\def\T{{\mathrm{\scriptscriptstyle T} }}
\newtheorem{theorem}{Theorem}
\newtheorem{corollary}{Corollary}
\newtheorem{definition}{Definition}
\newtheorem{example}{Example}
\newtheorem{lemma}{Lemma}
\newtheorem{proposition}{Proposition}
\newtheorem{remark}{Remark}
\begin{document}

\title[Confidence Regions in Wasserstein DRO]{{\large
\MakeLowercase{\uppercase{C}onfidence \uppercase{R}egions in
    \uppercase{W}asserstein \uppercase{D}istributionally
    \uppercase{R}obust \uppercase{E}stimation}}}
\author[Blanchet]{Jose Blanchet}
\address[A1]{Management Science and Engineering, Stanford University}
\author[Murthy]{Karthyek Murthy}
\address[A2]{Engineering Systems \& Design, Singapore University of
Technology \& Design}
\author[Si]{Nian Si}
\address[A3]{Management Science and Engineering, Stanford University}
\email{jose.blanchet@stanford.edu,
karthyek\_murthy@sutd.edu.sg,niansi@stanford.edu}
\maketitle




\maketitle

\begin{abstract}
  Wasserstein distributionally robust optimization estimators are
  obtained as solutions of min-max problems in which the statistician
  selects a parameter minimizing the worst-case loss among all
  probability models within a certain distance (in a Wasserstein
  sense) from the underlying empirical measure. While motivated by the
  need to identify optimal model parameters or decision choices that
  are robust to model misspecification, these distributionally robust
  estimators recover a wide range of regularized estimators, including
  square-root lasso and support vector machines, among others, as
  particular cases. This paper studies the asymptotic normality of
  these distributionally robust estimators as well as the properties
  of an optimal (in a suitable sense) confidence region induced by the
  Wasserstein distributionally robust optimization formulation. In
  addition, key properties of min-max distributionally robust
  optimization problems are also studied, for example, we show that
  distributionally robust estimators regularize the loss based on its
  derivative and we also derive general sufficient conditions which
  show the equivalence between the min-max distributionally robust
  optimization problem and the corresponding max-min formulation.
\end{abstract}

%







\section{Introduction}
\label{sec:Intro}
In recent years, distributionally robust optimization  formulations
based on Wasserstein distances have sparked a substantial amount of
interest in the literature. One reason for this interest, as
demonstrated by a range of examples in statistical learning
and operations research,
is that these formulations provide a flexible way to quantify and
hedge against the impact of model misspecification.  Motivated by those
applications, this paper aims to understand the fundamental
statistical properties, such as asymptotic normality of the
distributionally robust estimators and the associated confidence
regions deemed optimal in a suitable sense to be described shortly.

Before providing a review of Wasserstein distributionally robust
optimization and its connections to several areas, such as artificial
intelligence, machine learning and operations research, we set the
stage by first introducing the elements of a typical data-driven
distributionally robust estimation problem.

 Suppose that
$\{X_k: 1 \leq k \leq n\} \subset \mathbb{R}^m$ are independent and
identically distributed samples 
from an unknown distribution $P_\ast.$ A typical non-robust stochastic
optimization formulation informed by $P_{n}$ focuses on minimizing
empirical expected loss of the form,
$E_{P_{n}}\left\{ \ell (X;\beta )\right\} = n^{-1} \sum_{i=1}^n
\ell(X_i;\beta)$, over the parameter choices
$\beta \in B \subseteq \mathbb{R}^d.$ In this paper, we take $B$ to be
a closed, convex subset of $\mathbb{R}^d.$
Let the empirical risk minimization estimators be
\begin{equation}
  \beta _{n}^{ERM}\in \text{arg}\min_{\beta \in B}E_{P_{n}}\left\{ \ell (X;\beta
    )\right\}.  \label{ERM_1}
\end{equation}%

On the other hand, a distributionally robust formulation recognizes
the distributional uncertainty inherent in $P_{n}$ being a noisy
representation of an unknown distribution. Therefore, it enriches the
empirical risk minimization (\ref{ERM_1}) by considering an estimator
of the form,
\begin{equation}
  \beta _{n}^{DRO}(\delta )\in \arg \min_{\beta \in B}\sup_{P\,\in \,\mathcal{U}%
    _{\delta }(P_{n})}E_{P}\left\{ \ell (X;\beta )\right\} ,
\label{Wass_DRO_A}
\end{equation}%
where the set $\mathcal{U}_{\delta }(P_{n})$ is called the
distributional uncertainty region and $\delta $ is the size of the
distributional uncertainty. Here, given a measurable function
$f(\cdot),$ the notation $E_P\{f(X)\}$ denotes expectation
with respect to a probability distribution $P.$ Wasserstein
distributionally robust formulations advocate choosing,
\begin{equation*}
  \mathcal{U}_{\delta }(P_{n})=\{P \in \mathcal{P}(\Omega):
  W(P_{n},P)\leq \delta^{1/2}\},
\end{equation*}%
where $W(P_{n},P)$ is the Wasserstein distance between distributions
$P_{n}$ and $P$ defined below, and $\mathcal{P}(\Omega)$ is the set of probability
distributions supported on a closed set
$\Omega \subseteq \mathbb{R}^m.$ 
\begin{definition}[Wasserstein distances]
  \textnormal{Given a lower semicontinuous function
    $c:\Omega \times \Omega \rightarrow [0,\infty],$ the optimal
    transport cost $D_c(P,Q)$ between any two distributions
    $P,Q \in \mathcal{P}(\Omega)$ is defined as,
    \begin{align*}
      D_c(P,Q) = \min_{\pi \in \Pi(P,Q)}  E_{\pi}\left\{c(X,Y)\right\}
    \end{align*}
    where $\Pi(P,Q)$ denotes the set of all joint distributions of the
    random vector $(X,Y)$ with marginal distributions $P$ and $Q,$
    respectively. If we specifically take $c(x,y) = d(x,y)^2,$ where
    $d(\cdot)$ is a metric, we obtain the Wasserstein distance of
    order 2 by setting
    $
      W(P,Q) = \left\{D_c(P,Q)\right\}^{1/2}.
    $
  }
  \label{defn:WD}
\end{definition}


The quantity $W(P_{n},P)$ may be interpreted as the cheapest way to
transport mass from the distribution $P_{n}$ to the mass of another
probability distribution $P,$ while measuring the cost of
transportation from location $x \in \Omega$ to location $y \in \Omega$
in terms of the squared distance between $x$ and $y.$ In this paper,
we shall work with Wasserstein distances of order 2, which explains
why it is natural to use $\delta^{1/2}$ to specify the distributional
uncertainty region $\mathcal{U}_{\delta }(P_{n})$ as above.  Since
$W(P_n,P_n) = 0,$ the empirical risk minimizing estimator in
(\ref{ERM_1}) can be seen as a special case of the formulation
(\ref{Wass_DRO_A}) by setting $\delta = 0.$

  The need for selecting model parameters or making decisions using a data driven approach which is robust to model uncertainties has sparked a rapidly growing literature on Wasserstein distributionally robust optimization, via formulations such as  (\ref{Wass_DRO_A}); see, for example,
\citet{MohajerinEsfahani2017,ZHAO2018262,blanchet2016quantifying,gao2016distributionally,gao2018robust,wolfram2018}
for applications in operations research and
\cite{yang2017convex,yang2018Wasserstein} for examples
specifically in stochastic control.

In principle, the min-max formulation (\ref{Wass_DRO_A}) is
``distributionally robust'' in the
sense that its solution guarantees a uniform performance over all
probability distributions in $%
\mathcal{U}_{\delta_{n}}(P_{n}).$ Roughly speaking, for every choice
of parameter or decision $\beta,$ the min-max game type formulation in
(\ref{Wass_DRO_A}) introduces an adversary that chooses the most
adversarial distribution from a class of distributions
$\mathcal{U}_{\delta_{n}}(P_{n}).$ The goal of the procedure is to
then choose a decision that also hedges against these adversarial
perturbations, thus introducing adversarial robustness into settings
where the quality of optimal solutions are sensitive to incorrect
model assumptions.

Interestingly, the min-max formulation (\ref{Wass_DRO_A}), which is
derived from the above robustness viewpoint, has been shown to recover
many machine learning estimators when applied to suitable loss
functions $\ell(\cdot)$; some examples include the square-root lasso and
support vector machines \citep{blanchet2016robust}, the group lasso
\citep{blanchet2017distributionally2}, adaptive regularization
\citep{volpi2018generalizing,blanchet2017data}, among others
\citep{shafieezadeh-abadeh_distributionally_2015,gaostatlearn,
  duchidistributionally,chen2018robust}.
The utility of the distributionally robust formulation
(\ref{Wass_DRO_A}) has also been explored in adversarial training of
Neural Networks; see, for example
\citet{sinha2018certifiable,staib2017distributionally}.

Generic formulations such as (\ref{Wass_DRO_A}) are becoming increasingly
tractable; see, for example, %
\citet{MohajerinEsfahani2017,luo2017decomposition} for convex programming
based approaches and \citet{sinha2018certifiable,blanchet2018optimal} for
stochastic gradient descent based iterative schemes. 


Motivated by these wide range of applications, we investigate the asymptotic behaviour of the optimal value and optimal solutions of (\ref{Wass_DRO_A}). In order to
specifically describe the contributions, let us introduce the following
notation. For any positive integer $n$ and $\delta _{n}>0,$ let
\begin{equation*}
\Psi _{n}(\beta )=\sup_{P\,\in \,\mathcal{U}_{\delta
_{n}}(P_{n})}E_{P}\left\{ \ell (X;\beta )\right\} 
\end{equation*}%
denote the distributionally robust objective function in
\eqref{Wass_DRO_A}. Suppose that $\beta _{\ast }$ uniquely minimizes
the population risk. According to (\ref{ERM_1}) - (\ref{Wass_DRO_A}),
we have $\beta_n^{DRO}$ and $\beta_n^{ERM}$ minimize, respectively,
the distributionally robust loss $\Psi_n(\beta)$ and the empirical
loss in (\ref{ERM_1}). Next, let
\begin{equation}
\Lambda _{\delta _{n}}(P_{n})=\big\{ \beta \in B:\beta \in
\arg \min_{\beta  \in B }E_{P}\left\{ \ell (X;\beta )\right\} \text{ for some }P\in
\mathcal{U}_{\delta _{n}}(P_{n})\big\}
\label{NaturalCR}
\end{equation}
denote the set of choices of $\beta \in B$ that are ``compatible'' with the distributional uncertainty
region, in the sense that for every
$\beta \in \Lambda _{\delta _{n}}(P_{n}),$ there exists a probability
distribution $P\in \mathcal{U}%
_{\delta _{n}}(P_{n})$ for which $\beta $ is optimal. In other words,
if $\mathcal{U}_{\delta _{n}}(P_{n})$ represents the set of
probabilistic models which are, based on the empirical evidence,
plausible representations  of the
underlying phenomena, then each of such representations induces an
optimal decision and $\Lambda _{\delta _{n}}(P_{n})$ encodes the set of plausible decisions. Let
  $\Lambda^+ _{\delta _{n}}(P_{n})$ be the closure of $\cap_{\epsilon>0}\Lambda_{\delta_n+\epsilon}(P_n)$. Typically,  $\Lambda^+_{\delta _{n}}(P_{n}) = \Lambda_{\delta _{n}}(P_{n})$, but this is not always true as illustrated in Example \ref{eg:Lambda-plus}. Asymptotically, as $\delta_n$ decreases to zero, the distinction is negligible. However, choosing a set such as $\Lambda^+ _{\delta _{n}}(P_{n})$ as a natural set of plausible decisions is sensible because we guarantee that a distributionally robust solution belongs to this region. Our main result also implies that all distributionally robust solutions are asymptotically equivalent; within $o_p(n^{-1/2})$ distance from each other.

{
With the above notation, the key contributions of this article can be
described as follows.

We first establish the convergence in distribution of the triplet,
\begin{equation}
\big( n^{1/2}\{\beta_{n}^{ERM}-\beta_{\ast}\},\ n^{\bar{\gamma}/2}\{\beta
_{n}^{DRO}(\delta_n)-\beta_{\ast}\},\ n^{1/2}\big\{
\Lambda^+_{\delta_{n}}(P_{n} )-\beta_{\ast}\big\} \big) ,
\label{MLT-triplet}
\end{equation}
for a suitable $\bar{\gamma}\in(0,1/2]$ that depends
  on the rate at which the size of the distributional uncertainty,
$\delta_{n}$%
, is decreased to zero; see Theorem
  \ref{thm:levelsets-master}. We identify the joint limiting
  distributions of the triplet \eqref{MLT-triplet}. The third
component of the triplet in \eqref{MLT-triplet}, namely, $%
n^{1/2}\{\Lambda^+_{\delta_{n}}(P_{n})-\beta_{\ast}\}$, considers a
suitably scaled and centered version of the choices of $\beta \in B$
which are compatible with the respective distributional uncertainty
region $\mathcal{U}_{\delta_{n}}(P_{n})$ in the sense described
above. Therefore,  $\Lambda^+_{\delta_{n}}(P_{n})$ is a natural choice of the confidence region. We further develop an approximation for $\Lambda^+_{\delta_{n}}(P_{n})$; see Section \ref{Sec_Confidence_Regions}.

Second, we utilize
the limiting result of \eqref{MLT-triplet} to examine how the choice
of the size of distributional ambiguity, $\delta_{n}$, affects the
qualitative properties of the distributionally robust estimators and
the induced confidence regions. Specifically, choosing
$\delta_{n}=\eta n^{-\gamma}$, we characterize the behaviour of the
solutions for different choices of $\eta,\gamma\in (0,\infty),$ as
$n\rightarrow\infty$. It emerges that the canonical, $%
O(n^{-1/2})$, rate of convergence is achieved only if $\gamma\leq1$
and the limiting distribution corresponding to the distributionally
robust estimator and that of the empirical risk minimizer
are different only if $\gamma\geq1$. Hence to both
obtain the canonical rate and tangible benefits from the distributionally robust optimization
formulation, we must choose $\gamma=1$, which corresponds to the
resulting $\bar{\gamma}$ in \eqref{MLT-triplet} to be equal to 1.
Moreover, given any $\alpha\in(0,1)$, utilizing the limiting distribution of
the triplet in \eqref{MLT-triplet}, we are able to identify a positive
constant $\eta_{\alpha}\in(0,+\infty)$ such that whenever $\eta\geq
\eta_{\alpha}$ in the choice $\delta_{n}=\eta/n$, the set $\Lambda^+
_{\delta_{n}}(P_{n})$ is an asymptotic $(1-\alpha)$-confidence region for $%
\beta_{\ast}$. 

Finally, we establish the existence of an equilibrium game value. The distributionally robust optimization formulation assumes that the adversary selects a probability model after the statistician chooses a parameter. The equilibrium value of the game is attained if inf-sup equals sup-inf in (\ref{Wass_DRO_A}), namely, if we allow the statistician to choose a parameter optimally after the adversary selects a probability model. We show in great generality that the equilibrium value of the game exists; see Theorem \ref{minmax_prop}.}
We end the introduction with a discussion of related statistical results.
The asymptotic normality of M-estimators which minimize an empirical risk of
the form, $E_{P_{n}}\{\ell(X;\beta)\},$ was first established in the
pioneering work of \citet{huber1967}.  Subsequent asymptotic
characterizations in the presence of constraints on the choices of
parameter vector $\beta$ have been developed in
\citet{dupacova1988,shapiro1989,Shapiro1991,shapiromoor,shapiro2000},
again in the standard M-estimation setting.  Our work here is
different because of the presence of the adversarial perturbation to
the loss represented by the inner maximization in (\ref{Wass_DRO_A}).

Asymptotic normality in the related context of regularized estimators for
least squares regression has been established in %
\citet{knight2000asymptotics}. As mentioned earlier, distributionally robust
estimators of the form (\ref{Wass_DRO_A}) recover lasso-type estimators as
particular examples \citep{blanchet2016robust}. In these cases, the
inner max problem involving the adversary can be solved in closed form,
resulting in the presence of regularization. However, our results can be
applied even in the general context in which no closed form solution to the
inner maximization can be obtained. Therefore, our results in this paper can
be seen as extensions of the results by \citet{knight2000asymptotics}, from
a distributionally robust optimization perspective.

We comment that some of our results involving convergence of sets may
be of interest to applications in the area of empirical likelihood %
\citep{owen_empirical_1988,owen_empirical_1990,owen2001empirical}. This
is because $\Lambda_{\delta_{n}}( P_{n}) $ can be characterized in
terms of a function, namely, the robust Wasserstein profile function,
which resembles the definition of the empirical likelihood profile
function. We refer the reader to \citet{blanchet2016robust} for more discussion on the robust Wasserstein profile function and its connections to empirical likelihood. We also refer to \citet{cisneros2019distributionally} for additional applications, including graphical lasso, which could benefit from our results.


\label{sec:assumption_results}

\section{Preliminaries and Assumptions}

\subsection{Convergence of closed sets}
\label{SectConv_Sets}
We begin with a brief introduction to the notion of convergence of
closed sets before introducing the assumptions required to state our
main results.  For a sequence $\{A_k: k \geq 1\}$ of closed subsets of
$\mathbb{R}^d,$ the inner and outer limits are defined, respectively,
by
\begin{align*}
\text{Li}_{n \rightarrow \infty}\ A_n = &\big\{ z \in \mathbb{R}^d: \text{
there exists a sequence } (a_n)_{n \geq 1} {\text{with } a_n \in A_n} \text{ convergent to } z\},
 \text{ and } \\
\text{Ls}_{n \rightarrow \infty}\ A_n = &\big\{ z \in \mathbb{R}^d: \text{
there exist positive integers } n_1 < n_2 < n_3 < \cdots \text{ and } a_k
\in A_{n_k}  \\
&\qquad\qquad\qquad\qquad\qquad\qquad  \text{ such that the sequence }
(a_k)_{k \geq 1} \text{ is convergent to } z\big\}.
\end{align*}
We clearly have 
$\text{Li}_{n \rightarrow \infty}\ A_{n}\subseteq \text{Ls}_{n \rightarrow
\infty}\ A_{n }.$ 
The sequence $\{A_{n}:n\geq 1\}$ is said to converge to a set $A$ in the
Painlev\'{e}-Kuratowski (PK) sense if
\begin{equation*}
A= \text{Li}_{n\rightarrow \infty}\ A_{n}=\text{Ls}_{n \rightarrow \infty} \
A_{n},
\end{equation*}%
in which case we write PK-$\lim_{n}A_{n}=A$. Since $\mathbb{R}^{d}$ is a
locally compact Hausdorff space, the topology induced by Painlev\'{e}%
-Kuratowski convergence on the space of closed subsets
of $\mathbb{R}^{d}$ is completely metrisable, separable and coincides
with the well-known topology of closed convergence, also known as Fell
topology; see \citet[Chapter 1]{molchanov2005theory}. The notion of
convergence of sets we utilize here will be the above defined
Painlev\'{e}-Kuratowski convergence. { After equipping the space of closed subsets with the Borel $\sigma$-algebra, we are able to define probability measures and further define the usual weak convergence of measures; see, for example, \citet[Chapter 1]{billingsley2013convergence}.}

\subsection{Assumptions and notation}
Throughout the paper, we use $A\succ 0$ to denote that a given
symmetric matrix $A$ is positive definite and the notation $C^\circ$ and ${\rm cl} (C)$
to denote the interior and closure of a subset $C$ of Euclidean space, respectively. In the case
of taking expectations with respect to the data-generating distribution
$P_\ast,$ we drop the subindex in the expectation operator as in,
$E_{P_\ast} \left\{ f(X)\right\} = E\left\{ f(X)\right\}.$ We use $\Rightarrow$ to denote weak convergence and $\rightarrow$ to denote convergence in probability. We let $\mathbb{I}(\cdot)$ be the indicator function. { Let $\|\cdot\|_p$ be the dual norm of  $\|\cdot\|_q$ where $1/p + 1/q = 1$ for $q\in (1,\infty),$ and $p = \infty$ or $1$ for $q=1$ or $\infty$, respectively.}

As mentioned in Section \ref{sec:Intro}, suppose that $\Omega$ is a
closed subset of $\mathbb{R}^m$ and $B$ is a closed, convex subset of
$\mathbb{R}^d.$ Assumptions A1 and A2 below are taken to be satisfied
throughout the development, unless indicated otherwise.

(A1) The transportation cost
$c:\Omega \times \Omega \rightarrow [0,\infty]$ is of the form
$c(u,w)=\Vert u-w\Vert_{q}^{2}$.

(A2)  The function
$\ell:\Omega \times B \rightarrow \mathbb{R}$ satisfies the following
properties:
\begin{itemize}
\vspace*{-3mm}
\item[a)] The loss function $\ell(\cdot)$ is twice continuously differentiable,
  and for each $x$, $\ell(x,\cdot)$ is convex.
\item[b)] Let $h(x,\beta)=D_{\beta}\ell(x,\beta)$, and there
  exists $\beta_{\ast}\in B^\circ$ satisfying the optimality condition
  $E\{ h(X,\beta_{\ast })\} = 0.$ In addition,
  $E \{\Vert h(X,\beta_{\ast })\Vert _{2}^{2}\} <\infty,$ the
   symmetric matrix
  $C=E\left\{ D_{\beta }h(X,\beta_{\ast})\right\} \succ 0,$
  $E\left\{ D_{x}h(X,\beta_{\ast})D_{x}h(X,\beta_{\ast})^\T\right\}
  \succ 0$, and { $\mathrm{pr}\{\|D_x\ell(X,\beta_*)\|_p >0\}>0$}.

\item[c)] For every $\beta \in \mathbb{R}^d,$
  $\| D_{xx}\ell(\,\cdot\,;\beta) \|_p$ is  uniformly continuous and bounded by a continuous
  function $%
  M(\beta)$. {Further, there exists a positive constant $M'<\infty$ such that $\|D_xh(x,\beta)\|_q \leq M'(1+\|x\|_q) $  for $\beta$ in a neighborhood of $\beta_*$}. In addition, $D_xh(\cdot)$ and
  $D_\beta h(\cdot)$ satisfy the following locally Lipschitz
  continuity:
\begin{align*}
\left\Vert D_{x}h(x+\Delta,\beta_{\ast}+u)-D_{x}h(x,\beta_{\ast})\right\Vert
_{q} & \leq\kappa^{\prime}(x)\big( \left\Vert \Delta\right\Vert
_{q}+\left\Vert u\right\Vert _{q}\big), \\
\left\Vert D_{\beta}h(x+\Delta,\beta_{\ast}+u)-D_{\beta}h(x,\beta_{\ast
})\right\Vert _{q} & \leq\bar{\kappa}(x)\big(\left\Vert \Delta\right\Vert
                     _{q}+\left\Vert u\right\Vert _{q}\big),
\end{align*}
for $ \left\Vert\Delta\right\Vert
_{q}+\left\Vert u\right\Vert _{q} \leq 1$, where $\kappa^{\prime},\bar{\kappa}:\mathbb{R}^{m}\rightarrow\lbrack0,%
\infty) $ are such that $E[\{\kappa^{\prime}(X_{i})\}^{2}]<\infty$ and $E\{%
\bar {\kappa}^{2}(X_{i})\}<\infty.$
\end{itemize}


Assumption A1 covers most of the cases in the literature described in
Section \ref{sec:Intro}. One exception that does not immediately
satisfy Assumption A1, but which can be easily adapted after a simple
change-of-variables, is the weighted $l_{2}$ norm (also known as
Mahalanobis distance), namely
$c( x,y) =( x-y) ^{{\ \mathrm{\scriptscriptstyle T} }}A( x-y) $, where
$A \succ 0,$ see \citet{blanchet2018optimal}. 
The requirement that $\ell (\cdot )$ is twice differentiable in
Assumption A2.a is useful in the analysis to identify a second-order
expansion for the objective in (\ref{Wass_DRO_A}), which helps
quantify the the impact of adversarial perturbations. Convexity of
$\ell (x,\cdot ),$ together with $C$ being positive definite in A2.b,
implies uniqueness of $\beta _{\ast }$. The uniqueness
  of $\beta_*$ is a standard assumption in the derivation of rates of
  convergence for estimators; see, for example,
  \citet{huber1967}, \citet[Section 3.2.2]{van1996weak}.  Assumption
A2.b also allows us to rule out redundancies in the underlying source
of randomness (e.g. colinearity in the setting of linear regression).
 The first part of Assumption A2.c ensures that the inner maximization in  (\ref{Wass_DRO_A}) is finite by controlling the magnitude of the adversarial perturbations. {
 The local Lipschitz continuity requirement in $x$ arises with the optimal transportation analysis technique in \cite{blanchet2016robust}, c.f. Assumption A6. Analogous regularity in $\beta$ is useful in proving the confidence region limit theorem; see the discussion following Theorem \ref{thm:master}. }
Limiting results which study the impact of relaxing some of these assumptions are given immediately after describing the main result in Section
\ref{Sec_main_limit} below.

\section{Main results}
\subsection{The main limit theorem\label{Sec_main_limit}}
In order to state our main results we introduce a few more definitions. Define
\begin{equation*}
  \varphi (\xi )= 4^{-1}E\big[ \big\Vert \big\{ D_{x}h(X,\beta _{\ast
        })\big\} ^{ \mathrm{\scriptscriptstyle T} }\xi \big\Vert _{p}^{2}\big],
\end{equation*}%
and its convex conjugate,
$\varphi ^{\ast }(\zeta )=\sup_{\xi \in \mathbb{R}^{d}}\left\{ \xi ^{{
\mathrm{\scriptscriptstyle T} }}\zeta -\varphi (\xi )\right\}.$
In addition, define
\begin{align}
  S(\beta )&=\left[E \left\{\Vert D_{x}\ell (X;\beta )\Vert _{p}^{2}
             \right\}\right]^{1/2},
    \label{defn:sensitivity-term}\\
  f_{\eta ,\gamma }(x) &= x\mathbb{I}{(\gamma \geq 1)}-\eta^{1/2} D_{\beta
                         }S(\beta _{\ast })\mathbb{I}{(\gamma \leq 1)},
                         \label{family-f-eta-gamma}
\end{align}
for $\eta \geq 0,\gamma \geq 0$. { By Assumption A2.b, we have $S(\beta)$ is differentiable at $\beta_*$.} Recall the matrix
$C=E\left\{ D_{\beta }h(X,\beta _{\ast })\right\} $ introduced in
Assumption A2.b and
\begin{equation}
  \Lambda^+_{\delta _{n}}(P_{n}) = \mathrm{cl}\left\{\cap_{\epsilon>0} \Lambda_{\delta _{n}+\epsilon}(P_{n})\right\},
  \label{NaturalCRplus}
\end{equation}
which is the right limit of $\Lambda_{\delta_n}(P_n)$ defined in
(\ref{NaturalCR}). Finally, define the sets,
\begin{align}
  \Lambda_{\eta } =\left\{ u:\varphi ^{\ast }(Cu)\leq \eta \right\},
  \qquad
  \Lambda_{\eta ,\gamma} =
\begin{cases}
  \Lambda_\eta \quad &
  \text{ if }  \gamma =1, \\
  \mathbb{R}^{d} \quad  & \text{ if }\gamma <1, \\
  \{0\} \quad   &\text{ if }\gamma >1.%
\end{cases}
\label{Lim_Set}
\end{align}

We now state our main result.

\begin{theorem}
  \label{thm:levelsets-master}
  \sloppy{Suppose that Assumptions A1 - A2 are satisfied with {$q\in(1,\infty)$},
    $\Omega = \mathbb{R}^m$ and $E\left(\Vert X \Vert_2^2\right) < \infty.$ If
    $H\sim\mathcal{N}(0,\mathrm{cov}\{h(X,\beta_{\ast})\})$ and
    $\delta_{n}=n^{-\gamma}\eta$ \ for some
    $\gamma,\eta\in(0,\infty),$} then we have the following joint
  convergence in distribution:
\begin{align*}
  & \big( n^{1/2}\big\{\beta_{n}^{ERM}-\beta_{\ast}\big\},\ n^{\bar{\gamma}/2}
    \big\{\beta_{n}^{DRO}(\delta_n)-\beta_{\ast}\big\},\ n^{1/2}\big\{
    \Lambda^+_{\delta_{n}}(P_{n})-\beta_{\ast}\big\} \big) \\
  &\hspace{200pt} \Rightarrow \big( C^{-1}H,\ C^{-1}f_{\eta,\gamma}(H),\ \Lambda
    _{\eta,\gamma}+C^{-1}H\big) ,
\end{align*}
where $\bar{\gamma}=\min\{\gamma,1\}$ and $\Lambda_{\eta,\gamma}$ is
defined as in (\ref{Lim_Set}).
\end{theorem}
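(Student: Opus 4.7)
The plan is to express each of the three rescaled quantities as a continuous functional of the single empirical process $Z_n := n^{1/2}E_{P_n}\{h(X,\beta_*)\}$ and then invoke the continuous mapping theorem. Under A2.b, the central limit theorem gives $Z_n\Rightarrow -H$, and symmetry of the Gaussian law gives $-H\stackrel{d}{=}H$. Two asymptotic expansions drive the argument.

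The first is a uniform first-order expansion of the inner supremum in (\ref{Wass_DRO_A}). Using the Wasserstein strong-duality representation $\Psi_n(\beta)=\inf_{\lambda\ge0}\{\lambda\delta_n+E_{P_n}\phi_\lambda(X,\beta)\}$ with $\phi_\lambda(x,\beta)=\sup_y\{\ell(y,\beta)-\lambda\|y-x\|_q^2\}$, optimizing $\lambda$, and Taylor expanding $\ell$ in $x$ under the growth and Lipschitz controls in A2.c, one obtains
\[
  \Psi_n(\beta) = E_{P_n}\{\ell(X,\beta)\} + \delta_n^{1/2}S_n(\beta) + o_p(\delta_n^{1/2}),
\]
uniformly for $\beta$ in a neighborhood of $\beta_*$, where $S_n(\beta)\to S(\beta)$ and $S(\cdot)$ is differentiable at $\beta_*$ with gradient $D_\beta S(\beta_*)$. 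The second ingredient is the robust Wasserstein profile function $R_n(\beta):=\inf\{D_c(P_n,P):E_P\{h(X,\beta)\}=0\}$, which characterizes $\Lambda_{\delta_n}(P_n)=\{\beta\in B:R_n(\beta)\le\delta_n\}$. Optimal-transport duality applied at $\beta_*+n^{-1/2}u$, together with a quadratic Taylor expansion of the dual, yields
\[
  nR_n(\beta_*+n^{-1/2}u)\Rightarrow \varphi^*(Cu - H),
\]
uniformly on compact sets of $u$, where $\varphi^*$ is the Legendre transform of the quadratic $\varphi$ appearing in the statement; strict convexity and coercivity of $\varphi^*$ follow from positive-definiteness of $E\{D_xh(X,\beta_*)D_xh(X,\beta_*)^{\T}\}$ in A2.b.

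The three marginals then reduce to standard Z-estimator algebra. For ERM, expanding $E_{P_n}\{h(X,\beta_n^{ERM})\}=0$ around $\beta_*$ and inverting via $C\succ0$ gives $n^{1/2}\{\beta_n^{ERM}-\beta_*\}=-C^{-1}Z_n+o_p(1)$. For DRO, differentiating the $\Psi_n$-expansion produces the perturbed Z-equation $E_{P_n}\{h(X,\beta_n^{DRO})\}+\delta_n^{1/2}D_\beta S(\beta_*)+o_p(\delta_n^{1/2}\vee n^{-\bar\gamma/2})=0$; substituting $\beta_n^{DRO}=\beta_*+n^{-\bar\gamma/2}U_n$ and multiplying by $-n^{\bar\gamma/2}$ leaves $CU_n=-n^{(\bar\gamma-1)/2}Z_n-n^{(\bar\gamma-\gamma)/2}\eta^{1/2}D_\beta S(\beta_*)+o_p(1)$. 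With $\bar\gamma=\min\{\gamma,1\}$, exactly the right terms survive in each regime and one recovers $CU_n\Rightarrow f_{\eta,\gamma}(H)$. For the set component with $\gamma=1$ and $\delta_n=\eta/n$, the profile expansion gives $\{u:nR_n(\beta_*+n^{-1/2}u)\le\eta\}\Rightarrow\{u:\varphi^*(Cu-H)\le\eta\}=\Lambda_\eta+C^{-1}H$; the regimes $\gamma<1$ and $\gamma>1$ produce $\mathbb{R}^d$ and $\{0\}$ respectively, from the mismatch between $n\delta_n$ and the scale of $\varphi^*$.

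The hardest step is the third component, specifically upgrading the pointwise profile limit to Painlev\'e--Kuratowski convergence of the random sublevel sets and handling the $\Lambda^+$ modification. I would need uniform stochastic control of $R_n(\beta_*+n^{-1/2}u)$ on compacts of $u$, combined with coercivity of $u\mapsto\varphi^*(Cu-H)$ (which rests on $C\succ0$ and the rank condition in A2.b), in order to derive both the inner- and outer-limit inclusions required for PK-convergence. Because the inequality $R_n(\beta)\le\delta_n$ can be unstable where $R_n$ is locally flat, the right-continuous closure $\Lambda^+_{\delta_n}(P_n)$ from (\ref{NaturalCRplus}) is the correct object to work with; almost-sure continuity of the distribution of $\varphi^*(Cu-H)$ at level $\eta$ then ensures $\Lambda^+_{\delta_n}(P_n)$ and $\Lambda_{\delta_n}(P_n)$ share the same PK-limit with probability one. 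Once these points are in place, joint convergence of the triplet is immediate, as all three components have been written as continuous functionals of the single driving sequence $Z_n$.
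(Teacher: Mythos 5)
Your overall architecture matches the paper's: a dual/Taylor expansion of $\Psi_n$, the robust Wasserstein profile function $R_n$ with its dual representation for the set component, and joint convergence obtained because all three pieces are driven by the same $H_n=n^{1/2}E_{P_n}\{h(X,\beta_*)\}$. The ERM marginal and the level-set program for $\Lambda^+_{\delta_n}(P_n)$ (uniform control of $nR_n(\beta_*+n^{-1/2}\cdot)$ on compacts, epi-convergence, Painlev\'e--Kuratowski convergence of sublevel sets, and the three regimes in $\gamma$) are essentially the paper's Propositions on $V_n^{ERM}$, the tightness/equicontinuity of the profile, and the level-set propositions.

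There is, however, a genuine gap in your treatment of the DRO component. First, the expansion $\Psi_n(\beta)=E_{P_n}\{\ell(X,\beta)\}+\delta_n^{1/2}S_n(\beta)+o_p(\delta_n^{1/2})$ is too crude in the critical regime $\gamma=1$: the localized process $V_n^{DRO}(u)=n\{\Psi_n(\beta_*+n^{-1/2}u)-\Psi_n(\beta_*)\}$ multiplies the remainder by $n$, turning $o_p(\delta_n^{1/2})=o_p(n^{-1/2})$ into $o_p(n^{1/2})$, which does not vanish. The paper needs and proves the sharper expansion with the second-order term of size $O(\delta_n)$ characterized explicitly and an $o_p(\delta_n)$ remainder, uniformly on compacts (Proposition A1 / the proposition labelled \ref{Lem-2-Val-fn}); only then does the $\delta_n$-order term cancel in the difference and the remainder become $o_p(1)$ after rescaling. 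Second, ``differentiating the $\Psi_n$-expansion'' to obtain a perturbed estimating equation with $o_p(n^{-1/2})$ error is not a legitimate operation: a uniform $o_p$ bound on function values gives no control on derivatives, and for a supremum over an uncertainty set the gradient of $\Psi_n$ would additionally require an envelope-theorem justification. The paper avoids both issues by never differentiating: it uses convexity of $\Psi_n$ (as a supremum of convex functions), uniform convergence of $V_n^{DRO}$ on compacts to the strictly convex quadratic $V\{-f_{\eta,\gamma}(H),\cdot\}$, a separate tightness argument for $n^{\bar\gamma/2}(\beta_n^{DRO}-\beta_*)$ (Proposition \ref{prop:argmintightness}), and the argmin continuous mapping theorem. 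Your route could be repaired by establishing a uniform expansion of $D_\beta\Psi_n$ directly, but that is a substantive additional step, not a corollary of the value expansion you state.
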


The proof of Theorem \ref{thm:levelsets-master} is presented in
Section \ref{sec:levelsets-master}. { For $q=1$ or $\infty$, which corresponds to $%
p=\infty $ or 1, $S(\beta )$ may not be differentiable at $\beta_*$, then the limiting
distribution  presents a discontinuity which
makes it difficult to use in practice.  Hence, we prefer not to cover
this here.} Theorem \ref{thm:levelsets-master}
can be used as a powerful conceptual tool. For example, let us examine
how a sensible choice for the parameter $\delta_{n}$ can be obtained
as an application of Theorem \ref{thm:levelsets-master} by considering
the following cases:

{Case 1, where $\gamma > 1$:} If $n\delta_{n} \rightarrow 0$
corresponding to the case $\gamma > 1,$ we have $f_{0,\gamma}(H)=H$
from the definition of the parametric family in
\eqref{family-f-eta-gamma}. Therefore, from Theorem
\ref{thm:levelsets-master},
\begin{align*}
 & \big( n^{1/2}\{\beta_{n}^{ERM}-\beta_{\ast}\},\ n^{\bar{\gamma}/2}\{\beta
    _{n}^{DRO}(\delta_n)-\beta_{\ast}\},\ n^{1/2}\big\{
    \Lambda^+_{\delta_{n}}(P_{n})-\beta_{\ast}\big\} \big) \\& \hspace{150pt}\Rightarrow
    \big(C^{-1}H,C^{-1}H,\{C^{-1}H\}\big),
\end{align*}
which implies that the influence of the robustification vanishes in
the limit when $\delta_{n}=o(n^{-1})$.

{Case 2, where $\gamma < 1$:} If $n\delta_{n}\rightarrow\infty$
corresponding to the case $\gamma<1$, the rate of convergence for the
distributionally robust estimator is slower than the canonical than
$O(n^{-1/2})$ rate:
\begin{equation}
\beta_{n}^{DRO}(\delta_n) = \beta_{\ast}-\eta^{1/2} n^{-\gamma/2}
C^{-1}D_{\beta}S(\beta _{\ast})+o_{p}\big(
n^{-\gamma/2}\big),  \label{BDRO}
\end{equation}
where $n^{\gamma/2}o_{p}( n^{-\gamma/2}) \rightarrow0,$ in
probability, as $n\rightarrow\infty$. The relationship (\ref{BDRO})
reveals an uninteresting limit,
$n^{1/2}\{ \Lambda^+_{\delta_{n}} (P_{n})-\beta_{\ast}\}
\Rightarrow\mathbb{R}^{d}$, exposing a slower than $O(n^{-1/2})$ rate
of convergence $\Lambda^+_{\delta_n }(P_{n}).$ In fact, (\ref{BDRO})
indicates that $O( n^{-\gamma /2})$ scaling will result in a
non-degenerate limit.

{Case 3, where $\gamma = 1$:}  when $\delta_{n}=\eta/n$, we have that all components in
the triplet in Theorem \ref{thm:levelsets-master} have non-trivial
limits.

  Theorem \ref{minmax_prop} below provides a geometric
insight relating $\beta_{n}^{DRO}(\delta_n)$, $\beta_{n}^{ERM}$ and
$\Lambda^+_{\delta_{n}}(P_{n})$, which justifies a picture describing
$%
\Lambda^+_{\delta_{n}}(P_{n})$ as a set containing both
$\beta_{n}^{DRO}(%
\delta_n)$ and $\beta_{n}^{ERM}$. The observation that
$\beta_{n}^{ERM}\in\Lambda_{ \delta_{n}}(P_{n})$ is immediate because
$\Lambda_{\delta}(P_{n})$ is increasing in $\delta$, so
$\beta_{n}^{ERM}\in\Lambda_{0}(P_{n})\subset%
\Lambda^+_{\delta_{n}}(P_{n})$. On the other hand, the observation that
$\beta_{n}^{DRO}(\delta_n)\in\Lambda^+_{\delta_{n}}(P_{n})$ is
non-trivial and it relies on the exchangeability of inf and sup in
Theorem \ref{minmax_prop} below. An appropriate choice of $\eta$
which results in the set $\Lambda_{\delta_n}^+(P_{n})$ also possessing
desirable coverage for $\beta_\ast$ is prescribed in Section
\ref{Sec_Confidence_Regions}.
\begin{theorem}
  \label{minmax_prop}
  { Suppose that Assumption A1 is enforced. We further assume the loss function $\ell(\cdot)$ is
  continuous and non-negative, for each $x$, $\ell(x,\cdot)$ is convex, and $E_{P_*}\{\ell(X,\beta)\}$ has a unique optimizer $\beta_*\in B^\circ$.}
  Then for any
  $\delta >0,$
\begin{equation}
  \inf_{\beta \in B}\sup_{P \in\, \mathcal{U}_{\delta }(P_{n}) }
  E_{P}\left\{ \ell(X;\beta )\right\} =
  \sup_{P \in\, \mathcal{U}_{\delta }(P_{n}) }\inf_{\beta \in B} E_{P}
  \left\{ \ell (X;\beta )\right\},
  \label{minmax_eqn}
\end{equation}
and there exists a distributionally robust estimator choice
$\beta_{n}^{DRO}(\delta) \in \Lambda^+_{\delta}(P_{n})$.
\end{theorem}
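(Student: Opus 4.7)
The plan is to derive the minimax equality \eqref{minmax_eqn} via Sion's minimax theorem applied to a truncated version of $\ell$, and then to upgrade to the saddle-type conclusion by applying the result on slightly enlarged Wasserstein balls $\mathcal{U}_{\delta+\epsilon}(P_n)$ and passing to the limit as $\epsilon \downarrow 0$.

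For the minimax equality, I equip $\mathcal{U}_\delta(P_n)$ with the topology of weak convergence. This set is convex, tight (the constraint $W(P_n,P)\leq \delta^{1/2}$ yields the uniform bound $E_P\|X\|_2^2 \leq 2\delta + 2E_{P_n}\|X\|_2^2$), and weakly closed (by weak lower semicontinuity of $W(P_n,\cdot)^2$), hence weakly compact. For the truncated loss $\ell_M(x,\beta):=\min\{\ell(x,\beta),M\}$, the map $\beta \mapsto E_P\{\ell_M(X;\beta)\}$ is convex and continuous (dominated convergence plus convexity of $\ell(x,\cdot)$), while $P \mapsto E_P\{\ell_M(X;\beta)\}$ is linear and weakly continuous (since $\ell_M$ is bounded and continuous). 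Sion's minimax theorem then yields
\begin{equation*}
  \inf_{\beta \in B}\max_{P \in \mathcal{U}_\delta(P_n)}E_P\{\ell_M(X;\beta)\}= \max_{P \in \mathcal{U}_\delta(P_n)}\inf_{\beta \in B}E_P\{\ell_M(X;\beta)\}.
\end{equation*}
Since $\ell_M \uparrow \ell$ pointwise, sending $M \to \infty$, monotone convergence commutes with the outer $\max_P$ and $\inf_\beta$ (both being monotone limits), transferring the equality to $\ell$ itself and producing \eqref{minmax_eqn}.

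For the saddle-type conclusion, I repeat the truncated Sion argument on $\mathcal{U}_{\delta+\epsilon}(P_n)$ for each $\epsilon > 0$. Weak compactness of $\mathcal{U}_{\delta+\epsilon}$ together with weak continuity of $P \mapsto E_P\{\ell_M(X;\beta)\}$ ensures that $\max_P \inf_\beta E_P\{\ell_M\}$ is attained at some $P_{\epsilon,M}$, and the corresponding $\beta$-minimizer $\beta_{\epsilon,M}$ forms a saddle pair. Taking $M \to \infty$, convexity of $\ell(x,\cdot)$ combined with coercivity of $E_{P_n}\{\ell\}$ (inherited from convexity and the uniqueness of $\beta_\ast \in B^{\circ}$ on the population side) keeps the $\beta$-minimizers bounded; passing to a subsequential limit gives a pair $(\beta_\epsilon, P_\epsilon)$ with $P_\epsilon \in \mathcal{U}_{\delta+\epsilon}(P_n)$ and $\beta_\epsilon \in \arg\min_\beta E_{P_\epsilon}\{\ell(X;\beta)\}$, i.e., $\beta_\epsilon \in \Lambda_{\delta+\epsilon}(P_n)$. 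Sending $\epsilon \downarrow 0$, any subsequential limit $\beta^\star$ of $\{\beta_\epsilon\}$ minimizes $\Psi_n$ and, since each $\Lambda_{\delta+\epsilon'}(P_n)$ is closed and the family is increasing in $\epsilon'$, lies in $\bigcap_{\epsilon'>0}\Lambda_{\delta+\epsilon'}(P_n) \subseteq \Lambda_\delta^+(P_n)$; we may then designate $\beta_n^{DRO}(\delta) := \beta^\star$.

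The principal obstacle is the possible failure of weak upper semicontinuity of $P \mapsto E_P\{\ell(X;\beta)\}$, since no growth bound on $\ell$ is assumed. This both prevents a direct application of Sion and can make an exact worst-case distribution non-existent in $\mathcal{U}_\delta(P_n)$ itself. The truncation $\ell_M$ addresses the former issue, while the relaxation from $\Lambda_\delta(P_n)$ to $\Lambda_\delta^+(P_n)$, i.e., the passage to $\mathcal{U}_{\delta+\epsilon}(P_n)$ followed by $\epsilon \downarrow 0$, precisely accommodates the latter. A secondary subtlety is interchanging the two limits $M \to \infty$ and $\epsilon \downarrow 0$, which is handled by the monotone structure of both approximations together with convexity of $\ell(x,\cdot)$.
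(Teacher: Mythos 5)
Your overall architecture (compactify, apply Sion, pass to the limit, then recover a robust solution in $\Lambda^+_\delta(P_n)$ via the enlarged balls) mirrors the paper's, but the specific compactification you chose breaks the argument. You truncate the \emph{loss}, setting $\ell_M(x,\beta)=\min\{\ell(x,\beta),M\}$, and then invoke Sion's minimax theorem claiming that $\beta\mapsto E_P\{\ell_M(X;\beta)\}$ is convex. It is not: the pointwise minimum of a convex function with a constant is only quasi-convex, and quasi-convexity is not preserved under integration against $P$. For instance, with $f_1(\beta)=\min\{\beta^2,1\}$ and $f_2(\beta)=\min\{(\beta-10)^2,1\}$, the average $\tfrac12(f_1+f_2)$ has the disconnected sublevel set $\{0\}\cup\{10\}$ at level $\tfrac12$. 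So $E_P\{\ell_M(X;\cdot)\}$ need not even be quasi-convex, and Sion's theorem does not apply to the truncated game. The paper avoids this by truncating the \emph{support} instead: it restricts the uncertainty set to measures on $\mathcal{K}_N=\Omega\cap\{\|x\|_2\le N\}$, which keeps $\beta\mapsto E_P\{\ell(X;\beta)\}$ exactly convex, makes $P\mapsto E_P\{\ell(X;\beta)\}$ weakly continuous (since $\ell$ is continuous, hence bounded on $\mathcal{K}_N\times$ compacta), and yields weak compactness of $\mathcal{U}^N_\delta(P_n)$.

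A second, independent gap is your claim that "monotone convergence commutes with the outer $\max_P$ and $\inf_\beta$." Commuting a monotone-increasing limit with $\sup_P$ is fine, but commuting it with $\inf_\beta$ is exactly the hard direction: in general $\sup_M\inf_\beta g_M\le\inf_\beta\sup_M g_M$, and the reverse inequality is what must be proved. The paper spends its Steps 2 and 3 on precisely this point, establishing epi-convergence of $g_N$ to $g$ and eventual level-boundedness (the latter via the observation that $(1-\epsilon)P_n+\epsilon P_*\in\mathcal{U}_\delta(P_n)$, so $g(\beta)\ge\epsilon E_{P_*}\{\ell(X;\beta)\}$ has bounded level sets by uniqueness of $\beta_*$), before invoking Theorem 7.33 of Rockafellar--Wets. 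Your appeal to "coercivity of $E_{P_n}\{\ell\}$" is also unsupported: the empirical objective need not be coercive under the stated hypotheses, which only give uniqueness of the \emph{population} minimizer. Both issues would need to be repaired — most naturally by switching from loss truncation to support truncation and then supplying the epi-convergence/level-boundedness argument — before the proof is complete.
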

The proof of Theorem \ref{minmax_prop} is presented in Section
\ref{ssec:proof_prop_corr_1} of the supplementary material.  Example
\ref{eg:Lambda-plus} below demonstrates that the set of minimizers of
the distributionally robust formulation (\ref{Wass_DRO_A}) is not
necessarily unique and that the set $\Lambda_{\delta}(P_{n})$ may not contain Distributionally robust solutions.
Theorem~\ref{minmax_prop} indicates that the right-limit
$ \Lambda^+_{\delta}(P_{n})$  contains a distributionally robust solution. Theorem
  \ref{thm:levelsets-master} implies that the minimizers of
  (\ref{Wass_DRO_A}) differ by at most $o_p(n^{-1/2})$ in magnitude,
  which indicates that they are asymptotically equivalent and the
  inclusion of one solution of (\ref{Wass_DRO_A}) in
  $\Lambda^+_{\delta}(P_{n})$ is sufficient for the scaling
  considered.

\begin{example}
  Let the loss function
be
\[\ell(x,\beta)=f(\beta)+\{x^2-\log(x^2+1)\} f(\beta-4),\]
where $f(\beta)= 3\beta^2/4-1/8\beta^4 +3/8$ for $\beta\in [-1,1]$,
and $f(\beta)=|\beta|$, otherwise. $\ell(x,\beta)$ is twice-differentiable and convex satisfying Assumptions A1 - A2. Then, if the empirical measure
$P_n$ is a Dirac measure centered at zero with $n=1$, and
$\delta = 1$, we have the distributionally robust estimators
$\beta_{n}^{DRO}(\delta) \in [1,3]$. Further,
$[1,3] \subset \Lambda^+_{\delta}(P_{n})$ but
$[1,3] \cap \Lambda_{\delta}(P_{n}) =  \varnothing$.
\label{eg:Lambda-plus}
\end{example}


Next, we turn to the relationship between $\beta _{n}^{ERM}$ and
$\beta _{n}^{DRO}(\delta _{n}),$ when $\delta _{n}=\eta / n$. From
the first two terms in the triplet, we have,
\begin{align}
  \beta _{n}^{DRO}(\delta _{n})\
  & =\ \beta _{n}^{ERM}-\eta^{1/2} 
    {C^{-1}D_{\beta }S(\beta_{\ast })}{n^{-1/2}}
    + o_{p}\big( n^{-1/2}\big)
    \label{Rel_DRO_ERM} \\
  & =\ \beta _{n}^{ERM}-\delta_n^{1/2}{C^{-1}D_{\beta }S(\beta _{n}^{ERM})}
    + o_{p}\big( \delta_n\big) .  \notag
\end{align}%
The right hand side of (\ref{Rel_DRO_ERM}) points to the canonical
$O\left( n^{-1/2}\right) $ rate of convergence of the Wasserstein distributionally robust
estimator and it can readily be used to construct confidence regions,
as we shall explain in Section \ref{Sec_Confidence_Regions} below.

Relation (\ref{Rel_DRO_ERM}) also exposes the presence of an
asymptotic bias term, namely,
$S(\beta)= [E \{\Vert D_{x}\ell(X;\beta)\Vert_{p}^{2}\}]^{1/2}$, which
points towards selection of optimizers possessing reduced sensitivity
with respect to perturbations in data.  A precise mathematical
statement of this sensitivity-reduction property is given in Corollary
\ref{Cor_Sensitivity} below and its proof is presented in Section
\ref{ssec:proof_prop_corr_1} of the supplementary material.

\begin{corollary}
  \label{Cor_Sensitivity}Suppose that A1 - A2 are in force and
  consider
\begin{equation}
\bar{\beta}_{n}^{DRO} \in \arg\min_{\beta \in B}\left[E_{P_{n}}\left\{ \ell
(X;\beta)\right\} +   n^{-1/2} \left[ \eta E_{P_{n}}\left\{ \Vert
  D_{x}\ell(X;\beta )\Vert_{p}^{2}\right\} \right]^{1/2}\right].
\label{eq:sensitivity-red}
\end{equation}
Then, if $\delta_{n}=\eta/n$, we have that $\beta _{n}^{DRO}(\delta_n) =
\bar{\beta}_{n}^{DRO}+o_{p}(n^{-1/2})$.
\end{corollary}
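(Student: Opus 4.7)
The plan is to derive a first-order expansion of $\bar\beta_n^{DRO}$ around $\beta_*$ from the first-order optimality condition, and then compare it term by term with the expansion \eqref{Rel_DRO_ERM} of $\beta_n^{DRO}(\delta_n)$ already supplied by Theorem \ref{thm:levelsets-master}. Write $\bar S_n(\beta):=[E_{P_n}\{\Vert D_x\ell(X;\beta)\Vert_p^2\}]^{1/2}$, so that the objective in \eqref{eq:sensitivity-red} becomes $M_n(\beta)=E_{P_n}\{\ell(X;\beta)\}+\eta^{1/2}n^{-1/2}\bar S_n(\beta)$.

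First I would check consistency, $\bar\beta_n^{DRO}\to\beta_*$ in probability. The penalty $\eta^{1/2}n^{-1/2}\bar S_n(\beta)$ vanishes uniformly on compacta by a uniform law of large numbers applied with the growth bound $\Vert D_x h(\cdot,\beta)\Vert_q\le M'(1+\Vert x\Vert_q)$ from A2.c; combined with uniform convergence of $E_{P_n}\{\ell(X;\beta)\}$ to the population risk and uniqueness of $\beta_*$ (which follows from convexity in A2.a and $C\succ 0$ in A2.b), standard M-estimation arguments yield consistency. Because $\beta_*\in B^\circ$, with probability tending to one $\bar\beta_n^{DRO}$ is interior and so satisfies
\begin{equation*}
E_{P_n}\{h(X,\bar\beta_n^{DRO})\}+\eta^{1/2}n^{-1/2}D_\beta \bar S_n(\bar\beta_n^{DRO})=0.
\end{equation*}

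Next I would Taylor expand the leading term around $\beta_*$, exploiting the local Lipschitz bound on $D_\beta h$ from A2.c, to obtain
\begin{equation*}
E_{P_n}\{h(X,\beta_*)\}+C(\bar\beta_n^{DRO}-\beta_*)+o_p(\Vert \bar\beta_n^{DRO}-\beta_*\Vert)+\eta^{1/2}n^{-1/2}D_\beta \bar S_n(\bar\beta_n^{DRO})=0.
\end{equation*}
Since $E_{P_n}\{h(X,\beta_*)\}=O_p(n^{-1/2})$ by the central limit theorem under A2.b, and $D_\beta \bar S_n(\bar\beta_n^{DRO})=D_\beta S(\beta_*)+o_p(1)$ by consistency together with a uniform law of large numbers for $D_\beta \bar S_n$ on a neighbourhood of $\beta_*$, I get $\Vert \bar\beta_n^{DRO}-\beta_*\Vert=O_p(n^{-1/2})$ and then
\begin{equation*}
\bar\beta_n^{DRO}=\beta_*-C^{-1}E_{P_n}\{h(X,\beta_*)\}-\eta^{1/2}n^{-1/2}C^{-1}D_\beta S(\beta_*)+o_p(n^{-1/2}).
\end{equation*}
Combining with the classical Huber expansion $\beta_n^{ERM}=\beta_*-C^{-1}E_{P_n}\{h(X,\beta_*)\}+o_p(n^{-1/2})$ yields $\bar\beta_n^{DRO}=\beta_n^{ERM}-\eta^{1/2}n^{-1/2}C^{-1}D_\beta S(\beta_*)+o_p(n^{-1/2})$, which coincides with \eqref{Rel_DRO_ERM}, so that $\bar\beta_n^{DRO}=\beta_n^{DRO}(\delta_n)+o_p(n^{-1/2})$.

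The main obstacle is the step $D_\beta \bar S_n(\bar\beta_n^{DRO})=D_\beta S(\beta_*)+o_p(1)$. Differentiability of $S$ at $\beta_*$ is already granted by the paper under $q\in(1,\infty)$ together with $\mathrm{pr}\{\Vert D_x\ell(X;\beta_*)\Vert_p>0\}>0$ (so that $S(\beta_*)>0$ and the square root is smooth there); what needs care is passing from the population derivative of $S$ to the empirical derivative of $\bar S_n$ at the random point $\bar\beta_n^{DRO}$. This requires differentiation under the expectation (justified by the local Lipschitz control on $D_x h$ in A2.c with envelope $\kappa'$ and the moment condition $E\{\kappa'(X)^2\}<\infty$) and a uniform law of large numbers for $D_\beta \bar S_n(\beta)$ in a neighbourhood of $\beta_*$, for which the same envelope together with the growth bound on $\Vert D_x h\Vert_q$ in A2.c supplies an integrable dominating function.
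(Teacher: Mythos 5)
Your argument is correct in outline, but it follows a genuinely different route from the paper's. The paper never touches the first-order conditions of the penalized objective: it shows (Proposition \ref{Lem-2-Val-fn}) that $\Psi_n(\beta)$ and the penalized empirical risk $\bar{\Psi}_n(\beta)$ agree up to an $O_p(\delta_n)$ term whose variation over $\beta_\ast+n^{-1/2}u$ is $o_p(1/n)$, so the localized processes $V_n^{DRO}(\cdot)$ and $\bar{V}_n(\cdot)$ differ by $o_p(1)$ uniformly on compacts; it then reuses the tightness and strong convexity established for Theorem \ref{thm:levelsets-master} (Proposition \ref{prop:argmintightness}) and the argmin continuous mapping theorem to conclude that both minimizers, suitably centered and scaled, converge to the same strongly convex limit's unique minimizer, whence their difference is $o_p(n^{-1/2})$. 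You instead give $\bar{\beta}_n^{DRO}$ its own Bahadur-type expansion from the stationarity equation and match it against \eqref{Rel_DRO_ERM}. What the paper's route buys is that it entirely sidesteps consistency, interiority, and differentiability of the penalty at a random point — all of which your route must supply (and which you correctly flag as the delicate steps: nonconvexity of the penalized objective makes consistency of the \emph{global} minimizer require the level-set argument through $M_n(\beta)\geq E_{P_n}\{\ell(X;\beta)\}$, and $D_\beta \bar{S}_n(\bar{\beta}_n^{DRO})\to D_\beta S(\beta_\ast)$ needs the envelope and local Lipschitz conditions of A2.c together with $S(\beta_\ast)>0$ and $p\in(1,\infty)$). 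What your route buys is an explicit asymptotically linear representation of $\bar{\beta}_n^{DRO}$ itself, obtained by elementary Z-estimation arguments rather than the epi-convergence machinery; note, however, that you still implicitly rely on the first two components of Theorem \ref{thm:levelsets-master} through \eqref{Rel_DRO_ERM}, so your proof is not independent of the heavy analysis behind Proposition \ref{prop:DRO} — it only repackages the final comparison step.
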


While the formulation on the right-hand side of
(\ref{eq:sensitivity-red}) is conceptually appealing, it may not be
desirable from an optimization point of view due to the potentially
nonconvex nature of the objective involved. On the other hand, under
Assumption A2, the distributionally robust objective
$\Psi _{n}(\beta )$ is convex; see, for example, the reasoning in
\citet[Theorem 2a]{blanchet2018optimal} while also enjoying the
sensitivity-reduction property of the formulation in
(\ref{eq:sensitivity-red}).

A similar type of result to Corollary \ref{Cor_Sensitivity} is given
in \citet{gaostatlearn}, but the focus there is on the objective
function of (\ref{Wass_DRO_A}) being approximated by a suitable
regularization. The difference between this type of result and
Corollary \ref{Cor_Sensitivity} is that our focus is on the asymptotic
equivalence of the actual optimizers.  Behind a result such as
Corollary \ref{Cor_Sensitivity}, it is key to have a more nuanced
approximation which precisely characterizes the second order term of
size $O(\delta_n)$; see Proposition A1 in the supplementary material.

We conclude this section with results which examine the effects of
relaxing some assumptions made in the statement of Theorem
\ref{thm:levelsets-master} above. Proposition
\ref{prop:constrained-support} below asserts that convergence of the
natural confidence region $\Lambda^+_{\delta_n}(P_n),$ as identified in
Theorem \ref{thm:levelsets-master}, holds even if the support of the
probability distributions in the uncertainty region
$\mathcal{U}_{\delta_n}(P_n)$ is constrained to be a strict subset
$\Omega$ of $\mathbb{R}^d.$ For this purpose, we introduce the
following notation: For any set $C\in \mathbb{R}^{m},$ let
$C^{\epsilon }=\left\{ x\in C: B_{\epsilon }\left( x\right) \subset
  C\right\} ,$ where $B_{\epsilon }\left( x\right) $ is the
neighborhood around $x$ defined as
$B_{\epsilon }\left( x\right) =\left\{ y: \left\Vert y-x\right\Vert
  _{2}\leq \epsilon \right\} .$ Thus, for any probability measure $P$,
we have
$\lim_{\epsilon \rightarrow 0} {P}\left( C^{\epsilon }\right)
={P}\left( C^{\circ }\right).$

\begin{proposition}
  Suppose that Assumptions A1 - A2 are satisfied with {$q\in[1,\infty]$} and
  $E\left(\Vert X \Vert_2^2\right) < \infty$. In addition, suppose that the data
  generating measure $P_*$ satisfies
  $P_{\ast}(\Omega^\circ) =1$. If we take
  $H\sim\mathcal{N}(0,\mathrm{cov}\{h(X,\beta_{\ast})\})$ and
  $\delta_{n}=n^{-\gamma}\eta$ \ for some $\gamma,\eta\in(0,\infty),$
  then the following convergence holds as $n \rightarrow \infty$:
  \begin{align*}
    n^{1/2} \left\{ \Lambda_{\delta_n}(P_n) - \beta_\ast \right\}
    \Rightarrow \Lambda_{\eta,\gamma} + C^{-1}H.
  \end{align*}
  \label{prop:constrained-support}
\end{proposition}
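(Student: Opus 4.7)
The plan is to follow the proof of the third (confidence-region) component of Theorem \ref{thm:levelsets-master} with two modifications: relaxing the range $q\in(1,\infty)$ to $q\in[1,\infty]$, and accommodating a proper support $\Omega\subsetneq\mathbb{R}^m$. An incidental observation is that the statement concerns $\Lambda_{\delta_n}(P_n)$ rather than the right-closure $\Lambda_{\delta_n}^+(P_n)$; the two differ only through the pathology exhibited in Example \ref{eg:Lambda-plus}, which is ruled out asymptotically by Assumption A2.b, so the argument effectively reduces to the convergence of $\Lambda_{\delta_n}^+(P_n)$.

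The first modification is structural. The limit $\Lambda_{\eta,\gamma}+C^{-1}H$ is described purely in terms of $\varphi^\ast$ and the matrix $C$, neither of which requires differentiability of $S(\beta)$ at $\beta_\ast$. The exclusion $q\in\{1,\infty\}$ in Theorem \ref{thm:levelsets-master} is driven by the DRO-estimator component $f_{\eta,\gamma}(H)$ which involves $D_\beta S(\beta_\ast)$; the argument for the confidence region, which relies only on a sublevel-set representation of $\Lambda_{\delta_n}(P_n)$ through the robust Wasserstein profile function $R_n(\beta)$, goes through without differentiability of $S$.

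The second modification is carried out by comparing the constrained profile function $R_n^\Omega(\beta)$, whose sublevel sets describe $\Lambda_{\delta_n}(P_n)$, to an unconstrained profile function $R_n^{\mathbb{R}^m}(\beta)$. Via Lagrangian duality, each is obtained as an infimum over $\lambda\geq 0$ of expressions involving an inner supremum $\sup_{y}\{\langle h(X_i;\beta),y-X_i\rangle-\lambda\|y-X_i\|_q^2\}$, taken over $y\in\Omega$ versus $y\in\mathbb{R}^m$. Under the scaling $\delta_n=\eta n^{-\gamma}$ the optimal dual multiplier $\lambda_n$ grows like $n^{\gamma/2}$, so unconstrained inner maximizers stay within an $O_p(n^{-\gamma/2})$ neighborhood of the samples uniformly in $\beta$ in the localization window around $\beta_\ast$. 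Since $P_\ast(\Omega^\varepsilon)\uparrow 1$ as $\varepsilon\downarrow 0$, one can select $\varepsilon_n\downarrow 0$ with $\varepsilon_n\gg n^{-\gamma/2}$ such that, on an event of asymptotic probability one, all but an $o_p(1)$ fraction of the samples satisfy $X_i\in\Omega^{\varepsilon_n}$; for those samples the unconstrained optimizer lies automatically in $\Omega$, and the contribution of the remaining samples is controlled by a truncation argument invoking $E\|X\|_2^2<\infty$. This yields $R_n^\Omega(\beta)=R_n^{\mathbb{R}^m}(\beta)+o_p(\delta_n)$ uniformly on the localization, from which the desired convergence $n^{1/2}\{\Lambda_{\delta_n}(P_n)-\beta_\ast\}\Rightarrow \Lambda_{\eta,\gamma}+C^{-1}H$ follows by taking Painlev\'{e}--Kuratowski limits of the level sets.

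The principal obstacle is the uniform $O_p(n^{-\gamma/2})$ control on the perturbation magnitudes, since for $q\in\{1,\infty\}$ the cost $\|\cdot\|_q^2$ is non-smooth and $y^\ast(X_i;\beta)$ need not be unique. I would replace the first-order-conditions heuristic by a direct Markov-type bound on any primal optimal coupling $\pi^\ast$: the constraint $E_{\pi^\ast}\|Y-X\|_q^2\leq \delta_n$ yields $\pi^\ast\{\|Y-X\|_q>\varepsilon_n\}\leq \delta_n/\varepsilon_n^2=o(1)$ whenever $\varepsilon_n\gg n^{-\gamma/2}$, so $\pi^\ast$ can be truncated to its $\varepsilon_n$-localized part without affecting the profile function at leading order, and this primal-side estimate simultaneously delivers both modifications.
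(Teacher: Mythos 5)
Your overall architecture matches the paper's: the result is obtained by showing $nR_n(\beta_*+n^{-1/2}u)\Rightarrow\varphi^*(H-Cu)$ and passing to Painlev\'e--Kuratowski limits of sublevel sets as in Propositions \ref{prop-levset-eq1}--\ref{prop-levset-lthan1}, and you are right that the extension to $q\in[1,\infty]$ comes for free because the confidence-region limit involves only $\varphi^*$ and $C$, not $D_\beta S(\beta_*)$. Indeed, the paper proves Propositions \ref{prop:RWPEq}--\ref{prop:RWPcont} for general $\Omega$ under $P_*(\Omega^\circ)=1$ from the outset and then states that Proposition \ref{prop:constrained-support} follows by repeating the level-set argument verbatim.

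Where your proposal has a genuine gap is the mechanism for handling the support constraint. First, the duality you invoke (an infimum over $\lambda\ge 0$ with inner suprema penalized by $\lambda\Vert y-x\Vert_q^2$, and a multiplier growing like $n^{\gamma/2}$) is the duality for the worst-case objective $\Psi_n$, not for the profile function $R_n(\beta)=\inf\{D_c(P,P_n):E_P\{h(X,\beta)\}=0\}$; the latter's dual (Proposition \ref{prop:RWPEq}) is a maximum over the moment-constraint multiplier $\xi$, with no Wasserstein-radius multiplier at all. Second, and more substantively, the primal truncation step does not close: discarding or freezing the $\varepsilon_n$-distant part of an optimal coupling yields a measure that is supported where you want but no longer satisfies the equality constraint $E_P\{h(X,\beta)\}=0$, so it is infeasible for the profile problem, and the claim that this is ``without affecting the profile function at leading order'' requires an additional correction argument quantifying the cost of restoring feasibility --- which is exactly the hard part. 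The paper sidesteps this by staying on the dual side: the lower bound on $nR_n$ simply relaxes the constraint $X_i+n^{-1/2}\Delta\in\Omega$ to $\Delta\in\mathbb{R}^m$ (Proposition \ref{prop:RWPLB}), while the upper bound (Proposition \ref{prop:RWPUB}) restricts the empirical sum to samples in $C_0^{\epsilon_0}$, the $\epsilon_0$-interior of $\Omega$ intersected with a ball of radius $c_0$, where the explicit candidate perturbation $\bar{\Delta}_i$ is eventually feasible; the indicator $\mathbb{I}(X\in C_0^{\epsilon_0})$ is then removed in the limit $\epsilon_0\downarrow 0$, $c_0\uparrow\infty$ precisely because $P_*(\Omega^\circ)=1$. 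A minor further point: Example \ref{eg:Lambda-plus} satisfies Assumptions A1--A2, so A2.b does not rule out the discrepancy between $\Lambda_{\delta_n}(P_n)$ and $\Lambda^+_{\delta_n}(P_n)$; it is only the scaling $\delta_n\to 0$ that renders it asymptotically negligible.
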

The steps involved in proving Proposition
\ref{prop:constrained-support} are presented in Section
\ref{sec:proof}. A discussion on the validity of a central limit theorem
for the estimator $\beta_n^{DRO},$ in the presence of constraints
restricting transportation within the support set $\Omega$, is
presented in Section \ref{sec:discussion}.

In the case where the unique minimizer $\beta_\ast$ may not
necessarily lie in interior of the set $B$ (as opposed to the
requirement in Assumption A2.b, one may obtain the extension in
Proposition \ref{prop:DRO-boundary} as the limiting result for the
estimator $\beta^{DRO}_n(\delta_n)$. As in the previous results, we
take $h(x,\beta) = D_\beta\ell(x;\beta).$ The proof of Proposition
\ref{prop:DRO-boundary} is in Section \ref{ssec:proofs_dro_bound_prop}
of the supplementary material.
\begin{proposition}
  Suppose that Assumptions A1, A2.a, A2.c are satisfied and $\beta_\ast$
  is the unique minimizer of $\min_{\beta \in B} E\{\ell(X,\beta)\}.$
  Suppose that the set $B$ is compact and there exist
  $\varepsilon > 0$ and twice continuously differentiable functions
  $g_i(\beta)$ such that,
  \begin{align*}
    B \cap B_\varepsilon(\beta_\ast) =
    \left\{ \beta \in B_\varepsilon(\beta_\ast): g_i(\beta) = 0,
    i \in I, \ g_j(\beta) \leq 0, j \in J\right\},
  \end{align*}
  where $I,J$ are finite index sets and $g_i(\beta_\ast) = 0$ for all
  $i \in J.$ With this identification of the set $B,$ suppose that the following so-called Mangasarian-Fromovitz constraint qualification is satisfied at $\beta_\ast$: the
  gradient vectors $\{D g_i(\beta_\ast): i \in I\}$ are linearly
  independent and there exists a vector $w$ such that
  $w^\T D g_i(\beta_\ast) = 0$ for all $i \in I$ and
  {$w^\T D g_j(\beta_\ast) < 0$} for all $j \in J.$

  Suppose that $\Lambda_0$ is the set of Lagrange multipliers satisfying
  the first-order optimality conditions and the following
  second-order sufficient conditions: $\lambda \in \Lambda_0$ if and
  only if $D_\beta L(\beta_\ast,\lambda) = 0,$ $\lambda_i \geq 0$ for
  $i \in J,$ and
  $\max_{\lambda \in \Lambda_0} w^\T D_{\beta \beta}
  L(\beta_\ast,\lambda)w > 0$ for all $w \in \mathcal{C},$ where
  \[L(\beta,\lambda) = E\{\ell(X,\beta)\} + \sum_{i \in I \cup J}
    \lambda_i g_i(\beta)\] is the Lagrangian function associated with
  the minimization $\min_{\beta \in B} E\{\ell(X,\beta)\}$ and
  \[ \mathcal{C} = \left\{ w: w^\T Dg_i(\beta_\ast) = 0, i \in I, \
      w^\T Dg_j(\beta_\ast) \leq 0, j \in J, \ w^\T E\{h(X,\beta_\ast)\}
      \leq 0 \right\}\]
  is the non-empty cone of critical directions.  In addition, suppose
  that $\omega(\xi)$ is the unique minimizer of
  $\min_{u \in \mathcal{C}}\big\{\xi^\T u + 2^{-1}q(u) \big\},$ where
  $q(u) = \max \left\{u^\T D_{\beta \beta}L(\beta_\ast,\lambda) u :
    \lambda \in \Lambda_0 \right\}.$ Then, if $\delta_n = \eta n^{-1}$
  for $\eta \in (0,\infty)$,
  $E \{\Vert h(X,\beta_\ast) \Vert_2^2\} < \infty$ and
  $E\{D_\beta h(X,\beta_\ast)\} \succ 0,$ we have the following
  convergence as $n \rightarrow \infty$:
  \begin{align*}
    n^{1/2}\left\{\beta^{DRO}_n(\delta_n)  - \beta_\ast\right\} \Rightarrow
    \omega\left\{-H + \eta^{1/2}D_\beta S(\beta_\ast)\right\},
  \end{align*}
  where $H \sim \mathcal{N}(0,\mathrm{cov}\{h(X,\beta_\ast)\}).$
  \label{prop:DRO-boundary}
\end{proposition}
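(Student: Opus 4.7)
The strategy is to reduce the distributionally robust problem to a smooth regularized empirical risk minimization, and then invoke the second-order asymptotic theory for constrained M-estimators (à la Shapiro 1989, 1991) with the regularization entering as a deterministic perturbation of the linear part of the local expansion.

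First I would apply the regularization approximation of Proposition A1 in the supplementary material to write, uniformly over $\beta$ in a neighborhood of $\beta_\ast$,
\[
  \Psi_n(\beta) = E_{P_n}\{\ell(X,\beta)\} + \delta_n^{1/2} S_n(\beta) + o_p(\delta_n),
\]
where $S_n(\beta)=[E_{P_n}\{\|D_x\ell(X,\beta)\|_p^2\}]^{1/2}$. Compactness of $B$, convexity of $\ell(x,\cdot)$, the uniqueness of $\beta_\ast$ and $\delta_n\to 0$ then yield consistency $\beta_n^{DRO}(\delta_n)\to \beta_\ast$ in probability, so attention can be localized to a ball $B_\varepsilon(\beta_\ast)$ on which $B$ admits the smooth description assumed in the proposition.

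Next I would change variables to $u = n^{1/2}(\beta-\beta_\ast)$ and expand. For each Lagrange multiplier $\lambda\in\Lambda_0$, the Lagrangian $L_n(\beta,\lambda) = E_{P_n}\{\ell(X,\beta)\} + \sum_i \lambda_i g_i(\beta)$ coincides with $E_{P_n}\{\ell(X,\beta)\}$ on the feasible set modulo $o(n^{-1})$ terms (by complementary slackness $\lambda_j g_j(\beta_\ast)=0$ for $j\in J$ and $g_i(\beta)=0$ for $i\in I$), so
\begin{align*}
  n\{\Psi_n(\beta_\ast+n^{-1/2}u)-\Psi_n(\beta_\ast)\}
  &= n\{L_n(\beta_\ast+n^{-1/2}u,\lambda)-L_n(\beta_\ast,\lambda)\} \\
  &\qquad + n\delta_n^{1/2}\{S(\beta_\ast+n^{-1/2}u)-S(\beta_\ast)\} + o_p(1).
\end{align*}
The second-order Taylor expansion of $L_n(\cdot,\lambda)$ about $\beta_\ast$, together with $D_\beta L(\beta_\ast,\lambda)=0$, the central limit theorem $n^{1/2}(E_{P_n}-E)\{h(X,\beta_\ast)\}\Rightarrow H$, the local Lipschitz regularity in Assumption A2.c, and the identity $n^{1/2}\delta_n^{1/2}=\eta^{1/2}$, then give the asymptotic quadratic approximation
\[
  n\{\Psi_n(\beta_\ast+n^{-1/2}u)-\Psi_n(\beta_\ast)\}
  \;\approx\; \bigl(-H+\eta^{1/2}D_\beta S(\beta_\ast)\bigr)^{\mathrm{\scriptscriptstyle T}} u
  + \tfrac{1}{2}\,u^{\mathrm{\scriptscriptstyle T}} D_{\beta\beta}L(\beta_\ast,\lambda)\, u,
\]
with remainder tending to zero uniformly on compact $u$-sets, for each fixed $\lambda\in\Lambda_0$ (with sign convention absorbed into $H$).

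The argument now runs through the standard second-order delta method for constrained optimization. Under the Mangasarian--Fromovitz constraint qualification, the set of feasible scaled directions $\{n^{1/2}(\beta-\beta_\ast): \beta\in B\cap B_\varepsilon(\beta_\ast)\}$ converges (in the Painlevé--Kuratowski sense) to the tangent cone, and combining with the reduction of admissible directions provided by the objective yields $\mathcal{C}$ as the effective feasible cone in the limit. Taking the pointwise worst Lagrange multiplier produces the quadratic form $q(u)=\max_{\lambda\in\Lambda_0}u^{\mathrm{\scriptscriptstyle T}} D_{\beta\beta}L(\beta_\ast,\lambda)u$; the second-order sufficient condition guarantees that $q$ is positive on $\mathcal{C}\setminus\{0\}$, hence that $u\mapsto \xi^{\mathrm{\scriptscriptstyle T}} u+q(u)/2$ has a unique minimizer $\omega(\xi)$ on $\mathcal{C}$. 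Epi-convergence of the convex scaled objectives to this limit program and the argmin continuous mapping theorem (cf.\ Shapiro 1991) then deliver
\[
  n^{1/2}\{\beta_n^{DRO}(\delta_n)-\beta_\ast\} \;\Rightarrow\; \omega\bigl(-H+\eta^{1/2}D_\beta S(\beta_\ast)\bigr).
\]

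The main obstacle I expect is the non-uniqueness of the Lagrange multipliers: under MFCQ (rather than LICQ) the set $\Lambda_0$ may be a polytope, which forces the limiting quadratic form to be the pointwise maximum $q(u)$ of a family of quadratics. Verifying that the max is indeed the correct description of the limit, rather than a single $D_{\beta\beta}L(\beta_\ast,\lambda)$, requires the second-order expansion to hold uniformly in $\lambda\in\Lambda_0$, and requires the second-order sufficient condition to ensure a well-posed limit program. A secondary technical point is controlling the interaction between the deterministic drift $\eta^{1/2}D_\beta S(\beta_\ast)$ (arising from the regularization) and the stochastic drift from the CLT; absorbing the boundary contribution $n^{1/2}E\{h(X,\beta_\ast)\}$ into the Lagrangian, which would otherwise blow up, is the key cancellation that makes the limit finite and exactly of the form claimed.
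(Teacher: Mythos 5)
Your proposal is correct and follows essentially the same route as the paper's proof: approximate $\Psi_n$ by the regularized empirical risk via the second-order expansion (Proposition A1), localize by consistency/tightness, expand the max-Lagrangian $\max_{\lambda\in\Lambda_0}L(\cdot,\lambda)$ about $\beta_\ast$ so that the population linear term cancels against the constraint gradients and the quadratic part becomes $2^{-1}q(u)$, reduce the minimization over $B$ to minimization over the critical cone $\mathcal{C}$ via Shapiro's lemmas, and conclude with the argmin continuous mapping theorem. The uniformity-in-$\lambda$ concern you raise is resolved in the paper exactly as you anticipate, by noting that under MFCQ $\Lambda_0$ is a bounded polytope so the maximum is over finitely many extreme points.
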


The Mangasarian-Fromovitz constraint qualification conditions and the
necessary and sufficient conditions in the statement of Proposition
\ref{prop:DRO-boundary} are standard in the literature if the optimal
$\beta_\ast$ lies on the boundary of the set $B$; see, for example,
\cite{shapiro1989}. Please refer the discussion following Theorem 3.1
in \cite{shapiro1989} for sufficient conditions under which
$\omega(\xi)$ is unique.

Proposition \ref{prop:DRO-property} extends the sensitivity reduction
property in Corollary \ref{Cor_Sensitivity} to settings where the
minimizer for $\min_{\beta \in B} E_{P_*}\{\ell(X;\beta)\}$ is not
unique. The proof of Proposition \ref{prop:DRO-property} is presented
in Section \ref{ssec:proofs_dro_bound_prop} of the supplementary
material.

\begin{proposition}
  Suppose that Assumptions A1, A2.a and A2.c are satisfied, the set
  $B$ is compact, and the choice of the radii $(\delta_n: n \geq 1)$ is
  such that $n\delta_n \rightarrow \eta \in (0,\infty).$ Let the set $B_* $ be $\arg\min_{\beta \in B}  E_{P_*}\{\ell(X;\beta)\}$. Then, the distributionally robust optimization
  objective $\Psi_n(\beta)$ satisfies,
  \begin{align}
    n ^{1/2} \left[\Psi_n(\beta) - E\{\ell(X;\beta)\} \right]
    \Rightarrow Z(\beta) + \eta^{1/2} S(\beta),
    \label{rhs-nonunique}
  \end{align}
  where $Z(\cdot)$ is a zero mean Gaussian process with covariance
  function $\text{cov}\{Z(\beta_1),Z(\beta_2)\}
  =\text{cov}\{\ell(X,\beta_1),\ell(X,\beta_2)\}.$ The above
  weak convergence holds, as $n \rightarrow \infty,$ on the space of
  continuous functions equipped with the uniform topology on compact sets.
    %
  Consequently, if
  $\arg \min_{\beta \in B_\ast}\{Z(\beta) +
  \eta^{1/2} S(\beta)\}$ is singleton with probability one, we have as
  $n\rightarrow \infty,$
  \begin{equation*}
    \beta_n^{DRO}(\delta_n) \Rightarrow \textnormal{arg}\,\textnormal{min}_{\beta \in B_\ast}\big\{
     Z(\beta )+ \eta^{1/2} S(\beta ) \big\}.
\end{equation*}%

\label{prop:DRO-property}
\end{proposition}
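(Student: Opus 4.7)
The plan is to combine a uniform-in-$\beta$ first-order expansion of the Wasserstein inner supremum $\Psi_n(\beta)$ with a functional central limit theorem for the empirical process indexed by the loss class $\{\ell(\cdot;\beta):\beta\in B\}$, and then extract the limit of $\beta^{DRO}_n(\delta_n)$ via an argmin continuous mapping theorem. The compactness of $B$ is essential because it allows working in $C(B)$ and converts pointwise asymptotics into uniform ones.

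First I would invoke the strong-duality expansion underlying Proposition A1 of the supplementary material (see also \citet{blanchet2016robust}) to obtain the decomposition
\[
\Psi_n(\beta) \;=\; E_{P_n}\{\ell(X;\beta)\} \;+\; \delta_n^{1/2}\, S_n(\beta) \;+\; R_n(\beta),
\]
where $S_n(\beta) = [E_{P_n}\{\|D_x\ell(X;\beta)\|_p^2\}]^{1/2}$ is the empirical counterpart of $S(\beta)$ and $R_n(\beta) = o_p(\delta_n)$. The key task is to upgrade this expansion from pointwise to uniform in $\beta\in B$. Using Assumption A2.a (twice continuous differentiability of $\ell$) and A2.c (envelope $M(\beta)$, local Lipschitz continuity of $D_x h$ and $D_\beta h$, and the growth bound $\|D_xh(x,\beta)\|_q \leq M'(1+\|x\|_q)$), one shows that the dual transport multiplier varies continuously in $\beta$ and that $\sup_{\beta\in B}|R_n(\beta)|= o_p(\delta_n)$ via a bracketing/stochastic-equicontinuity estimate.

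Next I would apply a functional CLT on $B$. Continuity of $\ell(x;\cdot)$ together with a square-integrable envelope (derived from compactness of $B$ and the growth conditions in A2.c) implies that the class $\{\ell(\cdot;\beta):\beta\in B\}$ is $P_\ast$-Donsker, so that
\[
n^{1/2}\bigl[E_{P_n}\{\ell(X;\beta)\} - E\{\ell(X;\beta)\}\bigr] \;\Rightarrow\; Z(\beta)
\]
in $C(B)$ with the stated covariance. A uniform law of large numbers yields $\sup_{\beta\in B}|S_n(\beta) - S(\beta)|\to 0$ in probability. Since $n^{1/2}\delta_n^{1/2}\to\eta^{1/2}$ and $n^{1/2}\delta_n \to 0$, multiplying the expansion by $n^{1/2}$ and applying Slutsky's theorem on $C(B)$ gives the first conclusion \eqref{rhs-nonunique}. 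For the second conclusion, standard M-estimator consistency arguments show that any measurable selector $\beta_n^{DRO}(\delta_n)$ concentrates near $B_\ast$: off $B_\ast$ the centered objective $n^{1/2}[\Psi_n(\beta) - E\{\ell(X;\beta_\ast)\}]$ diverges to $+\infty$, whereas on $B_\ast$ it admits the tight limit $Z(\beta)+\eta^{1/2}S(\beta)$ because $E\{\ell(X;\beta)\}$ is constant on $B_\ast$. The argmin continuous mapping theorem, e.g.\ \citet[Corollary 5.58]{van1996weak}, together with the a.s.\ uniqueness hypothesis on $\arg\min_{\beta\in B_\ast}\{Z(\beta)+\eta^{1/2}S(\beta)\}$, delivers the advertised weak limit.

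The main obstacle is the uniform-in-$\beta$ expansion of $\Psi_n(\beta)$: the optimal transport dual problem depends on $\beta$ through $D_x\ell(\cdot;\beta)$ and $D_{xx}\ell(\cdot;\beta)$, and bounding $R_n(\beta)$ uniformly to order $o_p(\delta_n)$ requires combining the envelope $M(\beta)$ with a maximal inequality for the Donsker class generated by the gradient-squared-norm functionals. Once this uniform expansion is in hand, the rest of the proof reduces to essentially classical empirical-process and argmin-theorem arguments.
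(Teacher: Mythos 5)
Your proposal is correct and follows essentially the same route as the paper: both rely on the uniform second-order expansion $\Psi_n(\beta) = E_{P_n}\{\ell(X;\beta)\} + \delta_n^{1/2}S_n(\beta) + O_p(\delta_n)$ (the paper's Proposition A1), a Donsker-class functional CLT for $\{\ell(\cdot;\beta):\beta\in B\}$, a uniform law of large numbers for $S_n$, and the argmin continuous mapping theorem combined with the almost-sure uniqueness hypothesis. The only cosmetic difference is the reference used for the argmin theorem; the substance of the argument is the same.
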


\subsection{Construction of Wasserstein distributionally robust confidence regions}
  \label{Sec_Confidence_Regions}
As mentioned in the Introduction, for suitably chosen $\delta_n,$ the
set $\Lambda^+_{\delta_{n}}(P_{n})$ represents a natural confidence
region. In particular, $\Lambda^+_{\delta_{n}}(P_{n})$ possesses an
asymptotically desired coverage, say at level at least $1-\alpha$, if
and only if
\begin{equation*}
1-\alpha\leq\lim_{n\rightarrow\infty}\mathrm{pr}\left\{ \beta_{\ast}\in
\Lambda^+_{\delta_{n}}(P_{n})\right\} =\mathrm{pr}[-C^{-1}H\in\{u:\varphi^{%
\ast }(Cu)\leq\eta\}],
\end{equation*}
or, equivalently, if $\eta\geq\eta_{\alpha}$, where $\eta_{\alpha}$ is
the $%
(1-\alpha)$-quantile of the random variable $\varphi^{\ast}(H)$.

Recall the earlier geometric insight describing
$\Lambda^+_{\delta_{n}}(P_{n})$ as a set containing both
$\beta_{n}^{DRO}(\delta_n)$ and $\beta_{n}^{ERM},$ as a consequence of
Theorem \ref{minmax_prop}.  Following this, if we let
$\eta\geq\eta_{\alpha},$ we then have,
\begin{align*}
  \lim_{n\rightarrow\infty}\mathrm{pr}\big\{\beta_{\ast}\in\Lambda^+_{\delta_{n}}(P_{n}),\
  \beta_{n}^{DRO}\in\Lambda^+_{\delta_{n}}(P_{n}),\
  \beta_{n}^{ERM}\in\Lambda^+_{\delta_{n}}(P_{n})\big\}
  &= \lim_{n\rightarrow\infty}\mathrm{pr} \big\{ \beta_{\ast}\in\Lambda^+_{\delta_{n}}(P_{n})\big\}\\
  &\geq  1-\alpha,
\end{align*}
which presents the picture of $\Lambda^+_{\delta_n}(P_n)$ as a
confidence region simultaneously containing $\beta_\ast,\beta_n^{ERM}$
and $\beta_{n}^{DRO}(\delta_n),$ with a desired level of confidence.

The function $\varphi^{\ast}(H)$ can be computed in closed form in
some settings. But, typically, computing $\varphi^{\ast}(\cdot)$ may
be challenging. We now describe how to obtain a consistent estimator
for $\eta_{\alpha}$. Define the empirical version of $\varphi (\xi )$,
namely%
\begin{equation*}
  \varphi _{n}(\xi ) = \frac{1}{4}E_{P_{n}}\left[ \left\Vert \left\{
        D_{x}h(X,\beta _{\ast })\right\} ^{ \mathrm{\scriptscriptstyle T} }\xi
    \right\Vert _{p}^{2}\right] = \frac{1}{4n}\sum_{i=1}^{n}\left\Vert \left\{
      D_{x}h(X,\beta _{\ast })\right\} ^{ \mathrm{\scriptscriptstyle T} }\xi
  \right\Vert_{p}^{2},
\end{equation*}%
and the associated empirical convex conjugate,
$\varphi _{n}^{\ast }(\zeta )=\sup_{\xi \in \mathbb{R}^{d}}\left\{ \xi
  ^{{ \mathrm{\scriptscriptstyle T} }}\zeta -\varphi _{n}(\xi
  )\right\}.$
Proposition \ref{Prop_Reg_Plug} below, whose proof is in Section
\ref{ssec:proof_prop_reg} of the supplementary material, provides a
basis for computing a consistent estimator for $\eta _{\alpha }$.

\begin{proposition}
  \label{Prop_Reg_Plug}Let ${\Xi}_{n}$ be any consistent estimator of
  $%
  \text{cov}\{ h\left( X,\beta\right) \} $ and write $\bar{\Xi}_{n}$
  for any factorization of $\Xi_{n}$ such that
  $\bar{\Xi}_{n}\bar{\Xi}_{n}^{\T}=\Xi_{n}$%
  . Let $Z$ be a $d$-dimensional standard Gaussian random vector
  independent of the sequence $\left( X_{n}:n\geq1\right) $. Then, i)
  the distribution of $%
  \varphi^{\ast}(Z)$ is continuous, ii)
  $\varphi_{n}^{\ast }(\cdot)\Rightarrow\varphi^{\ast}(\cdot)$ as
  $n\rightarrow\infty$ uniformly on compact sets, and iii)
  $\varphi_{n}^{\ast}(\bar{\Xi}_{n}Z)\Rightarrow\varphi ^{\ast}(H)$.
\end{proposition}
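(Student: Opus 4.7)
The plan is to handle the three claims in order, relying on standard convex-analytic tools and a law-of-large-numbers argument. As setup, Assumption A2.b gives $E\{D_xh(X,\beta_*)D_xh(X,\beta_*)^\T\}\succ 0$, and the equivalence of $\ell_p$ and $\ell_2$ norms on $\mathbb{R}^m$ yields a quadratic lower bound $\varphi(\xi)\geq c_0\|\xi\|_2^2$ for some $c_0>0$. The growth bound $\|D_xh(x,\beta_*)\|_q\leq M'(1+\|x\|_q)$ from A2.c together with $E\|X\|_2^2<\infty$ produces a matching quadratic upper bound. In particular $\varphi$ is finite, convex, and homogeneous of degree $2$, so $\varphi^*$ is finite, convex, continuous, and bounded above by a constant multiple of $\|\zeta\|_2^2$.

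For (i), exploiting the degree-$2$ homogeneity one computes $\varphi^*(\zeta) = \sup_{\eta}(\zeta^\T\eta)^2/\{4\varphi(\eta)\}$, so $\varphi^*(\zeta)>0$ for every $\zeta\neq 0$ while $\varphi^*(0)=0$. For $c>0$, the sublevel set $L_c=\{\zeta:\varphi^*(\zeta)\leq c\}$ is convex and contains $0$ in its interior; since the boundary of a convex set in $\mathbb{R}^d$ has Lebesgue measure zero and $Z$ is absolutely continuous, $\mathrm{pr}\{\varphi^*(Z)=c\}=\mathrm{pr}\{Z\in\partial L_c\}=0$. The cases $c\leq 0$ are trivial.

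For (ii), the strong law applied at each fixed $\xi$ yields $\varphi_n(\xi)\to\varphi(\xi)$ almost surely, and a standard countable-dense-set argument combined with the convexity of $\varphi_n$ upgrades this to almost-sure pointwise convergence on $\mathbb{R}^d$; Rockafellar's theorem on convergence of convex functions (Theorem 10.8 of \emph{Convex Analysis}) then delivers uniform convergence on compact sets. To transfer the conclusion to conjugates, the empirical positive-definiteness $n^{-1}\sum_i A_iA_i^\T \to E\{AA^\T\}\succ 0$ provides a uniform-in-$n$ coercivity $\varphi_n(\xi)\geq c'\|\xi\|_2^2$ almost surely for large $n$, so for any fixed $\zeta$ the maximizers in the definition of $\varphi_n^*(\zeta)$ lie in a common compact ball $\|\xi\|\leq R$ for all large $n$. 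Restricting the supremum to that ball and using uniform convergence of $\varphi_n$ there yields pointwise convergence $\varphi_n^*\to\varphi^*$, and another application of the convex-function convergence theorem promotes this to uniform convergence on compact sets.

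For (iii), I would argue via Slutsky. A characteristic-function calculation shows $\bar{\Xi}_nZ\Rightarrow H$: conditionally on the data $\bar{\Xi}_n Z$ is centered Gaussian with covariance $\Xi_n$, so its unconditional characteristic function is $E\{\exp(-2^{-1}t^\T\Xi_nt)\}$, and consistency of $\Xi_n$ together with bounded convergence produces $\exp(-2^{-1}t^\T\mathrm{cov}\{h(X,\beta_*)\}t)$. Continuity of $\varphi^*$ and the continuous mapping theorem then give $\varphi^*(\bar{\Xi}_nZ)\Rightarrow\varphi^*(H)$. By tightness of $\bar{\Xi}_nZ$ and the uniform-on-compacts convergence from (ii), $|\varphi_n^*(\bar{\Xi}_nZ)-\varphi^*(\bar{\Xi}_nZ)|\to 0$ in probability, and Slutsky closes the argument. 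The main obstacle I foresee is the conjugate-transfer step within (ii): the pointwise LLN does not directly control the conjugates, and one needs the uniform-in-$n$ coercivity coming from the empirical version of $E\{D_xhD_xh^\T\}\succ 0$ to localize the relevant maximizers in a common compact set before uniform convergence of $\varphi_n$ can be exploited.
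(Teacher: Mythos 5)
Your proposal is correct and follows essentially the same route as the paper: coercivity of $\varphi$ from $E\{D_xh(X,\beta_*)D_xh(X,\beta_*)^\T\}\succ 0$, uniform convergence of $\varphi_n$ on compacts (the paper via a Lipschitz-envelope uniform LLN, you via convexity plus Rockafellar's theorem -- a cosmetic difference), localization of the maximizers to pass to the conjugates, and the Slutsky decomposition $\varphi_n^*(\bar{\Xi}_nZ)=\{\varphi_n^*(\bar{\Xi}_nZ)-\varphi^*(\bar{\Xi}_nZ)\}+\varphi^*(\bar{\Xi}_nZ)$ for (iii). If anything, your treatment of (i) (homogeneity, strict positivity off the origin, and the measure-zero boundary of the convex sublevel sets) and your explicit coercivity-based localization in (ii) supply details the paper leaves implicit.
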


Given the collection of samples $\left\{ X_{i}\right\} _{i=1}^{n}$, we
can generate independent and identically distributed copies of $Z$ and
use Monte Carlo to estimate the quantile
$\left( 1-\alpha\right) $-quantile, $%
\eta_{\alpha}\left( n\right) $, of
$\varphi_{n}^{\ast}(\bar{\Xi}_{n}Z)$. The previous proposition implies
that
$\eta_{\alpha}\left( n\right) =\eta_{\alpha}+o_{p}\left( 1\right),$ as
$n\rightarrow\infty$ This is sufficient to obtain an implementable
expression for $\beta_n^{DRO}\{\eta_\alpha(n)/n\}$ which is
asymptotically equivalent to (\ref{Rel_DRO_ERM}), as it differs only
by an error of maginutude $o_{p}(n^{-1/2}).$

Next, we provide rigorous support for the approximation
\begin{equation*}
\Lambda^+_{\delta_{n}}(P_{n})\approx\beta_{n}^{ERM}+n^{-1/2}\Lambda_{\eta},
\end{equation*}
which can be used to approximate $\Lambda^+_{\delta_{n}}(P_{n})$, providing we
can estimate $\Lambda_{\eta}$.

\begin{corollary}
\label{cor:CIcenter-plugin}Under the assumptions of Theorem \ref%
{thm:levelsets-master} and $\gamma = 1$, we have
\begin{equation*}
n^{1/2}\left\{ \Lambda^+_{\delta_{n}}(P_{n})-\beta_{n}^{ERM}\right \}
\Rightarrow\Lambda_{\eta}.
\end{equation*}
Moreover, if $\eta\left( n\right) =\eta+o\left( 1\right) $, and $%
C_{n}\rightarrow C$, then
\begin{equation*}
\Lambda_{\eta\left( n\right) }^{n}=\{u:\varphi_{n}^{\ast}(C_{n}u)\leq
\eta\left( n\right) \}\rightarrow\Lambda_{\eta}.
\end{equation*}
\end{corollary}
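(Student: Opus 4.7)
The plan is to reduce the first claim to the joint weak convergence provided by Theorem \ref{thm:levelsets-master} via continuous mapping, and to derive the second (deterministic) claim from the uniform convergence of $\varphi_n^\ast$ in Proposition \ref{Prop_Reg_Plug} together with the convexity of $\Lambda_\eta$.

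For the first assertion, setting $\gamma=1$ in Theorem \ref{thm:levelsets-master} yields the joint weak convergence
\[
\bigl( n^{1/2}\{\beta_n^{ERM} - \beta_\ast\},\ n^{1/2}\{\Lambda_{\delta_n}^+(P_n) - \beta_\ast\}\bigr) \ \Rightarrow\ \bigl(C^{-1}H,\ \Lambda_\eta + C^{-1}H\bigr)
\]
on $\mathbb{R}^d$ paired with the space of closed subsets of $\mathbb{R}^d$ under the Fell topology. I would verify that the translation map $\Phi:(v,A)\mapsto A-v = \{a-v : a\in A\}$ is continuous on this product (which follows directly from the definitions of $\text{Li}$ and $\text{Ls}$, since translating every element by a convergent vector preserves inner and outer limits) and then apply the continuous mapping theorem to
\[
n^{1/2}\{\Lambda_{\delta_n}^+(P_n) - \beta_n^{ERM}\} = \Phi\bigl(n^{1/2}\{\beta_n^{ERM} - \beta_\ast\},\ n^{1/2}\{\Lambda_{\delta_n}^+(P_n) - \beta_\ast\}\bigr)
\]
to obtain the weak limit $(\Lambda_\eta + C^{-1}H) - C^{-1}H = \Lambda_\eta$.

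For the outer-limit inclusion $\text{Ls}_n\,\Lambda_{\eta(n)}^n \subseteq \Lambda_\eta$ in the second assertion, I would take any $u_{n_k}\in\Lambda_{\eta(n_k)}^{n_k}$ with $u_{n_k}\to u$. Since $C_n\to C$, the vectors $C_{n_k}u_{n_k}$ lie in a bounded set, and Proposition \ref{Prop_Reg_Plug}(ii) delivers uniform convergence of $\varphi_n^\ast$ to the continuous function $\varphi^\ast$ on compact sets (continuity of $\varphi^\ast$ follows from it being finite-valued and convex on $\mathbb{R}^d$, which in turn uses the coercivity of $\varphi$ implied by the positive-definiteness hypothesis in Assumption A2.b). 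Hence $\varphi_{n_k}^\ast(C_{n_k}u_{n_k})\to\varphi^\ast(Cu)$, and combining with $\varphi_{n_k}^\ast(C_{n_k}u_{n_k})\le\eta(n_k)\to\eta$ gives $\varphi^\ast(Cu)\le\eta$, i.e.\ $u\in\Lambda_\eta$.

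The main obstacle is the inner-limit inclusion $\Lambda_\eta \subseteq \text{Li}_n\,\Lambda_{\eta(n)}^n$, because boundary points of the sub-level set $\Lambda_\eta$ need not directly be accessible as limits of points satisfying the strict inequality in $\Lambda_{\eta(n)}^n$. I would exploit convexity here: since $\varphi\ge 0$ and $\varphi(0)=0$, one has $\varphi^\ast(0)=0<\eta$, so the origin lies in the interior of the convex set $\Lambda_\eta$. Consequently, for any $u\in\Lambda_\eta$ and $t\in(0,1)$, the point $tu$ lies in the interior of $\Lambda_\eta$, meaning $\varphi^\ast(Ctu)<\eta$. Proposition \ref{Prop_Reg_Plug}(ii) then gives $\varphi_n^\ast(C_n tu)\to\varphi^\ast(Ctu)<\eta$, so $tu\in\Lambda_{\eta(n)}^n$ for all sufficiently large $n$. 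A diagonal extraction produces a sequence $t_n\uparrow 1$ with $u_n:=t_n u \in \Lambda_{\eta(n)}^n$ eventually and $u_n\to u$, completing the Painlev\'e--Kuratowski identification.
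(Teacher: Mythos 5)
Your proposal is correct and follows essentially the same route as the paper: the first claim is exactly the paper's one-line application of the continuous mapping theorem to the joint limit of Theorem \ref{thm:levelsets-master} (you merely add the routine check that translation is continuous for Painlev\'e--Kuratowski convergence), and the second claim is what the paper dismisses with ``follows from the regularity results in Proposition \ref{Prop_Reg_Plug}.'' Your explicit outer-limit argument via uniform convergence and your inner-limit argument via convexity of $\varphi^\ast$ and the strict inequality $\varphi^\ast(Ctu)\leq t\,\varphi^\ast(Cu)<\eta$ for $t<1$ correctly supply the details the paper leaves implicit.
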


\begin{proof}[Proof  of Corollary \ref{cor:CIcenter-plugin}]
By Following directly from Theorem \ref{thm:levelsets-master} and an application
of continuous mapping theorem as in,
\begin{equation*}
n^{1/2}\left\{ \Lambda^+_{\delta_{n}}(P_{n})-\beta_{n}^{ERM}\right\}
=n^{1/2}\left\{ \Lambda^+_{\delta_{n}}(P_{n})-\beta_{\ast}\right\} -
n^{1/2}\left\{ \beta_{n}^{ERM}-\beta_{\ast}\right\} \Rightarrow
\Lambda_{\eta}+C^{-1}H-C^{-1}H.
\end{equation*}
The second part of the result follows from the regularity results in
Proposition \ref{Prop_Reg_Plug}.
\end{proof}

The next result,  as we shall explain, allows us to obtain
computationally efficient approximations of the set $\Lambda_{\eta}.$
A completely analogous result can be used to estimate
$\Lambda_{\eta\left( n\right) }^{n}$, simply replacing
$\varphi^{\ast}(\cdot)$, $\varphi(\cdot)$ and $C$ by $%
\varphi_{n}^{\ast}(\cdot)$, $\varphi_{n}(\cdot)$ and $C_{n}.$

\begin{proposition}
The support function of the convex set $\Lambda _{\eta }=\{u:\varphi ^{\ast
}(Cu)\leq \eta \}$ is,
\begin{equation*}
h_{\Lambda _{\eta }}(v)=2\{\eta \varphi (C^{-1}v)\}^{1/2},
\end{equation*}%
where the support function of a convex set $A$ is defined as $h_{A}(x)=\sup
\{x\cdot a:a\in A\}.$ \label{prop:level-sets-char}
\end{proposition}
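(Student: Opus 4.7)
The strategy is to recognize $\varphi$ as one quarter of the square of a genuine norm on $\mathbb{R}^{d}$, so that $\varphi^{\ast}$ is the square of the corresponding dual norm and, after the change of variable $w=Cu$, the sublevel set $\Lambda_{\eta}$ becomes a dilation of the closed unit ball of that dual norm. The claimed formula $h_{\Lambda_{\eta}}(v)=2\{\eta\varphi(C^{-1}v)\}^{1/2}$ then falls out of the bipolar theorem.

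First I would set $f(\xi)=(E\{\|D_{x}h(X,\beta_{\ast})^{\T}\xi\|_{p}^{2}\})^{1/2}$, so that $\varphi(\xi)=f(\xi)^{2}/4$. The triangle inequality for $f$ follows from two applications of Minkowski's inequality---first in $\ell_{p}(\mathbb{R}^{m})$ pointwise in the sample, then in $L^{2}(P_{\ast})$---and absolute homogeneity is immediate. For definiteness, $f(\xi)=0$ forces $D_{x}h(X,\beta_{\ast})^{\T}\xi=0$ almost surely (using equivalence of $\|\cdot\|_{p}$ and $\|\cdot\|_{2}$ on the finite-dimensional space $\mathbb{R}^{m}$), hence $\xi^{\T}E\{D_{x}h\,D_{x}h^{\T}\}\xi=0$, which combined with the positive definiteness in Assumption A2.b gives $\xi=0$. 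Let $f^{\ast}$ denote the dual norm on $\mathbb{R}^{d}$.

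Next I would compute the convex conjugate by a polar decomposition of the argument. Writing $\xi=tu$ with $t\geq 0$, $f(u)\leq 1$ and using $\sup_{f(u)\leq 1}u^{\T}\zeta = f^{\ast}(\zeta)$,
\[
\varphi^{\ast}(\zeta)=\sup_{t\geq 0}\{tf^{\ast}(\zeta)-t^{2}/4\}=f^{\ast}(\zeta)^{2},
\]
attained at $t^{\ast}=2f^{\ast}(\zeta)$. Consequently the sublevel set satisfies $\{w:\varphi^{\ast}(w)\leq\eta\}=\{w:f^{\ast}(w)\leq\eta^{1/2}\}$, i.e., the closed $f^{\ast}$-ball of radius $\eta^{1/2}$.

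Since $C\succ 0$ is invertible and symmetric, substituting $w=Cu$ in the definition of the support function reduces the problem to
\[
h_{\Lambda_{\eta}}(v)=\sup_{w:f^{\ast}(w)\leq\eta^{1/2}}(C^{-1}v)^{\T}w=\eta^{1/2}f^{\ast\ast}(C^{-1}v)=\eta^{1/2}f(C^{-1}v),
\]
where the identity $f^{\ast\ast}=f$ is the bipolar theorem applied to a norm on the finite-dimensional space $\mathbb{R}^{d}$. Substituting $f(C^{-1}v)=2\{\varphi(C^{-1}v)\}^{1/2}$ yields the asserted expression. The only non-mechanical step is verifying that $f$ is a genuine norm rather than merely a seminorm, which I expect to be the main (mild) obstacle; Assumption A2.b is exactly what supplies this, after which the rest is a textbook Legendre transform together with finite-dimensional bipolar duality.
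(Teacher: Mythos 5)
Your proof is correct, but it takes a genuinely different route from the paper's. The paper's argument is a two-line application of a general fact (Rockafellar--Wets, Exercise 11.6): for a convex $f$ with $\inf f<0$, the support function of $\{u:f(u)\leq 0\}$ is $h(v)=\inf_{\lambda>0}\lambda f^{\ast}(\lambda^{-1}v)$. Applying this to $f=\varphi^{\ast}(C\,\cdot\,)-\eta$, whose conjugate is $\varphi(C^{-1}\,\cdot\,)+\eta$, and evaluating $\inf_{\lambda>0}\{\lambda^{-1}\varphi(C^{-1}v)+\lambda\eta\}$ by AM--GM (using the degree-$2$ homogeneity of $\varphi$) gives the result directly. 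You instead make the norm structure explicit: you verify that $f(\xi)=2\varphi(\xi)^{1/2}$ is a genuine norm (definiteness via $E\{D_xh\,D_xh^{\T}\}\succ 0$ from A2.b, the triangle inequality via Minkowski in $\ell_p$ and then in $L^2$), compute $\varphi^{\ast}=(f^{\ast})^2$ in closed form by the polar decomposition $\xi=tu$, identify $\Lambda_\eta$ as the image under $C^{-1}$ of the dual-norm ball of radius $\eta^{1/2}$, and finish with $f^{\ast\ast}=f$. All of these steps check out (the substitution $w=Cu$ uses the symmetry of $C$, which holds). What the paper's route buys is brevity and generality—it never needs $\varphi^{1/2}$ to be a norm, only the conjugacy pair and homogeneity. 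What your route buys is more information: the closed form $\varphi^{\ast}=(f^{\ast})^2$ and the identification of $\Lambda_\eta$ as a linearly transformed norm ball, which is arguably more illuminating for the geometric discussion of confidence regions in Section 4, at the modest cost of the definiteness verification that the paper's proof sidesteps entirely.
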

The proof of Proposition \ref{prop:level-sets-char} is in Section \ref{ssec:proof_prop_reg} of the supplementary material.
\begin{remark}
\textnormal{\ Proposition \ref{prop:level-sets-char} can be used to obtain a
tight envelope of the set $\Lambda _{\eta }$ by evaluating an intersection
of hyperplanes that enclose $\Lambda _{\eta }$. Recall from the definition
of support function that
\begin{equation*}
\Lambda _{\eta }=\cap _{u}\{v:u\cdot v\leq h_{\Lambda _{\eta }}\left(
u\right) \}.
\end{equation*}%
Therefore for any $u_{1},...,u_{m},$ we have
$\Lambda _{\eta }$ is contained in
$\cap _{u_{1},...u_{m}}\{v:u_{i}\cdot v\leq h_{\Lambda _{\eta }}\left(
  u_{i}\right) \},$
and
$\Lambda _{\eta \left( n\right) }^{n}$ is contained in
$\cap _{u_{1},...u_{m}}\{v:u_{i}\cdot v\leq h_{\Lambda _{\eta \left(
      n\right) }^{n}}\left( u_{i}\right) \}.$
}
\end{remark}

\section{Numerical examples: Geometry and coverage probabilities}

\label{sec:geo_insignts} 

\subsection{Distributionally robust linear regression}
We first offer a brief introduction to the distributionally robust
version of the linear regression problem considered in
\citet{blanchet2016robust}. Specifically, the data is generated by
$Y=\beta_{\ast}^{{ \mathrm{\scriptscriptstyle T} }}X+\epsilon,$ where
$X\in\mathbb{R}^{d}$ and $%
\epsilon$ are independent, $C=E(XX^{{\mathrm{\scriptscriptstyle T%
    } }}) $ and $\epsilon\sim\mathcal{N}(0,\sigma^{2})$. We consider
square loss
$\ell(x,y;\beta)= 1/2(y-\beta^{{\mathrm{\scriptscriptstyle T}
  }}x)^{2}$ and take the cost function
$c:\mathbb{R}^{d+1} \times \mathbb{R}%
^{d+1} \rightarrow [0,\infty]$ to be
\begin{equation}
c\{(x,y),(u,v)\}=\left\{
\begin{array}{c}
\left\Vert x-u\right\Vert _{q}^{2} \\
\infty%
\end{array}
\right.
\begin{array}{c}
\text{if }y=v, \\
\text{otherwise.}%
\end{array}%
\label{tr-cost-LinReg}
\end{equation}
Then, from \citet[Theorem 1]{blanchet2016robust}, we have
\begin{equation}
  \min_{\beta\in\mathbb{R}^{d}}\sup_{P:D_{c}(P,P_{n})\leq\delta_{n}}E_{P}\left[
    \ell(X,Y;\beta)\right] = \frac{1}{2}\min_{\beta\in\mathbb{R}^{d}}\big[ E_{P_{n}}
  \left\{ (Y-\beta^{{\mathrm{\scriptscriptstyle T} }}X)^{2}\right\}^{1/2}
  +\delta_{n}^{1/2}\left\Vert \beta\right\Vert _{p}\big]^{2},
\label{Sqrt_Lasso_Est}
\end{equation}
where $p$ satisfies $1/p+1/q=1.$ Following Corollary
\ref{cor:CIcenter-plugin}, an approximate confidence region is
\begin{equation*}
\Lambda^+_{\delta_{n}}(P_{n})\approx n^{-1/2}\Lambda_{\eta_{\alpha}}+\beta
_{n}^{ERM},
\end{equation*}
where
$\Lambda_{\eta_\alpha}=\{\theta:\varphi^{\ast}( C\theta) \leq
\eta_{\alpha}\},$
$\varphi(\xi)= 4^{-1}E \{\Vert e\xi-( \xi^{{\mathrm{%
      \scriptscriptstyle T} }}X) \beta_\ast\Vert_{p}^{2}\},$ the
constant $\eta_\alpha$ is such that
$\mathrm{pr} \{\varphi^\ast (H)\leq1-\alpha\}=\eta_{\alpha}$ for
$H\sim\mathcal{N}(0,C\sigma^{2}),$ and $\delta_{n}=\eta_{\alpha}/n.$
By performing a change of variables via linear transformation in the
analysis of the case $c(x,y) = \Vert x - y \Vert_2^2,$ Theorem \ref%
{thm:levelsets-master} can be directly adapted to the choice $c(x,y)$
being a Mahalanobis metric as in,
\begin{align}
c\left( x,y\right) =\left( x-y\right) ^{{\mathrm{\scriptscriptstyle T} }%
}A\left( x-y\right),  \label{Mahalanobis-metric}
\end{align}
for some matrix $A \succ 0.$ The respective
$\Lambda_{\eta_\alpha} = \{\theta:\varphi^{\ast}( C\theta) \leq
\eta_{\alpha}\}$ is computed in terms of
\begin{equation*}
  \varphi(\xi)=4^{-1}E\big\{ \Vert
  \xi^{{\mathrm{\scriptscriptstyle T} }}
  D_{x}h(X,\beta_{\ast })A^{-1/2}\Vert _{2}^{2}\big\}.
\end{equation*}
For the choice $c\left( x,y\right) =\left( x-y\right) ^{{\mathrm{%
\scriptscriptstyle T} }}A\left( x-y\right), $ the relationship between distributionally robust
and regularized estimators, as in (\ref{Sqrt_Lasso_Est}), is
\begin{equation*}
  \min_{\beta\in\mathbb{R}^{d}}\sup_{P:D_{c}(P,P_{n})\leq\delta_{n}}E_{P}\left\{
    l(X,Y;\beta)\right\} = \frac{1}{2}\min_{\beta\in\mathbb{R}^{d}}\left[
    E_{P_{n}}
    \left\{ (Y-\beta^{{\mathrm{\scriptscriptstyle T} }}X)^{2}\right\}^{1/2}
    +\delta_{n}^{-1/2}\big\Vert A^{-1/2}\beta\big\Vert _{2}\right] ^{2}.
\end{equation*}
See \citet{blanchet2017data} for an account of improved out-of-sample
performance resulting from Mahalanobis cost choices.



\subsection{Shape of confidence regions}

The goal of this section is to provide some numerical implementations
to gain intuition about the geometry of the set $\Lambda_{\eta}$ for
different transportation cost choices. We use the empirical set
\begin{equation*}
  \Lambda_{\eta_{\alpha}}^n=\{\theta:{\varphi}^{\ast}_n( {C}_n
  \theta) \leq n^{-1/2}\tilde{\eta}_{\alpha}\},
\end{equation*}
to approximate the desired confidence region as in Corollary
\ref{cor:CIcenter-plugin}. In the above expression,
${\varphi}_n(\xi)= 4^{-1}E_{P_{n}} \{\Vert e\xi-\left(
  \xi^{\mathrm{\scriptscriptstyle T} } X\right)
\beta_{n}^{ERM}\Vert_{p}^{2}\} $, $\eta_{\alpha}(n)$ is such that
$\mathrm{pr}(\tilde{\varphi}^{\ast}_n(H)\leq1-\alpha)=\eta_{\alpha}(n)$ for
$H\sim%
\mathcal{N}(0,{C}_n{\sigma}_n^{2}),$
${C}_n=E_{P_{n}}\left[ XX^{ \mathrm{\scriptscriptstyle T}
  }\right] $, and
${\sigma}^{2}_n =E_{P_{n}}[ \{(Y-( \beta_{n}^{ERM})^{\T } X\}^{2}] $.

In the following numerical experiments, the data is sampled from a linear
regression model with parameters $\sigma^{2}=1$, $\beta_{*}=[0.5,0.1]^{{
\mathrm{\scriptscriptstyle T} }},n=100$ and
\begin{equation}
X\sim\mathcal{N}\left( 0,\left[
\begin{array}{cc}
  1 & \rho \\
  \rho & 1%
\end{array}
\right] \right),
\label{eq:cov-matrix}
\end{equation}
with $\rho = 0.7.$ In Figures \ref{1-norm}-\ref{inf-norm}, we draw the
$95\%$ confidence region corresponding to the \sloppy{choices
  $p=1,3/2,2,3,\infty $,} ($%
q=\infty,3,2,3/2,$ respectively) by means of support functions defined
in Proposition %
\ref{prop:level-sets-char}. In addition, a confidence region for
$\beta _{\ast }$ resulting from the asymptotic normality of the
least-squares estimator,
${n}^{1/2}(\beta _{n}^{ERM}-\beta ^{\ast })\Rightarrow \mathcal{N}%
(0,C^{-1}\sigma ^{2}),$
is
\begin{equation*}
\Lambda _{CLT}(P_{n})=n^{-1/2}\{\theta :\theta ^{{\mathrm{\scriptscriptstyle %
T}}}C\theta /\sigma ^{2}\leq \chi _{1-\alpha }^{2}(d)\}+\beta _{n}^{ERM},
\end{equation*}%
where $\chi _{1-\alpha }^{2}(d)$ is the $1-\alpha $ quantile of the
chi-squared distribution with $d$ degrees of freedom. One can select the
matrix $A$ in the Mahalanobis metric \eqref{Mahalanobis-metric} such that
the resulting confidence region coincides with $\Lambda _{CLT}(P_{n})$.
Namely, $A$ is chosen by solving the equation%
\begin{equation}
E\left[ \left\{ e\xi -\left( \xi ^{{\mathrm{\scriptscriptstyle T}}}X\right)
\beta _{\ast }\right\} A^{-1}\left\{ e\xi -\left( \xi ^{{\mathrm{%
\scriptscriptstyle T} }}X\right) \beta _{\ast }\right\} ^{{\mathrm{%
\scriptscriptstyle T} }}\right] =C\sigma ^{2}.  \label{eqn_A2}
\end{equation}

Figure \ref{clt} gives the confidence region for the choice $p=2$ and
$%
\Lambda_{CLT}(P_n)$ superimposed with various distributionally robust
minimizer along with the empirical risk minimizer.
It is evident from the figures that $p=1$ gives a diamond shape, $p=2$
gives an elliptical shape and $p=\infty$ gives a rectangular shape.
Furthermore, we see that the distributionally robust optimization solutions all reside in their
respective confidence regions but may lie outside of the confidence
regions of other norms.
\begin{figure}[htbh]
\centering
\subfigure[$p=1$]{
\label{1-norm} \includegraphics[width=1.81in]{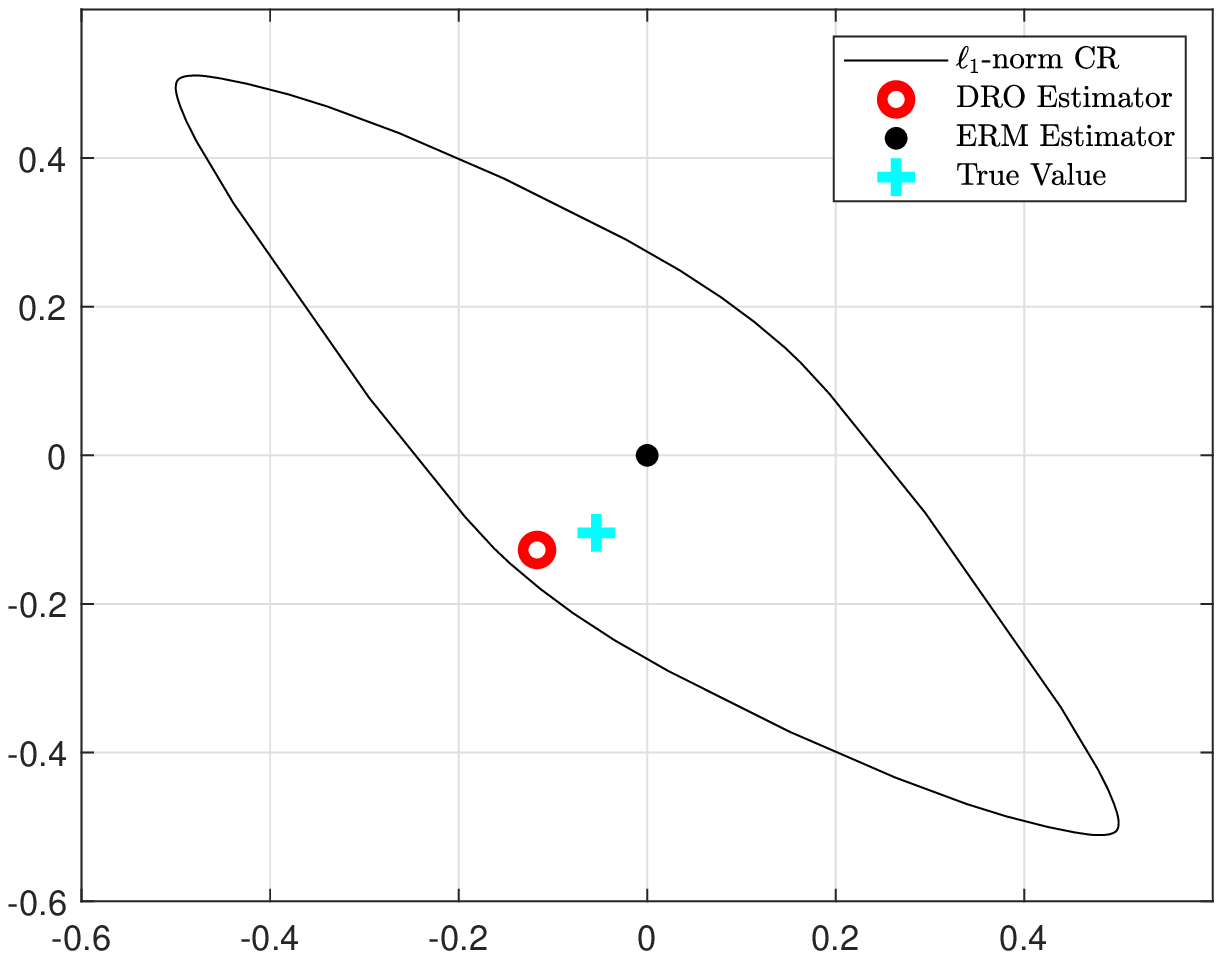}}
\subfigure[$p=1.5$]{
\label{2-norm} \includegraphics[width=1.81in]{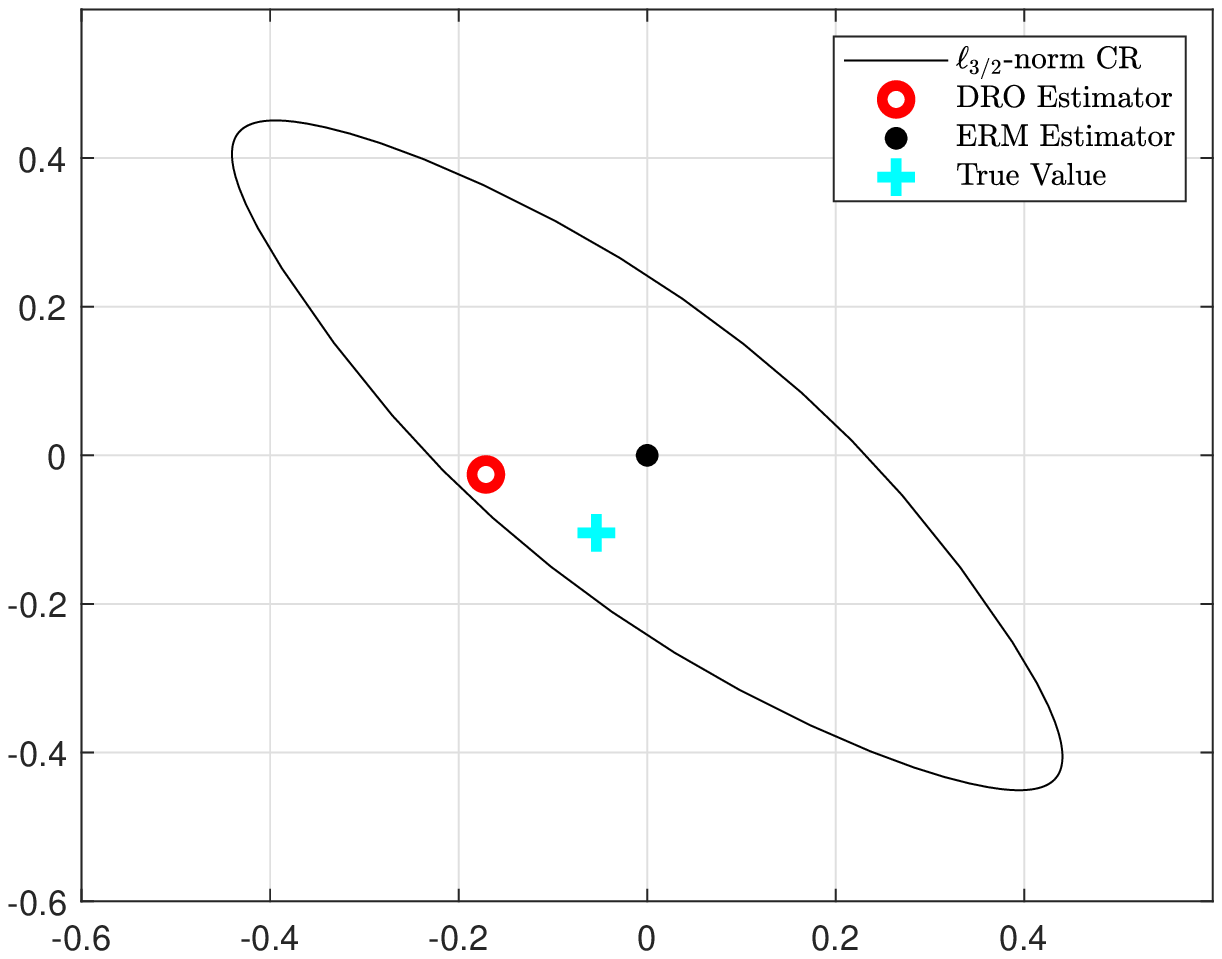}}
\subfigure[$p=2$]{
\label{2-norm} \includegraphics[width=1.81in]{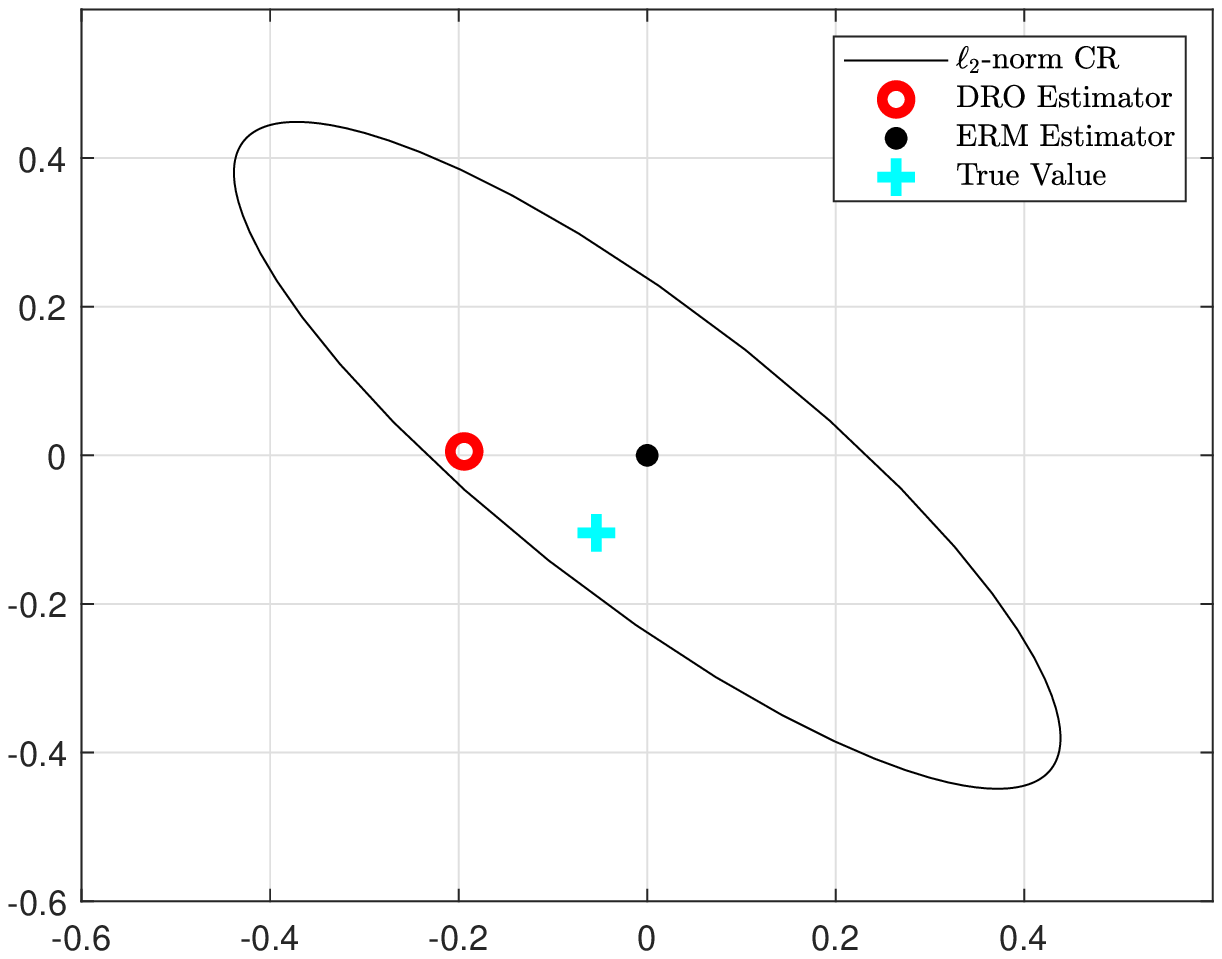}}
\subfigure[$p=3$]{
\label{2-norm} \includegraphics[width=1.81in]{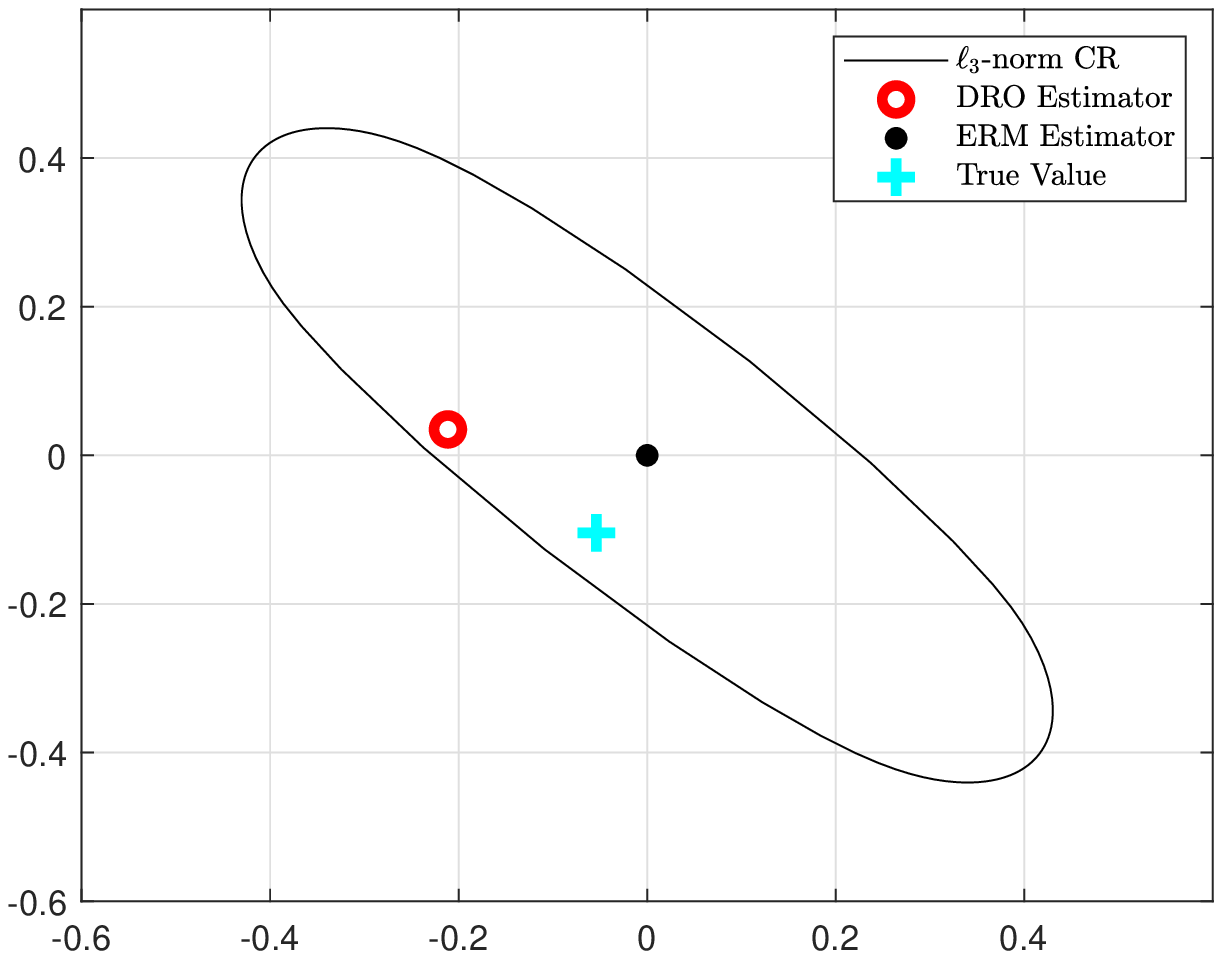}}
\subfigure[$p=\infty$]{
\label{inf-norm} \includegraphics[width=1.81in]{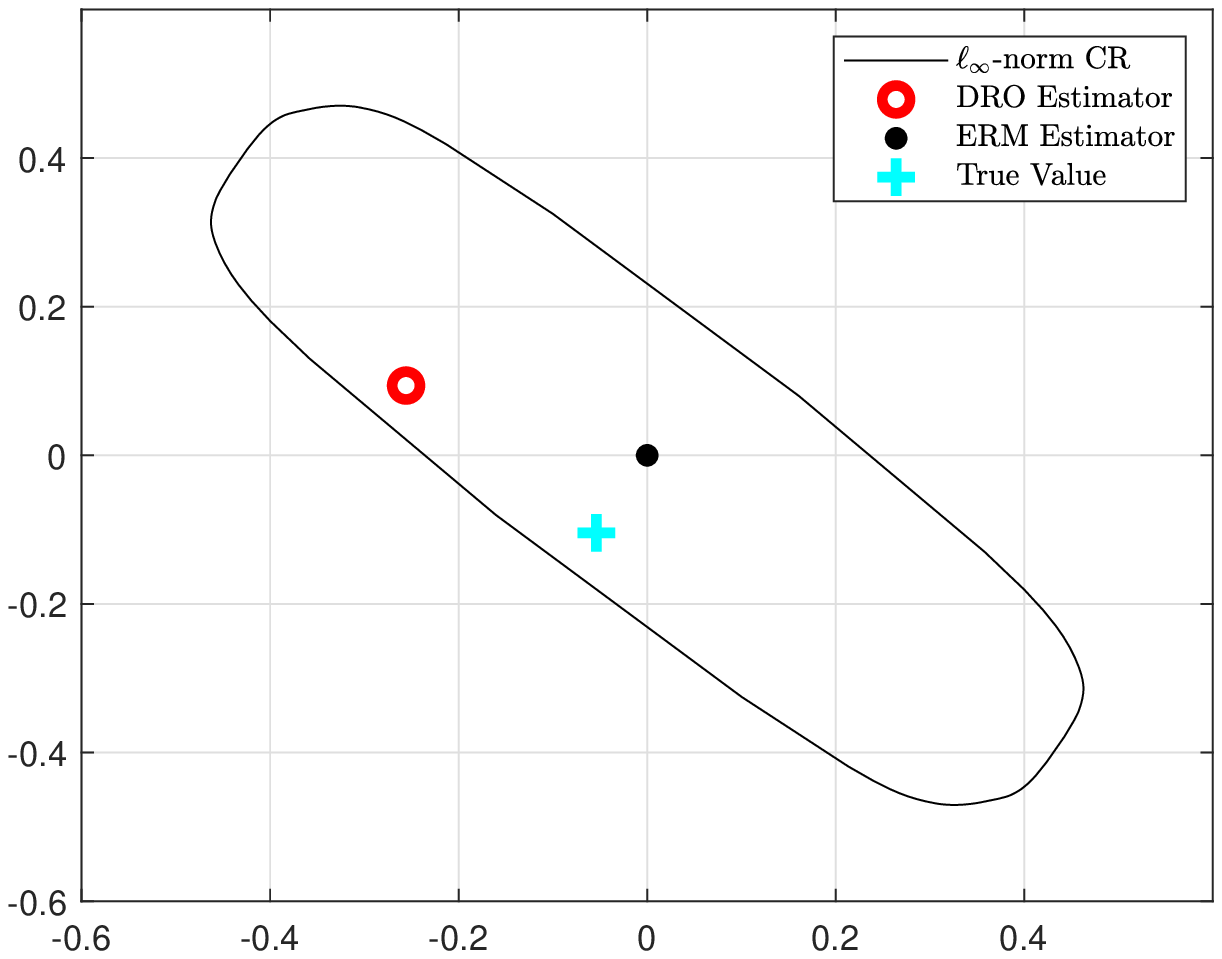}}
\subfigure[CLT]{
\label{clt} \includegraphics[width=1.81in]{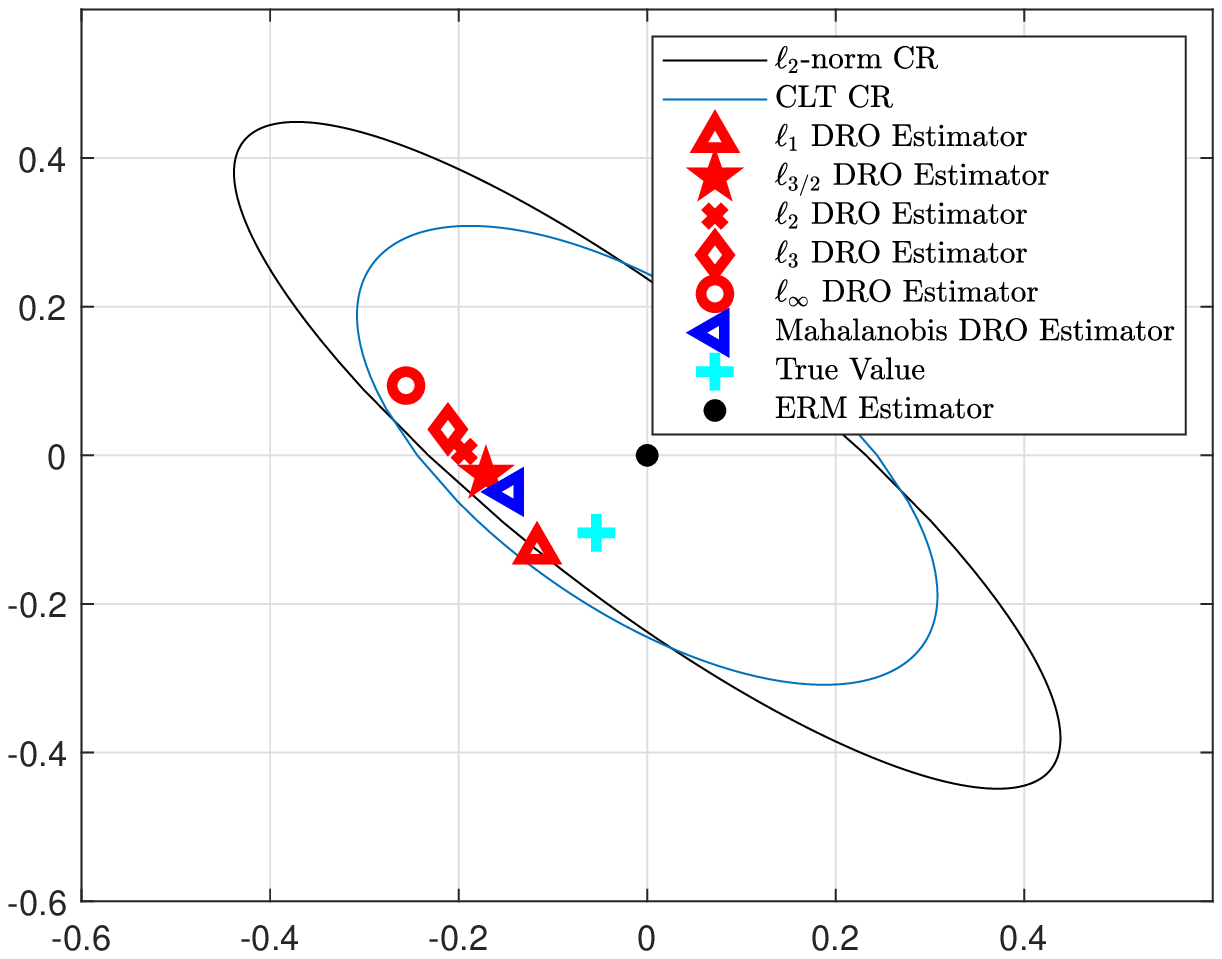}}
\caption{Confidence regions for different norm choices and central
  limit theorem based confidence region plotted together with the
  respective $\beta_n^{DRO}$ estimators and $\beta_n^{ERM}$}.
\end{figure}
We find the induced confidence regions constructed by the Wasserstein
distributionally robust optimization formulations are somewhat similar across the various $l_{p}$
norms, but they are all different to the standard central limit
theorem based confidence region. As noted, the Mahalanobis cost can be
calibrated to exactly match the standard central limit theorem
confidence region.
\subsection{Coverage probabilities and distributionally robust optimization solutions}
In this section, we test the scenario in which the covariates are
highly correlated. Specifically, the data is sampled from a linear
regression model with parameters $\sigma ^{2}=1$, $n=100$, $p=2$. The
random vector $X$ is taken to be distributed in (\ref{eq:cov-matrix}),
considering three different values for $\rho:$ we choose
$\rho = 0.95,0,-0.95.$ We consider the following two cases for the
underlying parameter $\beta_\ast$: $\beta_{*}=[0.5,0.5]^\T$ and
$\beta_{*}=[1,0]^\T.$ In Table \ref{tab:coverage} below, we report the
coverage probabilities of the underlying $\beta _{*}$ and
$\beta_n^{DRO}(\delta_n)$ in both the $\ell_2$-confidence region and
the central limit theorem based confidence regions.  Specifically, we
report the following four probabilities:
 \[
{\rm pr}\{\beta^{DRO}_n \in \Lambda^+_{\delta_{n}}(P_{n})\}, \quad {\rm pr}\{\beta_* \in \Lambda^+_{\delta_{n}}(P_{n})\}, \quad {\rm pr}\{\beta^{DRO}_n \in \Lambda_{CLT}(P_{n})\}, \quad {\rm pr}\{\beta_* \in \Lambda_{CLT}(P_{n})\}.
 \]
 We sample 1000 datasets and report the coverage probabilities in
 Table \ref{tab:coverage}. From Table \ref{tab:coverage}, we observe
 that for $\beta_*$, both the $\ell_2$ confidence region and the
 central limit theorem based confidence region achieve the target 95\%
 coverage. Furthermore, the coverage for the distributionally robust
 estimator of the $\ell_2$ confidence region is 100\%, which validates
 our theory. However, when $\rho = -0.95$ and
 $\beta_* = [0.5,0.5]^\T$, the coverage for the distributionally
 robust estimator in the central limit theorem based confidence region
 is only $75.8 \%$. In this example, the asymptotic results developed
 indicate that this coverage probability converges to zero, when $n$
 tends to infinity.

\begin{table}[htbp]
  \centering
  \caption{Coverage Probability}
    \begin{tabular}{cccccc}
      \toprule
      $\beta_0$    &    $\rho$   & \multicolumn{2}{c}{$\ell_2$-confidence region} & \multicolumn{2}{c}{CLT confidence region} \\
      \midrule
                   &       &  Coverage for $\beta_n^{DRO}$ & Coverage for $\beta_*$ & \multicolumn{1}{l}{Coverage for $\beta_n^{DRO}$} & Coverage for $\beta_*$ \\
      \midrule
      \multirow{3}[0]{*}{$\begin{bmatrix} 0.5 \\ 0.5 \end{bmatrix}$}
                   & 0.95  & 100.0\% & 94.5\% & 99.4\% & 94.6\% \\
                   & 0     & 100.0\% & 94.0\% & 97.1\% & 93.5\% \\
                   & -0.95 & 100.0\% & 94.8\% & 75.8\% & 94.4\% \\

          \midrule
    \multirow{3}[0]{*}{$\begin{bmatrix} 1.0 \\ 0.0 \end{bmatrix}$}
          & 0.95  & 100.0\% & 94.6\% & 93.7\% & 95.4\% \\
          & 0     & 100.0\% & 94.6\% & 100\% & 94.1\% \\
          & -0.95 & 100.0\% & 95.3\% & 91.2\% & 94.9\% \\

          \bottomrule
    \end{tabular}%
  \label{tab:coverage}%
\end{table}%
Figures \ref{fig:scatter_0.5} and \ref{fig:scatter_1} show the scatter
plots of the estimators, $\beta_n^{ERM}$ and $\beta_n^{DRO},$ when the
underlying $\beta_\ast$ takes the values $[0.5,0.5]^\T$ and
$[1,0]^\T$, respectively. In the near-collinearity
  cases where $\rho = 0.95$ or $-0.95$, the lower spreads for the
  distributionally robust estimators reveal their better performance
  over the empirical risk minimizing solutions. The utility of the
  proposed $\ell_2$-confidence region emerges in light of the better
  performance of the distributionally robust estimator $\beta_n^{DRO}$
  and its aforementioned lack of membership in $\Lambda_{CLT}(P_n).$
\begin{figure}[ptbh]
\centering
\subfigure[$\rho=0.95$]{
 \includegraphics[width=1.81in]{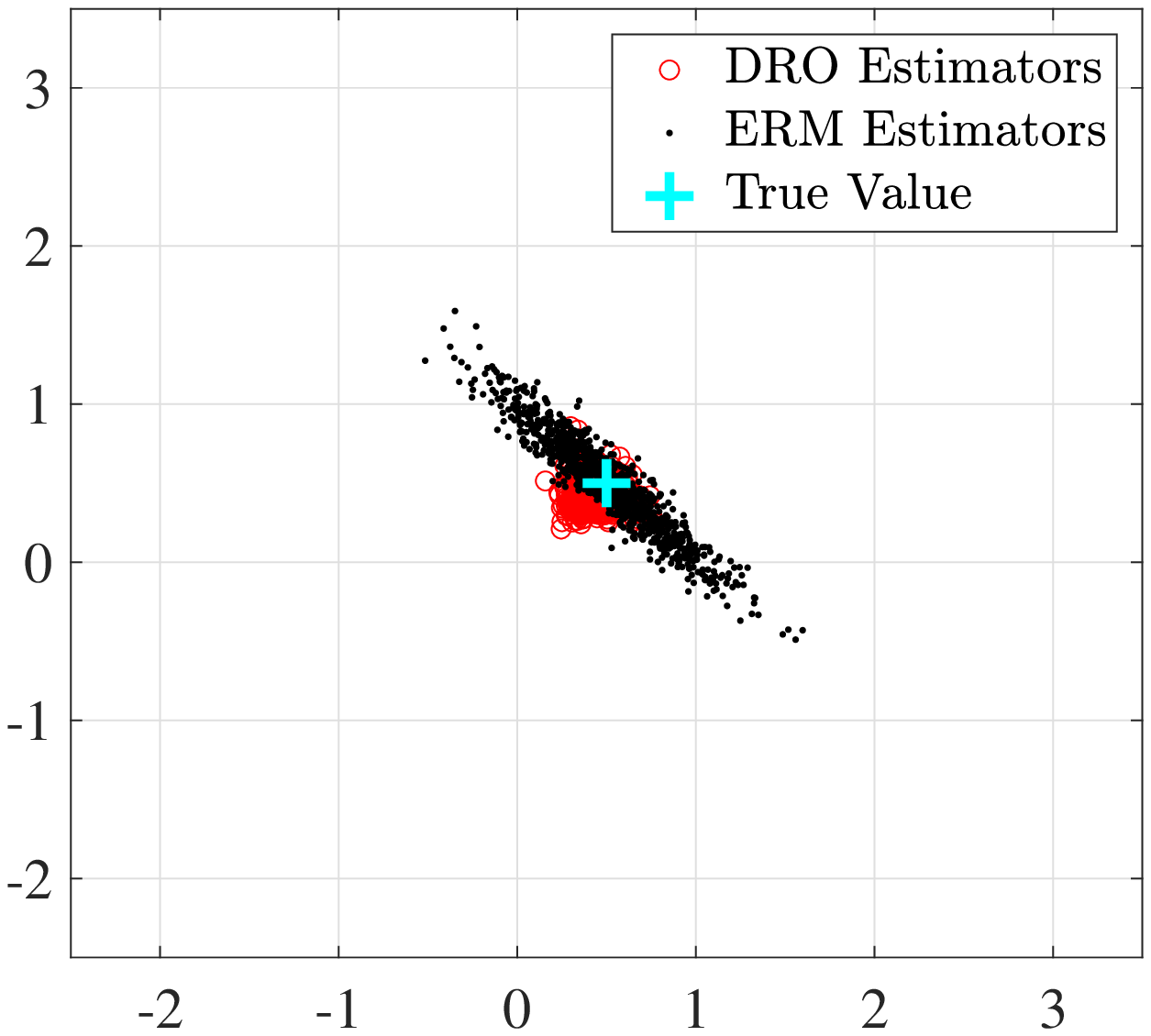}}
\subfigure[$\rho=0$]{
 \includegraphics[width=1.81in]{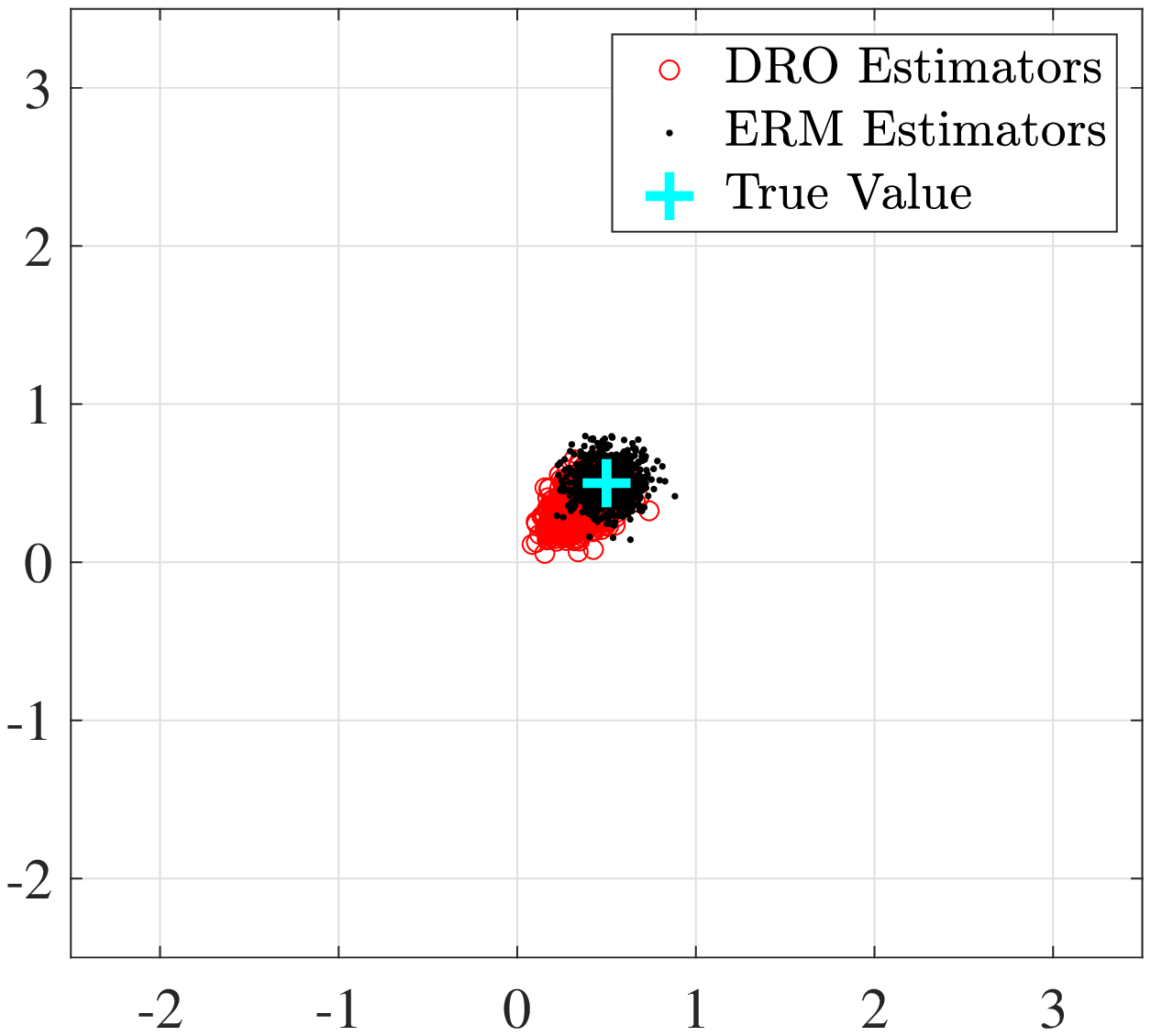}}
\subfigure[$\rho=-0.95$]{
\includegraphics[width=1.81in]{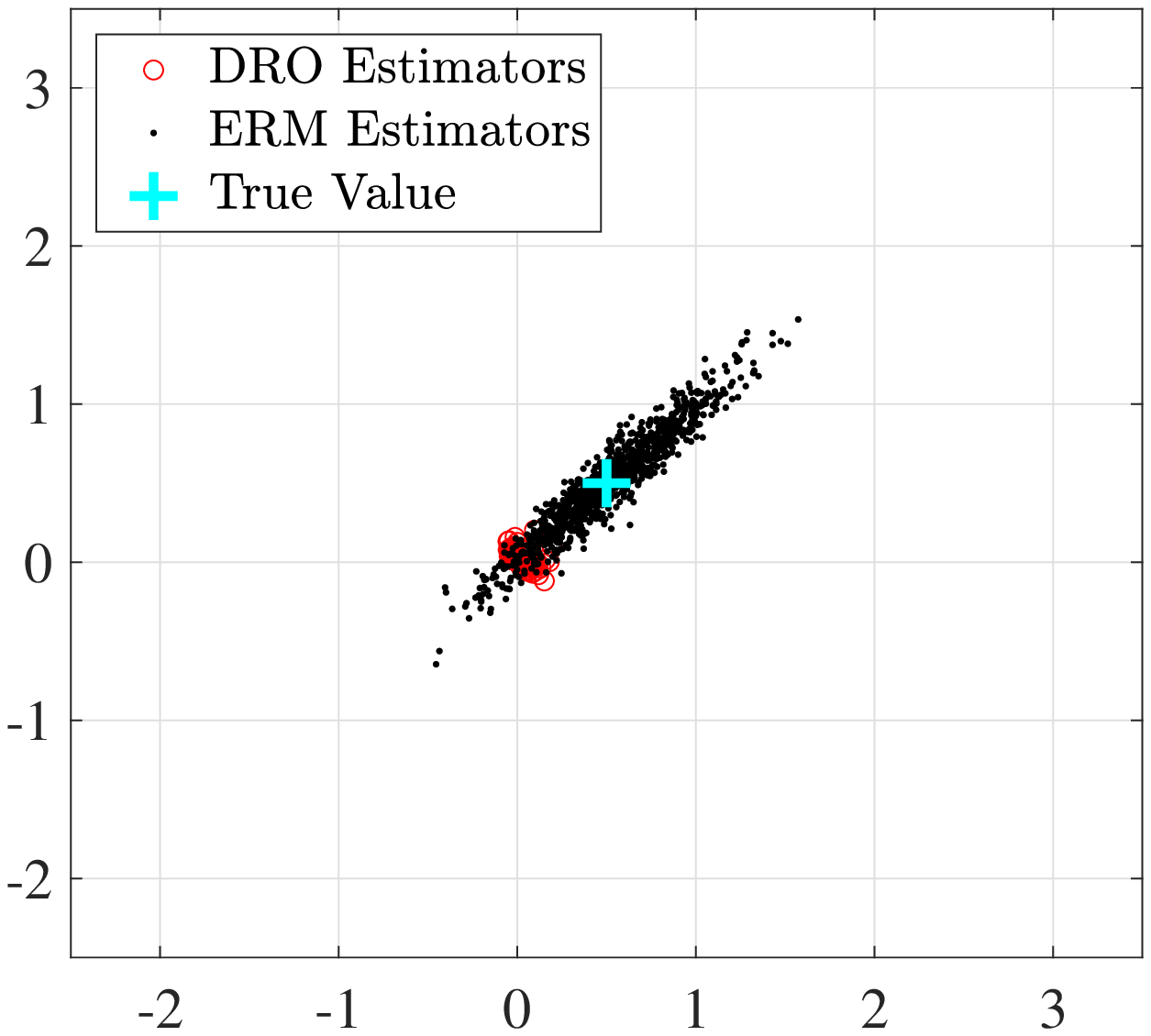}}
\caption{Scatter plots of $\beta_n^{ERM}$ (black circles) and
  $\beta_n^{DRO}$ (red circles) for $\beta_0 = [0.5,0.5]^\T$. }
\label{fig:scatter_0.5}
\end{figure}
\begin{figure}[ptbh]
\centering
\subfigure[$\rho=0.95$]{
 \includegraphics[width=1.81in]{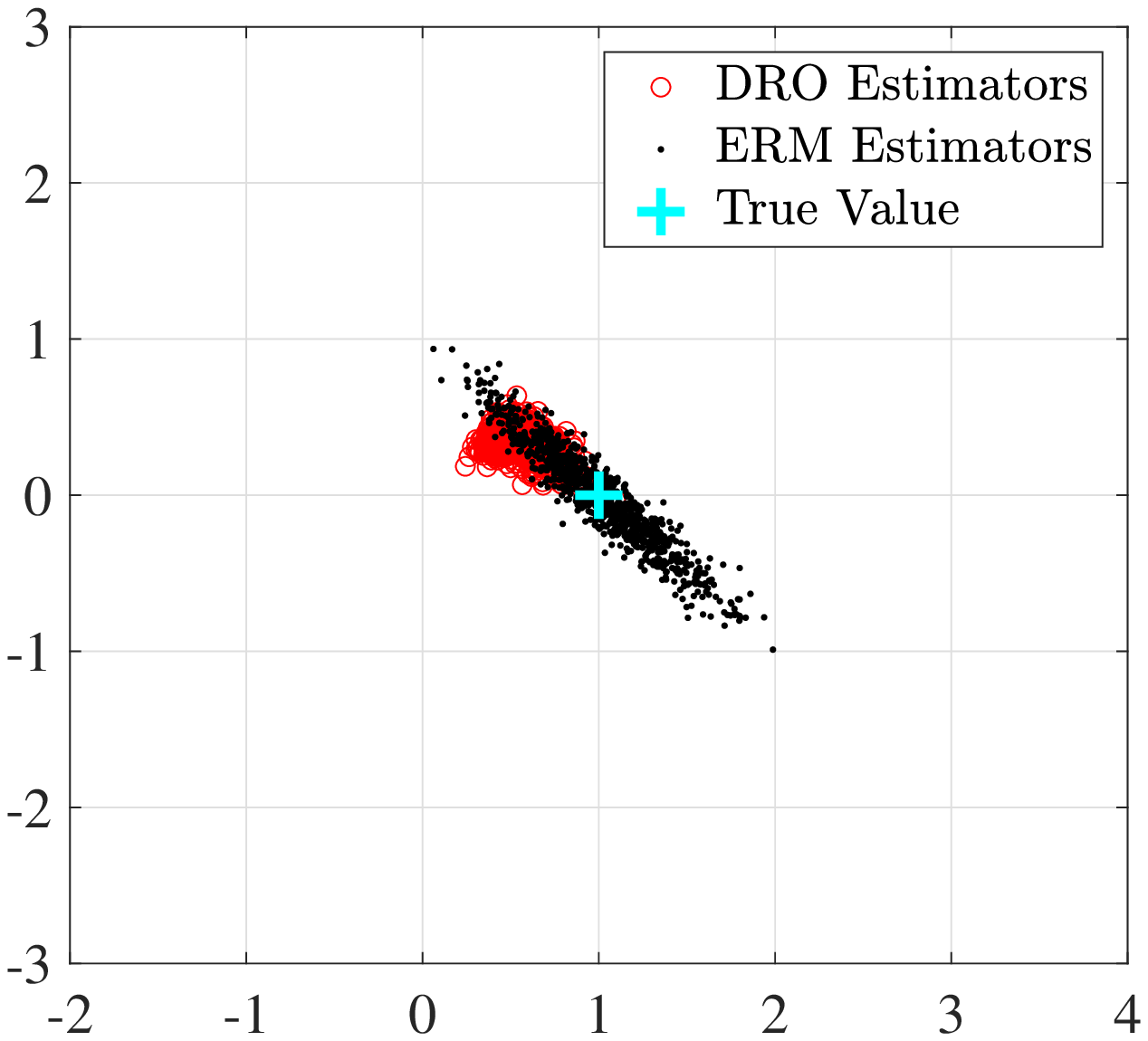}}
\subfigure[$\rho=0$]{
 \includegraphics[width=1.81in]{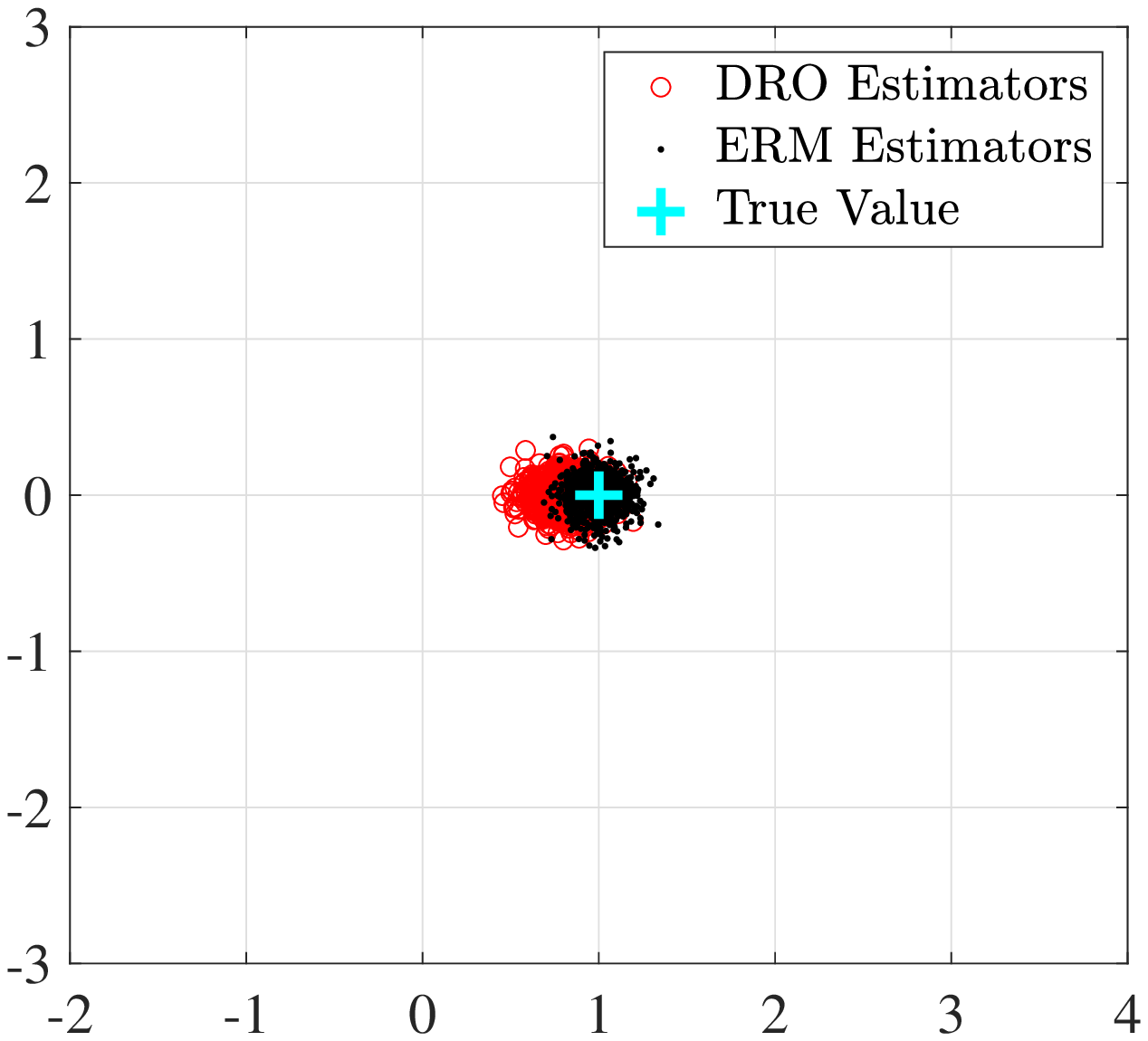}}
\subfigure[$\rho=-0.95$]{
\includegraphics[width=1.81in]{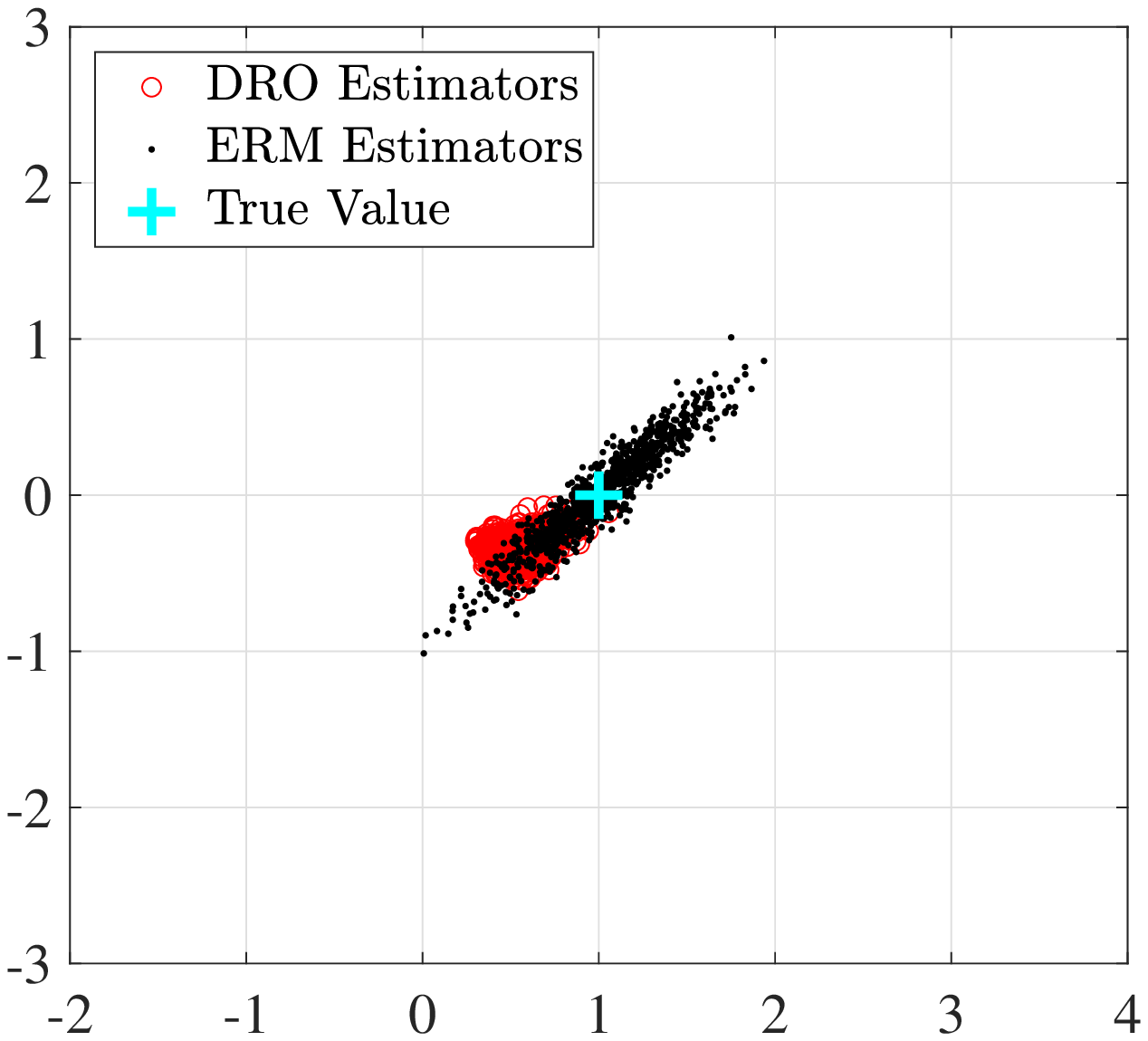}}
\caption{Scatter plots of $\beta_n^{ERM}$ (black circles) and
  $\beta_n^{DRO}$ (red circles) for $\beta_0 = [1.0,0.0]^\T$.}
\label{fig:scatter_1}
\end{figure}
\section{Proofs of main results}

\label{sec:proof} Theorem \ref{thm:levelsets-master} is obtained by
considering appropriate level sets involving auxiliary functionals
which we define next. Following \citet{blanchet2016robust}, we define
the robust Wasserstein profile function, associated with the
estimation of $\beta_\ast$ by solving
$E_{P_n}\{D_\beta h(X,\beta)\} = 0,$ as follows:
\begin{equation*}
R_{n}( \beta) =\inf_{P \in \mathcal{P}(\Omega)} \big[D_c(P,P_{n}) : \beta \in
\arg \min_{\beta \in B }E_{P}\left\{ \ell (X;\beta )\right\} \big].
\end{equation*}
This definition, as noted in \citet{blanchet2016robust}, allows to
characterize the set $\Lambda^+_{\delta}\left( P_{n}\right)$ in terms
of an associated level set; in particular, we have,
\begin{equation}
\Lambda^+_{\delta}( P_{n}) = \mathrm{cl}{\left\{\beta:R_{n}( \beta) \leq\delta \right\}},
\label{CR_based_on_RWP}
\end{equation}
where $\mathrm{cl}(\cdot)$ denotes closure. Indeed, this is because
\begin{equation*}
  \Lambda^+_{\delta}( P_{n})= \mathrm{cl} \big[\cap_{\epsilon>0}\big\{ \beta \in B:\beta \in
  \arg \min_{\beta \in B }E_{P}\{ \ell (X;\beta )\} \text{ for some }P\in
  \mathcal{U}_{\delta _{n}+\epsilon}(P_{n})\big\} \big].
\end{equation*}
If $\beta \in B^\circ$, we have
$R_{n}( \beta) =\inf_{P \in \mathcal{P}(\Omega)}[D_c(P,P_{n}) :E_{P}
\left\{h( X,\beta)\right\} = 0 ].$

Next, for the sequence of radii $\delta _{n}=n^{-\gamma }\eta $, for some
positive constants $\eta ,\gamma $, define functions $V_{n}^{DRO}:\mathbb{R}%
^{d}\rightarrow \mathbb{R}$ and $V_{n}^{ERM}:\mathbb{R}^{d}\rightarrow
\mathbb{R}$, as below, by considering suitably scaled versions of the
distributionally robust and empirical risk objective functions, namely
\begin{align*}
  V_{n}^{DRO}(u)& =n^{\bar{\gamma}}\big \{ \Psi _{n}\big( \beta _{\ast
                  }+n^{-\bar{\gamma}/2}u\big) -\Psi _{n}(\beta _{\ast })\big\} \text{ and }
  \\
  V_{n}^{ERM}(u)& =n\big[ E_{P_{n}}\big\{ \ell (X;\beta _{\ast
                  }+n^{-1/2}u)\big\} -E_{P_{n}}\big\{ \ell (X;\beta _{\ast })\big\} \big],
\end{align*}%
where $\bar{\gamma}=\min \left\{ \gamma ,1\right\} $ is defined in Theorem %
\ref{thm:levelsets-master}. Moreover, define $V:\mathbb{R}^{d}\times \mathbb{%
R}^{d}\rightarrow \mathbb{R}$ via
\begin{equation*}
  V(x,u)=x^{{\mathrm{\scriptscriptstyle T} }}u+ {2}^{-1}u^{{\mathrm{\scriptscriptstyle T} }}Cu.
\end{equation*}%
The following result, as we shall see, can be used to establish Theorem \ref%
{thm:levelsets-master} directly.

\begin{theorem}
  \label{thm:master}
  Suppose that the assumptions made in Theorem
  \ref{thm:levelsets-master} hold. Then we have,
\begin{equation*}
\big\{ V_{n}^{ERM}(\cdot),\ V_{n}^{DRO}(\cdot),\ nR_{n}\big( \beta_{\ast
}+n^{-1/2}\times\cdot\ \big) \big\} \Rightarrow\left\{ V(-H,\cdot ),\
V\{-f_{\eta,\gamma}(H),\cdot\},\ \varphi^{\ast}(H-C\times\cdot\,)\right\} ,
\end{equation*}
on the space $C(\mathbb{R}^{d};\mathbb{R})^{3}$ equipped with the topology
of uniform convergence in compact sets.
\end{theorem}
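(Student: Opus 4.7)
The plan is to express each of the three random functions asymptotically as a continuous mapping of the centered score $H_n := n^{-1/2}\sum_{i=1}^n h(X_i,\beta_*)$ and sample averages concentrating at deterministic limits (in particular $C$ and the matrix moment appearing in $\varphi$), then invoke the continuous mapping theorem for joint finite-dimensional convergence, and finally upgrade to uniform-on-compacts convergence using convexity (for the ERM and DRO pieces) and equicontinuity via the local Lipschitz hypotheses of A2.c (for the RWP piece). The sign convention in the stated limits is matched using the symmetry $H \stackrel{d}{=} -H$.

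For the ERM piece, a second-order Taylor expansion of $\ell$ in $\beta$, combined with the LLN for the Hessian and the CLT for the score, yields $V_n^{ERM}(u) = u^\T H_n + \tfrac{1}{2}u^\T C u + o_p(1)$ pointwise in $u$; convexity of $V_n^{ERM}$ and Rockafellar's theorem on pointwise convergence of convex functions upgrade this to uniform-on-compacts convergence. For the DRO piece, the core input is the strong-duality expansion
\[
\Psi_n(\beta) = E_{P_n}\{\ell(X,\beta)\} + \delta_n^{1/2} S_n(\beta) + O_p(\delta_n),
\]
uniformly for $\beta$ near $\beta_*$, which is essentially Proposition A1 of the supplement and follows from \citet{blanchet2016robust,blanchet2018optimal}. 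Plugging in $\delta_n = \eta n^{-\gamma}$, Taylor-expanding $E_{P_n}\ell$ and $S_n$ around $\beta_*$ (the latter is differentiable at $\beta_*$ under A2.b), and analyzing the three regimes gives the claimed limit $V(-f_{\eta,\gamma}(H),u)$: for $\gamma>1$ the sensitivity contribution is of order $n^{-\gamma/2}=o(n^{-1/2})$ and vanishes; for $\gamma=1$ it adds the linear term $\eta^{1/2}u^\T D_\beta S(\beta_*)$; for $\gamma<1$ the linear-in-score ERM piece vanishes at rate $n^{(\gamma-1)/2}$, leaving only the quadratic and the sensitivity contribution. Convexity of $\Psi_n$ (sup of convex functions) again lifts pointwise convergence to uniform on compacts.

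For the RWP piece, strong duality for the moment-constrained transport problem defining $R_n$ yields the Lagrangian representation
\[
nR_n(\beta_*+n^{-1/2}u) = \sup_\xi\bigl\{2\xi^\T m_n(u) - \varphi_n(\xi)\bigr\} + o_p(1),
\]
where $m_n(u) = n^{1/2}E_{P_n}\{h(X,\beta_*+n^{-1/2}u)\} = H_n + Cu + o_p(1)$ by Taylor expansion and $\varphi_n \to \varphi$ uniformly on compacts by Proposition \ref{Prop_Reg_Plug}. Applying the continuous mapping theorem to the convex-conjugate map gives the pointwise limit $\varphi^*(H - Cu)$. Since all three limits are continuous functionals of the common triple $(H_n, C_n, \varphi_n)$, joint finite-dimensional convergence follows from a single application of the continuous mapping theorem.

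The main obstacle is the uniform-on-compacts upgrade for the third coordinate. While Rockafellar's theorem handles the convex $V_n^{ERM}$ and $V_n^{DRO}$ automatically, $nR_n(\beta_*+n^{-1/2}\cdot)$ does not come with guaranteed convexity in $u$, so one must instead show equicontinuity of the sup-map in $u$. This requires localizing the dual maximizer $\xi$ to a compact set with high probability, for which the positive-definiteness $E\{D_x h\, D_x h^\T\}\succ 0$ from A2.b is crucial, and then using the local Lipschitz hypotheses on $D_x h, D_\beta h$ in A2.c together with the growth bound $\|D_x h(x,\beta)\|_q \leq M'(1+\|x\|_q)$ and the second-moment integrability of the Lipschitz constants to control the oscillations of $m_n(u)$ and $\varphi_n$ uniformly in $u$ on compact sets, thereby establishing the tightness needed for joint weak convergence on $C(\mathbb{R}^d;\mathbb{R})^3$.
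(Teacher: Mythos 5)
Your proposal follows essentially the same route as the paper: a second-order Taylor/quadratic expansion for $V_n^{ERM}$, the duality-based expansion $\Psi_n(\beta)=E_{P_n}\{\ell(X;\beta)\}+\delta_n^{1/2}S_n(\beta)+O_p(\delta_n)$ with the three-regime analysis in $\gamma$ for $V_n^{DRO}$, a Lagrangian-dual representation of $nR_n$ whose inner maximization produces $\varphi_n$ and whose outer supremum produces the conjugate $\varphi^*(H-Cu)$, localization of the dual multiplier $\xi$ via $E\{D_xh\,D_xh^\T\}\succ 0$ plus Lipschitz-based stochastic equicontinuity to upgrade the third coordinate to uniform-on-compacts convergence, and joint convergence from the common driver $H_n$ — this is exactly the content of Propositions \ref{prop:ERM}--\ref{prop:RWPcont} and Lemma \ref{lemmaksai}. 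The only deviations are cosmetic (you invoke convexity to lift pointwise to uniform convergence for the first two coordinates where the paper's propositions give uniform bounds directly, and your dual formula carries a spurious factor of $2$ relative to the paper's normalization $\varphi(\xi)=4^{-1}E\|\{D_xh\}^\T\xi\|_p^2$), so the plan is correct as stated.
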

{ Ensuring  smoothness of $D_{\beta}h(x+\Delta,\beta)$ and $D_{x}h(x+\Delta,\beta)$ around $\beta=\beta_{\ast},$ as in Assumption A2.c, is useful towards investigating the behavior of $nR_{n}\big( \cdot \big)$ in the neighborhood of $\beta^*,$ as required in the third component in the triplet in Theorem \ref{thm:master}.}

\subsection{Proof of Theorem \protect\ref{thm:master}}
Throughout this section, we suppose that the assumptions imposed in
Theorem \ref{thm:levelsets-master} hold. Let
\begin{equation*}
H_{n} = {n}^{-1/2}\sum_{i=1}^{n}h\left( X_{i},\beta_{\ast}\right)
\end{equation*}
The following sequence of results will be useful in proving Theorem
\ref{thm:master} and Proposition
\ref{prop:constrained-support}. Propositions \ref{prop:ERM} and \ref{prop:DRO} hold true for $\Omega = \mathbb{R}^d$; while propositions \ref{prop:RWPEq} - \ref{prop:RWPcont} hold true for general $\Omega$ under the assumption $P_{\ast}(\Omega^\circ) =1$ in Proposition \ref{prop:constrained-support}.

\begin{proposition}
Fix $\alpha\in\lbrack0,1].$ Given $\varepsilon,\varepsilon^{\prime},K>0,$ there exists a
positive integer $n_{0}$ such that
\begin{equation*}
{\rm pr} \left[\big\vert n^{\alpha-1}V_{n}^{ERM}\{n^{( 1-\alpha)
/2}u\}-n^{\alpha/2}H_{n}^{\T}u-2^{-1}u^{\T}Cu\big\vert \leq
\varepsilon^{\prime} \right] \geq 1-\varepsilon,
\end{equation*}
for every $n>n_{0}$ and $\Vert u\Vert_2\leq K.$ Specifically, if $\alpha=1,$
we have%
\begin{equation}
{\rm pr} \left\{\left\vert V_{n}^{ERM}(u)-H_{n}^{\T}u- 2^{-1}u^{\T}Cu\right\vert
\leq\varepsilon^{\prime}\right\} \geq 1-\varepsilon.
\label{eqn:V_ERM}
\end{equation}
\label{prop:ERM}
\end{proposition}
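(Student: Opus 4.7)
The plan is a pointwise second-order Taylor expansion of $\ell(X,\cdot)$ about $\beta_\ast$, followed by a uniform (in $u$) control of the Hessian remainder via the weak law of large numbers and the local Lipschitz bound in A2.c. Setting $v=n^{(1-\alpha)/2}u$ so that $n^{-1/2}v=n^{-\alpha/2}u$, twice continuous differentiability of $\ell$ (A2.a) yields
\begin{equation*}
\ell(X;\beta_\ast+n^{-\alpha/2}u)-\ell(X;\beta_\ast)
  = n^{-\alpha/2}u^\T h(X,\beta_\ast) + \tfrac{1}{2}n^{-\alpha}u^\T D_\beta h\{X,\tilde\beta_n(X,u)\}u,
\end{equation*}
with $\tilde\beta_n(X,u)$ on the segment joining $\beta_\ast$ and $\beta_\ast+n^{-\alpha/2}u$. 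Averaging under $P_n$, using $E_{P_n}h(X,\beta_\ast)=n^{-1/2}H_n$, and multiplying by $n^{\alpha-1}$ gives
\begin{equation*}
n^{\alpha-1}V_n^{ERM}\{n^{(1-\alpha)/2}u\}
  = n^{(\alpha-1)/2}H_n^\T u + \tfrac{1}{2}u^\T E_{P_n}[D_\beta h\{X,\tilde\beta_n(X,u)\}]u,
\end{equation*}
which already exhibits the asserted linear piece and reduces the proposition to the uniform statement $\sup_{\|u\|_2\leq K}|u^\T E_{P_n}[D_\beta h\{X,\tilde\beta_n(X,u)\}]u - u^\T Cu|\to 0$ in probability.

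To obtain this I would split the inner difference as $\{E_{P_n}D_\beta h(X,\beta_\ast)-C\} + E_{P_n}[D_\beta h\{X,\tilde\beta_n(X,u)\}-D_\beta h(X,\beta_\ast)]$. The first bracket does not depend on $u$; it is an i.i.d.\ average with mean $C$ (A2.b) whose entries have finite first moment (which is implicit in $C$ being a well-defined finite positive-definite matrix), hence it tends to zero in probability by the weak law of large numbers. For the second bracket, once $n$ is large enough that $Kn^{-\alpha/2}\leq 1$, the local Lipschitz bound in A2.c gives $\|D_\beta h\{X,\tilde\beta_n(X,u)\}-D_\beta h(X,\beta_\ast)\|_q\leq \bar\kappa(X)\,Kn^{-\alpha/2}$ uniformly for $\|u\|_2\leq K$; its $P_n$-expectation is therefore bounded by $Kn^{-\alpha/2}E_{P_n}\bar\kappa(X)=O_p(n^{-\alpha/2})$ since $E\bar\kappa(X)<\infty$ by A2.c. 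Combining the two pieces with $\sup_{\|u\|_2\leq K}\|u\|_2^2\leq K^2$ yields the required uniform convergence, and the case $\alpha=1$ collapses immediately to the display (\ref{eqn:V_ERM}).

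The main obstacle is the $(u,X)$-dependent argument $\tilde\beta_n(X,u)$ at which the Hessian must be evaluated, which prevents a direct appeal to a single-point LLN. The splitting above is precisely what resolves this: the Lipschitz estimate A2.c decouples the $u$-dependence from the sample randomness and reduces matters to a plain LLN for the fixed envelope function $\bar\kappa$. The rest of the argument is bookkeeping with the powers of $n$.
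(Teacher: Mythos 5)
Your proof is correct and follows essentially the same route as the paper's: a second-order Taylor expansion in $\beta$ followed by the law of large numbers for $E_{P_n}\{D_\beta h(X,\beta_\ast)\}$, with your use of the A2.c Lipschitz envelope $\bar\kappa$ simply making explicit the uniform-in-$u$ control of the Hessian remainder that the paper records as an unexplained $o(1)$. Two remarks: the linear term you derive, $n^{(\alpha-1)/2}H_n^\T u$, is the correct one (it matches the paper's own computation via $E_{P_n}\{h(X,\beta_\ast)\}=n^{-1/2}H_n$ and the $\alpha=1$ display \eqref{eqn:V_ERM}), so the exponent $n^{\alpha/2}$ printed in the general display of the proposition is a typo rather than a defect of your argument; and, exactly like the paper's proof, your argument genuinely needs $\alpha>0$, since for $\alpha=0$ the increment $n^{-\alpha/2}u$ does not shrink and the condition $Kn^{-\alpha/2}\le 1$ you invoke (and the quadratic approximation itself) fails for $K>1$.
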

\begin{proposition}
Given $\varepsilon,\varepsilon ^{\prime },K>0,$ there exists a positive integer $n_{0}$
such that
\begin{equation}
{\rm pr} \left\{ \left\vert V_{n}^{DRO}(u)+f_{\eta ,\gamma }(-H_{n})^{\T}u- 2^{-1}
u^{\T}Cu\right\vert \leq \varepsilon ^{\prime } \right\} \geq 1-\varepsilon,
\label{eqn:V_DRO}
\end{equation}%
for every $n>n_{0}$ and $\Vert u\Vert_2 \leq K.$ \label{prop:DRO}
\end{proposition}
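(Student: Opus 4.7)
The plan is to first establish a second-order asymptotic expansion of the robust objective $\Psi_n(\beta)$ around a shrinking neighbourhood of $\beta_*,$ then substitute $\beta_u=\beta_*+n^{-\bar\gamma/2}u,$ and finally identify the three regimes $\gamma<1,$ $\gamma=1,$ $\gamma>1$ with the corresponding pieces of $f_{\eta,\gamma}.$ The cornerstone is the representation, valid uniformly for $\beta$ in a neighbourhood of $\beta_*,$
\begin{equation*}
\Psi_n(\beta) = E_{P_n}\{\ell(X;\beta)\} + \delta_n^{1/2}\, S_n(\beta) + \delta_n T_n(\beta) + o_p(\delta_n),
\end{equation*}
where $S_n(\beta)=[E_{P_n}\{\|D_x\ell(X;\beta)\|_p^2\}]^{1/2}$ and $T_n(\beta)$ is a bounded continuous functional built from $D_{xx}\ell(X;\beta)$ and the $p$-norm geometry. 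This expansion follows from strong duality, $\Psi_n(\beta)=\inf_{\lambda\geq 0}\{\lambda\delta_n+E_{P_n}[\sup_x\{\ell(x;\beta)-\lambda c(x,X)\}]\},$ by performing a second-order Taylor expansion of the inner supremum at each data point, exactly as in Proposition~A1 of the supplementary material. Assumption~A2.c's local Lipschitz control of $D_x h$ and $D_\beta h,$ together with $E(\|X\|_2^2)<\infty,$ upgrades the pointwise expansion to a uniform one on compact neighbourhoods of $\beta_*.$

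The second step is direct substitution. Proposition~\ref{prop:ERM} with $\alpha=\bar\gamma$ gives
\begin{equation*}
n^{\bar\gamma}\{E_{P_n}\ell(X;\beta_u)-E_{P_n}\ell(X;\beta_*)\}
= n^{(\bar\gamma-1)/2}H_n^\T u + 2^{-1} u^\T C u + o_p(1),
\end{equation*}
uniformly over $\|u\|_2\leq K.$ Since A2.b implies differentiability of $S$ at $\beta_*$ and A2.c secures uniform convergence of $D_\beta S_n$ in a neighbourhood of $\beta_*,$ a first-order Taylor expansion yields
\begin{equation*}
n^{\bar\gamma}\delta_n^{1/2}\{S_n(\beta_u)-S_n(\beta_*)\}
= \eta^{1/2} n^{(\bar\gamma-\gamma)/2}\, D_\beta S(\beta_*)^\T u + o_p(1),
\end{equation*}
while the third term contributes $n^{\bar\gamma}\delta_n\{T_n(\beta_u)-T_n(\beta_*)\} = O_p(n^{\bar\gamma-\gamma}\|\beta_u-\beta_*\|) = o_p(1).$ Inspecting the scalings, the prefactor $n^{(\bar\gamma-1)/2}$ equals $1$ when $\gamma\geq 1$ and vanishes when $\gamma<1,$ whereas $n^{(\bar\gamma-\gamma)/2}$ equals $1$ when $\gamma\leq 1$ and vanishes when $\gamma>1.$ The surviving linear-in-$u$ terms therefore assemble as $H_n^\T u\,\mathbb{I}(\gamma\geq 1)+\eta^{1/2}D_\beta S(\beta_*)^\T u\,\mathbb{I}(\gamma\leq 1) = -f_{\eta,\gamma}(-H_n)^\T u,$ and adding $2^{-1}u^\T Cu$ gives the desired approximation, with uniformity over $\|u\|_2\leq K$ inherited from the uniform-on-compacts nature of each error term.

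The main obstacle is establishing the second-order DRO expansion in the critical regime $\gamma\leq 1,$ where $n^{\bar\gamma}\delta_n$ does not vanish and a crude $O_p(\delta_n)$ bound on the remainder would be insufficient. Extracting $\delta_n T_n(\beta)$ as an explicit term requires solving the inner optimal-transport maximisation to second order and exploiting the Lipschitz moments imposed in A2.c together with $E(\|X\|_2^2)<\infty$ to absorb the higher-order contributions uniformly, both over the adversarial transport perturbation and over $\beta$ in a neighbourhood of $\beta_*.$ Once that machinery, supplied by Proposition~A1, is in place, the substitution and case analysis above amount essentially to bookkeeping.
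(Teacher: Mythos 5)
Your proposal is correct and follows essentially the same route as the paper: it rests on the second-order expansion $\Psi_n(\beta)=E_{P_n}\{\ell(X;\beta)\}+\delta_n^{1/2}S_n(\beta)+O_p(\delta_n)$ from Proposition \ref{Lem-2-Val-fn}, combines it with Proposition \ref{prop:ERM} at scale $\alpha=\bar{\gamma}$, and reads off the indicator structure of $f_{\eta,\gamma}$ from the exponents $n^{(\bar{\gamma}-1)/2}$ and $n^{(\bar{\gamma}-\gamma)/2}$, exactly as in the paper's argument. The only cosmetic difference is that you leave the $O(\delta_n)$ coefficient $T_n(\beta)$ unspecified where the paper writes it explicitly as $a_n(\beta)/[2E_{P_n}\{\Vert D_x\ell(X;\beta)\Vert_p^2\}]$, but since you only need its local boundedness and continuity to discard the increment, this changes nothing.
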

\begin{proposition}
  Define the set $\Theta \subset \mathbb{R}^d$ as
\[
  \Theta=\{ \beta \in B^\circ : 0 \in {\rm conv} [\{h(x,\beta)\mid x \in
  \Omega\}]^\circ \},
\]
where ${\rm conv} (S)$ denotes the convex hull of the set $S$.
For $\beta_* + n^{-1/2} u \in \Theta$, We have,
\begin{align*}
n R_{n}\big( \beta_{\ast}+ n^{-1/2}u\big) = \max_{\xi} \left\{ -\xi^{\T}
H_{n} - M_{n}(\xi,u) \right\} ,
\end{align*}
where
\begin{align*}
  M_{n}(\xi,u)
  &= \frac{1}{n} \sum_{i=1}^{n} \max_{\Delta:X_i + n^{-1/2}\Delta \in \Omega} \left\{ \xi^{\T} \int_{0}^{1}
    D_{x}h\big( X_{i} + n^{-1/2}t\Delta, \beta_{\ast}+n^{-1/2} tu
    \big) \Delta {\rm d} t \right .  \\
  &\qquad\qquad + \xi^{\T} \int_{0}^{1}D_{\beta}h\big( X_{i} + n^{-1/2}t\Delta,
    \left. \beta_{\ast}+ n^{-1/2} t u\big) u{\rm d} t - \Vert\Delta\Vert
    _{q}^{2} \right\}.
\end{align*}
Furthermore, there exists a neighborhood of $\beta*$, $B_\epsilon(\beta_*)$ such that $B_\epsilon(\beta_*) \subset \Theta$.
\label{prop:RWPEq}
\end{proposition}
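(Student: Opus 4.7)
The plan is to first recognize that $R_n(\beta)$ is an optimal transport problem with a linear moment constraint, then apply an established strong duality result from the Wasserstein DRO literature (as in \citet{blanchet2016quantifying}) to convert it into a dual maximization, and finally perform the Taylor expansion that the stated formula reflects. Concretely, I would write
\[
R_n(\beta)\ =\ \inf_{\pi:\,\pi_1=P_n}\inf_{P:E_P\{h(Y,\beta)\}=0} E_\pi\{c(X,Y)\}.
\]
Introducing a Lagrange multiplier $\xi\in\mathbb{R}^d$ for the moment constraint and exchanging the inner inf with the supremum over $\xi$ (valid precisely when the Slater-type condition $\beta\in\Theta$ holds, since the primal is then strictly feasible), the optimal transport layer becomes separable because $P_n$ is a discrete measure supported on the $X_i$'s. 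This yields
\[
R_n(\beta) \ =\ \sup_{\xi\in\mathbb{R}^d} \Bigl\{ -\frac{1}{n}\sum_{i=1}^n \sup_{y_i\in\Omega}\bigl[\xi^{\T} h(y_i,\beta) - \|X_i-y_i\|_q^2\bigr]\Bigr\}.
\]

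With the dual representation in hand, I would multiply by $n$, substitute $y_i = X_i + n^{-1/2}\Delta_i$ and $\beta = \beta_\ast+n^{-1/2}u$, and then rescale the dual variable via $\xi\mapsto n^{-1/2}\xi$. The transportation cost rescales as $n^{-1}\|\Delta_i\|_q^2$, and for each $i$ the fundamental theorem of calculus gives
\begin{align*}
h(X_i + n^{-1/2}\Delta_i,\beta_\ast + n^{-1/2}u) &= h(X_i,\beta_\ast) \\
&\quad + n^{-1/2}\!\!\int_0^1\!\!\bigl[D_xh(X_i+n^{-1/2}t\Delta_i,\beta_\ast+n^{-1/2}tu)\Delta_i \\
&\quad\ + D_\beta h(X_i+n^{-1/2}t\Delta_i,\beta_\ast+n^{-1/2}tu)u\bigr]dt.
\end{align*}
The constant-in-$\Delta_i$ piece $\xi^{\T}h(X_i,\beta_\ast)$ aggregates to $-\xi^{\T}H_n$ after combining $n^{-1/2}\sum_i$, whereas the $\Delta_i$-dependent piece is exactly what the definition of $M_n(\xi,u)$ captures, producing the claimed identity $nR_n(\beta_\ast+n^{-1/2}u)=\max_\xi\{-\xi^{\T}H_n-M_n(\xi,u)\}$.

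For the second claim, I would argue that $\beta_\ast\in\Theta$ and that $\Theta$ is open in $B^\circ$. First, Assumption A2.b gives $E\{h(X,\beta_\ast)\}=0$, so $0$ belongs to $\mathrm{conv}\{h(x,\beta_\ast):x\in\Omega\}$. The key is to upgrade this to the interior. The assumption $E\{D_xh(X,\beta_\ast)D_xh(X,\beta_\ast)^{\T}\}\succ 0$ implies that for every nonzero $\xi\in\mathbb{R}^d$ the map $x\mapsto\xi^{\T}h(x,\beta_\ast)$ is non-constant on a set of positive $P_\ast$-measure; coupled with mean zero this forces $\xi^{\T}h(\cdot,\beta_\ast)$ to take both strictly positive and strictly negative values on $\mathrm{supp}(P_\ast)\subset\Omega$, hence $0$ lies strictly between two extreme $h$-values in every direction, which places $0$ in the interior of $\mathrm{conv}\{h(x,\beta_\ast):x\in\Omega\}$. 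Openness of $\Theta$ near $\beta_\ast$ then follows from the continuity of $h(x,\cdot)$ guaranteed by Assumption A2.c and a standard perturbation argument for interiors of convex sets.

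The main technical obstacle I anticipate is the rigorous justification of strong duality and the measurable/integrability regularity required for the exchange of inf and sup; in particular, one must argue that the supremum over $\xi$ is attained (or at least that the suprema can be swapped as stated) and that each inner supremum over $\Delta$ is well defined and finite under the growth condition $\|D_xh(x,\beta)\|_q\leq M'(1+\|x\|_q)$ from Assumption A2.c. The interior argument for $\Theta$ is the other delicate point, because $\mathrm{conv}\{h(x,\beta_\ast):x\in\Omega\}$ is stated without integrability, and one must be careful to deduce the full-dimensional interior property purely from the non-degeneracy of $h(\cdot,\beta_\ast)$ rather than from its distribution.
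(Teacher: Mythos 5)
Your route to the dual identity is essentially the paper's: invoke strong duality for the moment-constrained optimal transport problem (the paper cites the linear semi-infinite programming duality of Proposition 3 in \citet{blanchet2016robust}), exploit separability over the atoms of $P_n$, rescale $\xi$ and $\Delta$ by $n^{1/2}$, and split off $-\xi^{\T}H_n$ via the fundamental theorem of calculus; your sign conventions work out after the reparametrization of $\xi$. Your argument that $\beta_*\in\Theta$ -- combining $E\{h(X,\beta_*)\}=0$ with the non-degeneracy $E\{D_xh(X,\beta_*)D_xh(X,\beta_*)^{\T}\}\succ 0$ to rule out a supporting hyperplane at the origin -- is also the paper's contradiction argument in only slightly different clothing.

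The one genuine gap is the final claim that a full neighborhood $B_\epsilon(\beta_*)$ lies in $\Theta$, which you dispatch as ``a standard perturbation argument for interiors of convex sets.'' The set being perturbed is $\mathrm{conv}\{h(x,\beta):x\in\Omega\}$ with the hull taken over all of $\Omega$, which may be unbounded; Assumption A2.c only gives continuity (and local Lipschitzness with an $x$-dependent constant $\bar\kappa(x)$) of $h(x,\cdot)$ at each fixed $x$, not uniform control of $\sup_{x\in\Omega}\Vert h(x,\beta)-h(x,\beta_*)\Vert$ as $\beta\to\beta_*$. So one cannot conclude that the whole convex hull moves by little, and the naive perturbation step fails. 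The paper's fix is to first extract a finite witness: since $B_\epsilon(0)\subset\mathrm{conv}\{h(x,\beta_*):x\in\Omega\}$, Carath\'eodory's theorem expresses each of the $2d$ points $\pm\epsilon e_i$ as a convex combination of at most $d+1$ values $h(x_{i,j},\beta_*)$; continuity of $h$ in $\beta$ at these finitely many $x_{i,j}$ then keeps the perturbed combinations within $\epsilon/2$ of $\pm\epsilon e_i$, so their convex hull still contains a neighborhood of $0$ for all $\beta$ near $\beta_*$. Some such localization to finitely many support points is needed for your perturbation argument to go through.
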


\begin{proposition}
Consider any $\varepsilon ,\varepsilon ^{\prime },K>0.$ Then there exist $%
b_{0}\in (0,\infty )$ such that for any $b\geq b_{0},c_0> 0,\epsilon_0>0$, we
have a positive integer $n_{0}$ such that,
\begin{equation*}
\mathrm{pr}\left[ \sup_{\Vert u\Vert _{2}\leq K}\big \{ nR_{n}\big( \beta
_{\ast }+n^{-1/2}u\big) -f_{up}(H_{n},u,b,c)\big \} \leq \varepsilon
^{\prime }\right] \geq 1-\varepsilon ,
\end{equation*}%
for all $n\geq n_{0},$ and $f_{up}(H_{n},u,b,c)$ equals
\begin{equation*}
  \max_{\Vert \xi \Vert _{p}\leq b}\big\{-\xi ^{\T}H_{n}-%
  E \big[ 4^{-1}\Vert \{ D_{x}h(X,\beta _{\ast
        })\} ^{\T}\xi \Vert _{p}^{2}+\xi ^{\T}D_{\beta }h(X,\beta _{\ast
      })u\big] \mathbb{I}(X \in C_0^{\epsilon_0}) \big\},
\end{equation*}
with $C_0=\{x \in \Omega: \|x\|_p \leq c_0\}.$
\label{prop:RWPUB}
\end{proposition}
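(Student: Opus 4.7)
My plan is to exploit the dual representation of $nR_n$ established in Proposition~\ref{prop:RWPEq} and to produce the upper bound by lower-bounding the inner empirical average $M_n(\xi,u)$ via an explicit feasible perturbation $\Delta_i^*$ for every sample, combined with a localization argument showing that the unrestricted supremum over $\xi$ is essentially attained within $\{\|\xi\|_p\leq b_0\}$ with high probability. Since $M_n(\xi,u)$ is an average of inner suprema, evaluating each inner supremum at any feasible $\Delta_i^*$ yields a valid lower bound, and the target is to make this lower bound converge, uniformly over compact $(\xi,u)$, to the expectation appearing inside $f_{up}$.

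For the explicit choice of $\Delta_i^*$, when $X_i\in C_0^{\epsilon_0}$ I take the H\"older-dual maximizer of $\Delta\mapsto\xi^{\T}D_xh(X_i,\beta_\ast)\Delta-\|\Delta\|_q^2$, whose value is exactly $4^{-1}\|D_xh(X_i,\beta_\ast)^{\T}\xi\|_p^2$. The linear bound $\|D_xh(\cdot,\beta_\ast)\|_q\leq M'(1+\|\cdot\|_q)$ from Assumption A2.c, together with the boundedness of $C_0$, gives $\|\Delta_i^*\|_q=O(b)$ uniformly over $X_i\in C_0^{\epsilon_0}$, so that $n^{-1/2}\|\Delta_i^*\|_2\leq\epsilon_0$ for $n$ large and hence $X_i+n^{-1/2}\Delta_i^*\in C_0\subset\Omega$. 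For $X_i\notin C_0^{\epsilon_0}$, take $\Delta_i^*=0$, which is trivially feasible. Taylor-expanding $f_i(\Delta_i^*)$ around $(X_i,\beta_\ast)$ using the local Lipschitz constants $\kappa',\bar\kappa$ from Assumption A2.c yields a per-sample remainder of order $n^{-1/2}\{\kappa'(X_i)+\bar\kappa(X_i)\}$ times a polynomial in $(b,K)$; the square-integrability of $\kappa',\bar\kappa$ then makes the empirical average of these remainders $o_p(1)$, uniformly over $\{\|\xi\|_p\leq b,\|u\|_2\leq K\}$.

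A uniform law of large numbers over this compact parameter set, routine since the integrands are continuous in $(\xi,u)$ with envelopes controlled by $L^2$ functions of $X$, then yields a uniform lower bound on $M_n(\xi,u)$ that matches, up to $o_p(1)$, the expectation inside $f_{up}$. Inserting this lower bound into $-\xi^{\T}H_n-M_n(\xi,u)$ shows $\max_\xi\{-\xi^{\T}H_n-M_n(\xi,u)\}\leq f_{up}(H_n,u,b,c)+\varepsilon'/2$ provided the unrestricted maximum is attained within $\|\xi\|_p\leq b$. For this localization I would use the quadratic-in-$\xi$ growth of $M_n(\xi,u)$ obtained by evaluating the inner sup at a scaled-down $\lambda\Delta_i^\text{opt}$ that is unconditionally feasible; combined with $H_n=O_p(1)$ from the central limit theorem and $E\{D_xhD_xh^{\T}\}\succ 0$ from Assumption A2.b, this ensures the unrestricted argmax $\xi_n^*$ satisfies $\|\xi_n^*\|_p\leq b_0$ with probability at least $1-\varepsilon$, uniformly in $\|u\|_2\leq K$.

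The main technical obstacle is the interplay between the feasibility of $\Delta_i^*$ in $\Omega$ and the uniform control of the Taylor remainders: the truncation by $\mathbb{I}(X\in C_0^{\epsilon_0})$ is precisely what allows the H\"older-optimal $\Delta_i^*$ to stay inside $\Omega$ after $n^{-1/2}$-rescaling, and this is exactly where the linear growth bound on $D_xh$ in Assumption A2.c is used. The uniform LLN over $\{\|\xi\|_p\leq b\}$ is otherwise a standard Glivenko-Cantelli argument with an integrable envelope, and the small-probability exceptional events in the localization and the Taylor control jointly absorb the $\varepsilon$ slack in the statement.
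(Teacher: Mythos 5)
Your proposal follows essentially the same route as the paper's proof: localize the dual variable $\xi$ to a ball $\{\Vert\xi\Vert_p\leq b\}$ via a growth argument (the paper's Lemma \ref{lemmaksai}), lower-bound $M_n(\xi,u)$ by plugging in the H\"older-dual optimizer of $\Delta\mapsto\xi^{\T}D_xh(X_i,\beta_\ast)\Delta-\Vert\Delta\Vert_q^2$ only for samples in $C_0^{\epsilon_0}$ so that the $n^{-1/2}$-rescaled perturbation stays feasible in $C_0\subset\Omega$, absorb the Taylor remainder $I(X_i,\bar\Delta_i,u)$ using the regularity of $D_xh,D_\beta h$, and finish with a uniform law of large numbers over $\{\Vert\xi\Vert_p\leq b,\Vert u\Vert_2\leq K\}$. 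The only cosmetic differences are that the paper controls the remainder via uniform continuity on the compact set $C_0$ rather than the Lipschitz envelopes $\kappa',\bar\kappa$, and its localization lemma uses a perturbation of fixed norm (yielding linear rather than quadratic growth in $\Vert\xi\Vert_p$, which is all that is needed to dominate $-\xi^{\T}H_n$); neither affects correctness.
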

\begin{proposition}
  For any $\varepsilon ,\varepsilon ^{\prime },K,b>0,$ there exists a
  positive integer $n_{0}$ such that,
\begin{equation*}
{\rm pr}\left[ \sup_{\Vert u\Vert _{2}\leq K}\big\{ nR_{n}\big( \beta
_{\ast }+n^{-1/2}u\big) -f_{low}(H_{n},u,b)\big\} \geq -\varepsilon
^{\prime }\right] \geq 1-\varepsilon ,
\end{equation*}%
for all $n>n_{0},$ where
\begin{equation*}
  f_{low}(H_{n},u,b)=\max_{\Vert \xi \Vert _{p}\leq b}\big\{-\xi ^{\T}H_{n}-%
  E\left[ 4^{-1}\Vert \left\{ D_{x}h(X,\beta _{\ast })\right\}
    ^{\T}\xi \Vert _{p}^{2}+\xi ^{\T}D_{\beta }h(X,\beta _{\ast })u\right] \big\}.
\end{equation*}
\label{prop:RWPLB}
\end{proposition}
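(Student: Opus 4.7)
The plan is to start from the dual representation of $nR_n(\beta_\ast + n^{-1/2}u)$ in Proposition~\ref{prop:RWPEq} and restrict the outer maximization to the ball $\{\xi : \|\xi\|_p \leq b\}$; this restriction only decreases the value, so
\[
nR_n(\beta_\ast + n^{-1/2}u) \;\geq\; \max_{\|\xi\|_p \leq b}\{-\xi^{\T} H_n - M_n(\xi, u)\}.
\]
Since $f_{low}(H_n, u, b) = \max_{\|\xi\|_p \leq b}\{-\xi^{\T} H_n - E[\tfrac{1}{4}\|D_x h(X,\beta_\ast)^{\T}\xi\|_p^2 + \xi^{\T} D_\beta h(X,\beta_\ast) u]\}$, it suffices to establish, with probability at least $1-\varepsilon$ for all large $n$, the uniform upper bound
\[
M_n(\xi, u) \;\leq\; E\Big[\tfrac{1}{4}\big\|D_x h(X, \beta_\ast)^{\T} \xi\big\|_p^2 + \xi^{\T} D_\beta h(X, \beta_\ast) u\Big] + \varepsilon'
\]
over $\|\xi\|_p \leq b$ and $\|u\|_2 \leq K$. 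The claim then follows by taking the maximum over the ball, and the resulting uniform bound is in fact stronger than the stated supremum form.

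For each summand $I_i(\xi, u)$ of $M_n$, I would first drop the support constraint $X_i + n^{-1/2}\Delta \in \Omega$, which only enlarges the feasible set and thus yields an upper bound on $I_i$. The growth condition $\|D_x h(x, \beta)\|_q \leq M'(1 + \|x\|_q)$ from Assumption A2.c together with the coercivity from $-\|\Delta\|_q^2$ imply that the inner maximizer $\Delta_i^\ast$ satisfies $\|\Delta_i^\ast\|_q \leq C b(1 + \|X_i\|_q)$ for an absolute constant $C$. Substituting the Taylor decompositions $D_x h(X_i + n^{-1/2}t\Delta_i^\ast, \beta_\ast + n^{-1/2}tu) = D_x h(X_i, \beta_\ast) + [\cdots]$ and likewise for $D_\beta h$, and invoking the local Lipschitz bounds of Assumption A2.c, we arrive at
\[
I_i(\xi, u) \;\leq\; \tfrac{1}{4}\big\|D_x h(X_i, \beta_\ast)^{\T} \xi\big\|_p^2 + \xi^{\T} D_\beta h(X_i, \beta_\ast) u + n^{-1/2}\rho_i(b, K),
\]
with $\rho_i(b, K)$ polynomial in $\kappa'(X_i)$, $\bar\kappa(X_i)$, and $1 + \|X_i\|_q$, with coefficients depending on $b$ and $K$. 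Averaging the leading term over $i$ and applying a uniform law of large numbers on the compact parameter set $\{\|\xi\|_p \leq b\} \times \{\|u\|_2 \leq K\}$, whose envelope is square-integrable thanks to $E\|X\|_2^2 < \infty$ and the moment conditions on $\kappa'$ and $\bar\kappa$, yields the expectation on the right-hand side of the desired uniform bound.

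The principal obstacle is controlling the remainder $n^{-1}\sum_i \rho_i(b, K)$: the Lipschitz estimates of Assumption A2.c apply only when $\|n^{-1/2}\Delta\|_q + \|n^{-1/2}u\|_q \leq 1$, i.e., $\|\Delta\|_q + \|u\|_q \leq n^{1/2}$, whereas $\|\Delta_i^\ast\|_q$ can be of order $1 + \|X_i\|_q$. I would address this via truncation at a large level $T$: on $\{\|X_i\|_q \leq T\}$ the Taylor argument applies for all $n$ large enough that $C b(1 + T) \leq n^{1/2}$; on the complementary event $\{\|X_i\|_q > T\}$ a crude bound $I_i(\xi, u) \leq C' b^2 (1 + \|X_i\|_q)^2 + C' b K (1 + \|X_i\|_q)$ (from the growth condition, using $-\|\Delta\|_q^2 \leq 0$) together with Markov's inequality and $E\|X\|_q^2 < \infty$ makes the tail contribution arbitrarily small by choosing $T$ large. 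Uniformity over $(\xi, u)$ then follows from compactness and equicontinuity via a finite cover argument, closing the proof.
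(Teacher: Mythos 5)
Your proposal is correct and follows essentially the same route as the paper: restrict the dual variable to $\{\xi:\Vert \xi\Vert_p\leq b\}$ (weak duality gives the lower bound on $nR_n$), drop the support constraint, and establish the uniform-in-$(\xi,u)$ upper bound $M_n(\xi,u)\leq E\big[4^{-1}\Vert \{D_xh(X,\beta_\ast)\}^\T\xi\Vert_p^2+\xi^\T D_\beta h(X,\beta_\ast)u\big]+\varepsilon'$ via localization of the inner maximizer, the local Lipschitz estimates of Assumption A2.c, a truncation device, and a uniform law of large numbers. The only real difference is in execution: where you localize the maximizer directly from the growth bound $\Vert D_xh(x,\beta)\Vert_q\leq M'(1+\Vert x\Vert_q)$ and truncate on $\{\Vert X_i\Vert_q\leq T\}$, the paper first splits the increment of $h$ into its $x$- and $\beta$-parts and imports the corresponding localization and quadratic upper bound for the $x$-part from Proposition 5 of \citet{blanchet2016robust}.
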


\begin{proposition}
  \label{prop:tightness-1} For any $\varepsilon>0,$ there exist
  constants $a,n_{0} > 0$ such that for every
  $n\geq n_{0},$
\begin{equation*}
{\rm pr}\left\{ nR_{n}(\beta_{\ast})\leq a\right\} \geq1-\varepsilon,
\end{equation*}

\end{proposition}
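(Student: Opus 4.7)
The plan is to deduce the tightness of $nR_n(\beta_\ast)$ as an almost immediate consequence of the upper bound in Proposition~\ref{prop:RWPUB}, combined with the classical central limit theorem for $H_n$.

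First, I would fix $\varepsilon > 0$, choose arbitrary $K, \varepsilon' > 0$ (say $K = 1$, $\varepsilon' = 1$), and apply Proposition~\ref{prop:RWPUB} to obtain positive constants $b_0, c_0, \epsilon_0$ and an integer $n_1$ such that for any $b \geq b_0$ and every $n \geq n_1$,
\[
\mathrm{pr}\Big[\sup_{\|u\|_2 \leq K}\big\{nR_n(\beta_\ast + n^{-1/2}u) - f_{up}(H_n,u,b,c_0)\big\} \leq \varepsilon'\Big] \geq 1 - \varepsilon/2.
\]
Specializing the supremum at $u=0$ (which is allowed since $\beta_\ast$ lies in the neighborhood of $\beta_\ast$ contained in $\Theta$ by the last assertion of Proposition~\ref{prop:RWPEq}), we conclude that on this event,
\[
nR_n(\beta_\ast) \leq f_{up}(H_n, 0, b, c_0) + \varepsilon'.
\]

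Next I would bound $f_{up}(H_n, 0, b, c_0)$ by dropping its nonnegative quadratic term and applying the dual-norm identity: when $u = 0$,
\[
f_{up}(H_n, 0, b, c_0) = \max_{\|\xi\|_p \leq b}\Big\{-\xi^{\T} H_n - E\big[\tfrac{1}{4}\|\{D_x h(X,\beta_\ast)\}^{\T}\xi\|_p^2 \, \mathbb{I}(X \in C_0^{\epsilon_0})\big]\Big\} \leq \max_{\|\xi\|_p \leq b}(-\xi^{\T} H_n) = b\|H_n\|_q.
\]
Hence $nR_n(\beta_\ast) \leq b\|H_n\|_q + \varepsilon'$ on the event of interest.

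Finally, by Assumption~A2.b we have $E\|h(X,\beta_\ast)\|_2^2 < \infty$, so the classical central limit theorem gives $H_n \Rightarrow H \sim \mathcal{N}(0, \mathrm{cov}\{h(X,\beta_\ast)\})$, whence $\{\|H_n\|_q : n \geq 1\}$ is tight. Choose $K_0 > 0$ and $n_2$ so that $\mathrm{pr}(\|H_n\|_q > K_0) \leq \varepsilon/2$ for all $n \geq n_2$. Setting $a = b K_0 + \varepsilon'$ and $n_0 = \max\{n_1, n_2\}$, the union bound yields $\mathrm{pr}(nR_n(\beta_\ast) \leq a) \geq 1 - \varepsilon$ for all $n \geq n_0$.

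There is no serious obstacle here; the heavy lifting has already been done in Proposition~\ref{prop:RWPUB}. The only point requiring care is verifying that the formula underlying $f_{up}$ is applicable at $u=0$, which follows from the inclusion $\beta_\ast \in \Theta$ guaranteed by Proposition~\ref{prop:RWPEq}. Thereafter the bound collapses via Hölder's inequality and standard CLT tightness.
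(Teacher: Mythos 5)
Your proof is correct and follows essentially the same route as the paper: both rest on applying Proposition~\ref{prop:RWPUB} at $u=0$ and then using tightness of the resulting upper bound $f_{up}(H_n,0,b,c)$, which ultimately comes from the central limit theorem for $H_n$. Your explicit reduction $f_{up}(H_n,0,b,c_0)\leq b\Vert H_n\Vert_q$ via the dual-norm identity just makes the tightness step more self-contained than the paper's appeal to the weak convergence in \eqref{inter-mas-up}.
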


\begin{proposition}
For any $\varepsilon,\varepsilon^{\prime}, K > 0,$ there exist positive
constants $n_{0},\delta$ such that,
\begin{align*}
\sup_{\underset{\Vert u_{i}\Vert_{2} \leq K}{\Vert u_{1} - u_{2}\Vert_{2}
\leq\delta}} \big\vert n R_{n}\big( \beta_{\ast}+ n^{-1/2}u_{1}\big) - n
R_{n}\big( \beta_{\ast}+ n^{-1/2}u_{2}\big) \big\vert \leq
\varepsilon^{\prime},
\end{align*}
with probability exceeding $1-\varepsilon,$ for every $n > n_{0}.$ \label%
{prop:RWPcont}
\end{proposition}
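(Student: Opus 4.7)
The plan is to deduce the equicontinuity of $u \mapsto n R_n(\beta_\ast + n^{-1/2} u)$ on $\{\|u\|_2 \leq K\}$ by sandwiching it between the deterministic envelopes $f_{up}$ and $f_{low}$ furnished by Propositions \ref{prop:RWPUB} and \ref{prop:RWPLB}, and then showing that both envelopes are Lipschitz in $u$ with a constant that does not depend on $n$ or on $H_n$. Concretely, I would fix $\varepsilon, \varepsilon', K > 0$ and invoke Proposition \ref{prop:RWPUB} with tolerance $\varepsilon'/6$ to obtain $b_0$; set $b = b_0$. On a high-probability event $A_1$, $n R_n(\beta_\ast + n^{-1/2} u) \leq f_{up}(H_n, u, b, c_0, \epsilon_0) + \varepsilon'/6$ uniformly for $\|u\|_2 \leq K$, every $c_0 > 0, \epsilon_0 > 0$, and $n$ large. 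Proposition \ref{prop:RWPLB} analogously yields an event $A_2$ on which $n R_n(\beta_\ast + n^{-1/2} u) \geq f_{low}(H_n, u, b) - \varepsilon'/6$ uniformly on the same compact set.

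Next, I would make the gap $|f_{up} - f_{low}|$ small. Using $|\max \phi - \max \psi| \leq \max|\phi - \psi|$, this difference is dominated, uniformly in $\|u\|_2 \leq K$, by the expectation of $\bigl(4^{-1}\|D_x h(X,\beta_\ast)^{\T}\xi\|_p^2 + |\xi^{\T} D_\beta h(X,\beta_\ast) u|\bigr)\,\mathbb{I}(X \notin C_0^{\epsilon_0})$ optimized over $\|\xi\|_p \leq b$. The moment conditions supplied by Assumption A2 (in particular $E[\|D_x h(X,\beta_\ast)\|_q^2] < \infty$ via the growth bound $M'(1+\|X\|_q)$ combined with $E[\|X\|_2^2]<\infty$) render the integrand uniformly integrable; taking $c_0$ large and $\epsilon_0$ small then yields, via dominated convergence, $|f_{up} - f_{low}| \leq \varepsilon'/6$ uniformly on the compact.

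Third, for each fixed $\xi$ the maps $u \mapsto f_{up}(H_n, u, b, c_0, \epsilon_0)$ and $u \mapsto f_{low}(H_n, u, b)$ are affine in $u$, and crucially their $u$-gradients involve neither $H_n$ nor terms that grow with $n$. Maximizing over the compact set $\{\|\xi\|_p \leq b\}$ preserves Lipschitz continuity in $u$, with a deterministic constant $L$ proportional to $b \cdot E[\|D_\beta h(X,\beta_\ast)\|]$. Choosing $\delta$ so that $L\delta \leq \varepsilon'/6$, the chain
\[
n R_n(\beta_\ast + n^{-1/2} u_1) - n R_n(\beta_\ast + n^{-1/2} u_2) \leq f_{up}(u_1) - f_{low}(u_2) + \tfrac{\varepsilon'}{3} \leq L\delta + \tfrac{\varepsilon'}{6} + \tfrac{\varepsilon'}{3} \leq \varepsilon'
\]
holds on $A_1 \cap A_2$ whenever $\|u_1 - u_2\|_2 \leq \delta$; the reverse inequality follows by symmetry, giving the desired uniform modulus of continuity.

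The main subtlety I anticipate is verifying $E[\|D_\beta h(X,\beta_\ast)\|] < \infty$, which ensures $L$ is finite. This is not stated verbatim in Assumption A2 but follows from the local Lipschitz bound $\bar{\kappa}$ in Assumption A2.c combined with $E[\|X\|_2^2]<\infty$, which forces at most linear growth of $D_\beta h(\cdot,\beta_\ast)$ in $x$. A secondary point is that the quantifier order in Proposition \ref{prop:RWPUB} matters: $b_0$ must be chosen first and $c_0, \epsilon_0$ afterwards, because $L$ scales linearly with $b$, so one cannot allow $b \to \infty$ once the other parameters have been fixed.
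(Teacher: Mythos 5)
Your argument is correct, but it is not the route the paper takes. The paper's proof works at the pre-limit level: writing $nR_n(\beta_*+n^{-1/2}u)=\max_\xi\{-\xi^\T H_n-M_n(\xi,u)\}$, it bounds the difference of the two maxima by $\max_{j=1,2}|M_n(\xi_j,u_1)-M_n(\xi_j,u_2)|$ where $\xi_j$ are the optimizers, confines those optimizers to $\{\|\xi\|_p\le b\}$ via Lemma \ref{lemmaksai}, and then invokes a dedicated Lipschitz estimate for $M_n(\xi,\cdot)$ uniform over that ball (Lemma \ref{lem-lip-cont-Mn}). You instead sandwich $nR_n$ between the deterministic envelopes of Propositions \ref{prop:RWPUB}--\ref{prop:RWPLB}, shrink the gap between them by sending $c_0\to\infty$, $\epsilon_0\to 0$ (a dominated-convergence step the paper never needs for this proposition, since it uses the envelopes only for pointwise convergence in Theorem \ref{thm:master}), and exploit that the envelopes are affine in $u$ with an $n$-free Lipschitz constant $\approx b\,\|E\{D_\beta h(X,\beta_*)\}\|$. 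What your route buys is that Lemma \ref{lem-lip-cont-Mn} becomes unnecessary; what it costs is the extra truncation/uniform-integrability argument and a reliance on the uniform-in-$u$ form of the lower bound (note Proposition \ref{prop:RWPLB} is printed with a $\sup_u$ where the proof actually establishes the bound for all $\|u\|_2\le K$, which is the reading you need and the one the paper's own use of it presupposes). Your quantifier discipline ($b$ fixed first, then $c_0,\epsilon_0$, then $n_0$, with $\delta$ tied to $L\propto b$) is exactly right. One small repair: the finiteness of $E\|D_\beta h(X,\beta_*)\|$ is better justified as being implicit in Assumption A2.b (the matrix $C=E\{D_\beta h(X,\beta_*)\}$ must be well defined) than deduced from the local Lipschitz bound $\bar\kappa$, which only controls increments with $\|\Delta\|_q+\|u\|_q\le 1$ and does not by itself yield linear growth in $x$.
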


Proofs of Propositions \ref{prop:ERM} - \ref{prop:RWPcont} are furnished in
Section \ref{sec:proofs-props-master-thm} in the supplementary material. With the statements of these
results, we proceed with the proof of Theorem \ref{thm:master} as follows.

\begin{proof}[Proof  of Theorem \ref{thm:master}]
  Since $E\{h(X,\beta _{\ast })\}=0,$ it follows from central limit
  theorem that
$
H_{n}\Rightarrow -H,
$
where
$H\sim \mathcal{N}(0,E\{h(X,\beta _{\ast })h(X,\beta _{\ast })^{
  \T}\}).$ Since inequalities \eqref{eqn:V_DRO} and \eqref{eqn:V_ERM}
are associated with the same $H_n$ , it follows from Propositions
\ref%
{prop:ERM} and \ref{prop:DRO} that,
\begin{equation}
V_{n}^{ERM}(\cdot )\Rightarrow V^{ERM}(\cdot )=V(-H,\cdot )\quad \text{ and
}\quad V_{n}^{DRO}(\cdot )\Rightarrow V^{DRO}(\cdot )=V\{-f_{\eta ,\gamma
}(H),\cdot \}  \label{inter-mas-thm-pf-0}
\end{equation}%
jointly, on the space topologized by uniform convergence on compact sets.

To prove convergence of the third component of the triplet considered in
Theorem \ref{thm:master}, observe from the definitions of $\varphi ^{\ast
}(\cdot )$ and $C$ that,
\begin{equation}
  \varphi ^{\ast }(H-Cu)=\max_{\xi }\big(\xi ^{{\mathrm{\scriptscriptstyle T}}}
  [ H-E\{ D_{\beta }h(X,\beta _{\ast })\} u] - 4^{-1}
  E\Vert  \{ D_{x}h(X,\beta _{\ast })\} ^{{\mathrm{\scriptscriptstyle %
        T}}}\xi \Vert _{p}^{2}\big).  \label{Expr-psi-HCu}
\end{equation}%
Consider any fixed $K\in (0,+\infty ).$ Due to the weak convergence $%
H_{n}\Rightarrow -H,$ applications of continuous mapping theorem to the
bounds in Proposition \ref{prop:RWPUB},~\ref{prop:RWPLB} result in the
conclusions that,
\begin{equation}
  f_{up}(H_{n},u,b,c)\Rightarrow \max_{\Vert \xi \Vert _{p}\leq b}\big\{\xi ^{{\T}}H-
  E\big[ {4}^{-1}\Vert \big\{D_{x}h(X,\beta _{\ast })] ^{{\mathrm{\scriptscriptstyle T}}}\xi
  \Vert_{p}^{2}+\xi ^{{\mathrm{\scriptscriptstyle T}}}D_{\beta }h(X,\beta _{\ast})u\big]
  \mathbb{I}(X \in C_0^{\epsilon_0}) \big\},
\label{inter-mas-up}
\end{equation}%
\begin{equation}
  f_{low}(H_{n},u,b)\Rightarrow \max_{\Vert \xi \Vert _{p}\leq b}\big\{\xi ^{{\T}}H-
  E\big[ {4}^{-1}\Vert  \{D_{x}h(X,\beta _{\ast })\}^{{\mathrm{\scriptscriptstyle T}}}\xi \Vert
  _{p}^{2}+\xi ^{{\mathrm{\scriptscriptstyle T}}}D_{\beta }h(X,\beta _{\ast
  })u\big] \big\},  \label{inter-mas-low}
\end{equation}%
for any $u$ satisfying $\Vert u\Vert _{2}\leq K.$ Since the bounds in
Propositions \ref{prop:RWPUB},~\ref{prop:RWPLB} hold for arbitrarily large
choices for constants $b,c,$ and arbitrarily small choice for constant $\epsilon_0$ combining with the assumption $P_*(\Omega^\circ)=1$, we conclude from the observations %
\eqref{Expr-psi-HCu}, \eqref{inter-mas-up}, and \eqref{inter-mas-low}   that
\begin{equation}
  nR_{n}\big( \beta _{\ast }+n^{-1/2}u\big) \Rightarrow \varphi ^{\ast}(H-Cu),  \label{mas-fdim-conv}
\end{equation}%
for any $u$ satisfying $\Vert u\Vert _{2}\leq K.$ Finally, we have
from Propositions \ref{prop:tightness-1} and \ref%
{prop:RWPcont} that the collection
$\{nR_{n}(\beta_{\ast}+n^{-1/2}\times%
\cdot\,)\}$ is tight; see, for example, \citet[Theorem
7.4]{billingsley2013convergence}. As a consequence of this tightness
and the finite dimensional convergence in \eqref{mas-fdim-conv}, we
have that,
\begin{equation*}
nR_{n}\big( \beta_{\ast}+n^{-1/2}\times\cdot\,\big) \Rightarrow
\varphi^{\ast}(H-C\times\cdot\,).
\end{equation*}
Combining this observation with those in \eqref{inter-mas-thm-pf-0}, we
obtain the desired convergence result in Theorem \ref{thm:master}. Furthermore, since $f_{low}(H_{n},u,b)$ and $f_{up}(H_{n},u,b)$ are associated with the same $H_n$ with inequalities \eqref{eqn:V_DRO} and \eqref{eqn:V_ERM}, we have the three terms converge jointly.
\end{proof}

\subsection{Proof of Theorem \protect\ref{thm:levelsets-master}}

\begin{proof}[Proof  of Theorem \ref{thm:levelsets-master}]
  \label{sec:levelsets-master} Theorem \ref{thm:levelsets-master} is
  proved by considering suitable level sets of the component functions
  in the triplet, $%
  \{V_{n}^{ERM}(\cdot), \ V_{n}^{DRO}(\cdot), \ nR_{n}(
    \beta_{\ast}+ n^{-1/2}\times \cdot\ )\}, $ considered in
  Theorem \ref{thm:master}.  To reduce clutter in expressions, from
  here-onwards we refer the distributionally robust estimator
  \eqref{Wass_DRO_A}, simply as $\beta^{DRO}_n,$ with the dependence
  on the radius $\delta_n$ to be understood from the context. To begin,
  consider the following tightness result whose proof is provided in
  Section \ref{ssec:proofs-props-mast-lev-sets}.

\begin{proposition}
The sequences $\{\arg\min\xspace_{u} \, V_{n}^{ERM}(u): n \geq1\}$ and $%
\{\arg\min\xspace_{u} \, V_{n}^{DRO}(u): n \geq1\}$ are tight. \label%
{prop:argmintightness}
\end{proposition}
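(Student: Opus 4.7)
The plan is to exploit convexity of both scaled objectives, together with the quadratic approximations of Propositions \ref{prop:ERM} and \ref{prop:DRO} and the tightness of $\{H_{n}\}$ supplied by the central limit theorem.

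First, both $V_{n}^{ERM}(\cdot)$ and $V_{n}^{DRO}(\cdot)$ are convex functions on $\mathbb{R}^{d}$. Convexity of $V_{n}^{ERM}$ is immediate from the convexity of $\ell(x,\cdot)$ in Assumption A2.a. Convexity of $V_{n}^{DRO}$ follows because $\Psi_{n}(\beta)=\sup_{P\in\mathcal{U}_{\delta_{n}}(P_{n})}E_{P}\{\ell(X;\beta)\}$ is a supremum over $P$ of functions which are convex in $\beta$. Moreover, both satisfy $V_{n}^{ERM}(0)=V_{n}^{DRO}(0)=0$ by construction, so each argmin achieves a value $\leq 0$.

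Next I would invoke the following standard convexity trick: if $g:\mathbb{R}^{d}\rightarrow\mathbb{R}$ is convex with $g(0)=0$ and $\inf_{\|v\|_{2}=K}g(v)>0$, then $\arg\min_{u}g(u)\subseteq\{u:\|u\|_{2}\leq K\}$. Indeed, for any $u$ with $\|u\|_{2}>K$, setting $\lambda=K/\|u\|_{2}\in(0,1)$ and $v=\lambda u$, convexity yields $g(v)\leq\lambda g(u)+(1-\lambda)g(0)=\lambda g(u)$, whence $g(u)\geq\lambda^{-1}g(v)>0=g(0)$, ruling out any minimizer outside the ball of radius $K$.

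To verify the positivity hypothesis on a sphere of suitable radius, I would combine Propositions \ref{prop:ERM} and \ref{prop:DRO} with the tightness of $\{H_{n}\}$, which follows from the central limit theorem applied to $h(X,\beta_{\ast})$ (whose second moment is finite by A2.b). Given $\varepsilon>0$, pick $M$ so that $\|H_{n}\|_{2}\leq M$ and $\|f_{\eta,\gamma}(-H_{n})\|_{2}\leq M$ with probability at least $1-\varepsilon/2$ for all large $n$. Writing $\lambda_{\min}$ for the smallest eigenvalue of $C\succ 0$ and choosing $K$ large enough that $K\lambda_{\min}>4M$, on the sphere $\|u\|_{2}=K$ the quadratic surrogates $H_{n}^{\T}u+\tfrac{1}{2}u^{\T}Cu$ and $-f_{\eta,\gamma}(-H_{n})^{\T}u+\tfrac{1}{2}u^{\T}Cu$ are both bounded below by $K^{2}\lambda_{\min}/4$. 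Applying Propositions \ref{prop:ERM} and \ref{prop:DRO} with this $K$ and any $\varepsilon'<K^{2}\lambda_{\min}/8$, one concludes that with probability at least $1-\varepsilon$, both $V_{n}^{ERM}$ and $V_{n}^{DRO}$ exceed $K^{2}\lambda_{\min}/8>0$ at every point of the sphere $\|u\|_{2}=K$. Combined with the convexity trick, this yields the desired tightness of $\arg\min V_{n}^{ERM}$ and $\arg\min V_{n}^{DRO}$.

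The main obstacle is upgrading the (a priori pointwise in $u$) approximations of Propositions \ref{prop:ERM} and \ref{prop:DRO} to a statement holding uniformly on the sphere $\{\|u\|_{2}=K\}$. This can be handled by invoking the uniform-on-compact-sets convergence already established in Theorem \ref{thm:master}, or, more directly, by a finite $\delta$-net on the sphere combined with the classical fact that a sequence of convex functions which converges pointwise on an open set converges uniformly on compact subsets; the requisite equicontinuity is controlled through the local Lipschitz bounds of Assumption A2.c.
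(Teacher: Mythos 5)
Your proposal is correct and follows essentially the same route as the paper's proof: convexity of $V_n^{ERM}$ and $V_n^{DRO}$ (the latter because $\Psi_n$ is a supremum of convex functions), the quadratic lower bounds from Propositions \ref{prop:ERM}--\ref{prop:DRO} combined with tightness of $H_n$ and $\lambda_{\min}(C)>0$ to force positivity away from the origin, and then the convexity argument to confine the minimizers to a fixed ball. The only cosmetic difference is that you argue via positivity on a sphere plus a scaling trick while the paper argues via positivity on an annulus $K_3/2\leq\|u\|_2\leq K_3$; your explicit remark about upgrading the pointwise-in-$u$ bounds to uniform ones on the sphere is a fair point, and is resolved exactly as you suggest since the error terms in Propositions \ref{prop:ERM}--\ref{prop:DRO} are in fact uniform over $u$ in compact sets.
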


Observe that $V_{n}^{ERM}(\cdot)$ and $V_{n}^{DRO}(\cdot)$ are
minimized, respectively, at $n^{1/2}(\beta_{n}^{ERM}-\beta_{\ast})$
and $n^{\bar{\gamma }/2}(\beta_{n}^{DRO}-\beta_{\ast}).$ Furthermore,
due to the positive definiteness of $C$ in Assumption A2.b, we
have that $V^{ERM}(\cdot)$ and $V^{DRO}(\cdot)$ are strongly convex
with respect to $u$ and have unique minimizers, with probability
1. Therefore, due to the tightness of the sequences
$\{n^{1/2}(\beta_{n}^{ERM}-\beta_{\ast})\}_{n\geq1}$ and $\{n^{%
  \bar{\gamma}/2}(\beta_{n}^{DRO}-\beta_{\ast})\}_{n\geq1}$; see
Proposition %
\ref{prop:argmintightness} and the weak convergence of
$V_{n}^{ERM}(\cdot)$ and $V_{n}^{DRO}(\cdot)$ in Theorem
\ref{thm:master}, we have the following convergences:
\begin{align}
  n^{1/2}(\beta_{n}^{ERM}-\beta_{\ast})&\Rightarrow\arg\min\xspace%
  _{u}\,V(-H,u)=C^{-1}H,  \label{inter-mas-thm-pf-1}\\
  n^{\bar{\gamma}/2}(\beta_{n}^{DRO}-\beta_{\ast})
  &\Rightarrow\arg\min \xspace%
  _{u}\,V^{DRO}(u)=C^{-1}f_{\eta,\gamma}(H)
  \nonumber
\end{align}

Finally, to prove the convergence of the sets $\Lambda^+_{\delta_{n}}(P_{n}),$
we proceed as follows. Define
\begin{equation*}
G_{n}(u) = nR_{n}(\beta_{\ast}+ n^{-1/2}u), \quad G(u) = \varphi^{\ast
}(H-Cu), \quad\text{ and } \quad\alpha_{n} = n\delta_{n}.
\end{equation*}
For any function $f:B \rightarrow\mathbb{R}$ and $\alpha \in[%
0,+\infty],$ let lev$(f,\alpha)$ denote the level set
$\{ x \in \mathbb{R}^d: f(x) \leq\alpha\}.$

\begin{proposition}
  If $\delta_{n} = n^{-1}\eta,$ then
  $\mathrm{cl}\{$\textnormal{lev}$(G_{n},\alpha_{n})\} \Rightarrow$
  \textnormal{lev}$(G,\eta).$ \label{prop-levset-eq1}
\end{proposition}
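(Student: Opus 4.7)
The plan is to reduce weak convergence of random closed sets to an almost-sure Painlev\'e--Kuratowski statement via Skorokhod's representation theorem, and then verify the PK convergence pointwise by exploiting convexity, continuity, and coercivity of the limit $G$. Theorem \ref{thm:master} yields joint weak convergence $G_n \Rightarrow G$ on the Polish space $C(\mathbb{R}^d;\mathbb{R})$ equipped with uniform convergence on compact sets, and the space of closed subsets of $\mathbb{R}^d$ under the Fell/PK topology is also Polish. Passing to a common probability space on which $G_n \to G$ uniformly on every compact subset of $\mathbb{R}^d$ almost surely, and noting that $\alpha_n \equiv \eta$ when $\delta_n = \eta/n$, it suffices to verify the pointwise-in-$\omega$ convergence $\mathrm{cl}\{\mathrm{lev}(G_n,\eta)\} \to \mathrm{lev}(G,\eta)$ in the PK sense.

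For the outer inclusion $\mathrm{Ls}_n\,\mathrm{cl}\{\mathrm{lev}(G_n,\eta)\} \subseteq \mathrm{lev}(G,\eta)$, I would fix a convergent sequence $u_{n_k} \to u$ with $u_{n_k} \in \mathrm{cl}\{\mathrm{lev}(G_{n_k},\eta)\}$ and extract, via a diagonal argument inside each closure, points $\tilde u_{n_k} \in \mathrm{lev}(G_{n_k},\eta)$ still converging to $u$. Because $\{\tilde u_{n_k}\}$ is eventually contained in a fixed compact neighborhood of $u$, uniform-on-compacts convergence together with continuity of $G$ yield $G(u) = \lim_k G_{n_k}(\tilde u_{n_k}) \leq \eta$, so $u \in \mathrm{lev}(G,\eta)$.

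For the inner inclusion $\mathrm{lev}(G,\eta) \subseteq \mathrm{Li}_n\,\mathrm{cl}\{\mathrm{lev}(G_n,\eta)\}$, I would exploit convexity of $G$ together with a Slater-type interior point. The map $u \mapsto G(u) = \varphi^{\ast}(H - Cu)$ is convex as the composition of a convex conjugate with an affine map, and the point $u_0 := C^{-1}H$ satisfies $G(u_0) = \varphi^{\ast}(0) = 0 < \eta$. Given any $u \in \mathrm{lev}(G,\eta)$, the segment $u_t = (1-t)u + t u_0$ satisfies $G(u_t) \leq (1-t)\eta + t \cdot 0 < \eta$ for every $t \in (0,1]$. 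Uniform convergence on the compact segment $\{u_t : t \in [0,1]\}$ then gives $G_n(u_t) \leq \eta$ for all sufficiently large $n$, so $u_t \in \mathrm{lev}(G_n,\eta)$; a diagonal construction with $t_n \downarrow 0$ at an appropriately slow rate produces the required approximating sequence in $\mathrm{cl}\{\mathrm{lev}(G_n,\eta)\}$ converging to $u$.

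The principal subtlety is confining approximating sequences to a common compact set so that uniform-on-compacts convergence is applicable, and this is handled by coercivity of $G$. Since $\varphi$ is positively homogeneous of degree two with $\varphi(\xi) > 0$ for $\xi \neq 0$---a consequence of the full-rank assumption $E\{D_x h(X,\beta_{\ast}) D_x h(X,\beta_{\ast})^{\T}\} \succ 0$ in A2.b---the elementary estimate $\varphi^{\ast}(\zeta) \geq \|\zeta\|_2^4 / \{4 \varphi(\zeta)\}$, obtained by plugging $\xi = \varepsilon \zeta$ into the defining supremum and optimizing in $\varepsilon$, shows that $\varphi^{\ast}$ is coercive. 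Combined with invertibility of $C \succ 0$, this makes $G$ coercive and $\mathrm{lev}(G,\eta)$ compact; the outer inclusion together with uniform convergence on a slightly larger compact set then traps $\mathrm{lev}(G_n,\eta)$ in a common bounded region for all sufficiently large $n$, completing the verification in this locally compact, Polish setting.
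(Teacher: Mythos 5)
Your proof is correct, and while it shares the paper's opening move -- invoking Theorem \ref{thm:master} and Skorokhod's representation theorem to reduce weak convergence of the random closed sets to an almost-sure Painlev\'e--Kuratowski statement -- it diverges in how the set convergence itself is established. The paper upgrades the a.s.\ uniform-on-compacts convergence to continuous convergence and hence epi-convergence (via Theorems 7.14 and 7.11 of Rockafellar--Wets), and then cites off-the-shelf level-set convergence theorems for epi-convergent sequences, the applicability of which rests on the observation that $\inf_u G(u)=\varphi^\ast(0)=0<\eta$, so that $\eta$ is not a degenerate level. You instead verify the two PK inclusions by hand: the outer inclusion by a diagonal extraction plus continuity of $G$, and the inner inclusion by a Slater-point argument along the segment joining $u\in\mathrm{lev}(G,\eta)$ to $u_0=C^{-1}H$, exploiting convexity of $G$ and $G(u_0)=0$. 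These are two faces of the same fact -- the strict gap between $\eta$ and the minimum of $G$ is what prevents the level set from collapsing in the limit -- but your route is self-contained and makes the mechanism explicit, at the cost of being longer; the paper's route is shorter but leans on external variational-analysis results. Your coercivity estimate $\varphi^\ast(\zeta)\geq \Vert\zeta\Vert_2^4/\{4\varphi(\zeta)\}$ is a correct and pleasant addition (it also gives compactness of $\Lambda_\eta$, which the paper does not state here), though strictly speaking it is not needed for the PK convergence, since the outer-limit argument only ever deals with sequences that are convergent and hence already confined to compact sets.
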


\begin{proposition}
If $\delta_{n} = n^{-\gamma}\eta$ for some $\gamma> 1,$ then $\mathrm{cl}\{$\textnormal{lev}%
$(G_{n},\alpha_{n})\} \Rightarrow\{C^{-1}H\}.$ \label{prop-levset-gthan1}
\end{proposition}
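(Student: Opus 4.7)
My plan is to upgrade the weak convergence from Theorem \ref{thm:master} to almost-sure convergence via the Skorokhod representation theorem on the Polish space $C(\mathbb{R}^d;\mathbb{R})\times \mathbb{R}^d$, and then verify Painlev\'e-Kuratowski convergence of the deterministic closed sets $F_n := \mathrm{cl}\{\mathrm{lev}(G_n,\alpha_n)\}$ to the singleton $\{C^{-1}H\}$ pathwise. Since the Fell topology on closed subsets of $\mathbb{R}^d$ is Polish, almost-sure PK-convergence yields the asserted weak convergence of random closed sets. Joint weak convergence of $G_n$ together with $u_n^{ERM}:=n^{1/2}(\beta_n^{ERM}-\beta_\ast)$ to $(G,\,C^{-1}H)$, where $G(u)=\varphi^\ast(H-Cu)$, follows from Theorem \ref{thm:master} and the argmin-continuity argument already used in \eqref{inter-mas-thm-pf-1}. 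The key feature of the regime $\gamma>1$ is that $\alpha_n = n\delta_n = \eta n^{1-\gamma}\to 0$.

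A preliminary step is to show that $G$ vanishes only at $C^{-1}H$. By Assumption A2.b, $E\{D_x h(X,\beta_\ast) D_x h(X,\beta_\ast)^{\T}\}\succ 0$, so $\varphi(\xi)=0$ forces $\xi=0$; since $\varphi$ is a quadratic form in $\xi$ bounded above by $c\|\xi\|_2^2$ for some $c>0$ (using the linear growth bound $\|D_xh(x,\beta_\ast)\|_q\leq M'(1+\|x\|_q)$ in A2.c together with $E\|X\|_2^2<\infty$), Fenchel duality gives $\varphi^\ast(\zeta)\geq \|\zeta\|_2^2/(4c)>0$ for $\zeta\neq 0$. Combined with invertibility of $C$, this yields $G(u)=0\Leftrightarrow u = C^{-1}H$.

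On the almost-sure event that $G_n\to G$ uniformly on compact sets and $u_n^{ERM}\to C^{-1}H$, I then verify the two halves of PK-convergence. For the inner limit, $\beta_n^{ERM}$ minimizes $E_{P_n}\{\ell(X,\beta)\}$, so $P_n$ is feasible in the infimum defining $R_n(\beta_n^{ERM})$ at cost $D_c(P_n,P_n)=0$; consequently $G_n(u_n^{ERM})=n R_n(\beta_n^{ERM})=0\leq \alpha_n$, so $u_n^{ERM}\in F_n$, which with $u_n^{ERM}\to C^{-1}H$ yields $C^{-1}H\in \mathrm{Li}\,F_n$. For the outer limit, take any compact $K$ with $C^{-1}H\notin K$; continuity and strict positivity of $G$ on $K$ give $m:=\min_{u\in K} G(u)>0$, uniform-on-compacts convergence of $G_n$ makes $\min_{u\in K} G_n(u)\geq m/2$ eventually, and $\alpha_n<m/2$ eventually, hence $K\cap \mathrm{lev}(G_n,\alpha_n)=\varnothing$ (and so $K\cap F_n=\varnothing$, as $G_n$ is continuous so the sublevel set is already closed). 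This delivers $\mathrm{Ls}\,F_n\subset \{C^{-1}H\}$, completing the PK-convergence.

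I expect the only delicate point to be verifying strict positivity of $\varphi^\ast$ away from the origin, which relies on the non-degeneracy condition in A2.b; everything else reduces to a routine level-set argument for continuous functions converging uniformly on compact sets, made possible by the fact that $\alpha_n\downarrow 0$ in the regime $\gamma>1$.
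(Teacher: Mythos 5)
Your proposal is correct and follows essentially the same route as the paper: Skorokhod representation to pass to almost-sure uniform convergence, membership $n^{1/2}(\beta_n^{ERM}-\beta_\ast)\in\mathrm{lev}(G_n,\alpha_n)$ via $R_n(\beta_n^{ERM})=0$ for the inner limit, containment of the outer limit in $\mathrm{lev}(G,0)=\{C^{-1}H\}$ using $\alpha_n\to 0$, and metrizability of the Fell topology to convert almost-sure Painlev\'e--Kuratowski convergence into weak convergence. The only difference is cosmetic: where the paper cites \citet[Proposition 7.7a]{rockafellar2009variational} for the outer-limit inclusion and appeals to strict convexity of $\varphi^\ast$ for the singleton identity, you verify both directly (a compactness argument for the sublevel sets, and the Fenchel-duality bound $\varphi^\ast(\zeta)\geq \Vert\zeta\Vert_2^2/(4c)$ from $\varphi(\xi)\leq c\Vert\xi\Vert_2^2$), which is sound given the moment and growth conditions in Assumptions A2.b--c.
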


\begin{proposition}
  If $\delta_{n} = n^{-\gamma}\eta$ for some $\gamma< 1,$ then
  $\mathrm{cl}\{$\textnormal{lev}$(G_{n},\alpha_{n})\} \Rightarrow$
  $\mathbb{R}^{d}.$ \label{prop-levset-lthan1}
\end{proposition}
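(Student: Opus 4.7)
With $G_n(u)=nR_n(\beta_\ast + n^{-1/2}u)$ and $\alpha_n = n\delta_n = n^{1-\gamma}\eta$, the hypothesis $\gamma<1$ yields $\alpha_n\to\infty$. The plan is to exhibit the limit set $\mathbb{R}^d$ by showing that, with probability tending to one, every compact subset of $\mathbb{R}^d$ is eventually contained in $\mathrm{cl}\,\textnormal{lev}(G_n,\alpha_n)$. Once this is established, the characterization of Fell-topology convergence to the whole space, combined with the fact recalled in Section \ref{SectConv_Sets} that the space of closed subsets of $\mathbb{R}^d$ is metrizable, delivers the required weak convergence to the deterministic limit $\mathbb{R}^d$.

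The main step is to combine Theorem \ref{thm:master}, which gives $G_n\Rightarrow G:=\varphi^\ast(H-C\,\cdot\,)$ in $C(\mathbb{R}^d,\mathbb{R})$ with the topology of uniform convergence on compact sets, with the continuous mapping theorem applied to the functional $f\mapsto\sup_{u\in K} f(u)$ for an arbitrary compact $K\subset\mathbb{R}^d$. This yields
\[
\sup_{u\in K} G_n(u) \Rightarrow \sup_{u\in K} \varphi^\ast(H-Cu),
\]
and the right-hand side is finite almost surely because $\varphi^\ast$ is continuous and $K$ is compact. Hence $\{\sup_{u\in K}G_n(u)\}_{n\geq 1}$ is tight in $\mathbb{R}$, and coupled with $\alpha_n\to\infty$ we obtain, for every compact $K\subset\mathbb{R}^d$,
\[
\mathrm{pr}\big\{K\subseteq \mathrm{cl}\,\textnormal{lev}(G_n,\alpha_n)\big\} \;\geq\; \mathrm{pr}\big\{\textstyle\sup_{u\in K} G_n(u) \leq \alpha_n\big\} \;\to\; 1.
\]

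To conclude, write $A_n := \mathrm{cl}\,\textnormal{lev}(G_n,\alpha_n)$. Since the limit $\mathbb{R}^d$ is deterministic and the Fell topology is metrizable, the required weak convergence $A_n\Rightarrow\mathbb{R}^d$ reduces to convergence in probability; the Fell-topology sub-basis of neighborhoods of $\mathbb{R}^d$ makes this equivalent to the statement that, for every nonempty open $U\subset\mathbb{R}^d$, $\mathrm{pr}\{A_n\cap U\neq\emptyset\}\to 1$. Any such $U$ contains a closed ball $\bar B$, which is compact, so the previous paragraph applied to $K=\bar B$ gives $\mathrm{pr}\{\bar B\subseteq A_n\}\to 1$, and hence $\mathrm{pr}\{A_n\cap U\neq\emptyset\}\to 1$. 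The only substantive input is the functional weak convergence of Theorem \ref{thm:master}; the rest is a tightness-plus-continuous-mapping argument. In contrast to Propositions \ref{prop-levset-eq1} and \ref{prop-levset-gthan1}, no fine information about the shape of the limiting sublevel set of $\varphi^\ast(H-C\,\cdot\,)$ is required, since $\alpha_n\to\infty$ forces the sublevel sets to swallow every compact subset of $\mathbb{R}^d$; the main conceptual point is simply to translate scalar tightness into the correct notion of set-valued convergence via the Fell topology.
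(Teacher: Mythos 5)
Your proof is correct, but it formalizes the set convergence by a genuinely different route than the paper. The paper's proof (like those of Propositions \ref{prop-levset-eq1} and \ref{prop-levset-gthan1}) passes through the Skorokhod representation to obtain almost sure uniform convergence $G_n \to G$ on compacts, then argues pointwise: for each fixed $u$, $G_n(u) \to G(u) < \infty$ while $\alpha_n \to \infty$, so $u$ lies in $\mathrm{Li}_n\, \mathrm{lev}(G_n,\alpha_n)$, whence the Painlev\'e--Kuratowski limit is $\mathbb{R}^d$ almost surely and weak convergence follows. You instead stay at the level of weak convergence throughout: you push Theorem \ref{thm:master} through the continuous functional $f \mapsto \sup_{u\in K} f(u)$, deduce tightness of $\sup_K G_n$, conclude $\mathrm{pr}\{K \subseteq \mathrm{cl}\,\mathrm{lev}(G_n,\alpha_n)\}\to 1$ for every compact $K$, and then exploit the fact that the limit set is deterministic to reduce weak convergence to convergence in probability, verified on the hit-set sub-basis of Fell neighborhoods of $\mathbb{R}^d$. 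The underlying mechanism (finiteness of $G$ versus $\alpha_n \to \infty$) is identical, but your version avoids Skorokhod and the inner/outer-limit machinery entirely, which is cleaner here; the trade-off is that the reduction to convergence in probability is special to a deterministic limit and would not adapt to the $\gamma=1$ or $\gamma>1$ cases, where the paper's a.s.-representation template is reused. One small point worth making explicit if you write this up: the continuity (indeed finiteness) of $\varphi^\ast$, which you invoke to get $\sup_{u\in K}\varphi^\ast(H-Cu)<\infty$, follows from $\varphi(\xi)\geq c\Vert\xi\Vert_2^2$ under Assumption A2.b, as recorded in part (i) of Proposition \ref{Prop_Reg_Plug}.
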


Propositions \ref{prop-levset-eq1} - \ref{prop-levset-lthan1} above, whose
proofs are furnished in Section \ref{ssec:proofs-props-mast-lev-sets}, allow
us to complete the proof of Theorem \ref{thm:levelsets-master} as follows.
It follows from the definition of $R_{n}(\beta )$ that,
\begin{equation*}
\Lambda^+_{\delta _{n}}(P_{n})=\left\{ \beta :R_{n}(\beta )\leq \delta
_{n}\right\} =\beta _{\ast }+n^{-1/2}\left\{ u:G_{n}(u)\leq \alpha
_{n}\right\} .
\end{equation*}%
We have from Propositions \ref{prop-levset-eq1} -
\ref{prop-levset-lthan1} that
\begin{equation*}
n^{1/2}\left( \Lambda^+_{\delta _{n}}(P_{n})-\beta _{\ast }\right) =\left\{
u:G_{n}(u)\leq \alpha _{n}\right\} \Rightarrow
\begin{cases}
\text{lev}(G,\eta )\quad & \text{ if }\gamma =1, \\
\mathbb{R}^{d} & \text{ if }\gamma <1, \\
\{C^{-1}H\} & \text{ if }\gamma >1.%
\end{cases}
\end{equation*}%
Observe that $\varphi ^{\ast }(u)=\varphi ^{\ast }(-u).$ Therefore,
lev$%
(G,\eta )=\{u:\varphi ^{\ast }(H-Cu)\leq \eta \}=C^{-1}H+\{u:\varphi
^{\ast }(Cu)\leq \eta \}.$ Since the three terms in Theorem
\ref{thm:master} converge jointly, we have the three terms in Theorem
\ref%
{thm:levelsets-master} also converge jointly. This completes the proof
of Theorem \ref%
{thm:levelsets-master}.
\end{proof}

Proposition \ref{prop:constrained-support} follows by adopting exactly
the same steps which are used to establish the convergence of
$n^{1/2}\left\{\Lambda^+_{\delta _{n}}(P_{n})-\beta _{\ast }\right\}$ in the proof
of Theorem \ref{thm:levelsets-master}.



\section{Discussions}
\label{sec:discussion}
We discuss the subtleties in
deriving a limit theorem for the distributionally robust estimator
$\beta_n^{DRO}$ when the support of the random vector $X,$ denoted by
$\Omega,$ is constrained to be a strict subset of $\mathbb{R}^m.$
Suppose that the support of $X$ is constrained to be contained in the
set $\Omega = \{x \in \mathbb{R}^m: Ax \leq b\}$ specified in terms of
linear constraints involving an $l \times m$ matrix $A$ and
$b \in \mathbb{R}^l.$ For the sake of clarity, we discuss here only
the non-degenerate case where $\delta_n = \eta/ n.$

Considering the transportation cost $c(x,y) = \Vert x- y\Vert_2^2$ in
Definition \ref{defn:WD}, we demonstrate in Section \ref{ssec:discussion_proof} of the
Supplementary material that the central limit theorem,
$n^{1/2}\{\beta_n^{DRO}(\delta_n) - \beta_\ast\} \Rightarrow
C^{-1}H - \eta^{1/2} C^{-1}D_\beta S(\beta_\ast),$ continues to hold,
for example, in the elementary
case 
where the matrix $A$ has linearly independent rows, $X$ has a
probability density which is absolutely continuous with respect to the
Lebesgue measure on $\mathbb{R}^m$ and the support $\Omega$ is
compact. 
A key element which emerges in the verification (offered in
Proposition \ref{prop:DRO-constrained} in Section \ref{ssec:discussion_proof} of the supplementary material) is that
the fraction of samples which get transported to the boundary of the
set $\Omega$ stays $O_p(n^{-1/2}),$ as $n \rightarrow \infty.$

On the other hand, when the set
$\Omega = \{x \in \mathbb{R}^m: Ax \leq b\}$ has equality constraints
as in, for example,
$\Omega = \{(x_1,x_2,\ldots,x_m) \in \mathbb{R}^2: x_1 - x_2 = 0\},$
the bias term in the limit theorem gets affected due to the constraint
binding all the samples $\{X_1,\ldots,X_n\}$ and the fraction of
samples which get transported to the boundary of the set $\Omega$ is
1. This is easily seen in the linear regression example in Section
\ref{sec:geo_insignts} where $\ell(x,y;\beta) = (y - \beta^\T x)^2$ and
the support is taken as
$\Omega = \{(x_1,x_2) \in \mathbb{R}^2: x_1 = x_2\}.$ For this
elementary example, we instead have,
\begin{align}
    n^{1/2}\big\{\beta_n^{DRO}(\delta_n) - \beta_\ast\big\} \Rightarrow
  C^{-1}H - \eta^{1/2} C^{-1}D_\beta \tilde{S}(\beta_\ast),
  \label{clt-support-constraints}
\end{align}
where $\tilde{S}(\beta)$ is different from the term $S(\beta)$ as in,
$ \tilde{S}(\beta_\ast) = 2^{1/2-1/q} { \vert \beta^\T1 \vert} {
  \Vert \beta \Vert_p^{-1}} S(\beta).$
Here, recall the earlier definition
$S(\beta) = [E\{ \Vert D_x\ell(X;\beta)\Vert_p^2\}]^{1/2}$ in
\eqref{defn:sensitivity-term} for the unconstrained support case. The
computations required to arrive at the above conclusion are presented
in Example A1 in Section A.3 of the supplementary material. {
In the presence of general support constraints of the form $\Omega = \{x \in \mathbb{R}^m:Ax= b\},$ we show with Example A2 in Section A.3 that \eqref{clt-support-constraints} holds with $\tilde{S}(\beta) = \Vert P_{\mathcal{N}(A)} \beta \Vert_2$ for quadratic losses of the form $\ell(x;\beta) = a + \beta^\T x + \beta^\T C \beta;$ here $A$ is taken to be a matrix with linearly independent rows and $P_{\mathcal{N}(A)}$ denotes the projection operator onto the null space of $A.$ The bias term here is again different when compared to the term resulting from $S(\beta) = \Vert \beta \Vert_2$ exhibited in Theorem \ref{thm:levelsets-master}.}
As reasoned above, the presence of equality constraints for the support
$\Omega$ introduces new challenges to be tackled in another study.

\section*{Acknowledgements}
Material in this paper is based upon work supported by the Air Force Office of Scientific Research under award number FA9550-20-1-0397. Additional support is gratefully acknowledged from NSF grants 1915967, 1820942 and 1838576 and  MOE SRG ESD 2018 134.

\bibliographystyle{plainnat}
\bibliography{DR2,DR4}
\ \\

\appendix
\begin{center}
\textbf{\large Supplementary material}
\end{center}

\vspace{-15pt}
\section{Proofs pertaining to limit theorems of $\beta_n^{ERM}$ and
  $\beta_n^{DRO}(\delta_n)$}
  \label{ssec:dro_limit_theorem}
In this section we first present the proofs of Propositions
\ref{prop:ERM} - \ref{prop:DRO} which are useful towards establishing
convergences of the first two components of the triple considered in
Theorem \ref{thm:master}. Following these, we present the proofs of
Propositions \ref{prop:DRO-boundary} - \ref{prop:DRO-property}, both
pertaining to limit theorems for the distributionally robust estimator
under relaxed assumptions. Towards the end of this section, we also
provide the proofs of statements made in Section \ref{sec:discussion}.
%

\begin{proof}[Proof  of Proposition \ref{prop:ERM}]
Recall that $h(x;\beta)=D_{\beta}\ell(x;\beta).$ For $n$ sufficiently large, we have $\beta_{\ast}+n^{-\alpha/2}u \in B^\circ$ for $\| u\| \leq 2$. With $\ell(\cdot)$ being
twice continuously differentiable, employing Taylor expansion up to the
quadratic term, we obtain,
\begin{align*}
n^{\alpha-1}V_{n}^{ERM}\{n^{(1-\alpha)/2}u\} & =n^{\alpha}\left[ E%
_{P_{n}}\left\{ \ell\left( X;\beta_{\ast}+n^{-\alpha/2}u\right) \right\} -%
E_{P_{n}}\left\{ \ell\left( X;\beta_{\ast}\right) \right\} \right] \\
& =n^{\alpha/2}E_{P_{n}}\left\{ h(X;\beta_{\ast})\right\} ^{\T}u+\frac{%
1}{2}u^{\T}E_{P_{n}}\left\{ D_{\beta}h(X;\beta_{\ast })\right\} u+o(1),
\end{align*}
as $n\rightarrow\infty,$ uniformly over $u$ in compact sets. With this
expansion, the statement of Proposition \ref{prop:ERM} follows as a direct
consequence of the definitions, $H_{n}=n^{-1/2}\sum_{i=1}^{n}h(X_{i},%
\beta_{\ast}),$ $C=E\{D_{\beta}h(X;\beta_{\ast})\}$ and an application of the
law of large numbers, $\lim_{n\rightarrow\infty}E_{P_{n}}\{D_{\beta}h(X,%
\beta_{\ast})\}\rightarrow C$ almost surely.
\end{proof}

\subsection{Proof of Proposition \ref{prop:DRO}}
  \label{ssec:proof_prosition_DRO}
This subsection is devoted to the proof of Proposition \ref{prop:DRO}
taking $\Omega = \mathbb{R}^m.$ The following notation will be used in
the sequence of results below used to prove Proposition
\ref{prop:DRO}. Given $q \in (1,\infty),$ let $D_q(v),H_q(v)$ denote
the first derivative (gradient) and second derivative (Hessian) of the
function $f(\Delta) = \Vert \Delta \Vert_q^2$ evaluated at
$\Delta = v.$ Recall that we take $p$ to be such that
$p^{-1} + q^{-1} = 1.$ We also define the map
$T_p: \mathbb{R}^m \rightarrow \mathbb{R}^m$ as,
\begin{align*}
T_p(v) = \Vert v \Vert_p^{1-p/q} \text{sgn}(v)\vert v \vert^{p/q},
\end{align*}
where $\text{sgn}(\cdot)$ denotes the sign function.

Proposition \ref{prop:DRO} is proved via the sequence of results below.
\begin{lemma}
  Letting $\eta_{n} = \delta_{n} n^{\gamma},$  we have, for $\beta \in B$
  \begin{equation*}
    n^{\gamma/2}\left[ \Psi_{n}(\beta) - E_{P_{n}}\{\ell(X;\beta)\} \right]\\
    = \inf_{\lambda \geq 0}\left[ \lambda \eta_{n} +\frac{1}{4\lambda }
    E_{P_{n}}\left\{\left\Vert D_{x}\ell(X;\beta)\right\Vert _{p}^{2}\right\}
    + e_{n}(\beta,\lambda)\right],
\end{equation*}
where the function $e_n(\beta,\lambda)$ is
$e_n(\beta,\lambda) = E_{P_n}\left\{ f_{n}(X,\beta,\lambda)\right\},$ with
$f_n(\cdot)$ defined as,
\begin{align*}
  f_{n}(x,\beta,\lambda)
  = \sup_{\Delta \in \mathbb{R}^m} \left[ n^{\gamma/2}
  \left\{ \ell \big(x + n^{-\gamma/2}\Delta; \beta \big) -
  \ell(x;\beta)\right\}
  - \lambda \Vert \Delta \Vert_q^2\right] - \frac{1}{4\lambda}
  \Vert D_x \ell(x;\beta) \Vert_p^2.
\end{align*}
\label{Lem-1-Val-Fn}
\end{lemma}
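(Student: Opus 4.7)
The plan is to invoke the known strong-duality representation for Wasserstein DRO inner maximization (as established, for example, in \citet{blanchet2016quantifying,gao2016distributionally}) and then perform a rescaling that matches the parametrization $\eta_n = n^\gamma \delta_n$. Under Assumption A1, $c(x,y) = \Vert x-y\Vert_q^2$ is lower semicontinuous, so strong duality yields
\[
\Psi_n(\beta) = \inf_{\lambda \geq 0}\Big[\lambda \delta_n + E_{P_n}\big\{\sup\nolimits_{\Delta \in \mathbb{R}^m}[\ell(X+\Delta;\beta) - \lambda \Vert\Delta\Vert_q^2]\big\}\Big].
\]
Subtracting $E_{P_n}\{\ell(X;\beta)\}$ and multiplying by $n^{\gamma/2}$, the right-hand side becomes
\[
\inf_{\lambda\ge 0}\Big[n^{\gamma/2}\lambda \delta_n + n^{\gamma/2} E_{P_n}\big\{\sup\nolimits_\Delta[\ell(X+\Delta;\beta)-\ell(X;\beta)-\lambda\Vert\Delta\Vert_q^2]\big\}\Big].
\]

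The key step is a joint rescaling of the perturbation and the Lagrange multiplier that preserves the infimum. I would substitute $\Delta = n^{-\gamma/2}\tilde\Delta$ inside the supremum; this multiplies the cost term by $n^{-\gamma}$, leaving
\[
n^{\gamma/2}\sup_{\tilde\Delta}\big\{\ell(X + n^{-\gamma/2}\tilde\Delta;\beta) - \ell(X;\beta) - \lambda n^{-\gamma}\Vert\tilde\Delta\Vert_q^2\big\} = \sup_{\tilde\Delta}\big\{n^{\gamma/2}[\ell(X+n^{-\gamma/2}\tilde\Delta;\beta)-\ell(X;\beta)] - \lambda n^{-\gamma/2}\Vert\tilde\Delta\Vert_q^2\big\}.
\]
Reparametrizing the infimum via $\tilde\lambda := n^{-\gamma/2}\lambda$, one has $n^{\gamma/2}\lambda\delta_n = n^\gamma\tilde\lambda\delta_n = \tilde\lambda\eta_n$, and the inner supremum becomes the sup appearing in the definition of $f_n(X,\beta,\tilde\lambda)$ (before subtracting the $\frac{1}{4\tilde\lambda}\Vert D_x\ell(X;\beta)\Vert_p^2$ term). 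Adding and subtracting that term inside the expectation isolates the prescribed leading quadratic $\frac{1}{4\tilde\lambda}E_{P_n}\{\Vert D_x\ell(X;\beta)\Vert_p^2\}$ and leaves exactly $e_n(\beta,\tilde\lambda)$ as the residual. Renaming $\tilde\lambda$ to $\lambda$ gives the claimed identity.

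The main technical obstacle is ensuring strong duality applies and the manipulations above are legitimate for every $\lambda > 0$ in the relevant range. Convexity of $\ell(x,\cdot)$ alone does not bound growth in $x$, so I would appeal to the local Lipschitz control on $D_x h$ from Assumption A2.c, together with $E\{\Vert X\Vert_2^2\}<\infty$, to show that for each fixed $\beta$ and each sufficiently large $\lambda$ the inner sup is finite $P_n$-almost surely, so the duality formula of \citet{blanchet2016quantifying} indeed applies and no measurability issue arises in commuting the supremum with the expectation. The case $\lambda = 0$ is vacuous since $\frac{1}{4\lambda}$ is then interpreted as $+\infty$ and does not attain the infimum. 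Once these finiteness/measurability caveats are addressed, the rest is algebraic bookkeeping around the scaling $\eta_n = n^\gamma \delta_n$ and $\lambda \mapsto n^{\gamma/2}\lambda$, which motivates the particular way $f_n$ is written (so that $f_n\to 0$ pointwise and the quadratic $\frac{1}{4\lambda}\Vert D_x\ell\Vert_p^2$ becomes the effective driver of the limit in the subsequent proof of Proposition \ref{prop:DRO}).
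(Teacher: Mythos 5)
Your proposal is correct and follows essentially the same route as the paper: both invoke the strong duality of \citet{blanchet2016quantifying} and then carry out the change of variables $\Delta \mapsto n^{-\gamma/2}\Delta$, $\lambda \mapsto n^{\gamma/2}\lambda$ (the paper simply quotes the dual already in rescaled form) before adding and subtracting $(4\lambda)^{-1}\Vert D_x\ell(x;\beta)\Vert_p^2$ to isolate $e_n(\beta,\lambda)$. One minor correction: finiteness of the inner supremum is what the uniform bound on $\Vert D_{xx}\ell(\cdot;\beta)\Vert_p$ in the first part of Assumption A2.c is for, rather than the local Lipschitz control on $D_x h$ that you cite.
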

\begin{proof}[Proof  of Lemma \ref{Lem-1-Val-Fn}]
  It follows from \citet[Theorem 1]{blanchet2016quantifying} that
\begin{align*}
\Psi_{n}(\beta)=\inf_{\lambda\geq0}\left[ n^{\gamma/2}\lambda\delta_{n} +%
 E_{P_{n}}\left\{ \phi_{\lambda}(X;\beta,\lambda)\right\} \right] , \quad\text{where}
\end{align*}
\begin{align*}
   \phi_{\lambda}(x;\beta,\lambda)
  =\sup_{\Delta \in \{\Delta \in \mathbb{R}^{m}| x+n^{-\gamma
/2} \Delta \in \Omega \}}\left\{ \ell\big( x+ n^{-\gamma
/2}\Delta;\beta\big) -\lambda
n^{-\gamma/2}\Vert\Delta\Vert_{q}^{2}\right\} .
\end{align*}
With $\Omega = \mathbb{R}^m,$ it follows from the definition of
$f_n(\cdot)$ that,
\begin{align*}
  n^{\gamma/2}\left\{\phi_\lambda(x;\beta) - \ell(x;\beta)\right\}
  &= \sup_{\Delta \in \mathbb{R}^m}
  \left[ n^{\gamma/2}\left\{\ell\big(x + n^{-\gamma/2}\Delta;\beta \big)
  - \ell(x;\beta)\right\} - \lambda \Vert \Delta \Vert_q^2\right]\\
  &= f_n(x,\beta,\lambda) + (4\lambda)^{-1}\Vert D_x \ell(x;\beta)
    \Vert_p^2.
\end{align*}
Then, since $e_n(\beta,\lambda) = E_{P_n}[f_n(X,\beta,\lambda)]$ and
$ \delta_n n^{\gamma} = \eta_n,$ we obtain,
\begin{align*}
  n^{\gamma/2} \left[ \Psi_n(\beta) - E_{P_n}\left\{ \ell(x;\beta)\right\}
  \right] = \inf_{\lambda \geq 0} \left[ \lambda \eta_n + \frac{1}{4\lambda}
  E_{P_n}\left\{ \Vert D_x\ell(X;\beta)\Vert_p^2\right\}
  + e_n(\beta,\lambda)\right].
\end{align*}
This completes the verification of the statement of Lemma
\ref{Lem-1-Val-Fn}.
\end{proof}

\begin{lemma}
  For any $\Delta,\Delta_{\ast}\in \mathbb{R}^{d},$ letting
  $\xi=\Delta-\Delta_{\ast},$ we have the following inequalities:
\begin{itemize}
\item[a)] if $q\in(1,2],$ then
  $\left\Vert \Delta\right\Vert _{q}^{2} \geq
  \left\Vert\Delta_{\ast}\right\Vert _{q}^{2}+
  D_q(\Delta_\ast)^\T\xi+(q-1)\left\Vert \xi\right\Vert _{q}^{2}; $ and
\item[b)] if $q>2,$ then
  $\left\Vert \Delta\right\Vert _{q}^{2}\geq
  \left\Vert \Delta_{\ast}\right\Vert _{q}^{2} +
  D_q(\Delta_\ast)^\T \xi+
  C\min\left\{\left\Vert \xi\right\Vert _{2}^{2},
    {\left\Vert \xi\right\Vert_{q}^{q}}
    {\left\Vert \Delta_{\ast}\right\Vert_{q-2}^{-(q-2)}}\right\},$
where $C$ is a positive constant which  depends only on $d$ and $q.$
\end{itemize}
\label{Lem-Delta-ineq}
\end{lemma}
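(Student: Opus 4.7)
My plan is to derive both inequalities from the integral-remainder form of Taylor's theorem applied to the convex function $g(\Delta) := \|\Delta\|_q^2,$ using the Hessian computation
$$
H_q(v) = 2(2-q)\|v\|_q^{2-2q}\tau(v)\tau(v)^\T + 2(q-1)\|v\|_q^{2-q}\,\mathrm{diag}(|v_i|^{q-2}),
$$
where $\tau(v)_i := \mathrm{sgn}(v_i)|v_i|^{q-1}.$ The task reduces to bounding $\xi^\T H_q(\Delta_\ast + t\xi)\xi$ from below along the segment and integrating against $(1-t).$

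For part (a), with $q \in (1,2],$ both summands of $H_q(v)$ are positive semidefinite, so the rank-one term can be discarded. Hölder's inequality applied to the factorization $|\xi_i|^q = (|v_i|^{q-2}\xi_i^2)^{q/2}|v_i|^{q(2-q)/2},$ with conjugate exponents $2/q$ and $2/(2-q),$ gives $\|\xi\|_q^2 \leq \|v\|_q^{2-q}\sum_i|v_i|^{q-2}\xi_i^2,$ and consequently $\xi^\T H_q(v)\xi \geq 2(q-1)\|\xi\|_q^2$ uniformly in $v.$ Integrating against $(1-t)\,dt$ delivers the sharp constant $q-1.$ The mild singularity of $H_q$ along coordinate hyperplanes when $q<2$ is harmless, since $g$ has an absolutely continuous gradient; alternatively, one may bypass the Hessian altogether and invoke the classical 2-uniform convexity of $\ell_q$ (Ball--Carlen--Lieb / Hanner), which yields the same sharp constant directly.

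For part (b), with $q>2,$ the rank-one term in $H_q(v)$ is negative semidefinite and must be controlled. Cauchy--Schwarz gives $(\tau(v)^\T\xi)^2 \leq \|v\|_q^q \sum_i|v_i|^{q-2}\xi_i^2,$ and combining with the identity $(2-q)+(q-1)=1$ produces the useful bound $\xi^\T H_q(v)\xi \geq 2\|v\|_q^{2-q}\sum_i|v_i|^{q-2}\xi_i^2.$ The one-dimensional integral $\int_0^1(1-t)|\Delta_{\ast,i}+t\xi_i|^{q-2}\xi_i^2\,dt$ is exactly the Taylor remainder of $|\cdot|^q/[q(q-1)],$ so summation rewrites it as $[q(q-1)]^{-1}\bigl[\|\Delta_\ast+\xi\|_q^q - \|\Delta_\ast\|_q^q - q\tau(\Delta_\ast)^\T\xi\bigr],$ which the elementary scalar inequality $|b|^q \geq |a|^q + q|a|^{q-1}\mathrm{sgn}(a)(b-a) + c_q|b-a|^q$ (valid for $q\geq 2$) bounds below by a constant multiple of $\|\xi\|_q^q.$ Coupled with $\|\Delta_\ast+t\xi\|_q \leq \|\Delta_\ast\|_q + \|\xi\|_q,$ this yields
$$
g(\Delta_\ast + \xi) - g(\Delta_\ast) - D_q(\Delta_\ast)^\T\xi \ \geq\ c'_q\,\frac{\|\xi\|_q^q}{(\|\Delta_\ast\|_q + \|\xi\|_q)^{q-2}}.
$$

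The main obstacle is then the bookkeeping needed to recast this single bound into the $\min\{\cdot,\cdot\}$ form asserted in (b). I would split into two cases. When $\|\xi\|_q \leq \|\Delta_\ast\|_q,$ the denominator is at most $2^{q-2}\|\Delta_\ast\|_q^{q-2},$ giving a bound $\gtrsim \|\xi\|_q^q/\|\Delta_\ast\|_q^{q-2},$ which already dominates either argument of the minimum. When $\|\xi\|_q > \|\Delta_\ast\|_q,$ the denominator is at most $2^{q-2}\|\xi\|_q^{q-2},$ producing a bound $\gtrsim \|\xi\|_q^2,$ which by the norm inequality $\|\xi\|_q \geq d^{1/q - 1/2}\|\xi\|_2$ further dominates a $d$-dependent multiple of $\|\xi\|_2^2.$ Comparing with both arguments of the minimum in each sub-case confirms that a constant $C$ depending only on $d$ and $q$ suffices, as stated.
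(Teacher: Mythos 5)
Your proof is correct, and for part (b) it takes a genuinely different route from the paper's. For part (a) the paper simply cites Shalev-Shwartz (Appendix A) for the $(q-1)$-strong convexity of $\|\cdot\|_q^2$ with respect to $\|\cdot\|_q$; your H\"older argument reproves this self-containedly, which is fine. For part (b), both arguments begin with essentially the same Hessian lower bound: the paper reaches $x^\T H(v)x \geq \sum_i |v_i|^{q-2}x_i^2 \,/\, \bigl\Vert |v|^{q-2}\bigr\Vert_{q/(q-2)}$ via Lagrange's identity, which is exactly the quantity $\Vert v\Vert_q^{2-q}\sum_i|v_i|^{q-2}x_i^2$ you obtain by Cauchy--Schwarz on the rank-one term. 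The divergence is in how the time integral is handled. The paper further weakens the denominator to $\sum_i|v_i|^{q-2}$, bounds each one-dimensional integral $\int_0^1(1-t)|\Delta_{\ast,i}+t\xi_i|^{q-2}\,\mathrm{d}t$ by a coordinate-wise case analysis on the signs of $\Delta_{\ast,i}$ and $\xi_i$, and finally invokes Chebyshev's sum inequality to convert $\sum_i|\xi_i|^q/\sum_i|\xi_i|^{q-2}$ into $\Vert\xi\Vert_2^2$. You instead keep the denominator as $\Vert\Delta_\ast+t\xi\Vert_q^{q-2}$, bound it uniformly in $t$ by the triangle inequality, recognize the numerator integral as the exact Taylor remainder of the scalar function $|\cdot|^q$, and finish with the standard inequality $|a+s|^q \geq |a|^q + q|a|^{q-1}\mathrm{sgn}(a)s + c_q|s|^q$ and a two-case split on $\Vert\xi\Vert_q$ versus $\Vert\Delta_\ast\Vert_q$. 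Your route yields the cleaner intermediate bound $c_q'\Vert\xi\Vert_q^q/(\Vert\Delta_\ast\Vert_q+\Vert\xi\Vert_q)^{q-2}$, which indeed dominates the stated minimum: in the first case because $\Vert\Delta_\ast\Vert_{q-2}^{q-2}=\sum_i|\Delta_{\ast,i}|^{q-2}\geq\Vert\Delta_\ast\Vert_q^{q-2}$ (so beating the second argument of the minimum suffices), and in the second because $\Vert\xi\Vert_q\geq d^{1/q-1/2}\Vert\xi\Vert_2$. What your approach buys is the elimination of the sign-based casework and of Chebyshev's inequality; what the paper's buys is nothing you lose. Both proofs share the minor technical gloss that the pointwise Hessian may fail to exist on a null set of the segment (coordinate hyperplanes for $q<2$, the origin for $q>2$), which you at least acknowledge explicitly.
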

The proof of Lemma \ref{Lem-Delta-ineq} is technical in nature and is
provided in Section \ref{appendix:proof_technical_lemma}.

\begin{lemma}
  For any $v \in \mathbb{R}^d, \lambda > 0, \varepsilon > 0$ and
  $d \times d$ symmetric matrix $B,$ we have that the value of
  optimization,
  \begin{align}
    &\sup_{\Delta \in \mathbb{R}^d} \left( v^\T \Delta -
      \lambda \Vert \Delta \Vert_q^2
    + \varepsilon \Delta^\T B \Delta \right)\label{unconstr-obj}
  \end{align}
  is upper bounded and lower bounded as follows:
  \begin{align*}
    0 \leq \sup_{\Delta \in \mathbb{R}^d} \left\{ v^\T \Delta -
    \lambda \Vert \Delta \Vert_q^2 + \varepsilon \Delta^\T B \Delta
    \right\}  - \frac{\Vert v \Vert_p^2}{4\lambda}
    - \frac{\varepsilon}{4\lambda^2} T_p(v)^\T B T_p(v) \leq
        \frac{c_0 \varepsilon^{\bar{q}} \Vert \Delta_v\Vert_2^2}
    {\min\{ (\lambda - c_1\varepsilon)^+, \lambda^{\frac{1}{q-1}}\}}
  \end{align*}
  where $\Delta_v = (2\lambda)^{-1}T_p(v),$
  $\bar{q} = \min\{2,q/(q-1)\},$ and $c_0,c_1$ are positive constants
  which depends only on $d,q$ and the Frobenius norm of the matrix $B.$
\label{lem:unconstr-opt}
\end{lemma}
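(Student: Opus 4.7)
The plan is to view the perturbed problem as a small deformation of the unperturbed concave maximization $\sup_\Delta\{v^\T\Delta-\lambda\|\Delta\|_q^2\}$ and Taylor-expand around its maximizer $\Delta_v=(2\lambda)^{-1}T_p(v)$. A direct calculation using $1/p+1/q=1$ (so that $1+p/q=p$ and $p(q-1)=q$) yields $v^\T\Delta_v=\|v\|_p^2/(2\lambda)$, $\|\Delta_v\|_q^2=\|v\|_p^2/(4\lambda^2)$, the unperturbed optimal value $\|v\|_p^2/(4\lambda)$, and the first-order optimality condition $v=\lambda D_q(\Delta_v)$. Writing $F(\Delta):=v^\T\Delta-\lambda\|\Delta\|_q^2+\varepsilon\Delta^\T B\Delta$, evaluating $F$ at $\Delta_v$ produces exactly $\|v\|_p^2/(4\lambda)+\varepsilon(4\lambda^2)^{-1}T_p(v)^\T BT_p(v)$. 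Since $\sup_\Delta F(\Delta)\geq F(\Delta_v)$, this immediately gives the lower bound $\geq 0$ in the statement.

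For the upper bound, set $\xi=\Delta-\Delta_v$. Combining the first-order identity $v=\lambda D_q(\Delta_v)$ with the strong-convexity bounds of Lemma~\ref{Lem-Delta-ineq} produces $v^\T\xi-\lambda(\|\Delta\|_q^2-\|\Delta_v\|_q^2)\leq -\lambda g(\xi)$, where $g(\xi)=(q-1)\|\xi\|_q^2$ for $q\in(1,2]$ and $g(\xi)=C\min\{\|\xi\|_2^2,\|\xi\|_q^q\|\Delta_v\|_q^{-(q-2)}\}$ for $q>2$. Expanding $\Delta^\T B\Delta$ about $\Delta_v$ then gives the master inequality $F(\Delta)-F(\Delta_v)\leq -\lambda g(\xi)+\varepsilon\xi^\T B\xi+2\varepsilon\Delta_v^\T B\xi$, so everything reduces to bounding $\sup_\xi$ of the right-hand side.

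For $q\in(1,2]$ (where $\bar q=2$), I absorb $\varepsilon\xi^\T B\xi$ into the strong-convexity term via norm equivalence ($\xi^\T B\xi\leq c_{d,q}\|B\|_F\|\xi\|_q^2$), leaving the concave problem $\sup_\xi\{2\varepsilon(B\Delta_v)^\T\xi-A\|\xi\|_q^2\}$ with $A=\lambda(q-1)-c_{d,q}\|B\|_F\varepsilon$. Fenchel--Young duality $\sup_\xi\{a^\T\xi-A\|\xi\|_q^2\}=\|a\|_p^2/(4A)$ and a further norm-equivalence step yield $c_0\varepsilon^2\|\Delta_v\|_2^2/(\lambda-c_1\varepsilon)$, matching the statement. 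For $q>2$ (where $\bar q=p$), I use $-\min(a,b)=\max(-a,-b)$ to split the sup into two subproblems. The $\|\xi\|_2^2$ subproblem is handled as in the previous case and contributes an $O(\varepsilon^2)$ term. The $\|\xi\|_q^q$ subproblem is the Fenchel dual of $\alpha\|\xi\|_q^q$ with $\alpha=\lambda C\|\Delta_v\|_q^{-(q-2)}$, evaluating to a constant multiple of $\varepsilon^p\|B\Delta_v\|_p^p\|\Delta_v\|_q^{(q-2)/(q-1)}/\lambda^{1/(q-1)}$. The key algebraic identity $p+(q-2)/(q-1)=(2q-2)/(q-1)=2$, combined with norm equivalence, collapses the prefactor to $c_0\|\Delta_v\|_2^2$, giving the bound with denominator $\lambda^{1/(q-1)}$. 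Taking the maximum of the two subproblem bounds yields exactly the $\min\{(\lambda-c_1\varepsilon)^+,\lambda^{1/(q-1)}\}$ in the denominator.

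I expect the main obstacle to be the $q>2$ case, where three things must be reconciled: (i) correctly identifying the power of $\alpha$ in the Fenchel conjugate of $\alpha\|\xi\|_q^q$, which is what produces the $\lambda^{1/(q-1)}$ factor; (ii) verifying the exponent arithmetic $p+(q-2)/(q-1)=2$ that collapses $\|B\Delta_v\|_p^p\|\Delta_v\|_q^{(q-2)/(q-1)}$ into $\|\Delta_v\|_2^2$ after applying norm equivalence; and (iii) tracking all constants so they depend only on $d,q,$ and $\|B\|_F$, as claimed.
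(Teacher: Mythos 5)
Your proposal is correct and follows essentially the same route as the paper's proof: expand around the unperturbed maximizer $\Delta_v=(2\lambda)^{-1}T_p(v)$ using the optimality condition $v=\lambda D_q(\Delta_v)$, invoke the strong-convexity bounds of Lemma~\ref{Lem-Delta-ineq}, and optimize the resulting concave majorant separately for $q\in(1,2]$ and $q>2$ (the paper phrases the scalar optimizations via Cauchy--Schwarz rather than Fenchel conjugacy, and parametrizes the perturbation as $\Delta=\Delta_v+\varepsilon\xi$, but these are cosmetic differences). The only detail to watch is that in the $q>2$ branch the term $\varepsilon\xi^{\T}B\xi$ must also be split off against half of the $\Vert\xi\Vert_q^q$ penalty, exactly as the paper does, which produces a harmless extra factor; your exponent arithmetic $p+(q-2)/(q-1)=2$ and the resulting $\lambda^{1/(q-1)}$ denominator are right.
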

\begin{proof}[Proof  of Lemma \ref{lem:unconstr-opt}]
    First, we consider the case where $B$ is the zero matrix. When
  $B = 0,$ we have
  \begin{align*}
    \sup_{\Delta \in \mathbb{R}^d}
    \left( v^\T \Delta - \lambda \Vert \Delta \Vert_q^2
    \right) = (4\lambda)^{-1}\Vert v \Vert_p^2,
  \end{align*}
  in which the maximum is attained at $\Delta = \Delta_v;$ here,
  recall that $\Delta_v$ is
  $\Delta_v = (2\lambda)^{-1}T_p(v).$ The corresponding optimality
  condition is
  \begin{align}
      v - \lambda D_q(\Delta_v) = 0,
    \label{opt-cond}
  \end{align}
  where $D_q(\Delta_v)$ is the first derivative of the function
  $\Vert \Delta \Vert_q^2$ evaluated at $\Delta = \Delta_v.$ Next, for
  the case where the matrix $B$ is not zero, we proceed by changing
  the variable from $\Delta$ to $\xi$ with the relationship,
  $ \Delta = \Delta_v + \varepsilon \xi .$ Then the objective
  $f(\Delta) = \Delta^\T v - \lambda \Vert \Delta \Vert_q^2 +
  \varepsilon\Delta^\T B \Delta$ is rewritten in terms of the variable
  $\xi$ as follows: $f\big( \Delta_v + \varepsilon \xi\big)$ equals
  \begin{align*}
     & \left( v^\T\Delta_v - \lambda \Vert \Delta_v \Vert_q^2 \right)
       + \varepsilon \Delta_v^\T B \Delta_v
       + \varepsilon \xi^\T\left( v + 2\varepsilon B\Delta_v\right)
       - \lambda \left( \Vert \Delta_v + \varepsilon \xi\Vert_q^2
       - \Vert \Delta_v \Vert_q^2  \right) + \varepsilon^3 \xi^\T  B\xi\\
     &\ \quad =
       (4\lambda)^{-1}\Vert v \Vert_p^2 + \varepsilon \Delta_v^\T B \Delta_v
       + \varepsilon \xi^\T\left\{ v - \lambda D_q(\Delta_v)
       + 2\varepsilon B\Delta_v\right\}\\
     &\qquad\qquad\qquad\qquad
       - \lambda  \left\{ \Vert \Delta_v + \varepsilon \xi\Vert_q^2
       - \Vert \Delta_v \Vert_q^2 - \varepsilon \xi^\T D_q(\Delta_v)\right\}
       + \varepsilon^3 \xi^\T  B\xi.
  \end{align*}
  Then, we have from \eqref{opt-cond} that,
  \begin{align}
    &\varepsilon^{-2}\left\{f(\Delta) - (4\lambda)^{-1}\Vert v \Vert_p^2
      - \varepsilon \Delta_v^\T B \Delta_v\right\} \nonumber\\
      &\qquad\qquad=
    2\xi^\T B\Delta_v -  \lambda \varepsilon^{-2}
    \left\{ \Vert \Delta_v + \varepsilon \xi\Vert_q^2
        - \Vert \Delta_v \Vert_q^2 - \varepsilon \xi^\T D_q(\Delta_v)\right\}
        + \varepsilon \xi^\T  B\xi.
    \label{unconstr-second-order}
  \end{align}

  For deriving the upper bound in the statement of Lemma
  \ref{lem:unconstr-opt}, we proceed by utilizing the bound,
  $ \Vert \Delta_v + \varepsilon \xi \Vert_q^2 - \Vert \Delta_v
  \Vert_q^2 - \varepsilon \xi^\T D_q(\Delta_v) \geq P_v(\varepsilon
  \xi)$
  from Lemma \ref{Lem-Delta-ineq}, where
  $P_v:\mathbb{R}^d \rightarrow \mathbb{R}_+$  defined as,
    \begin{align*}
      P_v(x) &= \tilde{c}
      \begin{cases}
        \Vert x  \Vert_2^2  \quad&\text{ if } q \in (1,2]\\
        \min\left( \Vert x \Vert_2^2,
          \Vert x \Vert_2^q  \Vert \Delta_v \Vert_2^{2-q} \right)
        &\text{ if } q > 2\\
      \end{cases}
    \end{align*}
    for a suitable positive constant $\tilde{c}$ that depends only on
    $d$ and $q$; indeed, the existence of consant $\tilde{c}$
    satisfying this requirement follows from the observation that
    $\Vert x \Vert_q \geq \hat{c} \Vert x \Vert_2$ for a suitable
    positive constant $\hat{c}$ which depends only upon $d$ and $q.$
    Then we have the following upper bound from \eqref{unconstr-second-order}:
      \begin{align}
    \varepsilon^{-2}\left\{ \sup_{\Delta} f(\Delta) - (4\lambda)^{-1}\Vert v \Vert_p^2
    - \varepsilon \Delta_v^\T B \Delta_v\right\} \leq  \sup_{\xi}
        \left\{2\xi^\T B\Delta_v - \lambda\varepsilon^{-2} P_v(\varepsilon \xi) +
        \varepsilon \xi^\T  B\xi \right\}.
    \label{unconstr-second-order-ub}
  \end{align}
  The following observations are useful in simplifying the right hand
  side of \eqref{unconstr-second-order-ub}. With $\Vert B \Vert$
  denoting the Frobenius norm of the matrix $B,$ we have
  $\Vert B\xi \Vert_2 \leq \Vert B \Vert \Vert \xi \Vert_2$ and
  $\xi^\T B\xi  \leq \Vert B \Vert \Vert \xi \Vert_2^2.$
  As a consquence, when $q \in (1,2],$
  \begin{align*}
    \sup_{\xi} \left\{2\xi^\T B\Delta_v - \lambda \varepsilon^{-2} P_v(\varepsilon \xi)
    + \varepsilon \xi^\T  B\xi \right\}
    &\leq     \sup_{\xi} \left\{2\xi^\T B\Delta_v - \left(\lambda \tilde{c} -
      \varepsilon \Vert B \Vert \right) \Vert \xi \Vert_2^2 \right\}   =
      \frac{\Vert B \Vert^2 \Vert \Delta_v \Vert_2^2}
      {\left( \lambda \tilde{c}- \varepsilon \Vert B \Vert \right)^+}.
  \end{align*}
  In the above expression, $x^+ = \max\{x,0\}$ denotes the positive
  part of any real number $x.$ Next, when $q > 2,$ we have the
  following as a consequence of Cauchy-Schwarz inequality:
  \begin{align*}
    &\sup_{\xi} \left(2\xi^\T B\Delta_v - \lambda \tilde{c}
      \varepsilon^{-2} \Vert \varepsilon \xi \Vert_2^q\Vert \Delta_v
      \Vert_2^{2-q} + \varepsilon \xi^\T  B\xi \right)\\
    &\quad \leq \sup_{C \geq 0} \left( 2 \Vert B \Vert \Vert \Delta_v \Vert_2  C
      - \lambda \tilde{c} \Vert \Delta_v \Vert_2^{2-q}\varepsilon^{q-2}   C^q
      + \varepsilon \Vert B \Vert C^2 \right)\\
    &\quad \leq \sup_{C \geq 0} \left( 2 \Vert B \Vert \Vert \Delta_v \Vert_2  C
      - 2^{-1}\lambda \tilde{c} \Vert \Delta_v \Vert_2^{2-q}\varepsilon^{q-2}   C^q \right) +
      \sup_{C \geq 0} \left(\varepsilon \Vert B \Vert C^2
      - 2^{-1}\lambda \tilde{c} \Vert \Delta_v \Vert_2^{2-q}\varepsilon^{q-2}   C^q \right)\\
    &\quad \leq c\varepsilon^{-\frac{q-2}{q-1}} \lambda^{-\frac{1}{q-1}}
      \Vert \Delta_v\Vert_2^2  \left\{ 1  + (\varepsilon
      \lambda^{-1})^{\frac{q}{(q-1)(q-2)}}\right\},
  \end{align*}
  where $c$ is a suitable positive constant which depends only upon
  $d,q$ and $\Vert B \Vert.$ Then letting
  $\bar{q} = \min\{2,q/(q-1)\},$ we obtain from
  \eqref{unconstr-second-order-ub} and the above two upper bounds
  that,
  \begin{align*}
    \sup_{\Delta} f(\Delta) - (4\lambda)^{-1}\Vert v \Vert_p^2
    - \varepsilon \Delta_v^\T B \Delta_v \leq
    \frac{c_0 \varepsilon^{\bar{q}} \Vert \Delta_v\Vert_2^2}
    {\min\{ (\lambda - c_1\varepsilon)^+, \lambda^{\frac{1}{q-1}}\}}
  \end{align*}
  where $c_0,c_1$ are positive constants which depends only on $d,q$
  and $\Vert B\Vert.$ With this conclusion proving the upper bound, the
  lower bound is obtained by letting $\Delta = \Delta_v$ in the
  evaluation of $f(\Delta).$ This concludes the proof of Lemma
  \ref{lem:unconstr-opt}.
\end{proof}

\begin{lemma}
  For any $\beta \in B$ and $\lambda > 0,$ we have the following
  approximation for the term $e_n(\beta,\lambda)$ identified in Lemma
  \ref{Lem-1-Val-Fn}: As $n \rightarrow \infty,$
  \begin{align*}
    e_n(\beta,\lambda) = 8^{-1}\lambda^{-2} n^{-\gamma/2}
    a_n(\beta) + O_p(n^{-\bar{q}\gamma/2}),
  \end{align*}
  where
  \[a_n(\beta) = E_{P_n} \left[T_p\left\{ D_x\ell(X;\beta)\right\}^\T
      D_{xx}\ell(X;\beta) T_p\left\{
        D_x\ell(X;\beta)\right\}\right],\] and the convergence is
  uniform over $\beta$ in compact subsets of $B$ and $\lambda$ bounded
  away from zero. Moreover, the $O_p(n^{-\bar{q}\gamma/2})$ term is
  such that $\sup_{\lambda > 0} \lambda^2 O_p(n^{-\bar{q}\gamma/2})$
  is bounded from below by an integral random
  variable.  
\label{lem:en-bnd}
\end{lemma}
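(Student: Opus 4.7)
The plan is to apply Lemma \ref{lem:unconstr-opt} pointwise in $x$ to the inner maximization defining $f_n(x,\beta,\lambda).$ Under Assumption A2.a--c, Taylor expansion gives
\begin{align*}
n^{\gamma/2}\bigl\{\ell(x+n^{-\gamma/2}\Delta;\beta)-\ell(x;\beta)\bigr\}
= D_x\ell(x;\beta)^\T \Delta + \tfrac{n^{-\gamma/2}}{2}\Delta^\T D_{xx}\ell(x;\beta)\Delta + r_n(x,\Delta;\beta),
\end{align*}
where $r_n$ is the second-order Taylor remainder, controlled via the uniform continuity and boundedness of $D_{xx}\ell(\cdot;\beta)$ by $M(\beta)$ in A2.c. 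Substituting this identifies the sup defining $f_n$ as a quadratic-in-$\Delta$ problem of the shape analyzed in Lemma \ref{lem:unconstr-opt}, plus a residual from $r_n.$

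Applying Lemma \ref{lem:unconstr-opt} with $v = D_x\ell(x;\beta),$ $B = \tfrac12 D_{xx}\ell(x;\beta),$ and $\varepsilon = n^{-\gamma/2}$ yields
\begin{align*}
\sup_{\Delta}\bigl\{v^\T\Delta + \varepsilon \Delta^\T B\Delta - \lambda\Vert\Delta\Vert_q^2\bigr\}
= \tfrac{\Vert D_x\ell(x;\beta)\Vert_p^2}{4\lambda} + \tfrac{n^{-\gamma/2}}{8\lambda^2}T_p(D_x\ell)^\T D_{xx}\ell\, T_p(D_x\ell) + O\bigl(n^{-\bar{q}\gamma/2}\bigr),
\end{align*}
where the $O$-term has the explicit form $c_0 \varepsilon^{\bar q} \Vert\Delta_v\Vert_2^2 / \min\{(\lambda - c_1\varepsilon)^+, \lambda^{1/(q-1)}\}$ of Lemma \ref{lem:unconstr-opt}, with $\Vert\Delta_v\Vert_2^2 = (2\lambda)^{-2}\Vert T_p(D_x\ell(x;\beta))\Vert_2^2.$ Subtracting $(4\lambda)^{-1}\Vert D_x\ell(x;\beta)\Vert_p^2$ in accordance with the definition of $f_n$ and averaging over $P_n$ produces
\begin{align*}
e_n(\beta,\lambda) = \tfrac{n^{-\gamma/2}}{8\lambda^2}\, a_n(\beta) + E_{P_n}\bigl[R_n(X,\beta,\lambda)\bigr],
\end{align*}
where $R_n$ combines the Lemma \ref{lem:unconstr-opt} residual and the contribution from $r_n.$

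To verify $E_{P_n}[R_n] = O_p(n^{-\bar{q}\gamma/2})$ uniformly in $\beta$ over compact sets and in $\lambda$ bounded away from zero, I would combine three ingredients: the bound $\Vert D_{xx}\ell(\cdot;\beta)\Vert_p \leq M(\beta)$ with $M(\cdot)$ continuous from A2.c; the moment estimate $E\Vert D_x\ell(X;\beta)\Vert_p^2 < \infty$ implied by A2.b, together with the homogeneity $\Vert T_p(v)\Vert_2^2$ being a fixed power of $\Vert v\Vert_p$; and the locally Lipschitz control on $D_x h(x,\beta)$ in $\beta$ from A2.c to propagate these bounds to $\beta$ in a neighbourhood of $\beta_\ast.$ The explicit $\lambda$-dependence in Lemma \ref{lem:unconstr-opt} shows that $\lambda^2 R_n$ is dominated, independently of $\lambda > 0,$ by a random variable with finite expectation, which is exactly the integrability requirement appearing in the statement of the lemma.

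The main obstacle is handling $r_n$: because the supremum defining $f_n$ ranges over all of $\mathbb{R}^m,$ a naive appeal to uniform continuity of $D_{xx}\ell$ is insufficient. The standard remedy is to first show that any near-maximizer $\Delta^\ast$ satisfies $\Vert \Delta^\ast \Vert_q = O\bigl((\Vert D_x\ell(x;\beta)\Vert_p/\lambda)^{1/(q-1)}\bigr)$ by balancing the linear term against $\lambda\Vert\Delta\Vert_q^2,$ so the supremum is effectively attained on a bounded set on which locally uniform continuity of $D_{xx}\ell$ yields $r_n = o(n^{-\gamma})$ after multiplication by $n^{\gamma/2}.$ Matching upper and lower bounds at this truncated maximizer while preserving the explicit $\lambda^{-2}$ scaling and the integrable envelope is the delicate step, but it parallels the perturbation analyses already used in Propositions \ref{prop:RWPUB}--\ref{prop:RWPLB}.
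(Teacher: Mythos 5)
Your proposal follows the paper's proof in its essentials: Taylor-expand the loss to second order, recognize the inner supremum as the quadratic-perturbation problem of Lemma \ref{lem:unconstr-opt} with $v = D_x\ell(x;\beta)$, $B = \tfrac{1}{2} D_{xx}\ell(x;\beta)$ and $\varepsilon = n^{-\gamma/2}$ (your factor $\tfrac{1}{2}$ is in fact the correct identification, matching the $8^{-1}\lambda^{-2}$ coefficient in the statement), average over $P_n$ using the second-moment conditions, and read off the integrable envelope from the explicit $\lambda$-dependence of the error in that lemma. The one place you diverge is the treatment of the Taylor remainder, and there the paper's device is both simpler and sharper than your localization plan: using the uniform continuity and boundedness of $D_{xx}\ell(\cdot;\beta)$ from A2.c, the paper bounds the remainder by $\varepsilon' n^{-\gamma}\Vert\Delta\Vert_q^2$ and absorbs it into the penalty coefficient, sandwiching $f_n(x,\beta,\lambda)$ between two instances of the exact Lemma \ref{lem:unconstr-opt} problem with $\lambda$ replaced by $\lambda \mp \varepsilon' n^{-\gamma/2}$, so that no truncation of the feasible set is needed and the lemma applies verbatim to both bounds. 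Your localization route would instead require re-proving a restricted-domain version of Lemma \ref{lem:unconstr-opt}, and your stated bound $\Vert\Delta^*\Vert_q = O\bigl((\Vert D_x\ell\Vert_p/\lambda)^{1/(q-1)}\bigr)$ has the wrong exponent: balancing $v^\T\Delta$ against $\lambda\Vert\Delta\Vert_q^2$ gives the linear bound $\Vert\Delta^*\Vert_q = O(\Vert v\Vert_p/\lambda)$, consistent with the exact maximizer $\Delta_v = (2\lambda)^{-1}T_p(v)$ of the unperturbed problem; the $1/(q-1)$ power would arise only if the penalty were $\Vert\Delta\Vert_q^q$. Neither issue is fatal, but the $\lambda$-shift trick is what lets the paper retain the explicit $\lambda^{-2}$ scaling and the uniform integrable lower envelope over all $\lambda>0$ with no additional argument.
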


\begin{proof}[Proof  of Lemma \ref{lem:en-bnd}]
  Consider any $\beta$ satisfying $\Vert \beta \Vert_2 \leq b$ and
  $\lambda > \lambda_0 \in (0,1].$ For any
  $\varepsilon^\prime >0,$ we have as a consequence of Taylor
  expansion and uniform continuity of $D_{xx}\ell(\cdot;\beta)$ in Assumption A2.c that,
  \begin{equation*}
    \left\vert \ell(x+\Delta n^{-\gamma/2};\beta)-\ell\left( x;\beta\right)
      -n^{-\gamma/2} D_{x}\ell(x;\beta)^{\T}\Delta-
      2^{-1}n^{-\gamma}\Delta^{\T}D_{xx}\ell(x;\beta)\Delta\right\vert \leq\varepsilon^\prime
    n^{-\gamma}{\Vert\Delta \Vert_{q}^{2}},
  \end{equation*}
  for all $n \geq n_0$ where $n_{0}$ is sufficiently large. Then it
  follows from the definition of $f_n(\cdot)$ and Assumption A2.c that
  $f_n(x,\beta,\lambda)$ is upper and lower bounded, respectively, by,
  \begin{align*}
    &\sup_{\Delta} \left\{ D_x \ell(x;\beta)^\T
    \Delta + 2^{-1}n^{-\gamma/2} \Delta^\T D_{xx} \ell(x;\beta) \Delta
    - \big(\lambda - \varepsilon^\prime n^{-\gamma/2} \big)\Vert \Delta
    \Vert_q^2  \right\} - \frac{1}{4\lambda} \Vert D_x \ell(x,\beta) \Vert_p^2
   \text{ and }\\
   &\sup_{\Delta} \left\{ D_x \ell(x;\beta)^\T
    \Delta + 2^{-1} n^{-\gamma/2} \Delta^\T D_{xx} \ell(x;\beta) \Delta
    - \big(\lambda + \varepsilon^\prime n^{-\gamma/2} \big)\Vert \Delta
    \Vert_q^2 \right\} - \frac{1}{4\lambda} \Vert D_x \ell(x,\beta)
    \Vert_p^2.
  \end{align*}
  Letting $\varepsilon = n^{-\gamma/2},$ $v = D_x\ell(x;\beta),$
  $\bar{q} = \min\{2,q/(q-1)\}$ and $B =D_{xx}\ell(x;\beta),$ we
  obtain from the bounds derived for (\ref{unconstr-obj}) in Lemma
  \ref{lem:unconstr-opt} that,
  \begin{align*}
    f_n(x,\beta,\lambda) - 8^{-1}\lambda^{-2} n^{-\gamma/2}
    T_p\left\{ D_x\ell(x;\beta)\right\}^\T D_{xx}\ell(x;\beta)
    T_p\left\{ D_x\ell(x;\beta)\right\}
  \end{align*}
  is upper bounded by,
  \begin{align*}
    c_u (1 + \varepsilon^\prime) n^{-\bar{q}\gamma/2} \Vert     T_p\left\{
    D_x\ell(x;\beta)\right\}  \Vert_2^2
    \ \lambda_0^{-\max\{2,\frac{1}{q-1}\}}
  \end{align*}
  and likewise, lower bounded by,
  \begin{align}
        -c_l \varepsilon^\prime n^{-\gamma} \Vert     T_p\left\{
    D_x\ell(x;\beta)\right\} \Vert_2^2 \lambda^{-2}
    \label{lb-lambda-sq}
  \end{align}
  for suitable positive constants $c_l,c_u$ which are, in turn,
  determined by the constants $b, d$ and $q.$ Since
  $e_n(\beta,\lambda)$ is defined to equal
  $E_{P_n}[f_n(X,\beta,\lambda)],$ due to the finiteness of the second
  moment of
  $\sup[T_p\{D_x \ell(X;\beta)\}: \Vert \beta \Vert_2 \leq b],$ we
  have that
  \begin{align*}
    e_n(\beta,\lambda) = 8^{-1}\lambda^{-2} n^{-\gamma/2}
    E_{P_n} \left[T_p\left\{ D_x\ell(X;\beta)\right\}^\T D_{xx}\ell(X;\beta)
    T_p\left\{ D_x\ell(X;\beta)\right\}\right] + O_p(n^{-\bar{q}\gamma/2}),
  \end{align*}
  where the convergence is uniform over $(\beta,\lambda)$ such that
  $\Vert \beta \Vert_2 \leq b$ and $\lambda > \lambda_0.$ The
  observation that the $O_p(n^{-\bar{q}\gamma/2})$ term satisfies
  $\lambda^2 O_p(n^{-\bar{q}\gamma/2})$ is bounded from below by an
  integral random variable, uniformly over all $\lambda > 0$ and
  $\Vert \beta \Vert \leq b$, follows from the lower bound in
  \eqref{lb-lambda-sq}.
\end{proof}

\begin{proposition}
As $n\rightarrow\infty,$ we have,
\begin{equation*}
  \Psi_{n}(\beta) = E_{P_{n}}[\ell(X;\beta)]+ \delta_{n}^{1/2}
  \left[ E_{P_{n}}\big\{\left\Vert D_{x}\ell(X;\beta)\right\Vert_{p}^{2}\big\} \right]^{1/2}
  + \delta_{n} \frac{a_{n}(\beta)}{2E_{P_{n}}\big\{\left\Vert
      D_{x}\ell(X;\beta)\right\Vert _{p}^{2}\big\}}
  + o_p\left( \delta_{n} \right) ,
\end{equation*}
uniformly over $\beta$ in compact sets. \label{Lem-2-Val-fn}
\end{proposition}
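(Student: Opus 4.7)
The plan is to combine Lemma \ref{Lem-1-Val-Fn} with the expansion of $e_n(\beta,\lambda)$ furnished by Lemma \ref{lem:en-bnd}, obtaining the representation
\begin{equation*}
  n^{\gamma/2}\bigl[\Psi_n(\beta) - E_{P_n}\{\ell(X;\beta)\}\bigr]
  = \inf_{\lambda \geq 0}\Bigl[\lambda \eta_n + \frac{A_n(\beta)}{4\lambda} + \frac{a_n(\beta)}{8\lambda^2 n^{\gamma/2}} + R_n(\beta,\lambda)\Bigr],
\end{equation*}
where $A_n(\beta) = E_{P_n}\{\Vert D_x\ell(X;\beta)\Vert_p^2\}$ and $R_n(\beta,\lambda)$ collects the $O_p(n^{-\bar q \gamma/2})$ remainder. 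I would then identify the minimizer of the leading two-term expression $\lambda \eta_n + A_n(\beta)/(4\lambda)$, which is $\lambda_n^\ast(\beta) = \{A_n(\beta)/(4\eta_n)\}^{1/2}$ with minimum value $\{\eta_n A_n(\beta)\}^{1/2}$.

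Next, I would establish the upper bound by simply substituting $\lambda = \lambda_n^\ast(\beta)$ into the full objective. This yields
\begin{equation*}
  \{\eta_n A_n(\beta)\}^{1/2} + \frac{a_n(\beta)}{8\{\lambda_n^\ast(\beta)\}^2 n^{\gamma/2}} + R_n(\beta,\lambda_n^\ast(\beta))
  = \{\eta_n A_n(\beta)\}^{1/2} + \frac{\eta_n\, a_n(\beta)}{2\, n^{\gamma/2} A_n(\beta)} + O_p(n^{-\bar q \gamma/2}).
\end{equation*}
Multiplying through by $n^{-\gamma/2}$ and substituting $\eta_n = n^\gamma \delta_n$ produces the three explicit terms in the statement of the proposition; the residual $n^{-\gamma/2}O_p(n^{-\bar q \gamma /2}) = O_p(n^{-(\bar q + 1)\gamma/2})$ is $o_p(\delta_n) = o_p(n^{-\gamma})$ precisely because $\bar q > 1$ for $q \in (1,\infty)$.

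For the matching lower bound, I would carry out a localization argument around $\lambda_n^\ast(\beta)$. Since, under Assumption A2, both $A_n(\beta)$ and $\lambda_n^\ast(\beta)$ are bounded away from zero uniformly on compact sets of $\beta$ with high probability, and since the second derivative in $\lambda$ of $\lambda \eta_n + A_n(\beta)/(4\lambda)$ at $\lambda = \lambda_n^\ast(\beta)$ equals $A_n(\beta)/(2\lambda_n^\ast(\beta))^3 > 0$, the leading two-term objective is strongly convex on a neighborhood of $\lambda_n^\ast(\beta)$. Combining this strong convexity with the integrable lower bound $\lambda^2 R_n(\beta,\lambda) \geq -C_n$ furnished by Lemma \ref{lem:en-bnd} confines the infimizer to a shrinking neighborhood of $\lambda_n^\ast(\beta)$, on which the expansion of Lemma \ref{lem:en-bnd} is valid uniformly; any perturbation of $\lambda$ of size $o(1)$ changes the objective by $o_p(\delta_n n^{\gamma/2})$ after rescaling.

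The main obstacle will be making the localization rigorous while keeping all estimates uniform in $\beta$ over compact subsets of $B$. Two subtleties stand out: first, the error term $R_n(\beta,\lambda)$ is controlled in Lemma \ref{lem:en-bnd} only for $\lambda$ bounded away from zero, so one must use the integrable-lower-bound clause on $\lambda^2 R_n$ to rule out minimizers escaping toward $\lambda = 0$; second, the uniformity in $\beta$ follows from A2.b ensuring $E\{\Vert D_x\ell(X;\beta)\Vert_p^2\}$ is continuous and strictly positive in a neighborhood of $\beta_\ast$, together with a uniform law of large numbers for $A_n(\beta)$ and $a_n(\beta)$ obtained from the growth conditions in A2.c.
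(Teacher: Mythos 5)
Your decomposition is exactly the paper's: combine Lemma \ref{Lem-1-Val-Fn} with the expansion of $e_n(\beta,\lambda)$ from Lemma \ref{lem:en-bnd} and reduce to the scalar minimization of $\lambda\eta_n + A_n(\beta)/(4\lambda) + a_n(\beta)/(8\lambda^2 n^{\gamma/2}) + R_n(\beta,\lambda)$. Where you diverge is in how that minimization is analyzed. The paper treats the three-term model function $g_1(\lambda)=a\lambda+b/\lambda+c\varepsilon/\lambda^2$ exactly, via the change of variables $\lambda=(b/a)^{1/2}(1+\varepsilon u a^{1/2})$, obtaining the two-sided bound $|\inf g_1 - 2(ab)^{1/2}-\varepsilon ac/b|\leq \varepsilon^2 c^2 a^{3/2}b^{-5/2}$ together with the fact that the infimum of $g_1$ is attained at $\lambda\geq(b/a)^{1/2}$ (which is what keeps the dual variable away from the origin). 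You instead get the upper bound by evaluating at the two-term minimizer $\lambda_n^\ast$, and the lower bound by strong convexity plus localization; your arithmetic for the resulting three terms and for why $n^{-(1+\bar q)\gamma/2}=o(n^{-\gamma})$ is correct. Both routes are legitimate; the paper's buys an explicit quantitative error constant and an explicit lower bound on the argmin, while yours is a more standard perturbation argument that generalizes more readily.

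The one place your argument as stated does not close is the region $\lambda\downarrow 0$. The integrable-lower-bound clause of Lemma \ref{lem:en-bnd} gives $R_n(\beta,\lambda)\geq -Y_n/\lambda^2$ with $Y_n=O_p(n^{-\gamma})$, but $Y_n/\lambda^2$ outruns the coercive term $A_n(\beta)/(4\lambda)$ as $\lambda\to 0$, so this clause alone cannot rule out the infimum escaping to $-\infty$ near the origin unless $a_n(\beta)$ is bounded away from zero (so that the $+a_n/(8\lambda^2 n^{\gamma/2})$ term dominates the $-Y_n/\lambda^2$ term); that fails, e.g., for losses linear in $x$, where $a_n\equiv 0$. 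You need one extra observation to seal this region: either the paper's device (the approximating $g_1$ with $c\geq 0$ attains its infimum at $\lambda\geq(b/a)^{1/2}$, so only $\lambda$ bounded below need be compared against the expansion), or, more robustly, that $\lambda\mapsto A_n(\beta)/(4\lambda)+e_n(\beta,\lambda)=n^{\gamma/2}E_{P_n}\{\phi_\lambda(X;\beta)-\ell(X;\beta)\}$ is non-increasing in $\lambda$, whence for $\lambda\leq\lambda_0$ the exact dual objective already exceeds $A_n(\beta)/(4\lambda_0)+O_p(n^{-\gamma/2})$, which beats $(\eta_n A_n(\beta))^{1/2}$ once $\lambda_0$ is small. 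With that patch, and with the uniformity in $\beta$ handled as you indicate (continuity and strict positivity of $A_n$ near $\beta_\ast$ plus a uniform law of large numbers), your proof goes through.
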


\begin{proof}[Proof  of Proposition \ref{Lem-2-Val-fn}]
  We have from Lemma \ref{Lem-1-Val-Fn} and \ref{lem:en-bnd} that
  $n^{\gamma/2}\left( \Psi_{n}(\beta) - E_{P_{n}}[\ell(X;\beta)]
  \right)$ equals,
  \begin{align}
    \lim_{\lambda_0 \downarrow 0}
    \inf_{\lambda \geq \lambda_0}\left[ \lambda \eta_{n} +\frac{1}{4\lambda }
    E_{P_{n}}\left\{\left\Vert D_{x}\ell(X;\beta)\right\Vert _{p}^{2}\right\}
    +  8^{-1}\lambda^{-2} n^{-\gamma/2}
    a_n(\beta) + O_p(n^{-\bar{q}\gamma/2})\right],
    \label{approx-psin}
  \end{align}
  where the $O_p(n^{-\bar{q}\gamma/2})$ term in the above equation is
  uniform over
  $\{(\beta,\lambda): \Vert \beta \Vert_2 \leq b, \lambda >
  \lambda_0\},$ for any $b,\lambda_0 > 0,$ and
  $\sup_{\lambda > 0} \lambda^2 O_p(n^{-\bar{q}\gamma/2})$ is bounded
  from below by an integral random variable.  To solve this
  minimization, we begin by understanding the solution to the problem
  $\inf_{\lambda \geq 0} g_{1}(\lambda),$ where
\begin{equation*}
 g_{1}(\lambda)=  a\lambda+b/\lambda+c\varepsilon/\lambda^{2},
\end{equation*}
where $a,b,\varepsilon$ are positive constants and $c$ is non-negative.
Changing variable as in $\lambda=(b/a)^{1/2}(1+\varepsilon ua^{1/2})$
results in,
\begin{equation}
  \inf_{\lambda \geq 0} g_{1}(\lambda)
  = 2(ab)^{1/2} + \varepsilon acb^{-1} + \varepsilon^{2}a^{3/2}
  \inf_{u \geq -\varepsilon^{-1} a^{-1/2}} g_2(u),
\label{inter-dro-lempf-1}
\end{equation}
where
\begin{equation*}
  g_{2}(u)=\frac{b^{1/2}u^{2}}{1+\varepsilon ua^{1/2}}-\frac{c}{b}
  \frac{u(2+\varepsilon ua^{1/2})}{(1+\varepsilon ua^{1/2})^{2}}.
\end{equation*}
Since
\begin{equation*}
  g_{2}(u)\geq\frac{b^{1/2}u^{2}}{1+\varepsilon u a^{1/2}}-\frac{c}{b}
  \frac{2u}{(1+\varepsilon u{a}^{1/2})}=\frac{b^{1/2}u^{2}-(2c/b)u}
  {1+\varepsilon ua^{1/2}},
\end{equation*}
for $u\geq0,$ we have that, $\inf_{u\geq0}g_{2}(u)=0$ if $c=0$ and $%
\inf_{u\geq0}g_{2}(u)<0$ if $c>0.$ For the case $c>0,$ for all values
of $u>0$ such that $b^{1/2}u^{2}-(2c/b)u<0$ we have
$g_{2}(u) > b^{1/2}u^{2}-(2c/b)u.$ Since $b^{1/2}u^{2}-(2c/b)u$ is
lower bounded by $-c^{2}b^{-5/2}$ irrespective of the value of $u,$ we
have,
\begin{equation*}
-c^{2}b^{-5/2}\leq\inf_{u\geq-1/(\varepsilon {a}^{1/2})}g_{3}(u)\leq0,
\end{equation*}
for all sufficiently small $\varepsilon.$ Moreover, the infimum is
attained at $u \geq 0.$ Combining this observation with
\eqref{inter-dro-lempf-1}, we obtain that,
\begin{equation}
\left\vert \inf_{\lambda\geq 0}g_{1}(\lambda)-2(ab)^{1/2}-\varepsilon
  ac/b\right\vert \leq\varepsilon^{2}c^{2}a^{3/2}b^{-5/2},
\label{g1-opt-bnd}
\end{equation}
for all sufficiently small $\varepsilon,$ and the infimum is attained
at a choice of $\lambda \geq (b/a)^{1/2}.$ Letting $a = \eta_n,$
$b = 4^{-1} E_{P_n}\{ \Vert D_x\ell(X;\beta) \Vert_p^2\}, \varepsilon
= n^{-\gamma/2}$ and $c = 8^{-1}a_n(\beta) \geq 0,$ we obtain from
\eqref{g1-opt-bnd} that,
\begin{align*}
  \inf_{\lambda\geq 0}
  &\left[ \lambda \eta_{n} +\frac{1}{4\lambda }
    E_{P_{n}}\left\{\left\Vert D_{x}\ell(X;\beta)\right\Vert _{p}^{2}\right\}
    +  8^{-1}\lambda^{-2} n^{-\gamma/2} a_n(\beta) \right]\\
  &\quad = \left[\eta_nE_{P_{n}}\left\{\left\Vert D_{x}\ell(X;\beta)
    \right\Vert _{p}^{2}\right\}\right]^{1/2} + 2^{-1}n^{-\gamma/2}\eta_na_n(\beta)
    \left[E_{P_{n}}\left\{\left\Vert D_{x}\ell(X;\beta)
    \right\Vert _{p}^{2}\right\}\right]^{-1}
    + O_p(n^{-\gamma}),
\end{align*}
as $n \rightarrow \infty,$ and that the limit supremum of the sequence
of minimizers which attain the above infimum is positive.
Consequently, as $n \rightarrow \infty,$ we have that
(\ref{approx-psin})  equals,
\begin{align*}
   \left[\eta_nE_{P_{n}}\left\{\left\Vert D_{x}\ell(X;\beta)
  \right\Vert _{p}^{2}\right\}\right]^{1/2} + 2^{-1}n^{-\gamma/2}b_na_n(\beta)
      \left[E_{P_{n}}\left\{\left\Vert D_{x}\ell(X;\beta)
    \right\Vert _{p}^{2}\right\}\right]^{-1}
    + O_p(n^{-\bar{q} \gamma/2}),
\end{align*}
due to $\bar{q} \leq 2$ and the tightness of the collection
$\{\lambda^2 O_p(n^{-\bar{q}\gamma/2}): \lambda > 0\}.$ Since (\ref{approx-psin}) in
turn equals $n^{\gamma/2}(\Psi_n(\beta) - E_{P_n}\{\ell(X;\beta)\}),$
we obtain the claim in Proposition \ref{Lem-2-Val-fn} by substituting
$\eta_n = \delta_n n^{\gamma}.$
\end{proof}

\begin{proof}[Proof  of Proposition \ref{prop:DRO}]
For ease of notation, define $S_{n}(\beta)=[E_{P_{n}}\{\Vert
D_{x}\ell(X;\beta)\Vert _{p}^{2}\}]^{1/2}.$ Then it follows from the
definitions of $V_{n}^{DRO}(\cdot),V_{n}^{ERM}(\cdot)$ and the conclusion in
Lemma \ref{Lem-2-Val-fn} that,
\begin{align}
  V_{n}^{DRO}(u)
  & =n^{\bar{\gamma}-1}V_{n}^{ERM}\big\{n^{(1-\bar{\gamma})/2}u \big\}+
    n^{\bar{\gamma}}\delta_{n}^{1/2}\left\{ S_{n}(\beta_{\ast}+n^{-\bar{\gamma}%
/2}u)-S_{n}(\beta_{\ast})\right\}  \notag \\
& \hspace{50pt}+\frac{n^{\bar{\gamma}}\delta_{n}}{2}\left\{ \frac{%
a_{n}(\beta_{\ast}+n^{-\bar{\gamma}/2}u)}{S_{n}(\beta_{\ast}+n^{-\bar{\gamma}%
/2}u)}-\frac{a_{n}(\beta_{\ast})}{S_{n}(\beta_{\ast})}\right\} +o\left(
1\right) ,
\end{align}
as $n\rightarrow\infty,$ uniformly over $u$ in compact sets. Since $\bar{%
\gamma}=\min\{\gamma,1\},$ due to the twice continuous differentiability of $%
\ell(\cdot),$ we have that,
\begin{align*}
V_{n}^{DRO}(u) & =n^{\bar{\gamma}-1}V_{n}^{ERM}\big\{n^{(1-\bar{\gamma})/2}u \big\}+
\eta^{1/2}n^{\bar{\gamma}-\gamma/2}\left\{ S_{n}(\beta_{\ast}+n^{-\bar{\gamma%
}/2}u)-S_{n}(\beta_{\ast})\right\} +o(1) \\
& =n^{(\bar{\gamma}-1)/2}H_{n}^{\T}u+\frac{1}{2}u^{\T}Cu+ \eta^{1/2}n^{\left(
\bar{\gamma}-\gamma\right) /2}D_{\beta}S_{n}(\beta_{\ast})^{\T}u+o(1),
\end{align*}
as $n\rightarrow\infty,$ uniformly over $u$ in compact sets. Since $D_{\beta
}S_{n}(\beta_{\ast})$ converges to $D_{\beta}S(\beta_{\ast}),$ combining the
above observation with the statement of Proposition \ref{prop:ERM}, we
obtain the conclusion of Proposition \ref{prop:DRO}.
\end{proof}

\subsection{Proofs of Propositions \ref{prop:DRO-boundary} -
  \ref{prop:DRO-property}}
    \label{ssec:proofs_dro_bound_prop}
\begin{proof}[Proof  of Proposition \ref{prop:DRO-boundary}]
  First, consider the Lagrangian function,
  \begin{align*}
    L_n(\beta,\lambda) = \Psi_n(\beta) + \sum_{i \in I \cup J} \lambda_i g_i(\beta),
  \end{align*}
  and the pointwise maximum function,
  \begin{align*}
    \Phi_n(\beta) = \max\left\{L_n(\beta,\lambda): \lambda \in \Lambda_0 \right\}.
  \end{align*}
  Under the stated Mangasarian-Fromovitz constraint qualification
  conditions, we have that the set $\Lambda_0$ is nonempty, bounded
  convex polytope; see the discussion following Assumption B.3 in
  \citet{shapiro1989}. Therefore, $\Lambda_0$ is a convex
  hull of a finite set of extreme points denoted by $\Lambda_e.$ Then
  from the definition of $L(\beta,\lambda),$
  \begin{align*}
    \Phi_n(\beta)
    &= \max\left[ \Psi_n(\beta)  + L(\beta,\lambda) - E\{\ell(X;\beta)\}
      : \lambda \in \Lambda_e\right]\\
    &= \left[\Psi_n(\beta) -  E_{P_n}\{\ell(X;\beta)\}\right]
      + \left[  E_{P_n}\{\ell(X;\beta)\} - E\{\ell(X;\beta)\}\right] +
      \max\left[ L(\beta,\lambda) : \lambda \in \Lambda_e\right].
  \end{align*}
  Letting
  $H_n = -n^{1/2}\left[E_{P_n}\{h(X;\beta_\ast)\} -
    E\{h(X;\beta_\ast)\}\right]$ and taking $S_{n}(\beta)$ as in the
  proof of Proposition \ref{prop:DRO}, we obtain
  the following from the smoothness properties of $\ell(\cdot)$ in
  Assumptions A2.a, A2.c, expansion for $\Psi_n(\beta)$ in Proposition
  \ref{Lem-2-Val-fn}, and its subsequent application in Proposition
  \ref{prop:DRO}:
  \begin{align*}
    &n\left\{\Phi_n(\beta_\ast + n^{-1/2}u) - \Phi_n(\beta_\ast)\right\}
      = I_n(u) + J_n(u) + K_n(u),
  \end{align*}
  where
  \begin{align*}
    I_n(u) &= n\delta_n^{1/2}\left\{ S_n(\beta_\ast + n^{-1/2}u) -
             S_n(\beta_\ast)\right\} + o_p(n\delta_n)
             = \eta^{1/2}D_\beta S_n(\beta_\ast)^\T u + o_p(1),\\
    J_n(u) &=  n\left[  E_{P_n}\{\ell(X;\beta_\ast + n^{-1/2}u)\} -
             E_{P_n}\{\ell(X;\beta_\ast)\}\right]
             +  n\left[  E_{P}\{\ell(X;\beta_\ast + n^{-1/2}u)\} - E\{\ell(X;\beta)\}\right]\\
           &= -H_n^\T u + o_p(1)\\
    K_n(u) &= \max_{\lambda \in \Lambda_e} L(\beta_\ast + n^{-1/2}u,\lambda) -
             \max_{\lambda \in \Lambda_e} L(\beta_\ast,\lambda)
             =  2^{-1}q(u) + o(1),
  \end{align*}
  uniformly over compact sets of the variable $u.$ While the
  simplications for terms $I_n(u),J_n(u)$ are following the obtained same
  reasoning in the proofs of Propositions \ref{prop:ERM} - \ref{prop:DRO}, the last equality pertaining to $K_n(u)$ follows
  from the finiteness of the set $\Lambda_e,$ taylor expansion for
  $\max_{\lambda \in \Lambda_e} L(\beta_\ast + n^{-1/2}u)$ around
  $u = 0,$ and the Kuhn-Tucker optimality condition that
  $D_\beta L(\beta_\ast,\lambda) = 0$ for all $\lambda \in \Lambda_e.$
  Thus,
  \begin{align}
    &n\left\{\Phi_n(\beta_\ast + n^{-1/2}u) - \Phi_n(\beta_\ast)\right\}
      = \left\{-H_n + \eta^{1/2} D_\beta S_n(\beta_\ast) \right\}^\T u + 2^{-1}q(u) + o_p(1),
      \label{approx-dro-boundary}
  \end{align}
  uniformly in compact sets over the variable $u.$

  Next, we observe that the cone $\mathcal{C}$ of critical directions
  is nonempty under the second-order sufficient conditions stated in
  Proposition \ref{prop:DRO-boundary} (see the discussion following
  Theorem 3.1 in \citet{shapiro1989}.  Following the same
  lines of the reasoning in \cite[Lemma 3.1 -
  3.3]{shapiro1989}, we have a neighborhood $\mathcal{N}$ of
  $\beta_\ast$ such that if $\beta_n^{DRO}(\delta_n) \in \mathcal{N},$
  then
  \begin{align}
    \min_{\beta \in B}  \Psi_n(\beta) = \min_{\beta \in \mathcal{C}} \Phi_n(\beta),
    \label{approx-cone}
  \end{align}
  where $\mathcal{C}$ is the critical cone of directions given in the
  statement of Proposition \ref{prop:DRO-boundary}; here, the
  conditions which are required for applying these results in
  \citet{shapiro1989} are verified as follows: The conditions
  stated in Assumptions A.1, A.4 - A.5, B.4, C.4 and D are direct
  consequences of the continuous differentiability properties of
  $\ell(\cdot),$ compactness of $B,$ and finite moments assumed in the
  statement of Proposition \ref{prop:DRO-boundary} and Assumptions A2.a
  and A2.c in Section \ref{sec:assumption_results}. While the
  conditions in \cite[Assumptions A.2, A.6]{shapiro1989}
  follow from the compactness and aforementioned continuous
  differentiability properties, the conditions stated in Assumptions
  A.3, B.1 - B.3, C.5 and D of \citet{shapiro1989} are
  explicitly mentioned in the statement of Proposition
  \ref{prop:DRO-boundary}. Now, with the tightness of the collection
  $n^{1/2}\{\beta_n^{DRO}(\delta_n) - \beta_\ast\}$ verified as in
  Proposition \ref{prop:argmintightness}, we have that the probability
  of the event $\{\beta_n^{DRO}(\delta_n) \in \mathcal{N}\}$ is
  $1-o_p(1).$ Therefore, we have from (\ref{approx-cone}) and
  (\ref{approx-dro-boundary}) that,
  \begin{align*}
    n^{1/2}\left\{\beta_n^{DRO}(\delta_n) - \beta_\ast \right\}
    &= \arg \min_{u \in \mathcal{C}}
      \left\{\Phi_n(\beta_\ast + n^{-1/2}u) - \Phi_n(\beta_\ast)\right\}\\
    &= \arg \min_{u \in \mathcal{C}}
      \left[\left\{-H_n +  \eta^{1/2}D_\beta S_n(\beta_\ast) \right\}^\T u + 2^{-1}q(u)
      + o_p(1)\right],
  \end{align*}
  with probability $1-o_p(1),$ as $n \rightarrow \infty.$ As noted
  earlier, the small $o_p(1)$ term is uniform over compact sets of the
  variable $u.$ Due to central limit theorem, we have
  $H_n \Rightarrow H,$ where
  $H \sim \mathcal{N}[0,\text{cov}\{h(X,\beta_\ast)\}].$ We also have
  $ D_\beta S_n(\beta_\ast) \rightarrow D_\beta S(\beta_\ast),$ as
  $n \rightarrow \infty.$ With the cone $\mathcal{C}$ being nonempty as reasoned above
  and $\omega(\xi) = \arg\min_{u \in \mathcal{C}} \{ u^\T\xi +
  2^{-1}q(u)\}$ unique, we then obtain
  \begin{align*}
    n^{1/2}\left\{\beta_n^{DRO}(\delta_n) - \beta_\ast \right\}
    \Rightarrow \omega\left\{-H + \eta^{1/2}D_\beta S(\beta_\ast)\right\}.
  \end{align*}
  as a consequence of argmax/argmin continuous mapping theorem; see
  \citet[Corollary 3.2.3a]{van1996weak}.
\end{proof}

\begin{proof}[Proof  of Proposition \ref{prop:DRO-property}]
  Due to the continuous differentiability properties of $\ell(\cdot)$
  in Assumption A2.c and the compactness of the set $B,$ we have from
  \cite[Theorems 2.7.11 and 2.5.6]{van1996weak} that the class
  $\{\ell(X;\beta): \beta \in B\}$ is $P_\ast-$Donsker. Consequently,
  we have the uniform central limit theorem that,
  \begin{align*}
    {n}^{1/2}\left[E_{P_n}\{\ell(X;\beta)\} - E\{\ell(X;\beta)\} \right]
    \Rightarrow Z(\beta),
  \end{align*}
  as $n \rightarrow \infty,$ uniformly over continuous functions
  defined on the set $B.$ Similarly, applying the continuity
  properties of $\Vert D_x \ell(X;\beta) \Vert_p^2$ in Assumption
  A2.c, we have from \citet[Theorems 2.7.11 and 2.4.1]{van1996weak}
  that
  \begin{align*}
    \sup_{\beta \in B} \left\vert S_n(\beta)  - S(\beta) \right\vert
    \rightarrow  0
  \end{align*}
  as $n \rightarrow \infty$.  Since $n\delta_n \rightarrow \eta,$ we
  obtain by combining the above two convergences that,
    \begin{align*}
      \delta_n^{-1/2}\left[E_{P_n}\{\ell(X;\beta)\} + \delta_n^{1/2}S_n(\beta)
      - E\{\ell(X;\beta)\} \right] \Rightarrow \eta^{-1/2} Z(\beta) +
      S(\beta),
    \end{align*}
    uniformly.
    On the other hand, we have from Proposition \ref{Lem-2-Val-fn}
    that the DRO objective $\Psi_n(\beta)$ and
    $E_{P_n}[\ell(X;\beta)] + \delta_n^{1/2}S_n(\beta)$ differ only
    by $O_p(\delta_n).$ Therefore,
    \begin{align*}
      n^{1/2}\left[\Psi_n(\beta) - E\{\ell(X;\beta)\} \right]
      \Rightarrow Z(\beta) + \eta^{1/2} S(\beta),
    \end{align*}
    uniformly.  Recall that $B_\ast$ is the set of minimizers of
    $\min_{\beta \in B}E\{\ell(X;\beta)\}.$ Let us denote the optimal
    value $\min_{\beta \in B}E\{\ell(X;\beta)\}$ as $m.$ Due
    to the above uniform convergence and almost sure finiteness of
    $\sup_{\beta \in B} \left \vert  Z(\beta) +
      \eta^{1/2} S(\beta)\right\vert,$ given $\varepsilon > 0,$ there exists $N$
    large enough such that
    $\min_{\beta \in B_\ast} \Psi_n(\beta) < m + \varepsilon $ for all
    $n > N.$ Therefore, if the right hand side is singleton almost
    surely, we have
    \begin{align*}
      \arg\min_{\beta \in B} \Psi_n(\beta) \Rightarrow
      \arg\min_{\beta \in B_\ast} \left\{ Z(\beta) +
       \eta^{1/2}S(\beta) \right\},
    \end{align*}
    as $n \rightarrow \infty,$ as a consequence of the Argmin/argmax
    continuous mapping theorem; see \citet[Corollary
    3.2.3a]{van1996weak}.
    %
  \end{proof}

  \subsection{Statements and proofs of the results in Section
    \ref{sec:discussion}}
\label{ssec:discussion_proof}
\begin{proposition}
  Suppose that the support of $X$ is constrained to be contained in
  the set $\Omega = \{x \in \mathbb{R}^m: Ax \leq b\}$ specified in
  terms of linear constraints involving an $l \times m$ matrix $A$
  with linearly independent rows and $b \in \mathbb{R}^l.$ Consider
  the Wasserstein distance defined as in Definition \ref{defn:WD} with
  the transportation cost $c(x,y) = \Vert x- y\Vert_2^2.$ Suppose that
  $\delta_n = \eta n^{-1},$ $X$ has a probability density which is
  absolutely continuous with respect to the Lebesgue measure on
  $\mathbb{R}^m$ and the support $\Omega$ is compact. Then we have,
  \begin{equation}
  n^{1/2}\{\beta_n^{DRO}(\delta_n) - \beta_\ast\} \Rightarrow
    C^{-1}H - \eta^{1/2} C^{-1}D_\beta S(\beta_\ast),
    \label{eqn:prop_Omega_conv}
    \end{equation}
  as $n \rightarrow \infty.$

  \label{prop:DRO-constrained}
\end{proposition}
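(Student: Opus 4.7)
The plan is to adapt the proof of Proposition \ref{prop:DRO} by exploiting the fact that, under the stated conditions, the support constraint is locally inactive for all but an $O_p(n^{-1/2})$ fraction of the sample points, so that the contribution of the boundary constraints to $\Psi_n(\beta)$ is negligible at the resolution $O(n^{-1})$ that governs the DRO expansion. Define $V_n^{DRO}(u) = n\{\Psi_n(\beta_\ast + n^{-1/2}u) - \Psi_n(\beta_\ast)\}$ exactly as before. Lemma \ref{Lem-1-Val-Fn} already carries the support constraint inside $f_n(\cdot)$, so the only object whose behavior is affected by $\Omega$ is the inner supremum $\tilde f_n(x,\beta,\lambda)$ defined as the sup over $\{\Delta:x+n^{-1/2}\Delta\in\Omega\}.$ Writing $\tilde e_n(\beta,\lambda)=E_{P_n}\{\tilde f_n(X,\beta,\lambda)\}$ and letting $f_n,e_n$ denote their unconstrained counterparts as in Lemmas \ref{Lem-1-Val-Fn}--\ref{lem:en-bnd}, I will show that $\tilde e_n(\beta,\lambda)-e_n(\beta,\lambda)=O_p(n^{-1})$ uniformly over $\beta$ in a compact neighborhood of $\beta_\ast$ and $\lambda$ bounded away from zero. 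Granting this, the reasoning of Proposition \ref{Lem-2-Val-fn} and Proposition \ref{prop:DRO} goes through unchanged, yielding the same expansion $V_n^{DRO}(u) = -H_n^{\T}u + 2^{-1}u^{\T}Cu + \eta^{1/2}D_\beta S(\beta_\ast)^{\T}u + o_p(1)$ uniformly on compact $u$-sets, from which the conclusion \eqref{eqn:prop_Omega_conv} follows via the tightness and argmin continuous mapping arguments of Proposition \ref{prop:argmintightness} and the proof of Theorem \ref{thm:levelsets-master}.

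For the key estimate on $\tilde e_n - e_n$, I will use the explicit form of the unconstrained maximizer $\Delta^\ast_v=(2\lambda)^{-1}T_p(v)$ with $v=D_x\ell(x;\beta)$ derived in Lemma \ref{lem:unconstr-opt}. By Assumption A2.c and compactness of $\Omega$, there is a deterministic $K$ such that $\Vert \Delta^\ast_{v}\Vert_2\le K$ uniformly in $x\in\Omega$, $\beta$ in a neighborhood of $\beta_\ast$, and $\lambda$ in any fixed compact subset of $(0,\infty)$. Consequently $\tilde f_n(x,\beta,\lambda)=f_n(x,\beta,\lambda)$ whenever $\operatorname{dist}(x,\partial\Omega) > Kn^{-1/2}$, so
\begin{equation*}
\tilde e_n(\beta,\lambda)-e_n(\beta,\lambda)
= \frac{1}{n}\sum_{i=1}^{n}\mathbb{I}\big\{\operatorname{dist}(X_i,\partial\Omega)\leq Kn^{-1/2}\big\}\,\big\{\tilde f_n(X_i,\beta,\lambda)-f_n(X_i,\beta,\lambda)\big\}.
\end{equation*}
The absolute continuity of the law of $X$ together with the fact that $\partial\Omega$ is a finite union of $(m-1)$-dimensional facets of the polytope $\{Ax\le b\}$ (with linearly independent rows) implies that the Lebesgue measure of the $\varepsilon$-tube around $\partial\Omega$ is $O(\varepsilon)$; combined with the continuity of the density on the compact set $\Omega$ (or, more generally, with the boundedness of the integrable density times the tube volume), this gives $\mathrm{pr}\{\operatorname{dist}(X,\partial\Omega)\le\varepsilon\} = O(\varepsilon)$. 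Hence the number of near-boundary samples is $O_p(n^{1/2})$, and each summand above is $O_p(n^{-1/2})$ by the pointwise expansion used in Lemma \ref{lem:en-bnd}, so the whole difference is $O_p(n^{1/2}\cdot n^{-1/2}\cdot n^{-1})=O_p(n^{-1})$. Substituting into the $\inf_\lambda$ representation of $n^{1/2}\{\Psi_n(\beta)-E_{P_n}\ell(X;\beta)\}$ and using that the near-optimal $\lambda$ stays in a compact subset of $(0,\infty)$ (as established in the proof of Proposition \ref{Lem-2-Val-fn}) yields the desired negligibility after multiplying by $n^{1/2}$ to form $V_n^{DRO}$.

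The main obstacle will be justifying the uniform control of the near-boundary error simultaneously over $\beta$ in a compact neighborhood of $\beta_\ast$, $\lambda$ in a compact subset of $(0,\infty)$ bounded away from $0$, and for the $O_p(n^{1/2})$ random collection of near-boundary samples. This requires combining the uniform continuity hypotheses in A2.c with the density-of-$X$ argument to replace pointwise bounds by uniform bounds; it also requires verifying that the minimizing $\lambda_n^\ast$ in the expansion does not escape to $0$ or $\infty$, which is exactly what Proposition \ref{Lem-2-Val-fn} and the proof of Lemma \ref{lem:en-bnd} provide. A secondary point is handling samples near the lower-dimensional intersections of facets of $\Omega$, but these form a set of even smaller Lebesgue tube volume and contribute lower-order terms, owing to the linear independence of the rows of $A$. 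Once these uniform controls are in place, the remainder of the argument is a direct replication of the proofs of Propositions \ref{prop:ERM}, \ref{prop:DRO}, and \ref{prop:argmintightness}, and of Theorem \ref{thm:levelsets-master} restricted to its second component.
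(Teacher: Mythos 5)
Your overall strategy is the same as the paper's: carry the support constraint inside the inner supremum of Lemma \ref{Lem-1-Val-Fn}, show that only the samples within $O(n^{-1/2})$ of $\partial\Omega$ are affected, and use absolute continuity of the law of $X$ to bound the mass of that boundary layer by $O(n^{-1/2})$. That part is right and matches the paper's key observation. The gap is in your rate for the correction term. You claim each near-boundary summand $\tilde f_n(X_i,\beta,\lambda)-f_n(X_i,\beta,\lambda)$ is $O_p(n^{-1/2})$ "by the pointwise expansion used in Lemma \ref{lem:en-bnd}," but that expansion shows only that the \emph{unconstrained} $f_n$ is $O(n^{-1/2})$ (because the leading term $(4\lambda)^{-1}\Vert D_x\ell\Vert_p^2$ has been subtracted off). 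For a sample sitting essentially on a facet with $D_x\ell(x;\beta)$ pointing outward, the feasible set $\{\Delta: x+n^{-1/2}\Delta\in\Omega\}$ excludes the unconstrained maximizer $\Delta_v=(2\lambda)^{-1}T_p(v)$ entirely, the constrained supremum drops to a value bounded below only by $0$ (the choice $\Delta=0$), and the deficit is of order $(4\lambda)^{-1}\Vert D_x\ell(x;\beta)\Vert_p^2=O(1)$, not $O(n^{-1/2})$. A one-dimensional computation with $\Omega=[0,1]$, $x=\epsilon$, $v=-1$ confirms the deficit is $\lambda(n^{1/2}\epsilon-(2\lambda)^{-1})^2$, which is order one at $\epsilon=0$. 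Consequently $\tilde e_n-e_n=O_p(n^{-1/2})$, not $O_p(n^{-1})$.

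This matters because at the rate $O_p(n^{-1/2})$ the correction contributes $O_p(n^{-1})$ to $\Psi_n(\beta)$, and a crude triangle-inequality bound on $V_n^{DRO}(u)=n\{\Psi_n(\beta_\ast+n^{-1/2}u)-\Psi_n(\beta_\ast)\}$ then gives only $O_p(1)$, which does not establish negligibility. What is needed — and what the paper supplies — is control of the \emph{increment in $\beta$} of the boundary correction over an $n^{-1/2}$-window. The paper obtains this through Lemma \ref{lem:constr-opt}, an exact Lagrangian-dual formula for the constrained quadratic maximization, which exhibits the correction as $(4\lambda)^{-1}\xi_n(x,\beta,\lambda)^\T H_n(x,\beta,\lambda)\xi_n(x,\beta,\lambda)$ with $\xi_n$ an explicit positive-part expression; combined with compactness of $\Omega$ and the smoothness in A2, this makes the normalized correction $a_n(\beta,\lambda)$ uniformly bounded \emph{and Lipschitz} in $(\beta,\lambda)$, so its increment over the window is $O(n^{-1/2})$ and the net contribution to $V_n^{DRO}$ is $o_p(1)$. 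To repair your argument you would either need to prove such a Lipschitz (or equicontinuity) property for your implicitly defined difference $\tilde e_n-e_n$ — which essentially forces you back to an explicit representation like Lemma \ref{lem:constr-opt} — or find a genuinely sharper bound on the boundary deficit, which the example above shows is not available.
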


As in the proof of Proposition \ref{prop:DRO}, we first present the
constrained counterpart to Lemma \ref{lem:unconstr-opt} which is
useful for the setting considered in
Proposition~\ref{prop:DRO-constrained}.

\begin{lemma}
  For any $x,v \in \mathbb{R}^m, \lambda > 0, \varepsilon > 0,$
  $d \times d$ symmetric matrix $B,$ $l \times m$ matrix $A,$ and
  $b \in \mathbb{R}^l,$ we have
  \begin{align}
    \sup_{x:A(x + \varepsilon \Delta) \leq b} \left\{ v^\T \Delta -
    \lambda \Vert \Delta \Vert_2^2
    + \varepsilon \Delta^\T B \Delta \right\}
    = \frac{\Vert v \Vert_2^2}{4\lambda} - \frac{1}{4\lambda}
    \xi^\T H \xi,
    \label{constr-obj}
  \end{align}
  where $\xi = \{2\lambda \varepsilon^{-1}(Ax-b) + A \tilde{B} v\}^+,$
  $\tilde{B}$ is an $m \times m$ matrix given by the inverse of
  $(I_m - \varepsilon \lambda^{-1}B)$ with $I_m$ denoting the identity
  matrix, and $H$ is an $l \times l$ matrix given by the inverse of
  $A\tilde{B}A^\T.$
\label{lem:constr-opt}
\end{lemma}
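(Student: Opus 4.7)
My plan is to treat the supremum as a strictly concave quadratic program in $\Delta$ with linear inequality constraints and solve it via Lagrangian duality. First I would observe that, whenever $\lambda I - \varepsilon B$ is positive definite (equivalently, $\tilde B = (I_m - \varepsilon \lambda^{-1}B)^{-1}$ exists and is positive definite), the objective $\Delta \mapsto v^\T \Delta - \Delta^\T(\lambda I - \varepsilon B)\Delta$ is strongly concave, so the supremum is attained at a unique maximizer. The constraint $A(x + \varepsilon \Delta) \leq b$ rewrites as $\varepsilon A \Delta + (Ax - b) \leq 0$, to which I attach a multiplier $\mu \geq 0$ and form
\begin{equation*}
  \mathcal{L}(\Delta,\mu) = v^\T \Delta - \Delta^\T(\lambda I - \varepsilon B)\Delta - \mu^\T\bigl(\varepsilon A\Delta + Ax - b\bigr).
\end{equation*}

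Next I would solve the stationarity condition $\nabla_\Delta \mathcal{L} = 0$, which gives $2(\lambda I - \varepsilon B)\Delta = v - \varepsilon A^\T \mu$, i.e.\ $\Delta^\ast(\mu) = (2\lambda)^{-1}\tilde B(v - \varepsilon A^\T \mu)$. Substituting $\Delta^\ast(\mu)$ back yields the dual function
\begin{equation*}
  g(\mu) = \frac{1}{4\lambda}\bigl(v - \varepsilon A^\T\mu\bigr)^\T \tilde B\bigl(v - \varepsilon A^\T\mu\bigr) + \mu^\T(b - Ax),
\end{equation*}
which I then minimize over $\mu \geq 0$. Expanding and completing the square in the rescaled variable $\eta = \varepsilon \mu$, the minimization becomes
\begin{equation*}
  \min_{\eta \geq 0}\, (\eta - H\zeta)^\T M (\eta - H\zeta)
  \quad\text{with}\quad
  M = A\tilde B A^\T, \; H = M^{-1}, \;
  \zeta = 2\lambda \varepsilon^{-1}(Ax - b) + A\tilde B v,
\end{equation*}
so that the KKT complementary-slackness conditions identify the optimal $\eta^\ast$ with the projection of $H\zeta$ onto the nonnegative orthant in the $M$-metric; this projection is what the paper encodes by the shorthand $\xi = \{\zeta\}^+$ (coinciding with the coordinatewise positive part when $M$ is diagonal, and more generally understood as the KKT solution).

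Finally, I would track the constant and quadratic pieces of $g$ back through this optimization: strong duality (Slater's condition is immediate when the constraint set is nonempty and $\tilde B \succ 0$) ensures that the supremum on the primal side equals $g(\mu^\ast)$, and the algebra collapses to $\|v\|_2^2/(4\lambda) - (4\lambda)^{-1}\xi^\T H \xi$. The main obstacle I anticipate is exactly this last bookkeeping step: the raw dual evaluation produces $v^\T \tilde B v/(4\lambda)$ rather than $\|v\|_2^2/(4\lambda)$, so one must either absorb the correction $v^\T(\tilde B - I)v/(4\lambda)$ into the $\xi^\T H \xi$ piece by expanding $\zeta$, or interpret the stated identity as holding under the conventions of the paper (e.g.\ after writing $\tilde B v = v + \varepsilon \lambda^{-1} B \tilde B v$ and regrouping). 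Verifying that the projected $\xi = \{\zeta\}^+$ correctly accounts for every active constraint through a standard argument on polyhedral cones, and simultaneously tidying up this $\tilde B$ versus $I$ discrepancy, is where the real work lies.
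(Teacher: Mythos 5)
Your route is the paper's route: dualize the linear constraint with a multiplier $\mu \geq 0$, use stationarity to get $\Delta^\ast(\mu) = (2\lambda)^{-1}\tilde{B}(v - \varepsilon A^\T \mu)$, evaluate the dual $g(\mu) = \mu^\T(b-Ax) + (4\lambda)^{-1}(v-\varepsilon A^\T\mu)^\T\tilde{B}(v-\varepsilon A^\T\mu)$, and minimize over $\mu \geq 0$. The two points you defer are exactly where the argument is delicate, and you are right to be suspicious of both; neither is handled more rigorously in the paper than in your sketch.

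First, the constant term. The exact dual value (granting the positive-part step) is $(4\lambda)^{-1}v^\T\tilde{B}v - (4\lambda)^{-1}\xi^\T H\xi$, and the correction $(4\lambda)^{-1}v^\T(\tilde{B}-I_m)v = \varepsilon(4\lambda^2)^{-1} v^\T B\tilde{B}v$ cannot be folded into the $\xi^\T H\xi$ term: expanding $\zeta$ does not produce it. The identity as displayed is therefore exact only up to $O(\varepsilon)$. This is harmless where the lemma is applied (Proposition \ref{prop:DRO-constrained} takes $\varepsilon = n^{-1/2}$ and the discrepancy is absorbed into the $O_p(n^{-\bar{q}/2})$ error terms), but it cannot be repaired by regrouping, so you should prove the statement with $v^\T\tilde{B}v$ in place of $\Vert v \Vert_2^2$ and record the $O(\varepsilon)$ gap separately. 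Second, the positive part. Your projection-in-the-$M$-metric picture, with $M = A\tilde{B}A^\T$, is the correct one, and your hedge is warranted: that projection reduces to the coordinatewise positive part only when $M$ is diagonal, when $l = 1$, or under a sign condition on the active set. The paper's own justification --- that an optimal $\mu$ satisfies $\mu_i = 0$ whenever $\zeta_i < 0$, and that the constraint $\mu \geq 0$ can then be dropped because $\xi \geq 0$ --- fails for a general positive definite, non-diagonal $M$: for instance with $M = \bigl(\begin{smallmatrix} 1 & 0.9 \\ 0.9 & 1\end{smallmatrix}\bigr)$ and $\xi = (1,0)^\T$, the unconstrained minimizer $\varepsilon^{-1}M^{-1}\xi$ has a negative coordinate, so the constrained and unconstrained infima differ. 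There is no general polyhedral-cone argument that rescues the coordinatewise formula; a clean statement must either restrict to the cases above or express the value through the genuine $M$-metric projection (equivalently, through the optimal active set).
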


\begin{proof}[Proof  of Lemma \ref{lem:constr-opt}]
  For any $x \in \Omega,$ we have $Ax \leq b.$ Consequently, the
  constrained optimization in (\ref{constr-obj}) is feasible for the
  choice $\Delta = 0.$ Then, due to Lagrange's theorem for convex
  duality, we have that the objective in (\ref{constr-obj}) equals
  \begin{align}
    \inf_{\mu \geq 0}
    &\sup_{\Delta \in \mathbb{R}^d}\left[
    v^\T \Delta - \lambda \Vert \Delta \Vert_2^2
      + \varepsilon \Delta^\T B \Delta - \mu^\T \left\{ A(x+\varepsilon \Delta)
      - b \right\}\right]\nonumber\\
    &\qquad\qquad=\inf_{\mu \geq 0}
      \left\{ -\mu^\T(Ax-b) + \sup_{\Delta \in \mathbb{R}^d} f(\Delta,\mu)
      \right\}
     \label{Lagrange-duality}
  \end{align}
  where, for any $\mu \geq 0, \Delta \in \mathbb{R}^d$ we define
 $f(\Delta,\mu)$ as,
  \begin{align}
    f(\Delta,\mu)=  \left(v - \varepsilon A^\T\mu \right)^\T \Delta
    - \lambda \Vert \Delta \Vert_2^2 + \varepsilon \Delta^\T B \Delta.
    \label{f-Delta-mu}
  \end{align}
  Utilizing the optimality condition that
  $v-\varepsilon A^\T\mu = 2(\lambda + \varepsilon B\Delta),$ we
  obtain
  \begin{align*}
    \sup_{\Delta \in \mathbb{R}^d} f(\Delta,\mu)
    = \frac{1}{4\lambda}(v - \varepsilon A^\T \mu)^\T \tilde{B}
    (v - \varepsilon A^\T \mu).
  \end{align*}
  Then, we obtain from (\ref{Lagrange-duality}) that
  \begin{align*}
    \inf_{\mu \geq 0}
    \left\{ -\mu^T(Ax-b) + \sup_{\Delta \in \mathbb{R}^d} f(\Delta,\mu)
    \right\}
    &= \frac{\Vert v \Vert_2^2}{4\lambda} +
      \inf_{\mu \geq 0} \left\{-\mu^\T \left(Ax-b +
      \frac{\varepsilon}{2\lambda}A \tilde{B}v \right) +
      \frac{\varepsilon}{4\lambda} \mu^\T A\tilde{B}A^\T\mu\right\}\\
    &=  \frac{\Vert v \Vert_2^2}{4\lambda} +
      \frac{\varepsilon}{2\lambda}\inf_{\mu \geq 0} \left( - \mu^\T \xi
      + \frac{\varepsilon}{2} \mu^\T  A\tilde{B}A^\T \mu\right).
  \end{align*}
  where $\xi = \{2\lambda \varepsilon^{-1}(Ax-b) + A \tilde{B} v\}^+$
  denotes the component-wise positive part.
   This is because, for any $\mu = (\mu_1,\ldots,\mu_l)$ which attains
   the infimum in the above left hand side, it is necessarily the case
   that $\mu_i = 0$ whenever the respective $\xi_i < 0$ for any
   $i = 1,\ldots,l.$ Moreover,
   \begin{align*}
     \inf_{\mu \geq 0} \left( - \mu^\T \xi
     + \frac{\varepsilon}{2} \mu^\T A \tilde{B} A^\T \mu\right)
     =      \inf_{\mu \in \mathbb{R}^l} \left( - \mu^\T \xi
      + \frac{\varepsilon}{2} \mu^\T A \tilde{B} A^\T \mu\right),
   \end{align*}
   because of the following reasoning: $\xi \geq 0$ component-wise and
   if any $\mu = (\mu_1,\ldots,\mu_l)$ which attains the optimum in
   the right-hand side is such that $\mu_i < 0$ for some $i,$ then one
   can strictly decrease the objective by increasing $\mu_i$ if the
   respective $\xi_i > 0,$ (or) not change the objective by making
   $\mu_i = 0.$ Consequently,
   \begin{align*}
     \inf_{\mu \geq 0} \left( - \mu^\T \nu
     + \frac{\varepsilon}{2} \mu^\T A \tilde{B} A^\T \mu\right)
          &=      \inf_{\mu \in \mathbb{R}^m} \left( - \mu^\T \xi
       + \frac{\varepsilon}{2} \mu^\T A\tilde{B} A^\T \mu\right)\\
       &= - 2^{-1}\varepsilon^{-1}\xi^\T \left(A\tilde{B}A^\T \right)^{-1}
         \xi,
   \end{align*}
   because $A$ is taken to have linearly independent rows and the
   respective optimality condition is
   $\xi - \varepsilon A\tilde{B}A^T\mu = 0.$ Therefore, we
   have from the Lagrange duality, (\ref{Lagrange-duality}) and the
   above simplication that the objective in (\ref{constr-obj}) equals
   $(4\lambda)^{-1}( \Vert v \Vert_2^2 -
     \xi^\T H \xi ),$
   thus concluding the proof.
\end{proof}

\begin{proof}[Proof  of Proposition \ref{prop:DRO-constrained}]
  Due to the presence of the constraints
  $\Omega = \{x \in \mathbb{R}^m: Ax \leq b\},$ we have
  $\Psi_n(\beta)$ as in the statement of Lemma \ref{Lem-1-Val-Fn} with
  $e_n(\beta,\lambda) = E_{P_n}\left[ f_{n}(X,\beta,\lambda)\right]$
  and
\begin{align*}
  f_{n}(x,\beta,\lambda)
  = \sup_{x: A(x+ n^{-1/2} \Delta) \leq b } \left[ n^{1/2}
  \left\{ \ell \big(x + n^{-1/2}\Delta;\beta\big) - \ell(x;\beta)\right\}
  - \lambda \Vert \Delta \Vert_q^2\right] - \frac{1}{4\lambda}
  \Vert D_x \ell(x;\beta) \Vert_2^2.
\end{align*}
Fixing $b > 0$ and $ \lambda_0 \in (0,1),$ consider any $\beta$ such
that $\Vert \beta \Vert_2 \leq b$ and $\lambda > \lambda_0.$ To apply
Lemma \ref{lem:constr-opt} for evaluating $f_{n}(x,\beta,\lambda)$ as
in the proof of Lemma \ref{lem:en-bnd}, we identify the respective
quantities in (\ref{constr-obj}) in the statement of Lemma
\ref{lem:constr-opt} as follows: Letting
$\varepsilon = n^{-1/2},$ $v = D_x\ell(x;\beta),$
$\bar{q} = \min\{2,q/(q-1)\},$ $B =D_{xx}\ell(x;\beta),$
$H_n(x,\beta,\lambda)$ be the inverse of
$A \left\{I_m - n^{-1/2} \lambda^{-1}
  D_{xx}\ell(X;\beta)\right\}^{-1}A^\T$ and
\[\xi_n(x,\beta,\lambda) = \left[2\lambda n^{1/2}(Ax - b) +
    A \left\{I_m - n^{-1/2} \lambda^{-1}
      D_{xx}\ell(X;\beta)\right\}^{-1} D_x\ell(x;\beta) \right]^+ \]
we have that
$f_n(x,\beta,\lambda) - (4\lambda)^{-1} \xi_n(x,\beta,\lambda)^\T
H_n(x,\beta,\lambda) \xi_n(x,\beta,\lambda)$
is upper and lower bounded, respectively, by
  \begin{align*}
    c_u (1 + \varepsilon^\prime) n^{-\bar{q}/2} \Vert \xi_n(x,\beta,
    \lambda) \Vert_2^2
    \ \lambda_0^{-\max\{2,\frac{1}{q-1}\}} \quad \text{ and } \quad
    -c_l \varepsilon^\prime n^{-1} \Vert \xi_n(x,\beta,\lambda) \Vert_2^2
    \lambda^{-2},
  \end{align*}
  for suitable positive constants $c_l,c_u$ which are, in turn,
  determined by the constants $b, d$ and $q.$

  Next, with $\Omega$ being compact, we have from the expression for
  $\xi_n(\cdot)$ and the uniform boundedness of
  $D_x\ell(x,\beta),D_{xx}\ell(x,\beta)$ (over the set
  $x \in \Omega, \Vert \beta \Vert \leq b$) that,
  \begin{align*}
    {\rm pr}\left\{   \left\Vert \xi_n(X,\beta,\lambda) \right \Vert_2 > 0
    \right\}  \leq {\rm pr}\left[ \min_{i=1,\ldots,l} \{b_i - (Ax)_i\} <
    M\lambda^{-1} n^{-1/2} \right],
  \end{align*}
  for some suitably large constant $M.$ The above right hand side is
  $O_p(\lambda n^{-1/2}),$ as $n \rightarrow \infty,$ since the
  distribution $X$ is absolutely continuous and satisfies
  $\mathrm{pr}(AX \leq b) = 1.$ Then, letting
  \begin{align*}
    a_n(\beta,\lambda) = \lambda^{-1} n^{1/2}E_{P_n}\left\{\xi_n(X, \beta,
    \lambda)^\T H_n(X,\beta,\lambda) \xi_n(X,\beta,\lambda)\right\},
  \end{align*}
  we have
  $\sup_{n, \lambda > \lambda_0, \Vert \beta \Vert_2 \leq b}
  a_n(\beta,\lambda) < \infty$ due to the uniform boundedness of
  $\xi_{n}(x,\beta,\lambda)$ over
  $n \geq 1, x \in \Omega, \Vert \beta \Vert \leq b$ and
  $\lambda > \lambda_0.$ With $e_n(\beta,\lambda)$ defined to equal
  $E_{P_n}[f_n(X,\beta,\lambda)],$ we therefore obtain,
  \begin{align*}
    e_n(\beta,\lambda) = 4^{-1}\lambda^{-2} n^{-1/2}a_n(\beta,\lambda)
    + O_p(n^{-\bar{q}/2}),
  \end{align*}
  where the convergence pertaining to the $O_p(\cdot)$ term is uniform
  over $(\beta,\lambda)$ such that $\Vert \beta \Vert_2 \leq b$ and
  $\lambda > \lambda_0.$ Likewise, due to the above lower bound for
  $f_n(\cdot),$ the $O_p(n^{-\bar{q}1/2})$ term
  $\lambda^2 O_p(n^{-\bar{q}/2})$ is bounded from below by an
  integral random variable, uniformly over all $\lambda > 0$ and
  $\Vert \beta \Vert \leq b.$ Furthermore, due to continuous
  differentiability of $\ell(\cdot)$ over compact $\Omega,$ we have
  that $a_n(\beta,\lambda)$ is lipschitz over
  $\lambda > \lambda_0, \Vert \beta \Vert_2 \leq b.$ Combining this
  with the above expression for $e_n(\beta,\lambda)$ and that of
  $\Psi_n(\beta)$ derived from Lemma \ref{Lem-1-Val-Fn}, we have,
  $n^{1/2}\left[ \Psi_{n}(\beta) - E_{P_{n}}\{\ell(X;\beta)\}
  \right]$ equals,
  \begin{align*}
    \inf_{\lambda \geq 0}\left[ \lambda \eta +\frac{1}{4\lambda }
    E_{P_{n}}\left\{\left\Vert D_{x}\ell(X;\beta)\right\Vert_{p}^{2}\right\}
    +  \frac{a_n(\beta,\lambda)}{4\lambda^2n^{1/2}}
     + O_p(n^{-\bar{q}/2})\right].
  \end{align*}
  The desired conclusion then follows by utilizing the uniform
  boundedness, lipschitzness of $a_n(\beta,\lambda)$ and proceeding
  as in the proofs of Propositions \ref{Lem-2-Val-fn} and
  \ref{prop:DRO} given earlier in this supplementary material.
\end{proof}

{ The following examples show that the convergence \eqref{eqn:prop_Omega_conv} may not hold if the set
$\Omega = \{x \in \mathbb{R}^m: Ax \leq b\}$ has equality constraints.}
\begin{example}
  For the linear regression example in Section \ref{sec:geo_insignts},
  suppose that the support for $X,$ denoted by the set
  $\Omega = \{x \in \mathbb{R}^2: Ax \leq b\},$ where the matrix $A$
  and vector $b$ are such that
  \[\Omega = \{(x_1,x_2) \in \mathbb{R}^2: x_1 - x_2 = 0\}.\]
  Suppose that $\delta_n = \eta n^{-1}.$ With the loss
  $\ell(x,y;\beta) = (y-\beta^\T x)^2$ and the transportation cost
  $c(\cdot)$ given as in \eqref{tr-cost-LinReg}, we have the following
  from the definition of $\phi_{\lambda}(\cdot)$ in the proof of Lemma
  \ref{Lem-1-Val-Fn}: for any $x = (x_1,x_2) \in \Omega,$ with $x_1$
  being equal to $x_2,$
  \begin{align*}
    \phi_{\lambda}(x;\beta,\lambda)
    &= \sup_{\Delta \in \mathbb{R} }
      \left\{ \left(y - \beta^\T x - n^{-1/2}\Delta \beta^\T1  \right)^2
      - \lambda n^{-1/2} 2^{2/q}\Delta^2 \right\}\\
    &= (y-\beta^\T x)^2 + n^{-1/2}\sup_{\Delta \in R}
      \left[ -2(y-\beta^\T x) \beta^\T 1 \Delta - \left\{ \lambda 2^{2/q}
      - (\beta^\T 1)^2n^{-1/2}\right\}\Delta^2\right]\\
    &= (y-\beta^\T x)^2 + n^{-1/2}\frac{(y-\beta^\T x)^2}
      {\lambda 2^{2/q}(\beta^\T 1)^{-2} - n^{-1/2}}
      = \frac{(y-\beta^\T x)^2}
      {1 - \lambda^{-1}2^{-2/q}(\beta^\T 1)^2 n^{-1/2}}.
  \end{align*}
  For the choice $\delta_n = \eta n^{-1},$ the distributionally robust optimization objective
  simplifies as below by exploiting the dual representation for
  $\Psi_n(\beta)$ in Lemma \ref{Lem-1-Val-Fn}:
  \begin{align*}
    \Psi_n(\beta)
    &= \inf_{\lambda \geq 0} \left\{ \lambda \eta n^{-1/2}
      + \frac{E_{P_n}(Y-\beta^\T X)^2}
      {1 - \lambda^{-1}2^{-2/q}(\beta^\T 1)^2 n^{-1/2}} \right\}\\
    &= E_{P_n}(Y-\beta^\T X)^2 +  n^{-1/2} \inf_{\mu \geq 0}
      \left\{\eta \mu + \mu^{-1}2^{-2/q}(\beta^\T 1)^2 E_{P_n}(Y-\beta^\T X)^2
      \right\} + n^{-1} \eta 2^{-2/q}(\beta^\T 1)^2 \\
    &= E_{P_n}(Y-\beta^\T X)^2 +  n^{-1/2} 2^{1-1/q} \eta^{1/2}
      \vert \beta^\T 1 \vert \{E_{P_n}(Y-\beta^\T X)^2\}^{1/2}
      + n^{-1}\eta 2^{-2/q}(\beta^\T 1)^2.
  \end{align*}
  Suppose that $\beta_\ast,$ denoting an optimal parameter minimizing
  $E\{(Y-\beta^\T X)^2\},$ is such that $\beta_\ast^\T 1 \neq 0.$ Then
  \[n \left\{\Psi_n(\beta_\ast + n^{-1/2}u) -
      \Psi_n(\beta_\ast)\right\} = H_n^\T u + u^T E_{P_n}\left( X
      X^\T\right)u + \eta^{1/2} D_\beta\tilde{S}(\beta_\ast)^\T u +
    \eta 2^{-2/q}(\beta_\ast^\T 1)^2 + o(1),\]
  where
  $H_n = -n^{1/2}E_{P_n}\left\{2(Y-\beta_\ast^\T X)X\right\}$ and
  $\tilde{S}(\beta) = 2^{1-1/q}\vert \beta^\T 1 \vert
  \{E_{P_n}(Y-\beta^\T X)^2\}^{1/2}.$ The above convergence happens
  uniformly in compact sets over $u$ and as $n \rightarrow \infty.$
  Consequently, when $C = E[XX^\T]$ is positive definite, we have the
  the following central limit theorem for the distributionally robust estimator
  $\beta_n^{DRO}(\delta_n)$ incorporating support constraint: As
  $n \rightarrow \infty,$
  \begin{align*}
    n^{1/2}\left\{ \beta_n^{DRO}(\delta_n) - \beta_\ast \right\} \Rightarrow
    C^{-1}H - \eta^{1/2} D_\beta \tilde{S}(\beta),
  \end{align*}
  where $H$ is normally distributed as in Theorem
  \ref{thm:levelsets-master}. Comparing this limiting result with that
  in Theorem \ref{thm:levelsets-master}, we see that the limit has
  changed with the introduction of support constraints via the term
  $D_\beta\tilde{S}(\beta),$ instead of $D_\beta S(\beta)$ appearing in
  Theorem \ref{thm:levelsets-master}. In particular, we see that the
  terms $S(\beta)$ and $\tilde{S}(\beta)$ differ as in,
  \begin{align*}
    \tilde{S}(\beta_\ast) = 2^{1/2-1/q} \frac{\vert \beta^T1 \vert}
    {\Vert \beta \Vert_p}S(\beta).
  \end{align*}
  \label{eg1:supp-constr-CLT}
\end{example}

{
\begin{example}
 Suppose that $\ell(x;\beta) = a + \beta^\T x + \beta^\T C \beta$ for some $a \in \mathbb{R}$ and  positive semi-definite $C.$  Let $r \leq m$ be a positive integer and the support for $X$ be given by $\Omega = \{x \in \mathbb{R}^m: Ax = b\},$ where the matrix $A$ is an $(r \times m)$ matrix with linearly independent rows and $b \in \mathbb{R}^r.$  Suppose that $\delta_n = \eta n^{-1}.$ With the transportation cost
  $c(\cdot)$ given by $c(x,x^\prime) = \Vert x - x^\prime \Vert_2^2,$ we have the following
  from the definition of $\phi_{\lambda}(\cdot)$ in the proof of Lemma
  \ref{Lem-1-Val-Fn}: for any $x \in \Omega,$ we have $Ax = b$ and
  \begin{align*}
    \phi_{\lambda}(x;\beta,\lambda)
    &= \ell(x;\beta) +  n^{-1/2} \sup_{\Delta}
      \left\{  \beta^\T \Delta
      - \lambda \Vert \Delta \Vert_2^2   : A(x + n^{-1/2}\Delta) = b \right\}\\
          &= \ell(x;\beta) +  n^{-1/2} \sup_{\Delta}
      \left\{  \beta^\T \Delta
      - \lambda \Vert \Delta \Vert_2^2   : A\Delta = 0\right\}\\
      &= \ell(x;\beta) +  n^{-1/2} \inf_{\mu \in \mathbb{R}^r} \sup_{\Delta}
      \left\{  (\beta - A^\T\mu)^\T \Delta
      - \lambda \Vert \Delta \Vert_2^2  \right\},
  \end{align*}
  as a consequence of convex duality. Then
\begin{align*}
    \phi_{\lambda}(x;\beta,\lambda) =  \ell(x;\beta) + \frac{n^{-1/2}}{4\lambda} \inf_{\mu \in \mathbb{R}^r} \Vert \beta - A^\T \mu\Vert_2^2 = \ell(x;\beta) + \Vert (\mathbb{I}_m - A^\T(AA^\T)^{-1}A)\beta\Vert_2^2,
\end{align*}
where $I_m$ is the $m \times m$ identity matrix. For the choice $\delta_n = \eta n^{-1},$
we obtain the following from the dual representation in Lemma \ref{Lem-1-Val-Fn}:
  \begin{align*}
    \Psi_n(\beta)
    &= E_{P_n} \left\{ \ell(X;\beta) \right\} + \inf_{\lambda \geq 0}
    \left\{ \lambda \eta n^{-1/2}  + \frac{n^{-1/2}}{4\lambda}  \Vert (\mathbb{I}_m - A^\T(AA^\T)^{-1}A) \beta \Vert_2^2 \right\}\\
    &= E_{P_n} \left\{ \ell(X;\beta) \right\} + \eta^{1/2}n^{-1/2} \Vert (\mathbb{I}_m - A^\T(AA^\T)^{-1}A) \beta \Vert_2\\
     &= E_{P_n} \left\{ \ell(X;\beta) \right\} + \delta_n^{1/2} \Vert P_{\mathcal{N}(A)} \beta  \Vert_2,
  \end{align*}
  where $P_{\mathcal{N}(A)} = I_m - A^\T(AA^\T)^{-1}A$ is the matrix for projecting onto the null space of A.
Letting $H_n = n^{1/2}E_{P_n}\left\{h(X;\beta)\right\}$ and $\tilde{S}(\beta) = E_{P_n} \left\{\Vert P_{\mathcal{N}(A)} \beta \Vert_2^2 \right\}^{1/2},$
  \[n \left\{\Psi_n(\beta_\ast + n^{-1/2}u) -
      \Psi_n(\beta_\ast)\right\} = H_n^\T u + u^T C u + \eta^{1/2} D_\beta\tilde{S}(\beta_\ast)^\T u +
     + o(1),\]
     as $n \rightarrow \infty$ and uniformly in compact sets over $u.$ Consequently,
  \begin{align*}
    n^{1/2}\left\{ \beta_n^{DRO}(\delta_n) - \beta_\ast \right\} \Rightarrow
    C^{-1}H - \eta^{1/2} D_\beta \tilde{S}(\beta),
  \end{align*}
  where $H$ is normally distributed as in Theorem
  \ref{thm:levelsets-master}. With $S(\beta) = \Vert \beta \Vert_2$ in this example, we see that the introduction of support constraint results in a bias term that differs from that in Theorem \ref{thm:levelsets-master} by,
  \begin{align*}
      \tilde{S}(\beta) = \frac{\Vert P_{\mathcal{N}(A)} \beta \Vert} {\Vert \beta \Vert_2 } S(\beta),
  \end{align*}
  where $P_{\mathcal{N}(A)}$ is the projection matrix for projecting onto the null space of the matrix $A.$ 
\end{example}
}

\color{black}
\section{Proofs of Propositions \protect\ref{prop:RWPEq} - \protect\ref%
{prop:RWPcont}}

\label{sec:proofs-props-master-thm} In this section we present the proofs of
Propositions \ref{prop:RWPEq} - \ref{prop:RWPcont}, which are useful towards
establishing the convergence of the last component of the triple considered
in Theorem \ref{thm:master}.

\begin{proof}[Proof  of Proposition \ref{prop:RWPEq}]
By utilizing the duality for linear semi-infinite programs as in the proof
of Proposition 3 of \citet{blanchet2016robust}, for $\beta _{\ast
}+n^{-1/2}u\in \Theta ,$ we obtain that
\begin{align*}
nR_{n}(\beta _{\ast }+n^{-1/2}u)&= \max_{\xi }\left( -\sum_{i=1}^{n}\xi ^{{%
\mathrm{\scriptscriptstyle T}}}h(X_{i},\beta _{\ast }+n^{-1/2}u)\right. \\
&-\left. \sum_{i=1}^{n}\max_{\Delta :X_{i}+\Delta \in \Omega
}\left[ \xi ^{{\mathrm{\scriptscriptstyle T}}}\left\{ h(X_{i}+\Delta ,\beta
_{\ast }+n^{-1/2}u)-h(X_{i},\beta _{\ast }+n^{-1/2}u)\right\} -\left\Vert
\Delta \right\Vert _{q}^{2}\right] \right) .
\end{align*}%
As a result,
\begin{align*}
& nR_{n}(\beta _{\ast }+n^{-1/2}u)=\max_{\xi }\left[
-\sum_{i=1}^{n}\max_{\Delta :X_{i}+\Delta \in \Omega }\left\{ \xi ^{{\mathrm{%
\scriptscriptstyle T}}}h(X_{i}+\Delta ,\beta _{\ast }+n^{-1/2}u)-\left\Vert
\Delta \right\Vert _{q}^{2}\right\} \right] \\
& \quad =\max_{\xi }\left( -\sum_{i=1}^{n}\xi ^{{\mathrm{\scriptscriptstyle T%
}}}h(X_{i},\beta _{\ast })-\sum_{i=1}^{n}\max_{X_{i}+\Delta \in \Omega
}\left[ \xi ^{{\mathrm{\scriptscriptstyle T}}}\left\{ h(X_{i}+\Delta ,\beta
_{\ast }+n^{-1/2}u)-h(X_{i},\beta _{\ast })\right\} -\left\Vert \Delta
\right\Vert _{q}^{2}\right] \right) .
\end{align*}%
By rescaling $\xi =n^{1/2}\xi ,\Delta =n^{1/2}\Delta $ and letting $H_{n}=%
{n^{-1/2}}\sum_{i=1}^{n}h(X_{i},\theta _{\ast }),$ we obtain,
\begin{equation*}
nR_{n}(\beta _{\ast }+n^{-1/2}u)=\max_{\xi }\left\{ -\xi ^{{\mathrm{%
\scriptscriptstyle T}}}H_{n}-M_{n}(\xi ,u)\right\} ,
\end{equation*}%
where
\begin{align}
& M_{n}(\xi ,u) \notag \\
=&\frac{1}{n}\sum_{i=1}^{n}\max_{\Delta :X_{i}+n^{-1/2}\Delta
\in \Omega }\left[ n^{1/2}\xi ^{{\mathrm{\scriptscriptstyle T}}}\left\{
h(X_{i}+n^{-1/2}\Delta ,\beta _{\ast }+n^{-1/2}u)-h(X_{i},\beta _{\ast
})\right\} -\left\Vert \Delta \right\Vert _{q}^{2}\right]
\label{Mn-rep-for-lb} \\
  =&\frac{1}{n}\sum_{i=1}^{n}\max_{\Delta :X_{i}+n^{-1/2}\Delta \in \Omega
}\left\{ \xi ^{{\mathrm{\scriptscriptstyle T}}}\int_{0}^{1}
D_{x}h\left( X_{i} +n^{-1/2}t\Delta,\beta _{\ast }+n^{-1/2}tu%
\right)\Delta\mathrm{d}t  \right. \notag \\
&\left. + \xi ^{{\mathrm{\scriptscriptstyle T}}}\int_{0}^{1} D_{\beta }h\left( X_{i}+n^{-1/2}t\Delta ,\beta
_{\ast }+n^{-1/2}tu\right) u \mathrm{d}t-\Vert \Delta \Vert
_{q}^{2}\right\} ,  \notag
\end{align}%
where the latter equality follows from the fundamental theorem of calculus.
This completes the proof of the first part of Proposition~\ref{prop:RWPEq}.

For the second part, we first show $\beta _{\ast }\in \Theta .$ For any
non-zero $\xi \in \mathbb{R}^{d},$ we have $E\left\{ \xi ^{{\mathrm{%
\scriptscriptstyle T}}}h(X,\beta _{\ast })\right\} =0$, due to Assumption
A2.b. We claim $0$ lies in the interior of $\mathrm{conv}\left( \left\{ \xi
^{{\mathrm{\scriptscriptstyle T}}}h(x,\beta _{\ast }),x\in \Omega \right\}
\right) .$ Otherwise, we must have $h(X,\beta ^{\ast })=0$, almost
surely, and $\xi ^{{\mathrm{\scriptscriptstyle T}}}h(x,\beta ^{\ast })$ have
the same sign, for all $x\in \Omega .$ Without loss of generality, we assume
$\xi ^{{\mathrm{\scriptscriptstyle T}}}h(x,\beta ^{\ast })\geq 0$ for all $%
x\in \Omega .$ Then, we have $D_{x}\left\{ \xi ^{{\mathrm{\scriptscriptstyle %
T}}}h(x,\beta ^{\ast })\right\} =0$, almost surely, which leads to a
contradiction to $E\left\{ D_{x}h(X,\beta _{\ast })D_{x}h(X,\beta _{\ast })^{%
{\mathrm{\scriptscriptstyle T}}}\right\} \succ 0.$ Therefore, there exists $%
\underline{x}_{\xi },\overline{x}_{\xi }\in $ $\Omega $ such as%
\begin{equation*}
\xi ^{{\mathrm{\scriptscriptstyle T}}}h(\underline{x}_{\xi },\beta _{\ast
})<0<\xi ^{{\mathrm{\scriptscriptstyle T}}}h(\overline{x}_{\xi },\beta
_{\ast }).
\end{equation*}%
If $\beta _{\ast }\notin \Theta ,$ which means $0$ lies on the boundary of $%
\mathrm{conv}\left[ \left\{ h(x,\beta _{\ast }),x\in \Omega \right\} \right]
,$ by applying the supporting hyperplane theorem; see, for example, \citet[section
2.5.2]{boyd2004convex}, there exists a non-zero $\xi $ such that for
all $x\in \Omega ,$
\begin{equation*}
\xi ^{{\mathrm{\scriptscriptstyle T}}}h(x,\beta _{\ast })\leq 0,
\end{equation*}%
which leads to a contradiction.

Since $\beta _{\ast }\in \Theta ,$ there exists $\epsilon >0$ such as $%
B_{\epsilon }\left( 0\right) \subset \mathrm{conv}\left[ \left\{ h(x,\beta
_{\ast }),x\in \Omega \right\} \right] .$ Consider basis points $%
e_{i}=(0,\ldots ,1,\ldots ,0)^{{\mathrm{\scriptscriptstyle T}}},$ whose
coordinates are all zero, except the $i$-th entry that equals one. So, $%
\mathrm{conv}\left[ \left\{ \epsilon e_{i}\right\} _{i=1}^{d}\cup \left\{
-\epsilon e_{i}\right\} _{i=1}^{d}\right] \subset \mathrm{conv}\left[
\left\{ h(x,\beta _{\ast }),x\in \Omega \right\} \right] $ is a neighborhood
of $0.$ To simplify the notation, let $y_{i}=\epsilon e_{i}$ for $%
i=1,2,\ldots ,d$, and $y_{i}=-\epsilon e_{i-d}$ for $i=d+1,d+2,\ldots ,2d.$
By Carath\'{e}odory's theorem; see, for example, \citet[Theorem 17.1]{rockafellar1970convex},
we have for each $y_{i},$ there exists $x_{i,1}\ldots x_{i,d+1}$ such that $%
y_{i}$ is a convex combination of $h(x_{i,1},\beta )\ldots h(x_{i,d+1},\beta
).$ Then, due to the continuity of $D_{\beta }h(x,\beta )$
around $\beta _{\ast }$ in Assumption A2.c, there exists a neighborhood of $%
\beta _{\ast },$ $B_{\epsilon }\left( \beta _{\ast }\right) $, such that for
all $\beta \in B_{\epsilon }\left( \beta _{\ast }\right) ,$ for $%
i=1,2,\ldots ,2d$ and $j=1,2,\ldots ,d+1,$
\begin{equation*}
\left\Vert h(x_{i,j},\beta )-h(x_{i,j},\beta )\right\Vert _{2}<\epsilon /2.
\end{equation*}%
Then by applying the same convex combination to obtain $y_{i}^{\beta }$, we
have for all $i=1,2,\ldots ,2d,$ $\left\Vert y_{i}^{\beta }-y_{i}\right\Vert
_{2}<\epsilon /2.$ Therefore, $\mathrm{conv}\left( \left\{ y_{i}^{\beta
}\right\} _{i=1}^{2d}\right) \subset \mathrm{conv}\left[\left\{ h(x,\beta
),x\in \Omega \right\} \right] $ is a neighborhood of $0,$ which completes
the proof. \end{proof}

A key component of the proofs of the upper and lower bounds for $%
nR_{n}(\beta_{\ast}+n^{-1/2}u)$ is the following tightness result.

\begin{lemma}
For any $\varepsilon,K>0,$ there exists $n_{0}>0$ and $b\in(0,\infty)$ such
that
\begin{equation*}
{\rm pr}\left[ \max_{\left\Vert \xi\right\Vert _{q}\geq
b}\{-\xi^{\T}H_{n}-M_{n}(\xi,u)\}>0\right] \leq\varepsilon,
\end{equation*}
for all $n\geq n_{0}$ and uniformly over $u$ such that $\Vert u\Vert_{2}\leq
K$. \label{lemmaksai}
\end{lemma}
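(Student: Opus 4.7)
The plan is to reduce the claim to a uniform quadratic lower bound on $M_n$ over a single sphere in $\xi$, exploiting tightness of $H_n$ and convexity of $M_n(\cdot,u)$. Since the central limit theorem gives $H_n \Rightarrow -H$, there exists $K_0$ such that $\mathrm{pr}\{\|H_n\|_p \leq K_0\} \geq 1 - \varepsilon/2$ uniformly in $n$, and on that event $-\xi^\T H_n \leq K_0\|\xi\|_q$. The goal reduces to guaranteeing $M_n(\xi,u) > K_0\|\xi\|_q$ for all $\|\xi\|_q \geq b$ on an event of probability at least $1 - \varepsilon/2$.

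A convexity argument drastically simplifies matters. Each per-sample term in $M_n(\xi,u)$ is a supremum of affine functions of $\xi$, so $M_n(\cdot,u)$ is convex; moreover, $M_n(0,u) = 0$ because the inner maximum over $\Delta$ at $\xi = 0$ is attained at $\Delta = 0$. Convexity together with $M_n(0,u) = 0$ yields $M_n(\lambda\xi,u) \geq \lambda M_n(\xi,u)$ for all $\lambda \geq 1$. Hence if one can establish the quadratic lower bound $M_n(\xi,u) \geq c\|\xi\|_q^2$ uniformly on a single sphere $\{\|\xi\|_q = b_0\}$, it propagates to $M_n(\xi,u) \geq c\, b_0\|\xi\|_q$ for every $\|\xi\|_q \geq b_0$, and choosing $b_0 > K_0/c$ completes the comparison with $-\xi^\T H_n$.

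To establish the quadratic bound uniformly on the compact sphere $\{\|\xi\|_q = b_0\}$, I would substitute into $M_n$ the per-sample choice $\Delta_i = \tfrac12 T_p\bigl(D_x h(X_i,\beta_\ast)^\T \xi\bigr)\chi_i$, where $\chi_i = \mathbb{I}\{\|X_i\|_2 \leq R\}$ is a truncation indicator. The growth bound $\|D_x h(X_i,\beta_\ast)\|_q \leq M'(1 + \|X_i\|_2)$ from A2.c ensures $\|n^{-1/2}\Delta_i\|_q + \|n^{-1/2}u\|_q \leq 1$ for $n$ sufficiently large, so that the local Lipschitz estimates in A2.c apply to the integrated operators inside $M_n(\xi,u)$, yielding Taylor remainders pointwise bounded by $n^{-1/2}\kappa'(X_i)$ and $n^{-1/2}\bar\kappa(X_i)$ times polynomials in $b_0, R, K$. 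Averaging these remainders via the law of large numbers under the moment conditions $E\{\kappa'(X)^2\}, E\{\bar\kappa(X)^2\} < \infty$, combined with a finite $\varepsilon$-net argument on the compact sphere (enabled by continuity of $\xi \mapsto \|D_x h(X_i,\beta_\ast)^\T \xi\|_p^2$), one expects
\[
M_n(\xi,u) \;\geq\; \tfrac14 E\!\left\{\|D_x h(X,\beta_\ast)^\T \xi\|_p^2\,\mathbb{I}\{\|X\|_2 \leq R\}\right\} + \xi^\T C u - o(1),
\]
uniformly over $\|\xi\|_q = b_0$ and $\|u\|_2 \leq K$, as $n \to \infty$. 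The positive definiteness $E\{D_x h(X,\beta_\ast)D_x h(X,\beta_\ast)^\T\} \succ 0$ from A2.b, combined with the equivalence of $p$- and $2$-norms on $\mathbb{R}^d$, delivers $\varphi(\xi) \geq c_1\|\xi\|_q^2$ for some $c_1 > 0$; taking $R$ large, the truncated moment still retains at least $c_1\|\xi\|_q^2/2$, while the cross term $\xi^\T C u$ contributes at most a bounded multiple of $b_0 K$. Consequently, for $b_0$ large enough, $M_n(\xi,u) \geq c_1 b_0^2/4$ on the sphere, which is the desired quadratic bound.

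The principal obstacle will be managing the interaction among the truncation level $R$, the sphere radius $b_0$, and the uniformity in the direction $\xi/\|\xi\|_q$ on the sphere and in $\|u\|_2 \leq K$. The truncation level $R$ must be large enough that the truncated version of $\varphi(\xi)$ retains a fixed positive fraction of $c_1\|\xi\|_q^2$, yet small enough that the Taylor remainders controlled by $\kappa'(X_i)$ and $\bar\kappa(X_i)$ remain genuinely $o(1)$ after averaging, uniformly over the compact sphere. Coupling these choices carefully, and exploiting the square-integrability of $\kappa'$ and $\bar\kappa$ from A2.c to pass from pointwise to uniform control, is the delicate technical step.
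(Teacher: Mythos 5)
Your proposal is correct in substance but organizes the argument differently from the paper. The core estimate is the same in both: you test the inner supremum with the canonical perturbation $\Delta_i\propto T_p\{D_xh(X_i,\beta_*)^\T\xi\}$, which produces the $\tfrac14\|D_xh(X_i,\beta_*)^\T\xi\|_p^2$ term, and you then invoke the positive definiteness of $E\{D_xh(X,\beta_*)D_xh(X,\beta_*)^\T\}$ together with a truncated uniform law of large numbers to make that term strictly positive and quadratic in $\|\xi\|$; the paper does exactly this with its vector $\Delta_i'(\bar\xi)$ and the set $C_0^{\epsilon_0}$. Where you genuinely diverge is the reduction to a compact set of $\xi$: you observe that $M_n(\cdot,u)$ is a supremum of affine functions of $\xi$ with $M_n(0,u)=0$, hence $M_n(\lambda\xi,u)\geq\lambda M_n(\xi,u)$ for $\lambda\geq1$, so a uniform positive quadratic bound on a single sphere propagates to linear growth outside it. The paper instead keeps a free scalar $c$ in the test perturbation $\Delta=c\Delta_i'(\bar\xi)$, with $\Delta_i'$ normalized by the direction of $\xi$ so its size does not grow with $\|\xi\|$, obtains a lower bound on $M_n(\xi,u)$ whose leading term is $c(\delta/2-\varepsilon')\|\xi\|_p$ minus a constant $(cc_1)^2$ and terms linear in $\|\xi\|_p$, and then tunes $c$ and the threshold $b$ so that the slope dominates $\|H_n\|$. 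Your convexity reduction is cleaner and avoids the final division by $\|\xi\|_p$; the paper's version avoids asserting convexity but pays with more bookkeeping of constants. One point you should patch: the inner maximization is over $\{\Delta: X_i+n^{-1/2}\Delta\in\Omega\}$, and the lemma is invoked for general closed $\Omega$ with $P_*(\Omega^\circ)=1$ (it feeds into Proposition \ref{prop:RWPUB} and hence Proposition \ref{prop:constrained-support}), so your test perturbation must be feasible. Your truncation $\mathbb{I}\{\|X_i\|_2\leq R\}$ does not guarantee this; replace it with the indicator of the $\epsilon_0$-interior of a compact subset of $\Omega$, as the paper does with $C_0^{\epsilon_0}$, so that $X_i+n^{-1/2}\Delta_i\in\Omega$ for all sufficiently large $n$. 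With that adjustment the argument goes through.
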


\begin{lemma}
For any positive constants $b,c_{0}$ and any bounded set $C \in \mathbb{R}^d$, we have
\begin{align*}
& \frac{1}{n}\sum_{i=1}^{n}\left[ \left\Vert \left\{ D_{x}h(X,\beta _{\ast
})\right\} ^{\T}\xi \right\Vert _{p}^{2}+\xi ^{\T}D_{\beta }h(X_{i},\beta
_{\ast })u\right] \mathbb{I}(X_{i}\in C) \\
& \hspace{50pt}\rightarrow E\left( \left[ \left\Vert \left\{
D_{x}h(X,\beta _{\ast })\right\} ^{\T}\xi \right\Vert _{p}^{2}+\xi
^{\T}D_{\beta }h(X_{i},\beta _{\ast })u\right] \mathbb{I}(X_{i}\in C)%
\right) ,
\end{align*}%
uniformly over $\left\Vert \xi \right\Vert _{q}\leq b$ and $\Vert u\Vert
_{2}\leq K$ in probability as $n\rightarrow \infty .$ \label{Lem-LLN-uniform}
\end{lemma}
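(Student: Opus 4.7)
The plan is to apply a standard uniform law of large numbers for a continuously parameterized family of bounded integrable functions. Let $\Theta = \{(\xi, u) \in \mathbb{R}^d \times \mathbb{R}^d : \|\xi\|_q \leq b, \|u\|_2 \leq K\}$, which is compact, and define the map
\begin{equation*}
g(x, \xi, u) = \big[ \|\{D_x h(x, \beta_\ast)\}^\T \xi\|_p^2 + \xi^\T D_\beta h(x, \beta_\ast) u \big] \mathbb{I}(x \in C).
\end{equation*}
The conclusion of the lemma is equivalent to showing $\sup_{(\xi,u) \in \Theta} |n^{-1}\sum_{i=1}^n g(X_i, \xi, u) - E\{g(X, \xi, u)\}| \rightarrow 0$ in probability as $n \rightarrow \infty.$

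First I would exhibit a uniform envelope. Since $C$ is bounded, there is $c_C < \infty$ with $\|x\|_q \leq c_C$ for every $x \in C$; Assumption A2.c then gives $\|D_x h(x, \beta_\ast)\|_q \leq M'(1 + c_C)$ on $C.$ The twice continuous differentiability of $\ell$ in Assumption A2.a implies that $D_\beta h(\cdot, \beta_\ast)$ is continuous on $\Omega$, and is therefore bounded on the compact set $\mathrm{cl}(C).$ Combining these estimates with H\"older's inequality produces a constant $M = M(b, K, C) < \infty$ satisfying $\sup_{(\xi, u) \in \Theta}|g(x, \xi, u)| \leq M$ for every $x \in \Omega.$ Furthermore, for each fixed $x$ the map $(\xi, u) \mapsto g(x, \xi, u)$ is jointly continuous on $\Theta$, since $\xi \mapsto \|A^\T \xi\|_p^2$ is continuous for any fixed matrix $A$ and the bilinear map $(\xi, u) \mapsto \xi^\T B u$ is continuous for any fixed matrix $B$; moreover the associated Lipschitz constant on $\Theta$ depends on $x$ only through the bounds already established on $\|D_x h(x, \beta_\ast)\|_q$ and $\|D_\beta h(x, \beta_\ast)\|_q$ and is therefore uniform over $x \in \Omega.$

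With a uniformly bounded envelope and uniformly-in-$x$ Lipschitz continuity in the parameters $(\xi, u),$ together with compactness of $\Theta,$ the claim follows by a routine finite-cover argument: fix $\varepsilon > 0$, cover $\Theta$ by finitely many balls $B_1, \ldots, B_N$ of sufficiently small radius so that $\sup_{x \in \Omega,\, (\xi_1, u_1), (\xi_2, u_2) \in B_j} |g(x, \xi_1, u_1) - g(x, \xi_2, u_2)| \leq \varepsilon / 3$ for each $j$; apply the classical strong law of large numbers at each of the $N$ centers; and conclude by the triangle inequality together with a union bound. The main technical point to guard against is the potentially unbounded growth of $D_x h$ permitted by Assumption A2.c, which is neutralized by the truncation $\mathbb{I}(x \in C)$ reducing the analysis to a compact region of $x$ on which all relevant quantities are uniformly bounded.
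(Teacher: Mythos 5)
Your argument is correct and follows essentially the same route as the paper, which simply asserts that the lemma ``follows as a consequence of the continuity properties of $D_xh(\cdot), D_\beta h(\cdot)$ and the strong law of large numbers'' (deferring details to Lemma 3 of \citet{blanchet2016robust}); your write-up supplies exactly the standard covering argument that this sketch presupposes. The one point worth making explicit is that the boundedness of $C$ is what makes the envelope finite despite the linear growth allowed for $D_xh$ in Assumption A2.c --- which you correctly identify as the key technical point.
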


Proofs of Lemmas \ref{lemmaksai} and \ref{Lem-LLN-uniform} are presented in
Section \ref{appendix:proof_technical_lemma}. The following definitions are useful in the
proofs of Proposition \ref{prop:RWPUB} and Lemma \ref{lemmaksai}. For a
fixed $u,\Delta ,$ let
\begin{equation}
I(X_{i},\Delta ,u)=I_{1}(X_{i},\Delta ,u)+I_{2}(X_{i},\Delta ,u),
\label{defn-I-terms}
\end{equation}%
where $i\in \{1,\ldots ,n\},$
\begin{align*}
I_{1}(X_{i},\Delta ,u)& =\int_{0}^{1}\left\{ D_{x}h\left( X_{i}+n^{-1/2}t%
\Delta ,\beta _{\ast }+n^{-1/2}tu\right) -D_{x}h\left(
X_{i},\beta _{\ast }\right) \right\} \Delta \mathrm{d}t\quad \text{ and } \\
I_{2}(X_{i},\Delta ,u)& =\int_{0}^{1}\left\{ D_{\beta }h\left( X_{i}+n^{-1/2}t
\Delta ,\beta _{\ast }+n^{-1/2}tu\right) -D_{\beta
}h\left( X_{i},\beta _{\ast }\right) \right\} u\mathrm{d}t.
\end{align*}%
Then, we have
\begin{equation*}
M_{n}(\xi ,u)=\frac{1}{n}\sum_{i=1}^{n}\left[ \xi ^{{\mathrm{%
\scriptscriptstyle T}}}D_{\beta }h\left( X_{i},\beta _{\ast }\right)
u+\max_{\Delta :X_{i}+n^{-1/2}\Delta \in \Omega }\left\{ \xi ^{{\mathrm{%
\scriptscriptstyle T}}}D_{x}h\left( X_{i},\beta _{\ast }\right) \Delta +\xi
^{{\mathrm{\scriptscriptstyle T}}}I(X_{i},\Delta ,u)-\Vert \Delta \Vert
_{q}^{2}\right\} \right] .
\end{equation*}

In addition,  for $\xi \neq 0,$ we write $%
\bar{\xi}=\xi /\left\Vert \xi \right\Vert _{p}.$ Let us define the vector $%
V_{i}(\bar{\xi})=D_{x}h(X_{i},\beta _{\ast })^{{\mathrm{\scriptscriptstyle T}%
}}\bar{\xi}$ and put
\begin{equation}
\Delta _{i}^{\prime }=\Delta _{i}^{\prime }(\bar{\xi})=
\begin{cases}
|V_{i}(\bar{\xi}%
)|^{p/q}\text{sgn}\{V_{i}(\bar{\xi})\} \quad &q \in (1,\infty) \\
 V_{i}(\bar{\xi})\mathbb{I}[{|V_{i}(\bar{\xi})|=\max_j\{|V_{j}(\bar{\xi})|\}}]\quad &q = 1 \\
\text{sgn}\{V_{i}(\bar{\xi})\}\quad &q = \infty.
\end{cases}
\label{delta_prime}
\end{equation}

\begin{proof}[Proof  of Proposition \ref{prop:RWPUB}]
First observe that $R_{n}(\cdot) \geq0$ (consider the choice $\xi= 0
$). Given $K,\varepsilon> 0,$ define the event,
\begin{align*}
\mathcal{A}_{n} = \left\{ nR_{n}\left(\beta_{\ast}+ n^{-1/2}u\right) = \max_{\Vert
\xi\Vert_{p} \leq b} \left\{ -\xi^{{ \mathrm{\scriptscriptstyle T} }}H_{n}
- M_{n}(\xi,u)\right\} \text{ for all } u \text{ such that } \Vert u
\Vert_{2} \leq K\right\} .
\end{align*}
where $b > 0$ is such that ${\rm pr}(\mathcal{A}_{n}) \geq1-\varepsilon$ for $n \geq n'$.  Such a $b
\in(0,\infty)$ exists because of Lemma \ref{lemmaksai} and the fact that the set $\{\beta_* + n^{-1/2} u \mid \|u\|_2 \leq K\}$ will eventually become a subset of $\Theta$ when $n$ is sufficiently large.

Next, for any $c_{0}>0,\epsilon _{0}>0$ define
\begin{equation*}
M_{n}^{\prime }(\xi ,u,c_{0},\epsilon _{0})=\frac{1}{n}\sum_{i=1}^{n}\left\{
\xi ^{{\mathrm{\scriptscriptstyle T}}}D_{x}h(X,\beta _{\ast })\bar{\Delta}%
_{i}-\Vert \bar{\Delta}_{i}\Vert _{q}^{2}+\xi ^{{\mathrm{\scriptscriptstyle T%
}}}I(X_{i},\bar{\Delta}_{i},u)+\xi ^{{\mathrm{\scriptscriptstyle T}}%
}D_{\beta }h(X,\beta _{\ast })u\right\} \mathbb{I}\left( X_{i}\in
C_{0}^{\epsilon _{0}}\right) ,
\end{equation*}%
where $C_{0}=\{w\in \Omega :\left\Vert w\right\Vert _{p}\leq c_{0}\},$ $%
I(X_{i},\Delta ,u)$ is defined as in \eqref{defn-I-terms} and $\bar{\Delta}%
_{i}=c_{i}\Delta _{i}^{\prime }$, which is defined in (\ref{delta_prime})
with $c_{i}$ chosen so that
\begin{equation*}
\left\Vert \bar{\Delta}_{i}\right\Vert _{q}=\frac{1}{2}\left\Vert
D_{x}h(X_{i},\beta _{\ast })^{{\mathrm{\scriptscriptstyle T}}}\xi
\right\Vert _{p}.
\end{equation*}%
Since $D_{x}h(X_{i},\beta _{\ast })$ is continuous, $\left\Vert \xi
\right\Vert _{p}$ is bounded, and $C_{0}$ is compact, we have
\begin{equation*}
\sup_{x\in C_{0}}\left\{ \frac{1}{2}\left\Vert D_{x}h(X_{i},\beta _{\ast })^{%
{\mathrm{\scriptscriptstyle T}}}\xi \right\Vert _{p}\right\} <\infty .
\end{equation*}%
Therefore, there exists $n_{1}>0$ such that for all $n\geq n_{1}$ and $X_i \in C_0^{\epsilon_0}$,  we have $%
X_{i}+n^{-1/2}\bar{\Delta}_{i}\in C_{0},$ and thus $M_{n}(\xi ,u)\geq
M_{n}^{\prime }(\xi ,u,c_{0},\epsilon _{0}),$ for every $u$ and $n\geq n_{1}.
$ With these definitions, observe that
\begin{align}
\max_{X_{i}+n^{-1/2} \Delta \in \Omega }\left\{ \xi ^{{\mathrm{\scriptscriptstyle T}}%
}D_{x}h(X_{i},\beta _{\ast })\Delta -\left\Vert \Delta \right\Vert
_{q}^{2}\right\} & =\xi ^{{\mathrm{\scriptscriptstyle T}}}D_{x}h(X_{i},\beta
_{\ast })\bar{\Delta}_{i}-\Vert \bar{\Delta}_{i}\Vert _{q}^{2}  \notag \\
& =\frac{1}{4}\left\Vert \left\{ D_{x}h(X,\beta _{\ast })\right\} ^{{\mathrm{%
\scriptscriptstyle T}}}\xi \right\Vert _{p}^{2}  \label{inter-rwp-ub1}
\end{align}%
and
\begin{equation}
\max_{\left\Vert \xi \right\Vert _{q}\leq b}\left\{ -\xi ^{{\mathrm{%
\scriptscriptstyle T}}}H_{n}-M_{n}(\xi ,u)\right\} \leq \max_{\left\Vert \xi
\right\Vert _{q}\leq b}\left\{ -\xi ^{{\mathrm{\scriptscriptstyle T}}%
}H_{n}-M_{n}^{\prime }(\xi ,u,c_{0},\epsilon _{0})\right\} .
\label{inter-rwp-ub2}
\end{equation}

Next, define
\begin{align*}
\hat{M}_{n}(\xi ,u,c_{0},\epsilon _{0})& =\frac{1}{n}\sum_{i=1}^{n}\left\{
\xi ^{{\mathrm{\scriptscriptstyle T}}}D_{x}h(X_{i},\beta _{\ast })\bar{\Delta%
}_{i}-\left\Vert \bar{\Delta}_{i}\right\Vert _{q}^{2}+\xi ^{{\mathrm{%
\scriptscriptstyle T}}}D_{\beta }h(X_{i},\beta _{\ast })u\right\} \mathbb{I}%
(X_{i}\in C_{0}^{\epsilon _{0}}) \\
& =\frac{1}{n}\sum_{i=1}^{n}\left\{ \frac{1}{4}\left\Vert D_{x}h(X_{i},\beta
_{\ast })^{{\mathrm{\scriptscriptstyle T}}}\xi \right\Vert _{p}^{2}+\xi ^{{%
\mathrm{\scriptscriptstyle T}}}D_{\beta }h(X_{i},\beta _{\ast })u\right\}
\mathbb{I}(X_{i}\in C_{0}^{\epsilon _{0}}),
\end{align*}%
where the equality follows from \eqref{inter-rwp-ub1}. Due to Lemma \ref%
{Lem-LLN-uniform}, we have
\begin{equation*}
\hat{M}_{n}(\xi ,u,c_{0},\epsilon _{0})\rightarrow E\left( \left[ \frac{1}{4}%
\left\Vert \left\{ D_{x}h(X,\beta _{\ast })\right\} ^{{\mathrm{%
\scriptscriptstyle T}}}\xi \right\Vert _{p}^{2}+\xi ^{{\mathrm{%
\scriptscriptstyle T}}}D_{\beta }h(X,\beta _{\ast })u\right] \mathbb{I}(X\in
C_{0}^{\epsilon _{0}})\right) .
\end{equation*}%
in probability, uniformly over $\Vert \xi \Vert _{p}\leq b$ and $\Vert
u\Vert _{2}\leq K.$ Furthermore,
\begin{equation}
\sup_{\left\Vert \xi \right\Vert _{p}\leq b}\left\vert \hat{M}_{n}(\xi
,u,c_{0},\epsilon _{0})-M_{n}^{\prime }(\xi ,u,c_{0},\epsilon
_{0})\right\vert \rightarrow 0,  \label{inter-rwp-ub-3}
\end{equation}%
because, from the uniform continuity of $D_{\beta }h(\cdot )$ and $%
D_{x}h(\cdot )$ in compact sets, we have that
\begin{equation}
|\xi ^{{\mathrm{\scriptscriptstyle T}}}I(X_{i},\bar{\Delta}_{i},u)|\mathbb{I}%
(X_{i}\in C_{0}^{\epsilon _{0}})\rightarrow 0,  \label{inter-rwp-ub-4}
\end{equation}%
uniformly over $\Vert \xi \Vert _{p}\leq b$ and $\Vert u\Vert _{2}\leq K.$
Combining the observations in \eqref{inter-rwp-ub-3} and %
\eqref{inter-rwp-ub-4}, we obtain that for any $\varepsilon ^{\prime }>0$
there exists $n_{0}\geq n_{1}$ sufficiently large such that,
\begin{align*}
& \max_{\Vert \xi \Vert _{p}\leq b}\left\{ -\xi ^{{\mathrm{\scriptscriptstyle
T}}}H_{n}-M_{n}^{\prime }(\xi ,u,c_{0},\epsilon _{0})\right\}  \\
& \hspace{30pt}\leq \max_{\Vert \xi \Vert _{p}\leq b}\left\{ -\xi ^{{\mathrm{%
\scriptscriptstyle T}}}H_{n}-E\left( \left[ \frac{1}{4}\left\Vert \left\{
D_{x}h(X,\beta _{\ast })\right\} ^{\T}\xi
\right\Vert _{p}^{2}+\xi ^{{\mathrm{\scriptscriptstyle T}}}D_{\beta
}h(X,\beta _{\ast })u\right] \mathbb{I}(X\in C_{0}^{\epsilon _{0}})\right)
\right\} +\varepsilon ^{\prime }.
\end{align*}%
Then the statement of Proposition \ref{prop:RWPUB} follows from %
\eqref{inter-rwp-ub2}, the definition of the event $\mathcal{A}_{n}$ and the
observation that ${\rm pr}(\mathcal{A}_{n})\geq 1-\varepsilon .$ \end{proof}

\begin{proof}[Proof  of Proposition \ref{prop:RWPLB}]
For the lower bound, we reexpress the expression for $M_{n}(\xi ,u)$ in %
\eqref{Mn-rep-for-lb} as follows:
\begin{align}
M_{n}(\xi ,u)& \leq \frac{1}{n}\sum_{i=1}^{n}\max_{\Delta \in \mathbb{R}%
^{d}}\left[ n^{1/2}\xi ^{{\mathrm{\scriptscriptstyle T}}}\left\{
h(X_{i}+n^{-1/2}\Delta ,\beta _{\ast }+n^{-1/2}u)-h(X_{i},\beta _{\ast
}+n^{-1/2}u)\right\} -\left\Vert \Delta \right\Vert _{q}^{2}\right]  \notag
\\
& \hspace{30pt}+\frac{1}{n}\sum_{i=1}^{n}n^{1/2}\xi ^{{\mathrm{%
\scriptscriptstyle T}}}\left\{ h(X_{i},\beta _{\ast
}+n^{-1/2}u)-h(X_{i},\beta _{\ast })\right\} .  \label{inter-lb-1}
\end{align}


Employing the fundamental theorem of calculus, we obtain that
\begin{align*}
& \frac{1}{n}\sum_{i=1}^{n}n^{1/2}\xi ^{{\mathrm{\scriptscriptstyle T}}%
}\left\{ h(X_{i},\beta _{\ast }+n^{-1/2}u)-h(X_{i},\beta _{\ast })\right\} =%
\frac{1}{n}\sum_{i=1}^{n}\int_{0}^{1}\xi ^{{\mathrm{\scriptscriptstyle T}}%
}D_{\beta }h(X_{i},\beta _{\ast }+tn^{-1/2}u)u\mathrm{d}t \\
& \quad \quad =\xi ^{{\mathrm{\scriptscriptstyle T}}}\left\{ \frac{1}{n}%
\sum_{i=1}^{n}D_{\beta }h(X_{i},\beta _{\ast })\right\} u+\frac{1}{n}%
\sum_{i=1}^{n}\int_{0}^{1}\xi ^{{\mathrm{\scriptscriptstyle T}}}\left\{
D_{\beta }h(X_{i},\beta _{\ast }+tn^{-1/2}u)-D_{\beta }h(X_{i},\beta _{\ast
})\right\} u\mathrm{d}t \\
& \quad \quad \leq \xi ^{{\mathrm{\scriptscriptstyle T}}}\left\{\frac{1}{n}%
\sum_{i=1}^{n}D_{\beta }h(X_{i},\beta _{\ast })\right\} u+\Vert \xi \Vert _{p}%
\frac{1}{n}\sum_{i=1}^{n}\int_{0}^{1}\left\Vert \left\{ D_{\beta
}h(X_{i},\beta _{\ast }+tn^{-1/2}u)-D_{\beta }h(X_{i},\beta _{\ast })\right\}
u\right\Vert _{q}\mathrm{d}t.
\end{align*}%
Then, given $\varepsilon ,\varepsilon ^{\prime }>0,$ due to continuity of $%
D_{\beta }h(\cdot )$ in Assumption A2.c, finiteness $E\{\bar{\kappa}(X_{i})\}$
and the law of large numbers, there exists $n_{0}$ sufficiently large such
that for all $n\geq n_{0},\Vert \xi \Vert _{p}\leq b,\Vert u\Vert _{2}\leq K,
$ we have
\begin{equation}
\frac{1}{n}\sum_{i=1}^{n}n^{1/2}\xi ^{{\mathrm{\scriptscriptstyle T}}%
}\left\{ h(X_{i},\beta _{\ast }+n^{-1/2}u)-h(X_{i},\beta _{\ast })\right\}
\leq \xi ^{{\mathrm{\scriptscriptstyle T}}}E\{D_{\beta }h(X,\beta _{\ast
})\}u+\varepsilon ^{\prime }/2,
\label{inter-lb-term2}
\end{equation}%
with probability exceeding $1-\varepsilon /2.$

Next, given $\nu ,\varepsilon ^{\prime \prime },b,K\in (0,\infty )$, it
follows from Assumption A2 and the same line of reasoning in the proof of
Proposition 5 in \citet{blanchet2016robust} that there exists $n_{0}$ such
that,
\begin{equation}
\sup_{\left\Vert \Delta \right\Vert _{q}\geq \nu n^{1/2}}\left[ n^{1/2}%
\xi ^{{\mathrm{\scriptscriptstyle T} }}\left\{ h(X_{i}+n^{-1/2}\Delta
,\beta _{\ast }+n^{-1/2}u)-h(X_{i},\beta _{\ast }+n^{-1/2}u)\right\}
-\left\Vert \Delta \right\Vert _{q}^{2}\right] \leq 0,
\label{inner-opt-inside-compact-set}
\end{equation}%
for all $n\geq n_{0},\Vert \xi \Vert _{p}\leq b,$ $\Vert u\Vert _{2}\leq K,$
and consequently, the first term in the right hand side of \eqref{inter-lb-1}
is bounded from above by
\begin{equation*}
\frac{1}{n}\sum_{i=1}^{n}\min \left\{ \frac{1}{4\left( 1-\varepsilon
^{\prime \prime }\right) }\Vert \xi ^{{\mathrm{\scriptscriptstyle T} }%
}D_{x}h(X_{i},\beta _{\ast }+n^{-1/2}u)\Vert _{p}^{2},c_{n}\right\} +\nu ,
\end{equation*}%
for some sequence $(c_{n}:n\geq 1)$ satisfying $c_{n}\rightarrow \infty $ as
$n\rightarrow \infty $ (the exact value of $c_{n}$ is not important). It
follows from Assumption A2.c that
\begin{align*}
& \frac{1}{n}\sum_{i=1}^{n}\min \left[ \frac{1}{4\left( 1-\varepsilon
^{\prime \prime }\right) }\left\Vert \left\{ D_{x}h(X_{i},\beta _{\ast
}+n^{-1/2}u)\right\} ^{{\mathrm{\scriptscriptstyle T} }}\xi \right\Vert
_{p}^{2},c_{n}\right] +\nu \\
& \hspace{50pt}\leq \frac{1}{n}\sum_{i=1}^{n}\min \left[ \frac{1}{4\left\{
1-\varepsilon ^{\prime \prime }\right\} }\Vert \left( D_{x}h(X,\beta _{\ast
})\right) ^{{\mathrm{\scriptscriptstyle T} }}\xi \Vert
_{p}^{2},c_{n}\right] +n^{-1/2}\Vert \xi \Vert _{p}\Vert u\Vert _{q}%
\frac{1}{n}\sum_{i=1}^{n}\kappa ^{\prime }(X_{i})+\nu .
\end{align*}%
Then, for $n_{0}$ suitably large, a similar application of Lemma \ref%
{Lem-LLN-uniform} as in Proposition \ref{prop:RWPUB} results in,
\begin{align*}
& \frac{1}{n}\sum_{i=1}^{n}\min \left[ \frac{1}{4\left( 1-\varepsilon
^{\prime \prime }\right) }\left\Vert \left\{ D_{x}h(X_{i},\beta _{\ast
}+n^{-1/2}u)\right\} ^{{\mathrm{\scriptscriptstyle T} }}\xi \right \Vert
_{p}^{2},c_{n}\right] +\nu \\
& \hspace{50pt}\leq \frac{1}{4\left( 1-\varepsilon ^{\prime \prime }\right) }%
E\left\Vert \left\{ D_{x}h(X,\beta _{\ast })\right\} ^{{\mathrm{%
\scriptscriptstyle T} }}\xi \right\Vert _{p}^{2}+\frac{\varepsilon ^{\prime }%
}{4}+\nu ,
\end{align*}%
for all $n\geq n_{0},\Vert \xi \Vert _{p}\leq b,\Vert u\Vert _{2}\leq K,$
with probability exceeding $1-\varepsilon /2.$ Choosing $\nu ,\varepsilon
^{\prime \prime }$ suitably small, we combine the above observation with
that in \eqref{inter-lb-term2} to obtain that,
\begin{align*}
& nR_{n}\left( \beta _{\ast }+n^{-1/2}u\right) =\max_{\xi }\left\{ -\xi ^{{
\mathrm{\scriptscriptstyle T} }}H_{n}-M_{n}(\xi ,u)\right\} \\
& \hspace{50pt}\geq \max_{\underset{\Vert u_{i}\Vert _{2}\leq K}{\Vert \xi
\Vert _{p}\leq b}}\left[ -\xi ^{{\mathrm{\scriptscriptstyle T} }}H_{n}-E%
\left\{\frac{1}{4}\left\Vert D_{x}h(X,\beta _{\ast })^{{\mathrm{%
\scriptscriptstyle T} }}\xi \right\Vert _{p}^{2}+\xi ^{{\mathrm{%
\scriptscriptstyle T} }}D_{\beta }h(X,\beta _{\ast })u\right\} \right]
-\varepsilon ^{\prime },
\end{align*}%
for all $n\geq n_{0},$ $\Vert u\Vert _{2}\leq K,$ with probability exceeding
$1-\varepsilon .$ \end{proof}

\begin{proof}[Proof  of Proposition \ref{prop:tightness-1}]
Due to equation \eqref{inter-mas-up}, given $\varepsilon>0$, there exists $a$ and $n_1$ such as
\[
{\rm pr}\{f_{up}(H_{n},u,b,c) >a\} < \varepsilon/2.
\]
for all $n>n_1$.
Recall Proposition \ref{prop:RWPUB} and apply specifically with $u = 0$, we have
\[\sup_{n\geq
\max\{n_{0},n_{1}\}} {\rm pr}\{nR_{n}(\beta_{\ast})\geq a +1\}\leq\varepsilon.\]
\end{proof}

Lemma \ref{lem-lip-cont-Mn} below is useful to prove Proposition \ref%
{prop:RWPcont}. Proof of Lemma \ref{lem-lip-cont-Mn} is presented in Section \ref{appendix:proof_technical_lemma}.

\begin{lemma}
Given any $K,b \in(0,\infty)$ and $\varepsilon\in(0,1),$ there exist
positive constants $n_{0},L$ such that,
\begin{align*}
\sup_{\Vert\xi\Vert_{p} \leq b}\left\vert M_{n}(\xi,u_{1}) - M_{n} (\xi
,u_{2})\right\vert \leq L \Vert u_{1} - u_{2} \Vert_{q},
\end{align*}
with probability exceeding $1-\varepsilon.$ \label{lem-lip-cont-Mn}
\end{lemma}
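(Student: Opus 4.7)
The plan is to apply a standard envelope-Lipschitz argument to the inner suprema defining $M_n$. Denote the inner objective
\[ G_{i,n}(\xi,u,\Delta) = n^{1/2}\xi^\T\{h(X_i + n^{-1/2}\Delta,\beta_\ast + n^{-1/2}u) - h(X_i,\beta_\ast)\} - \|\Delta\|_q^2, \]
and let $\Delta_i^\ast(\xi,u)$ be any feasible maximizer of $G_{i,n}(\xi,u,\cdot),$ so that $M_n(\xi,u) = n^{-1}\sum_i G_{i,n}(\xi,u,\Delta_i^\ast(\xi,u))$. Plugging the $u_1$-maximizer into the expression at $u_2$ gives
\[ M_n(\xi,u_1) - M_n(\xi,u_2) \leq \frac{1}{n}\sum_{i=1}^n \{G_{i,n}(\xi,u_1,\Delta_i^\ast(\xi,u_1)) - G_{i,n}(\xi,u_2,\Delta_i^\ast(\xi,u_1))\}, \]
upon which the quadratic term $-\|\Delta\|_q^2$ and the constant $-h(X_i,\beta_\ast)$ cancel. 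Applying the fundamental theorem of calculus to the remaining $\beta$-difference yields
\[ M_n(\xi,u_1) - M_n(\xi,u_2) \leq \|\xi\|_p\|u_1-u_2\|_q \cdot \frac{1}{n}\sum_{i=1}^n \sup_{s\in[0,1]} \bigl\|D_\beta h\bigl(X_i + n^{-1/2}\Delta_i^\ast(\xi,u_1),\beta_\ast + n^{-1/2}u_s\bigr)\bigr\|, \]
where $u_s = s u_1 + (1-s) u_2$. Together with the symmetric converse bound, it suffices to show the averaged envelope on the right is bounded by a deterministic constant $L'$ with probability at least $1-\varepsilon$, uniformly over $\|\xi\|_p \leq b$ and $\|u_1\|_2,\|u_2\|_2 \leq K$; then $L = bL'$ works.

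The first step is to establish an a priori bound on $\|\Delta_i^\ast(\xi,u)\|_q$. Using $G_{i,n}(\xi,u,\Delta_i^\ast) \geq G_{i,n}(\xi,u,0)$ and cancelling the common term $h(X_i,\beta_\ast + n^{-1/2}u) - h(X_i,\beta_\ast)$ yields
\[ \|\Delta_i^\ast(\xi,u)\|_q^2 \leq n^{1/2}\xi^\T\bigl\{h(X_i + n^{-1/2}\Delta_i^\ast,\beta_\ast + n^{-1/2}u) - h(X_i,\beta_\ast + n^{-1/2}u)\bigr\}. \]
Invoking the growth bound $\|D_x h(x,\beta)\|_q \leq M'(1+\|x\|_q)$ from A2.c together with the fundamental theorem of calculus bounds the right-hand side by $b M' \|\Delta_i^\ast\|_q (1 + \|X_i\|_q + n^{-1/2}\|\Delta_i^\ast\|_q)$, which after rearrangement gives
\[ \|\Delta_i^\ast(\xi,u)\|_q \leq C(b)(1+\|X_i\|_q), \]
uniformly in $\|\xi\|_p \leq b$, $\|u\|_2 \leq K$, and $i \leq n$, provided $n^{1/2} > 2bM'$.

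With this bound in hand, the envelope is controlled via A2.c. On the high-probability event $E_n = \{\max_{i\leq n} n^{-1/2}(1+\|X_i\|_q) \leq \epsilon\}$, which satisfies ${\rm pr}(E_n) \to 1$ under $E\|X\|_2^2 < \infty$ (since $n\,{\rm pr}(\|X\|_q > \epsilon n^{1/2}) \leq \epsilon^{-2} E[\|X\|_q^2\, \mathbb{I}\{\|X\|_q > \epsilon n^{1/2}\}] \to 0$ by uniform integrability), the displacements $n^{-1/2}(\|\Delta_i^\ast\|_q + \|u_s\|_q)$ lie within the local Lipschitz regime of A2.c. Thus
\[ \bigl\|D_\beta h(X_i + n^{-1/2}\Delta_i^\ast,\beta_\ast + n^{-1/2}u_s)\bigr\|_q \leq \|D_\beta h(X_i,\beta_\ast)\|_q + C(b,K)\bar{\kappa}(X_i)\{n^{-1/2}(1+\|X_i\|_q) + 1\}, \]
whose sample average is bounded, by the law of large numbers combined with Cauchy--Schwarz (using $E\bar{\kappa}^2(X) < \infty$, $E\|X\|_2^2 < \infty$, and $E\|h(X,\beta_\ast)\|_2^2 < \infty$ from A2.b), by a finite deterministic limit. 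Choosing $L'$ slightly larger yields the desired tail bound for all sufficiently large $n$.

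The main obstacle is the uniform a priori control of $\|\Delta_i^\ast(\xi,u)\|_q$ across all $i$ and $(\xi,u)$. The coercivity of the quadratic penalty must overcome a cross-term of order $n^{-1/2}\|\Delta\|_q^2$ produced by the sublinear growth of $D_x h$; the fact that this cross-term vanishes as $n \to \infty$ is what ultimately makes the envelope bound uniformly finite. Controlling $\max_i \|X_i\|_q$ at the $o(n^{1/2})$ scale is the complementary ingredient, and it follows from the second-moment assumption $E\|X\|_2^2 < \infty$ via uniform integrability. Once these are in place, the rest is a routine application of the local Lipschitz properties in A2.c and the law of large numbers.
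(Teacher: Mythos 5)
Your argument is correct, and its overall skeleton coincides with the paper's: bound the difference of the two inner suprema by evaluating the objective at a fixed maximizer, apply the fundamental theorem of calculus in $\beta$, invoke the local Lipschitz bound on $D_\beta h$ from Assumption A2.c, and close with a law-of-large-numbers bound on $E_{P_n}\Vert D_\beta h(X,\beta_\ast)\Vert_q$ and $E_{P_n}\{\bar\kappa(X)\}$. Where you genuinely diverge is in the a priori control of the inner maximizers $\Delta_i^\ast(\xi,u)$, which is the one non-routine ingredient. The paper imports the bound $\Vert \Delta_{ij}\Vert_q \leq \nu n^{1/2}$ (uniformly in $i$, $\xi$, $u$) from \eqref{inner-opt-inside-compact-set}, i.e., from the machinery of Proposition \ref{prop:RWPLB} and ultimately from Proposition 5 of \citet{blanchet2016robust}; plugged into the Lipschitz bound this yields a contribution $\nu\,\bar\kappa(X_i)$ whose average is controlled directly. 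You instead derive a per-sample bound $\Vert \Delta_i^\ast\Vert_q \leq C(b)(1+\Vert X_i\Vert_q)$ from the elementary comparison $G_{i,n}(\xi,u,\Delta_i^\ast)\geq G_{i,n}(\xi,u,0)$ together with the linear growth condition $\Vert D_x h(x,\beta)\Vert_q \leq M'(1+\Vert x\Vert_q)$, absorbing the resulting $O(n^{-1/2}\Vert\Delta\Vert_q^2)$ cross-term into the quadratic penalty for $n^{1/2}>2bM'$; you then need the additional event $E_n=\{\max_i(1+\Vert X_i\Vert_q)\leq \epsilon n^{1/2}\}$, whose probability tends to one by uniform integrability under $E\Vert X\Vert_2^2<\infty$, to stay inside the regime $\Vert\Delta\Vert_q+\Vert u\Vert_q\leq 1$ where A2.c applies, and a Cauchy--Schwarz step (using $E\{\bar\kappa^2(X)\}<\infty$) to average $\bar\kappa(X_i)(1+\Vert X_i\Vert_q)$. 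Your route is more self-contained and gives a sharper, sample-wise bound on the maximizers; the paper's is shorter because it reuses a bound already established for the lower-bound proposition. Two cosmetic points: the integrability you need for the leading term is that of $\Vert D_\beta h(X,\beta_\ast)\Vert_q$ (which follows from the existence of $C=E\{D_\beta h(X,\beta_\ast)\}$ in A2.b), not of $\Vert h(X,\beta_\ast)\Vert_2^2$ as you cite; and to get the two-sided bound you should note, as you do implicitly, that the roles of $u_1$ and $u_2$ are symmetric so the same envelope inequality applies with $\Delta_i^\ast(\xi,u_2)$.
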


\begin{proof}[Proof  of Proposition \ref{prop:RWPcont}]
For any $u_{j}, j = 1,2,$ satisfying $\Vert u_{j} \Vert_{2} \leq K,$ let $%
\xi_{j}$ attain the supremum in the relation $n^{\rho/2}R_{n}(\beta_{\ast}+
n^{-1/2}u_{j}) = \sup_{\xi} \left\{ -\xi^{\T}H_{n} -
M_{n}(\xi,u_{j})\right\} .$ Then we have,
\begin{align}
\left\vert nR_{n}(\beta_{\ast}+ n^{-1/2}u_{1}) - nR_{n}(\beta_{\ast}+
n^{-1/2}u_{2}) \right\vert \leq\max_{j=1,2} \left\vert M_{n}(\xi_{j}, u_{1})
- M_{n}(\xi_{j},u_{2})\right\vert.  \label{inter-RWPpf-cont}
\end{align}
For the given choices of $\varepsilon, K,$ we have from Lemma \ref{lemmaksai}
that there exist positive constants $b$ and $n_{0}$ such that the optimal
choices $\xi_{j},$ $j = 1,2,$ satisfy $\Vert\xi_{j} \Vert_{p} \leq b,$ each
with probability exceeding $1-\varepsilon/3.$ Consequently, we have from %
\eqref{inter-RWPpf-cont} and Lemma \ref{lem-lip-cont-Mn} that
\begin{align*}
\sup_{\Vert u_{j} \Vert_{2} \leq K, j =1,2}\left\vert nR_{n}(\beta_{\ast}+
n^{-1/2}u_{1}) - nR_{n}(\beta_{\ast}+ n^{-1/2}u_{2}) \right\vert \leq L
\Vert u_{1} - u_{2} \Vert_{q},
\end{align*}
with probability exceeding $1-\varepsilon,$ for all $n$ suitably large. This
completes the proof of Proposition \ref{prop:RWPcont}.
\end{proof}

\section{Proofs of Propositions \protect\ref{prop:argmintightness} -
\protect\ref{prop-levset-lthan1}}
\label{ssec:proofs-props-mast-lev-sets} In this section, we provide proofs
of Propositions \ref{prop:argmintightness} - \ref{prop-levset-lthan1}, which
are key in the proof of Theorem \ref{thm:levelsets-master}.

\begin{proof}[Proof  of Proposition \ref{prop:argmintightness}]
Due to the convexity of $\ell(\cdot)$, we have that $V_{n}^{ERM}(\cdot)$ is
convex. In addition, for $\beta_{1},\beta_{2} \in\mathbb{R}^{d}$ and $%
\alpha\in(0,1),$ we have
\begin{align*}
\Psi_{n}\{\alpha\beta_{1} + (1-\alpha)\beta_{2}\} & = \sup_{P:D_{c}(P,P_{n})
\,\leq\, \delta_{n}} E_{P} \left\{ \ell(X;\alpha\beta_{1} +
(1-\alpha)\beta_{2})\right\} \\
& \leq\sup_{P:D_{c}(P,P_{n}) \,\leq\, \delta_{n}} \left[ \alpha E%
_{P} \left\{ \ell(X; \beta_{1})\right\} + (1-\alpha) E_{P} \left\{
\ell(X; \beta_{2})\right\} \right] \\
& \leq\alpha\sup_{P:D_{c}(P,P_{n}) \,\leq\, \delta_{n}} E_{P} \left\{
\ell(X; \beta_{1})\right\} + (1-\alpha) \sup_{P:D_{c}(P,P_{n}) \,\leq\,
\delta_{n}} E_{P} \left\{ \ell(X; \beta_{2})\right\} \\
& = \alpha\Psi_{n}(\beta_{1}) + (1-\alpha)\Psi_{n}(\beta_{2}).
\end{align*}
Due to the convexity of $\Psi_{n}(\cdot)$, we have that $V_{n}^{DRO}(\cdot)$
is also convex. Furthermore, due to the positive definiteness of $C = E %
\left\{ D_{\beta}h(X,\beta_{\ast}) \right\} $ in Assumption A2.b, the
smallest eigen value of $C,$ denoted by $\lambda_{\min}(C),$ is positive.
Equipped with these observations, we proceed as follows:

For a given $\varepsilon ,\varepsilon ^{\prime }>0,$ let $K_{1}$ be such
that $\sup_{n}{\rm pr}(\Vert H_{n}\Vert _{2}>K_{1})\leq \varepsilon ,$
\begin{equation*}
K_{2}=\max \left\{ K_{1},\sup_{v:\Vert v\Vert _{2}\leq K_{1}}f_{\eta
,\gamma }(v)\right\} ,\quad \text{ and }\quad K_{3}=2\frac{2K_{2}+\left\{
2\varepsilon ^{\prime }\lambda _{\min }(C)\right\}^{1/2}}{\lambda _{\min }(C)}.
\end{equation*}%
Observe from the definition of $f_{\eta ,\gamma }(\cdot )$ that $K_{2}\in
(0,+\infty ).$ Due to Propositions \ref{prop:ERM} and \ref{prop:DRO}, there
exists $n_{0}$ such that,
\begin{align*}
V_{n}^{ERM}(u)& \geq H_{n}^{\T}u+\frac{1}{2}u^{\T}Cu-\varepsilon ^{\prime }%
\text{ and } \\
V_{n}^{DRO}(u)& \geq f_{\eta ,\gamma }(H_{n})^{\T}u+\frac{1}{2}%
u^{\T}Cu-\varepsilon ^{\prime },
\end{align*}%
for all $u$ such that $\Vert u\Vert _{2}\leq K_{3}$ and $n\geq n_{0}.$ On
the event, $\Vert H_{n}\Vert _{2}\leq K_{1},$ we have that both $%
V_{n}^{ERM}(u)$ and $V_{n}^{DRO}(u)$ are bounded from below by,
\begin{equation*}
V_{l}(u)=-K_{2}\Vert u\Vert _{2}+\frac{\lambda _{\min }(C)}{2}\Vert u\Vert
_{2}^{2}-\varepsilon ^{\prime },
\end{equation*}%
when $n\geq n_{0}.$ Since $V_{l}(u)>0$ for all $u$ such that $\Vert u\Vert
_{2}\geq K_{3}/2,$ we have that
\begin{equation*}
\sup_{n\geq n_{0}}{\rm pr}\left\{ V_{n}^{ERM}(u)>0\right\} \geq
1-\varepsilon \quad \text{ and }\quad \sup_{n\geq n_{0}}{\rm pr}\left\{
V_{n}^{DRO}(u)>0\right\}\geq 1-\varepsilon ,
\end{equation*}%
for all $u$ such that $\Vert u\Vert _{2}\in \lbrack K_{3}/2,K_{3}].$ Define $%
U=\{u\in \mathbb{R}^{d}:\Vert u\Vert _{2}\leq K_{3}/2\}.$ As $%
\min_{u}V_{n}^{ERM}(u)\leq 0$ and $\min_{u}V_{n}^{DRO}(u)\leq 0,$ it follows
from the convexity of $V_{n}^{ERM}(\cdot )$ and $V_{n}^{DRO}(\cdot )$ that,
\begin{align*}
\sup_{n\geq n_{0}}{\rm pr}\left\{ \arg \min \xspace_{u}V_{n}^{ERM}(u)%
\subseteq U\right\} & \geq 1-\varepsilon \quad \text{ } \\
\text{and }\quad \sup_{n\geq n_{0}}{\rm pr}\big\{\arg \min \xspace%
_{u}V_{n}^{DRO}(u)& \subseteq U\big\}\geq 1-\varepsilon ,
\end{align*}%
thus verifying the claim.
\end{proof}

\begin{proof}[Proof  of Proposition \ref{prop-levset-eq1}]
Due to Theorem \ref{thm:master}, we have that $G_{n}(\cdot) \Rightarrow
G(\cdot),$ uniformly in compact sets. Then it follows from Skorokhod
representation theorem that there exists a probability space where the
convergence,
\begin{align}
\sup_{\Vert u \Vert\leq K} \left\vert G_{n}(u) - G(u) \right\vert
\rightarrow0,  \label{as-conv}
\end{align}
happen almost surely, for every $K \in(0,\infty).$ Since $G(\cdot)$ is
continuous, we have from Theorem 7.14 of \citet{rockafellar2009variational}
that $G_{n}$ converges continuously to $G,$ almost surely. A simple
consequence of this observation; see \citet[Theorem 7.11]%
{rockafellar2009variational}, is that the epigraphs of $G_{n}$ converges to
the epigraph of $G$ (alternatively, $G_{n}$ epiconverges to $G$) almost
surely. Observe that $G(\cdot)$ is convex and $\inf_{u} G(u) = 0;$ this is
because $\varphi^{\ast}(0) = 0.$ Moreover, since $\alpha_{n} =
n\delta_{n} = \eta\in(0,+\infty)$, we have that
\begin{align*}
\text{lev}(G_{n},\alpha_{n}) = \text{lev}(G_{n},\eta) \rightarrow \text{lev}%
(G,\eta),
\end{align*}
almost surely, in the Painelev\'{e}-Kuratowski sense; see, for example, \citet%
[Theorem 5.1]{doi:10.1080/02331938908843480}, \citet[Theorem 7.1]%
{10.2307/1994611}, or \citet{10.2307/2159443}. Then, by \citet[Proposition 4.4]{rockafellar2009variational}, we have $\mathrm{cl}\{\text{lev}%
(G_{n},\alpha_{n})\}\rightarrow\text{lev}
(G,\eta)$.
Consequently, $\mathrm{cl}\{\text{lev}%
(G_{n},\alpha_{n})\} \Rightarrow \text{lev}(G,\eta).$
\end{proof}

\begin{proof}[Proof  of Proposition \ref{prop-levset-gthan1}]
Following the same reasoning used in the proof of Proposition \ref%
{prop-levset-eq1} to arrive at \eqref{as-conv}, we have a probability space
where the convergence,
\begin{align}
\sup_{\Vert u \Vert\leq K} \left\vert G_{n}(u) - G(u) \right\vert
\rightarrow0 \quad\text{ and } \quad n^{1/2}\left( \beta_{n}^{ERM} -
\beta_{\ast}\right) \rightarrow C^{-1}H,  \label{as-conv-2}
\end{align}
happen almost surely, for every $K \in(0,\infty);$ here, the latter
convergence follows from \eqref{inter-mas-thm-pf-1}.

Next, observe that $\alpha_{n} = n\delta_{n} \rightarrow0.$ Then, we have
from \citet[Proposition 7.7a]{rockafellar2009variational} that
\begin{align}
\text{Ls}_{n \rightarrow \infty} \ \text{lev}(G_{n},\alpha_{n}) \subseteq%
\text{lev}(G,0) = \left\{ u: \varphi^{\ast}(H-Cu) = 0\right\} = \{C^{-1}H\},
\label{limsup-setsgthan1-inter}
\end{align}
where the latter equality follows from the strict convexity of $%
\varphi^{\ast }(\cdot)$ and the positive definiteness of $C$ in Assumption
A2.b.

Furthermore, since $R_{n}(\beta_{n}^{ERM}) = 0,$ we have,
\begin{equation*}
n^{1/2}(\beta_{n}^{ERM}-\beta_{\ast}) \in\text{lev}\left\{nR_{n}(\beta_{\ast}+
n^{-1/2}\times\cdot\ ), \ \alpha_{n}\right\}= \text{lev}(G_{n},\alpha_{n}),
\end{equation*}
for every $n.$ Therefore, from the second convergence in \eqref{as-conv-2},
we obtain
\begin{equation*}
C^{-1}H \in \text{Li}_{n \rightarrow \infty} \ \text{lev}(G_{n},\alpha_{n}).
\end{equation*}
Combining this observation with that in \eqref{limsup-setsgthan1-inter} and \citet[Proposition 4.4]{rockafellar2009variational}, we
obtain that PK-$\lim_{n}$ $\mathrm{cl}\{\text{lev}%
(G_{n},\alpha_{n})\} = \{C^{-1}H\}$ almost
surely. As a result, $\mathrm{cl}\{\text{lev}%
(G_{n},\alpha_{n})\} \Rightarrow\{C^{-1}H\}.$
\end{proof}

\begin{proof}[Proof  of Proposition \ref{prop-levset-lthan1}]
Following the same reasoning in the proof of Proposition \ref%
{prop-levset-eq1}, we have a probability space where the convergence in %
\eqref{as-conv} happen almost surely, for every $K \in(0,+\infty).$ Consider
any fixed $u \in\mathbb{R}^{d}.$ Since $G_{n}(u) \rightarrow G(u) < \infty$
almost surely and $\alpha_{n} = n\delta_{n} \rightarrow\infty,$ there exists
a random variable $N_{u},$ defined on the same probability space, such that,
$G_{n}(u) < \alpha_{n},$ with probability 1, for all $n \geq N_{u}.$ As a
result, we have $u \in$ lev$(G_{n},\alpha _{n}),$ for all but finitely many $%
n,$ with probability 1. Then it follows from the definition of inner limit ($%
\text{Li}_n$) of sets that $u \in\liminf_{n} \text{lev}(G_{n},\alpha_{n}).$
Since the choice of $u \in\mathbb{R}^{d}$ is arbitrary, we have that
\begin{equation*}
\mathbb{R}^{d} \subseteq \text{Li}_{n \rightarrow \infty}\ \text{lev}%
(G_{n},\alpha_{n}).
\end{equation*}
As $\limsup_{n} \text{lev}(G_{n},\alpha_{n})$ is essentially a subset of $%
\mathbb{R}^{d},$ it follows that PK-$\lim_{n} \mathrm{cl}\{\text{lev}%
(G_{n},\alpha_{n})\} =
\mathbb{R}^{d},$ almost surely. Consequently, $\mathrm{cl}\{\text{lev}%
(G_{n},\alpha_{n})\}
\Rightarrow\text{lev}(G,\alpha).$
\end{proof}

\section{Proofs of Theorem \protect\ref{minmax_prop} and Corollary
\protect\ref{Cor_Sensitivity}}
\label{ssec:proof_prop_corr_1}
\begin{proof}[Proof  of Theorem \ref{minmax_prop}]
We define
\begin{align*}
&\mathcal{K}_{N}=\Omega \cap \left\{ x\in \mathbb{R}%
^{m}:\left\Vert x\right\Vert _{2}\leq N\right\} ,\ \mathcal{U}_{\delta
}^{N}\left( P_{n}\right) =\left\{ P\in \mathcal{P}(\mathcal{K}_{N}):W(P,P_{n})\leq \delta^{1/2}\right\},
\\
&g_{N}(\beta )=\sup_{P\in \mathcal{U}_{\delta }^{N}\left( P_{n}\right)
}E_{P}\left\{ \ell \left( X,\beta \right) \right\} \text{, and } g(\beta )=\sup_{P\in \mathcal{U}_{\delta }\left( P_{n}\right)
}E_{P}\left\{ \ell \left( X,\beta \right) \right\} .
\end{align*}
As slight abuse of the notation, we define $\ell(x,\beta)=+\infty$ for $\beta \notin B$, and thus $g(\beta)=g_N(\beta)=+\infty$ for $\beta \notin B$. Since $B$ is closed, we have $g(\beta)$ and $g_N(\beta)$ are lower semi-continuous.

 Now, we divide the proof of equation \eqref{minmax_eqn}
into three steps.

{Step 1: }we show that $g_{N}(\beta )\rightarrow g(\beta )$
pointwisely as $N\rightarrow \infty .$

{Step 2: }we show that the sequence $\left\{ g_{N}(\beta )\right\} $
epi-converges to $g(\beta ) $; see, for example, \citet[Definition 7.1]{rockafellar2009variational}.

{Step 3: }we finally show that $\inf_{\beta \in B}\lim_{N\rightarrow \infty }g_{N}(\beta )=\lim_{N\rightarrow \infty
}\inf_{\beta \in B}g_{N}(\beta ).$

By Fatou's lemma and the non-negativity of $\ell(\cdot)$, we have $E_{P}\left\{ \ell \left( X,\cdot \right) \right\}$ is lower semi-continuous on $B$ for any $P \in\mathcal{U}_{\delta }^{N}$. By the weak convergence of probability measure, we have $E_{\cdot}\left\{ \ell \left( X,\beta \right) \right\}$ is continuous in weak topology on $\mathcal{U}_{\delta }^{N}(P_n)$ for any $ \beta \in B$.  By Sion's minimax theorem
\citep{sion1958general} and the compactness of $\mathcal{U}_{\delta }^{N}\left( P_{n}\right) $
in weak topology, we have
\begin{equation*}
\lim_{N\rightarrow \infty }\inf_{\beta \in B}g_{N}(\beta
)=\sup_{N\geq 1}\inf_{\beta \in B}g_{N}(\beta )=\sup_{N\geq
1}\inf_{\beta \in B}\sup_{P\in \mathcal{U}_{\delta }^{N}\left(
P_{n}\right) }E_{P}\left\{ \ell \left( X,\beta \right) \right\} =\sup_{N\geq
1}\sup_{P\in \mathcal{U}_{\delta }^{N}\left( P_{n}\right) }\inf_{\beta \in
B}E_{P}\left\{ \ell \left( X,\beta \right) \right\} .
\end{equation*}%
By the weak duality, we have
\begin{eqnarray*}
\inf_{\beta \in B}\lim_{N\rightarrow \infty }g_{N}(\beta )&=& \inf_{\beta \in B}\sup_{P\in \mathcal{U}_{\delta }\left(
P_{n}\right) }E_{P}\left\{ \ell \left( X,\beta \right) \right\}\\
  &\geq
&\sup_{P\in \mathcal{U}_{\delta }\left( P_{n}\right) }\inf_{\beta \in
B}E_{P}\left\{ \ell \left( X,\beta \right) \right\}  \\
&\geq &\sup_{N\geq 1}\sup_{P\in \mathcal{U}_{\delta }^{N}\left( P_{n}\right)
}\inf_{\beta \in B}E_{P}\left\{ \ell \left( X,\beta \right)
\right\}  \\
&=&\lim_{N\rightarrow \infty }\inf_{\beta \in B}g_{N}(\beta ).
\end{eqnarray*}%
Therefore, all the inequalities above should be equalities, which completes
the proof.

Now, we execute proofs of {Steps 1 - 3}.

{Proof of Step 1: }Since $g_{N}(\beta )$ is increasing, we
have $g_{N}(\beta )\ $converges. Assume $\lim_{N\rightarrow \infty
}g_{N}(\beta )=g^{\ast }(\beta ),$ where $g^{\ast }(\beta )$ could be $%
+\infty .$ We have $g^{\ast }\left( \beta \right) \leq g(\beta ).$ We use
proof by contradiction. If $g^{\ast }\left( \beta \right) <g(\beta ),$ we
consider two cases: $g(\beta )<+\infty $ and $g(\beta )=+\infty .$

Case 1: $g(\beta)<\infty$. Let $\epsilon =g(\beta )-g^{\ast }\left( \beta
\right)>0 .$ Let $P^{\prime }\in \mathcal{U}_{\delta }\left( P_{n}\right) $
such as $E_{P^{\prime }}\left\{ \ell \left( X,\beta \right) \right\}
>g(\beta )-\epsilon /2.$ There exists $N$ sufficiently large such that
\begin{equation*}
E_{P^{\prime }}\left\{ \ell \left( X,\beta \right) \mathbb{I}(\left\Vert
X\right\Vert _{2}>N)\right\} <\epsilon /2.
\end{equation*}%
Then, we construct a measure $P_{N}^{\prime }\in \mathcal{U}_{\delta
}^{N}\left( P_{n}\right) $ that for any Borel set $A\subset \mathcal{K}_{N},$
\begin{equation*}
P_{N}^{\prime }(A)=P^{\prime }(A)+\left\{ 1-P^{\prime }(\mathcal{K}%
_{N})\right\} P_{n}\left( A\right) .
\end{equation*}%
Therefore, we have
\begin{equation*}
g^{\ast }(\beta )\geq g_{N}(\beta )\geq E_{P_{N}^{\prime }}\left\{ \ell
\left( X,\beta \right) \right\} >E_{P^{\prime }}\left\{ \ell \left( X,\beta
\right) \right\} -\epsilon /2>g(\beta )-\epsilon ,
\end{equation*}%
which leads a contradiction.

Case 2: $g(\beta)=+\infty$ and $g^*(\beta)<+\infty$.  Let $P^{\prime }\in \mathcal{U}_{\delta }\left( P_{n}\right) $
such as $E_{P^{\prime }}\left\{ \ell \left( X,\beta \right) \right\}
>g^*(\beta )+1.$ There exists $N$ sufficiently large such that
\begin{equation*}
E_{P^{\prime }}\left\{ \ell \left( X,\beta \right) \mathbb{I}(\left\Vert
X\right\Vert _{2}>N)\right\} <1.
\end{equation*}%
Then, we construct a measure $P_{N}^{\prime }\in \mathcal{U}_{\delta
}^{N}\left( P_{n}\right) $ that for any Borel set $A\subset \mathcal{K}_{N},$
\begin{equation*}
P_{N}^{\prime }(A)=P^{\prime }(A)+\left\{ 1-P^{\prime }(\mathcal{K}%
_{N})\right\} P_{n}\left( A\right) .
\end{equation*}%
Therefore, we have
\begin{equation*}
g^{\ast }(\beta )\geq g_{N}(\beta )\geq E_{P_{N}^{\prime }}\left\{ \ell
\left( X,\beta \right) \right\} >E_{P^{\prime }}\left\{ \ell \left( X,\beta
\right) \right\} -\epsilon /2>g^*(\beta) ,
\end{equation*}%
which leads a contradiction.


{Proof of Step 2: } By \citet[Proposition 7.2]{rockafellar2009variational}, we need to check two conditions:

(i) For every $\beta \in \mathbb{R}^d$ and for every sequence $\left\{
\beta _{N}\right\} _{N=1}^{\infty }$ converging to $\beta $, we claim $\liminf_{N\rightarrow \infty }g_{N}(\beta _{N})\geq g(\beta ).$ Recalling $%
g_{N}(\beta _{N})\geq g_{M}(\beta _{N})$ by monotonicity for $N>M$ and the
lower semi-continuity of $g_{M}(\beta ),$ we have
\begin{equation*}
\liminf_{N\rightarrow \infty }g_{N}(\beta _{N})\geq \liminf_{N\rightarrow \infty }g_{M}(\beta _{N})\geq g_{M}(\beta ).
\end{equation*}%
By taking $M$ to the infinity, we have $\liminf_{N\rightarrow \infty
}g_{N}(\beta _{N})\geq g(\beta ).$

(ii) for every $\beta \in \mathbb{R}^d,$ we pick a sequence $\beta
_{N}=\beta ,$ then
\begin{equation*}
\lim_{N\rightarrow \infty }g_{N}(\beta _{N})=\lim_{N\rightarrow \infty
}g_{N}(\beta )=g(\beta ).
\end{equation*}%

{Proof of Step 3: } We claim $g(\cdot)$ is level-bounded. Since $E_{P_{\ast }}\left\{ \ell \left(
X,\beta \right) \right\} $ has a
unique minimizer, its level set $\left\{\beta \in \mathbb{R}^d : E%
_{P_{\ast }}\left\{ \ell \left( X,\beta \right) \right\} \leq b\right\} $ is
bounded. For every $P_{n}$ and $\delta ,$ there always exists $%
\epsilon \in (0,1),$ such that
\begin{equation*}
(1-\epsilon )P_{n}+\epsilon P_{\ast }\in\mathcal{U}_\delta(P_n).
\end{equation*}
Then, we have $g(\beta )\geq \epsilon
E_{P_{\ast }}\left\{ \ell \left( X,\beta \right) \right\}$.
Therefore, the level set $\left\{\beta\in \mathbb{R}^d: g(\beta )\leq
b\right\} \subset \left\{\beta\in \mathbb{R}^d:E_{P_{\ast }}\left\{
\ell \left( X,\beta \right) \right\} \leq b/\epsilon \right\}$  is bounded.

By \citet[Exercise 7.32(c)]{rockafellar2009variational}, we have the sequence $\{g_N(\cdot)\}$ is eventually level-bounded.  Further, since  $g_N,g$ are lower semi-continuous and proper, by \citet[Theorem 7.33]{rockafellar2009variational}, we have the desired result.

Then, we proceed with the claim that there exists $\beta_{n}^{DRO}(\delta) \in \Lambda^+_{\delta}(P_{n})$.  First, $\mathcal{U}_{\delta
}^{N}\left( P_{n}\right)$ is compact and $\inf_{\beta \in B}g_{N}(\beta)$ is upper-semicontinuous on $P$, and thus there exists $P_N$ such that
\[
P_N\in\mathop{\arg\max}_{P \in \mathcal{U}_{\delta}^{N}\left( P_{n}\right)}\inf_{\beta \in B}g_{N}(\beta).
\]
Further, for any $\beta_N \in \arg\min_{\beta \in B}g_N(\beta)$, when $N$ is sufficiently large, whose existence is guaranteed by \citet[Theorem 7.33]{rockafellar2009variational}.
Then, we have
\[\inf_{\beta \in B}\sup_{P\in \mathcal{U}_{\delta }^{N}\left(
P_{n}\right) }E_{P}\left\{ \ell \left( X,\beta \right) \right\} \geq E_{P_N} \left\{\ell(X,\beta_N) \right\} \geq \sup_{P\in \mathcal{U}_{\delta }^{N}\left( P_{n}\right) }\inf_{\beta \in
B}E_{P}\left\{ \ell \left( X,\beta \right) \right\} .
\]
By Sion's minimax theorem, we have all the inequalities above are equalities. Therefore, $\beta_N \in
\arg \min_{\beta }E_{P_N}\left\{ \ell (X;\beta )\right\}$ and thus $\beta_N \in \Lambda _{\delta}(P_{n})$. Finally, since the sequence $\{\beta_N\}_{N=1}^{\infty}$ is bounded and all its cluster points belong to $\arg \min g(\beta)$ by \citet[Theorem 7.33]{rockafellar2009variational}, combining with the closedness of $\Lambda^+_\delta(P_n)$, we have the desired result.
\end{proof}
\begin{proof}[Proof  of Corollary \ref{Cor_Sensitivity}]
Define $\bar{\Psi}_n(\beta)= E_{P_n}[\ell(X;\beta)] + {\eta}^{1/2}n^{-1/2}
\{E_{P_n}\Vert D_x\ell(X;\beta)\Vert^2\}^{1/2}$ and
\begin{align*}
\bar{V}_n(u) = n^{1/2}\left\{ \bar{\Psi}_n\left(\beta_\ast +
n^{-1/2}u\right) - \bar{\Psi}_n(\beta_\ast)\right\}.
\end{align*}
Following the lines of the proof of Proposition \ref{prop:DRO}, we have $%
\bar{V}_n(u) = V_n^{DRO}(u) + o_p(n^{-1/2}).$ Consequently, since the
collection $\{V_n^{DRO}(\cdot)\}_{n \geq 1}$ is tight and strongly convex
(see the proof of Proposition \ref{prop:argmintightness}), we have that the
sequences $\{\bar{V}_n(\cdot)\}_{n \geq 1}$ and $\{\arg%
\min \xspace_u \bar{V}_n(u): n \geq 1\}$ are tight. Then, as a
consequence of Theorem \ref{thm:master}, we have that $\bar{V}_n(\cdot)
\Rightarrow V\{-f_{\eta,1}(H),\cdot\,\},$ uniformly in compact sets. Since the
functions $\bar{V}_n(\cdot)$ and $V\{-f_{\eta,1}(H),\cdot\,\}$ are minimized,
respectively, at $n^{1/2}\left(\bar{\beta}_n^{DRO} - \beta_\ast\right)$ and $%
C^{-1}f_{\eta,1}(H),$ we have that
\begin{align*}
n^{1/2}\left(\bar{\beta}_n^{DRO} - \beta_\ast \right) \Rightarrow
C^{-1}f_{\eta,1}(H),
\end{align*}
as $n \rightarrow \infty.$ Then the conclusion that
\begin{align*}
n^{1/2}\left(\bar{\beta}_n^{DRO} - \beta_n^{DRO}\right) \rightarrow 0,
\end{align*}
in probability, follows automatically from the convergence $n^{1/2}\left({%
\beta}_n^{DRO} - \beta_\ast\right) \Rightarrow C^{-1}f_{\eta,1}(H)$; see
Theorem \ref{thm:levelsets-master} as a consequence of the continuous
mapping theorem. This verifies the statement of Corollary \ref%
{Cor_Sensitivity}. 
\end{proof}

\section{Proofs of Proposition \protect\ref{Prop_Reg_Plug} and Proposition \ref{prop:level-sets-char}}
\label{ssec:proof_prop_reg}
\begin{proof}[Proof  of Proposition \protect\ref{Prop_Reg_Plug}]
i).
Since $E\left[ D_{x}h(X,\beta_{\ast})D_{x}h(X,\beta_{\ast})^{\T} %
\right] \succ0,$ we have $\varphi(\xi)\geq c\left\Vert
\xi\right\Vert _{2}^{2}$ for some numerical constant $c>0$ and thus $%
\varphi^{\ast}\left( \cdot\right) $ is continuous$.$

ii).
Since $\beta _{n}^{ERM}{\rightarrow }\beta _{\ast }$ almost surely, we have $%
\varphi _{n}\left( \xi \right) \overset{a.s.}{\rightarrow }$ $\varphi \left(
\xi \right) $ for any $\xi .$ Then, since $\left\Vert \xi ^{\T}D_{x}h(X,\beta
_{\ast })\right\Vert _{p}^{2}$ is Lipschitz in $\xi $ for $\left\Vert \xi
\right\Vert _{p}\leq b$, i.e., for $\left\Vert \xi _{1}\right\Vert _{p}\leq
b,\left\Vert \xi _{2}\right\Vert _{p}\leq b$,
\begin{equation*}
\left\vert \left\Vert \left\{ D_{x}h(X,\beta _{\ast })\right\} ^{\T}\xi
_{1}\right\Vert _{p}^{2}-\left\Vert \left\{ D_{x}h(X,\beta _{\ast })\right\}
^{\T}\xi _{2}\right\Vert _{p}^{2}\right\vert \leq 2b\left\Vert D_{x}h(X,\beta
_{\ast })\right\Vert _{q}^{2}\left\Vert \xi _{1}-\xi _{2}\right\Vert _{p}
\end{equation*}%
and $E\left[ \left\Vert D_{x}h(X,\beta _{\ast })\right\Vert _{q}^{2}%
\right] <\infty ,$ we have the uniform law of large numbers that
\begin{equation*}
\sup_{\left\Vert \xi \right\Vert _{p}\leq b}\left\vert \varphi _{n}\left(
\xi \right) -\varphi \left( \xi \right) \right\vert {%
\rightarrow }0
\end{equation*}%
almost surely, uniformly over $\left\Vert \xi \right\Vert _{p}\leq b.$ Then, we have
\begin{equation*}
\sup_{\left\Vert \xi \right\Vert _{p}\leq b}\left\{ \xi ^{\T}\zeta
-\varphi _{n}\left( \xi \right) \right\}{\rightarrow }%
\sup_{\left\Vert \xi \right\Vert _{p}\leq b}\left\{ \xi ^{\T}\zeta
-\varphi \left( \xi \right) \right\}
\end{equation*}%
almost surely, uniformly over $\zeta $ in compact sets as $n\rightarrow \infty .$ Finally,
since $b$ is chosen arbitrarily, we conclude $\varphi _{n}^{\ast }\left(
\cdot \right) \overset{p}{\rightarrow }$ $\varphi ^{\ast }\left( \cdot
\right) $ as $n\rightarrow \infty $ uniformly on compact sets.

iii).
Observe that $\varphi_{n}^{\ast}\left( \bar{\Xi}_{n}Z\right) =\varphi
_{n}^{\ast}\left( \bar{\Xi}_{n}Z\right) -\varphi^{\ast}\left( \bar{\Xi}%
_{n}Z\right) +\varphi^{\ast}\left( \bar{\Xi}_{n}Z\right) .$ The continuous
mapping theorem and $\bar{\Xi}_{n}Z\Rightarrow H$ give us $\varphi^{\ast
}\left( \bar{\Xi}_{n}Z\right) \Rightarrow\varphi^{\ast}\left( H\right) .$
And ii) gives us $\varphi_{n}^{\ast}\left( \bar{\Xi}_{n}Z\right)
-\varphi^{\ast}\left( \bar{\Xi}_{n}Z\right) \overset{p}{\rightarrow}0.$
\end{proof}
\begin{proof}[Proof  of Proposition \ref{prop:level-sets-char}]
For any convex function $f(\cdot)$ with $\inf f<0,$ it is well-known \citep[Exercise 11.6]{rockafellar2009variational} that the
support function of the level set $A=\{u:f(u)\leq0\}$ is $%
h_{A}(v)=\inf_{\lambda>0}\lambda f^{\ast }(\lambda^{-1}v),$ where $f^{\ast}$
is the convex conjugate of $f.$ Since the convex conjugate of $%
\varphi^{\ast}(C\times\cdot\ )-\eta$ is $\varphi(C^{-1}\times\cdot\
)+\eta,$ the support function of $\Lambda_{\eta
}=\{u:\varphi^{\ast}(Cu)-\eta\leq0\}$ is
\begin{equation*}
  h_{\Lambda_{\eta}}(v)=\inf_{\lambda>0}\lambda\left\{ \varphi\left( {
        \lambda^{-1}C^{-1}v}{}\right) +\eta\right\} =\inf_{\lambda>0}\left\{ \lambda^{-1}
    \varphi\left( C^{-1}v\right) +\lambda\eta\right\} =2
  \{\eta\varphi(C^{-1}v)\}^{1/2}.
\end{equation*}
This completes the proof of Proposition \ref{prop:level-sets-char}.
\end{proof}
\section{Proofs of technical results}
\label{appendix:proof_technical_lemma}

\begin{proof}[Proof  of Lemma \protect\ref{Lem-Delta-ineq}]
A proof of the conclusion in Part a) of Lemma \ref{Lem-Delta-ineq} can be
found in Appendix A of \citet{shalev2007online}. For the proof of Part b), we
proceed as follows.

For brevity, let $D$ denote the derivative of the function
$\Vert \Delta \Vert_q^2$ evaluated at $\Delta = \Delta_\ast$ and
$H(\Delta)$ denote the hessian matrix
of function $\frac{1}{2}\left\Vert \Delta\right\Vert _{q}^{2}.$ Then,
for any $x\in R^{d},$
\begin{align*}
& x^{T}H(\Delta)x \\
& =\frac{1}{q}\left( \frac{2}{q}-1\right) \left( \sum_{i=1}^{d}\left\vert
\Delta_{i}\right\vert ^{q}\right) ^{2/q-2}\left\{
q\sum_{i=1}^{d}\text{sgn}(\Delta_{i})|\Delta_{i}|^{q-1}x_{i}\right\} ^{2} \\
& +(q-1)\left( \sum_{i=1}^{d}\left\vert \Delta_{i}\right\vert ^{q}\right)
^{2/q-1}\sum_{i=1}^{d}|\Delta_{i}|^{q-2}x_{i}^{2} \\
& =\left( \sum_{i=1}^{d}\left\vert \Delta_{i}\right\vert ^{q}\right)
^{2/q-2} \left[ \left( 2-q\right) \left\{
\sum_{i}\text{sgn}(\Delta_{i})|\Delta_{i}|^{q-1}x_{i}\right\} ^{2}+(q-1)\left(
\sum_{i=1}^{d}\left\vert \Delta_{i}\right\vert ^{q}\right) \left(
\sum_{i}|\Delta_{i}|^{q-2}x_{i}^{2}\right) \right] \\
& =\left( \sum_{i=1}^{d}\left\vert \Delta_{i}\right\vert ^{q}\right)
^{2/q-2} \left[ \left\{
\sum_{i=1}^{d}\text{sgn}(\Delta_{i})|\Delta_{i}|^{q-1}x_{i}\right\}
^{2}+(q-1)\sum_{i=1}^{d}\sum_{j=i+1}^{d}|\Delta_{i}|^{q-2}\left\vert
\Delta_{j}\right\vert ^{q-2}\left( \Delta_{i}x_{j}-\Delta_{j}x_{i}\right)
^{2}\right] .
 \end{align*}
Since $q-1>1,$ we obtain that,
\begin{equation*}
x^{T}H(\Delta)x\geq\left( \sum_{i=1}^{d}\left\vert \Delta_{i}\right\vert
^{q}\right) ^{2/q-2}\left[ \left\{ \sum_{i=1}^{d}\text{sgn}(\Delta_{i})|\Delta
_{i}|^{q-1}x_{i}\right\}
^{2}+\sum_{i=1}^{d}\sum_{j=i+1}^{d}|\Delta_{i}|^{q-2}\left\vert
\Delta_{j}\right\vert ^{q-2}\left( \Delta_{i}x_{j}-\Delta_{j}x_{i}\right)
^{2}\right] .
\end{equation*}
Considering only non-zero entries among $\{\Delta_{i}:i=1,\ldots,d\},$ we
re-express the right hand side as,
\begin{align*}
& \left( \sum_{i=1}^{d}\left\vert \Delta_{i}\right\vert ^{q}\right) ^{2/q-2}
\left\{ \left( \sum_{\substack{ i=1  \\ \Delta_{i}\neq0}}^{d}|\Delta _{i}|^{q}%
\frac{x_{i}}{\Delta_{i}}\right) ^{2}+\sum_{\substack{ i=1  \\ \Delta
_{i}\neq0 }}^{d}\sum_{\substack{ j=i+1  \\ \Delta_{j}\neq0}}%
^{d}|\Delta_{i}|^{q}\left\vert \Delta_{j}\right\vert ^{q}\left( \frac{x_{j}}{%
\Delta_{j}}-\frac{x_{i}}{\Delta_{i}}\right) ^{2}\right\} \\
& \quad\quad=\left( \sum_{i=1}^{d}\left\vert \Delta_{i}\right\vert
^{q}\right) ^{2/q-2}\left[ \sum_{\substack{ j=1  \\ \Delta_{j}\neq0}}%
^{d}|\Delta_{j}|^{q}\left\{ \sum_{\substack{ i=1  \\ \Delta_{i}\neq0}}%
^{d}|\Delta _{i}|^{q}\left( \frac{x_{i}}{\Delta_{i}}\right) ^{2}\right\} %
\right] \\
& \quad\quad=\left( \sum_{i=1}^{d}\left\vert \Delta_{i}\right\vert
^{q}\right)^{2/q-1}\left\{\sum_{i=1}^{d}|\Delta_{i}|^{q-2}\left(
x_{i}\right) ^{2}\right\} \\
& \quad\quad=\frac{\sum_{i=1}^{d}|\Delta_{i}|^{q-2}\left( x_{i}\right) ^{2}}{%
\left\Vert \left\vert \Delta\right\vert ^{q-2}\right\Vert _{\frac {q}{q-2}}},
\end{align*}
where $\left\vert \Delta\right\vert ^{q-2}$ is defined as a vector $%
(|\Delta_{1}|^{q-2},|\Delta_{2}|^{q-2},\ldots,|\Delta_{d}|^{q-2})^{\T}.$
Then,
\begin{equation}
x^{T}H(\Delta)x\geq\frac{\sum_{i=1}^{d}|\Delta_{i}|^{q-2}\left( x_{i}\right)
^{2}}{\left\Vert \left\vert \Delta\right\vert ^{q-2}\right\Vert _{\frac {q}{%
q-2}}}\geq\frac{\sum_{i=1}^{d}|\Delta_{i}|^{q-2}\left( x_{i}\right) ^{2}}{%
\left\Vert \left\vert \Delta\right\vert ^{q-2}\right\Vert _{1}}=\frac{%
\sum_{i=1}^{d}|\Delta_{i}|^{q-2}\left( x_{i}\right) ^{2}}{\sum
_{i=1}^{d}|\Delta_{i}|^{q-2}}.  \label{inter-delta-ineq-1}
\end{equation}
We can regard the right hand side as the weighted average of $\left\{
x_{i}^{2}\right\} _{i=1}^{d}.$

Next, by applying Taylor's theorem, we have
\begin{equation*}
\left\Vert \Delta\right\Vert _{q}^{2}=\left\Vert \Delta^{\ast}\right\Vert
_{q}^{2}+\left( D\left\Vert \Delta^{\ast}\right\Vert _{q}^{2}\right)
^{\T}\xi+2\int_{0}^{1}(1-t)\xi^{\T}H(\Delta^{\ast}+t\xi)\xi {\rm d}t.
\end{equation*}
We focus on the last term, which is
\begin{equation*}
\int_{0}^{1}(1-t)\xi^{\T}H(\Delta^{\ast}+t\xi)\xi {\rm d}t\geq\int_{0}^{1}(1-t)
\frac{\sum_{i=1}^{d}|\Delta_{i}^{\ast}+t\xi_{i}|^{q-2}(\xi_{i})^{2}}{%
\sum_{i=1}^{d}|\Delta_{i}^{\ast}+t\xi_{i}|^{q-2}}{\rm d}t,
\end{equation*}
due to the inequality deduced earlier in \eqref{inter-delta-ineq-1}. As the
denominator of the right hand side in the above expression is bounded by,
\begin{equation*}
\sum_{i=1}^{d}|\Delta_{i}^{\ast}+t\xi_{i}|^{q-2}\leq\max(2^{q-3},1)\sum
_{i=1}^{d}\left( |\Delta_{i}^{\ast}|^{q-2}+|\xi_{i}|^{q-2}\right) ,
\end{equation*}
we obtain that,
\begin{align*}
\int_{0}^{1}(1-t)\frac{\sum_{i=1}^{d}|\Delta_{i}^{\ast}+t\xi_{i}|^{q-2}(%
\xi_{i})^{2}}{\sum_{i=1}^{d}|\Delta_{i}^{\ast}+t\xi_{i}|^{q-2}}{\rm d}t \geq \frac{%
1}{\max(2^{q-3},1)}\frac{\sum_{i=1}^{d}\left\{
\int_{0}^{1}(1-t)|\Delta_{i}^{\ast}+t\xi_{i}|^{q-2}{\rm d}t\right\}
(\xi_{i})^{2}}{\sum_{i=1}^{d}\left(
|\Delta_{i}^{\ast}|^{q-2}+|\xi_{i}|^{q-2}\right) }.
\end{align*}
Then, we only need to bound the integral
\begin{equation*}
\int_{0}^{1}(1-t)|\Delta_{i}^{\ast}+t\xi_{i}|^{q-2}{\rm d}t.
\end{equation*}
If $\Delta_{i}^{\ast}$ and $\xi_{i}$ have the same sign, then
\begin{equation*}
\int_{0}^{1}(1-t)|\Delta_{i}^{\ast}+t\xi_{i}|^{q-2}{\rm d}t\geq%
\int_{0}^{1}(1-t)t^{q-2}|\xi_{i}|^{q-2}{\rm d}t=\frac{1}{(q-1)q}|\xi_{i}|^{q-2}.
\end{equation*}
On the other hand, if $\Delta_{i}^{\ast}$ and $\xi_{i}$ have  different
signs, then we obtain that,
\begin{equation*}
\int_{0}^{1}(1-t)|\Delta_{i}^{\ast}+t\xi_{i}|^{q-2}{\rm d}t\geq|\xi_{i}|^{q-2}%
\left\{ \int_{0}^{a}(1-t)\left( a-t\right) ^{q-2}{\rm d}t+\int_{a}^{1}(1-t)\left(
t-a\right) ^{q-2}{\rm d}t\right\} ,
\end{equation*}
where $a=\min\left( \left\vert \frac{\Delta_{i}^{\ast}}{\xi_{i}} \right\vert
,1\right) .$ Computing the integrals in the right hand side of the above
inequality, we obtain,
\begin{equation*}
\left\{ \int_{0}^{a}(1-t)\left( a-t\right) ^{q-2}{\rm d}t+\int_{a}^{1}(1-t)\left(
t-a\right) ^{q-2}{\rm d}t\right\}=\frac{(1-a)^{q}+a^{q-1}(q-a)}{q(q-1)}
\end{equation*}
Since $2a<q,$ we have
\begin{equation*}
\frac{(1-a)^{q}+a^{q-1}(q-a)}{q(q-1)}\geq\frac{(1-a)^{q}+a^{q}}{q(q-1)}\geq%
\frac{1}{2^{q-1}q(q-1)}.
\end{equation*}
Then by combining the above observations, we obtain that,
\begin{equation*}
2\int_{0}^{1}(1-t)\xi^{\T}H(\Delta^{\ast}+t\xi)\xi {\rm d}t\geq C^{\prime}\frac {%
\sum_{i=1}^{d}|\xi_{i}|^{q}}{\sum_{i=1}^{d}|\Delta_{i}^{\ast}|^{q-2}+%
\sum_{i=1}^{d}|\xi_{i}|^{q-2}}.
\end{equation*}
where
\begin{equation*}
C^{\prime}=\frac{1}{2^{q-2}q(q-1)\max(2^{q-3},1)}.
\end{equation*}
Moreover, we have,
\begin{equation}
\frac{\sum_{i=1}^{d}|\xi_{i}|^{q}}{\sum_{i=1}^{d}|\Delta_{i}^{\ast}|^{q-2}+%
\sum_{i=1}^{d}|\xi_{i}|^{q-2}}\geq\frac{1}{2}\min\left( \frac {%
\sum_{i=1}^{d}|\xi_{i}|^{q}}{\sum_{i=1}^{d}|\Delta_{i}^{\ast}|^{q-2}},\frac{%
\sum_{i=1}^{d}|\xi_{i}|^{q}}{\sum_{i=1}^{d}|\xi_{i}|^{q-2}}\right) .
\end{equation}
Due to Chebyshev's sum inequality, we also obtain,
\begin{equation}
d\sum_{i=1}^{d}|\xi_{i}|^{q}\geq\left( \sum_{i=1}^{d}|\xi_{i}|^{q-2}\right)
\left( \sum_{i=1}^{d}|\xi_{i}|^{2}\right) .
\end{equation}
Letting $C=\frac{1}{2d}C^{\prime},$ the desired result follows.
\end{proof}

\color{black}
\begin{proof}[Proof  of Lemma \protect\ref{lemmaksai}]
For a fixed $u,\Delta ,$ recall the definitions of $I(X_{i},\Delta
,u)$, $I_{1}(X_{i},\Delta ,u)$, $I_{2}(X_{i},\Delta ,u)$, $\Delta _{i}^{\prime }$
from \eqref{defn-I-terms}-\eqref{delta_prime} and
\begin{equation}
M_{n}(\xi ,u)=\frac{1}{n}\sum_{i=1}^{n}\left( \xi ^{{\mathrm{%
\scriptscriptstyle T}}}D_{\beta }h\left( X_{i},\beta _{\ast }\right)
u+\max_{\Delta :X_{i}+n^{-1/2}\Delta \in \Omega }\left\{ \xi ^{{\mathrm{%
\scriptscriptstyle T}}}D_{x}h\left( X_{i},\beta _{\ast }\right) \Delta +\xi
^{{\mathrm{\scriptscriptstyle T}}}I(X_{i},\Delta ,u)-\Vert \Delta \Vert
_{q}^{2}\right\} \right) .  \label{inter-tightness-ksai}
\end{equation}%
By taking $\Delta =0$ and recalling Assumption A2.c, we have%
\begin{eqnarray*}
&&\max_{\Delta :X_{i}+n^{-1/2}\Delta \in \Omega }\left\{ \xi ^{{\mathrm{%
\scriptscriptstyle T}}}D_{x}h\left( X_{i},\beta _{\ast }\right) \Delta +\xi
^{{\mathrm{\scriptscriptstyle T}}}I(X_{i},\Delta ,u)-\Vert \Delta \Vert
_{q}^{2}\right\}  \\
&\geq &\xi ^{{\mathrm{\scriptscriptstyle T}}}I_{2}(X_{i},0,u) \\
&\geq &-\Vert \xi \Vert _{p}\int_{0}^{1}\left\Vert D_{\beta }h\left(
X_{i},\beta _{\ast }+t\frac{u}{n^{1/2}}\right) -D_{\beta }h\left(
X_{i},\beta _{\ast }\right) \right\Vert _{q}\left\Vert u\right\Vert _{q}\
\mathrm{d}t \\
&\geq &-\frac{1}{2}n^{-1/2}\Vert \xi \Vert _{p}\left\Vert u\right\Vert
_{q}^{2}\bar{\kappa}(X_{i})\mathrm{.}
\end{eqnarray*}%
Since $\left\Vert u\right\Vert _{2}\leq K,$ for any $\epsilon _{1}>0,$ there
exists $n_{1}>0$ such as for all $n>n_{1},$ $\frac{1}{2}n^{-1/2}\left\Vert
u\right\Vert _{q}^{2}<1.$

Then for any $c>0,$ plugging in $\Delta =c\Delta _{i}^{\prime },$ we have
that $\xi ^{{\mathrm{\scriptscriptstyle T}}}D_{x}h(X_{i},\beta _{\ast
})\Delta =c\Vert D_{x}h(X_{i},\beta _{\ast })^{{\mathrm{\scriptscriptstyle T}%
}}\xi \Vert _{p}\Vert \Delta _{i}^{\prime }\Vert _{q},$ and thus,
\begin{align*}
& \max_{\Delta :X_{i}+n^{-1/2}\Delta \in \Omega }\left\{ \xi ^{{\mathrm{%
\scriptscriptstyle T}}}D_{x}h\left( X_{i},\beta _{\ast }\right) \Delta +\xi
^{{\mathrm{\scriptscriptstyle T}}}I(X_{i},\Delta ,u)-\Vert \Delta \Vert
_{q}^{2}\right\}  \\
&  \geq \left\{ c\Vert D_{x}h\left( X_{i},\beta _{\ast }\right) ^{%
{\mathrm{\scriptscriptstyle T}}}\xi \Vert _{p}\Vert \Delta _{i}^{\prime
}\Vert _{q}-c^{2}\Vert \Delta _{i}^{\prime }\Vert _{q}^{2}+\xi ^{{\mathrm{%
\scriptscriptstyle T}}}I_{1}(X_{i},c\Delta _{i}^{\prime },u)\right\} \mathbb{I}\left(
X_{i}+cn^{-1/2}\Delta _{i}^{\prime }\in \Omega \right) -\bar{\kappa}%
(X_{i})\Vert \xi \Vert _{p}.
\end{align*}%
As a consequence of H\"{o}lder's inequality, $|\xi ^{{\mathrm{%
\scriptscriptstyle T}}}I_{1}(X_{i},c\Delta _{i}^{\prime },u)|$ is bounded
from above by
\begin{equation*}
c\Vert \xi \Vert _{p}\int_{0}^{1}\left\Vert \left\{ D_{x}h\left(
X_{i}+cn^{-1/2}\Delta _{i}^{\prime },\beta _{\ast }+t\frac{u}{n^{1/2}}%
\right) -D_{x}h(X_{i},\beta _{\ast })\right\} \Delta _{i}^{\prime
}\right\Vert _{q}{\rm d}t.
\end{equation*}%
Define the set $C_{0}=\{w\in \Omega :\left\Vert w\right\Vert _{p}\leq
c_{0}\},$ where $c_{0}$ will be chosen large momentarily. Then, due to the
continuity of $D_{x}h(\cdot )$ in Assumption A2.c, we have that
\begin{equation*}
\lim_{n\rightarrow \infty }|\xi ^{{\mathrm{\scriptscriptstyle T}}%
}I_{1}(X_{i},c\Delta _{i}^{\prime },u)|\mathbb{I}(X_{i}\in C_{0})=0,
\end{equation*}%
uniformly over all $i$ such that $X_{i}\in C_{0},$ $\xi $ in compact sets,
and $\Vert u\Vert _{2}\leq K.$ Therefore, for given positive constants $%
\varepsilon ^{\prime },c$ there exists $n_{2}$ such that for all $n\geq
n_{2},$
\begin{equation*}
\sup_{i}|\xi ^{{\mathrm{\scriptscriptstyle T}}}I_{1}(X_{i},c\Delta
_{i}^{\prime },u)|\mathbb{I}(X_{i}\in C_{0})\leq c\varepsilon ^{\prime
}\Vert \xi \Vert _{p}.
\end{equation*}%
Further, notice that $\left\Vert \Delta _{i}^{\prime }\right\Vert $ is
bounded when $X_{i}\in C_{0}$ due to the compactness of $C_{0}$ and the
continuity of $D_{x}\left\{ h(X_{i},\beta _{\ast }\right) \},$ Let $M_{\Delta
}=\sup_{x\in C_{0}}\left\Vert \Delta _{i}^{\prime }\left( x\right)
\right\Vert _{2}.$ As a result, we obtain from \eqref{inter-tightness-ksai}
that, $M_{n}(\xi ,u)$ is bounded from below by,
\begin{eqnarray}
\frac{1}{n} &&\xi ^{{\mathrm{\scriptscriptstyle T}}}\sum_{i=1}^{n}D_{\beta
}h(X_{i},\beta _{\ast })u-\frac{1}{n}\sum_{i=1}^{n}\bar{\kappa} (X_{i})\Vert \xi
\Vert _{p}+  \label{inter-tightness-lb} \\
&&\frac{1}{n}\sum_{i=1}^{n}\left\{ c\Vert D_{x}h\left( X_{i},\beta _{\ast
}\right) ^{{\mathrm{\scriptscriptstyle T}}}\xi \Vert _{p}\Vert \Delta
_{i}^{\prime }\Vert _{q}-c^{2}\Vert \Delta _{i}^{\prime }\Vert
_{q}^{2}-c\varepsilon ^{\prime }\Vert \xi \Vert _{p}\right\} \mathbb{I}%
(X_{i}\in C_{0}^{cn^{-1/2}M_{\Delta }}).  \notag
\end{eqnarray}%
As in the proof of Lemma 2 in \citet{blanchet2016robust} and $E%
\left[ D_{x}h(X,\beta _{\ast })D_{x}h(X,\beta _{\ast })^{{\mathrm{%
\scriptscriptstyle T}}}\right] \succ 0$ in Assumption A2.b,
there exists $\epsilon _{0}>0$, $\delta >0,$ and $c_{0}$ sufficiently large
such that for all $n\geq N^{\prime }(\delta ),$
\begin{equation*}
\frac{1}{n}\sum_{i=1}^{n}\Vert D_{x}h\left( X_{i},\beta _{\ast }\right) ^{{%
\mathrm{\scriptscriptstyle T}}}\xi \Vert _{p}\Vert \Delta _{i}^{\prime
}\Vert _{q}\mathbb{I}(X_{i}\in C_{0}^{\epsilon _{0}})>\frac{\delta }{2}%
\left\Vert \xi \right\Vert _{p}.
\end{equation*}%
Further, let $c_{1}=\sup_{x\in C_{0}}\Vert \Delta _{i}^{\prime }\left(
x\right) \Vert _{q}^{2}<\infty .$ By following the proof of Lemma 2 in %
\citet{blanchet2016robust}, if $n\geq \max \left\{ N^{\prime }(\delta
),n_{1},n_{2,}\left( M_{\Delta }c/\epsilon _{0}\right) ^{2}\right\} ,$ we
have
\begin{equation*}
\sup_{\left\Vert \xi \right\Vert _{p}>b}\left\{ \xi ^{{\mathrm{%
\scriptscriptstyle T}}}H_{n}-M_{n}(\xi ,u)\right\} \leq \sup_{\left\Vert \xi
\right\Vert _{p}>b}\left\Vert \xi \right\Vert _{p}\left[ b^{\prime }-\left\{
c\left( \frac{\delta }{2}-\varepsilon ^{\prime }\right) -\frac{\left(
cc_{1}\right) ^{2}}{b}\right\} \right] ,
\end{equation*}%
on the set $\left\{ \Vert H_{n}\Vert _{q}+\left\Vert \frac{1}{n}%
\sum_{i=1}^{n}D_{\beta }h(X_{i},\beta _{\ast })u\right\Vert _{q}+\left\Vert
\frac{1}{n}\sum_{i=1}^{n}\bar{\kappa}(X_{i})\right\Vert _{q}\leq b^{\prime
}\right\} .$ Therefore, we can pick $c=4(b^{\prime }+1)/\delta +1,$ $%
\epsilon ^{\prime }=\delta /4$ and $b=\left( cc_{1}\right) ^{2}+1,$ then
\begin{equation*}
b^{\prime }-\left\{ c\left( \frac{\delta }{2}-\varepsilon ^{\prime }\right) -%
\frac{\left( cc_{1}\right) ^{2}}{b}\right\} <0.
\end{equation*}%
Notice that, there exists $b^{\prime }$ and $n_{3}$ such that for all $%
n>n_{3}$ and $\Vert u\Vert _{2}\leq K$
\begin{equation}
\mathrm{pr}\left\{ \Vert H_{n}\Vert _{q}+\left\Vert \frac{1}{n}%
\sum_{i=1}^{n}D_{\beta }h(X_{i},\beta _{\ast })u\right\Vert +\left\Vert
\frac{1}{n}\sum_{i=1}^{n}\bar{\kappa}(X_{i})\right\Vert _{q}>b^{\prime
}\right\} <\varepsilon /2.  \label{inter-tightness-comapact set}
\end{equation}%
Denote $n_{4}=\left( M_{\Delta }\left( 4(b^{\prime }+1)/\delta +1\right)
/\epsilon _{0}\right) ^{2}.$ Therefore, there exists $n_{0}$ such as%
\begin{equation*}
\mathrm{pr}\left[ \max \left\{ N^{\prime }(\delta
),n_{1},n_{2},n_{3},n_{4}\right\} >n_{0}\right] <\varepsilon /2.
\end{equation*}%
Finally, we have the statement of Lemma \ref{lemmaksai} as a consequence of
the union bound. \end{proof}

\begin{proof}[Proof  of Lemma \protect\ref{Lem-LLN-uniform}]
Lemma \ref{Lem-LLN-uniform} follows as a consequence of the continuity
properties of $D_{x}h(\cdot),D_{\beta}h(\cdot)$ and the strong law of large
numbers. The proof of Lemma \ref{Lem-LLN-uniform} is similar to the proof of
Lemma 3 in \citet{blanchet2016robust}.
\end{proof}

\begin{proof}[Proof  of Lemma \protect\ref{lem-lip-cont-Mn}]
For $i=1,\ldots,n$ and $j=1,2,$ let $\Delta_{ij}$ attain the inner supremum
in
\begin{equation*}
\max_{\Delta}\left[n^{1/2}\xi^{\T}\left\{ h(X_{i}+n^{-1/2}\Delta
,\beta_{\ast}+n^{-1/2}u_{j})-h(X_{i},\beta_{\ast})\right\}-\Vert\Delta
\Vert_{q}^{2}\right] .
\end{equation*}
Then
\begin{align*}
& \left\vert \max_{\Delta}\left[ n^{1/2}\xi^{\T}\left\{
h(X_{i}+n^{-1/2}\Delta,\beta_{\ast}+n^{-1/2}u_{1})-h(X_{i},\beta_{\ast})%
\right\} -\Vert\Delta\Vert_{q}^{2}\right] \right. \\
& \quad\quad\left. -\max_{\Delta}\left[n^{1/2}\xi^{\T}\left\{
h(X_{i}+n^{-1/2}\Delta,\beta_{\ast}+n^{-1/2}u_{2})-h(X_{i},\beta_{\ast
})\right\} -\Vert\Delta\Vert_{q}^{2}\right] \right\vert \\
& \quad\leq\max_{j=1,2}\left[ n^{1/2}\left\vert \xi^{\T}\left\{
h(X_{i}+n^{-1/2}\Delta_{ij},\beta_{\ast}+n^{-1/2}u_{1})-h(X_{i}+n^{-1/2}%
\Delta_{ij},\beta_{\ast}+n^{-1/2}u_{2})\right\} \right\vert \right] ,
\end{align*}
and consequently, it follows from the definition of $M_{n}(\xi,u)$ that,
\begin{align}
& \left\vert M_{n}(\xi,u_{1})-M_{n}(\xi,u_{2})\right\vert  \notag \\
& \,\leq\frac{1}{n}\sum_{i=1}^{n}\max_{j=1,2}\left[ n^{1/2}\left\vert
\xi^{\T}\left\{
h(X_{i}+n^{-1/2}\Delta_{ij},\beta_{\ast}+n^{-1/2}u_{1})-h(X_{i}+n^{-1/2}%
\Delta_{ij},\beta_{\ast}+n^{-1/2}u_{2})\right\} \right\vert \right] .
\label{inter-lip-Mn}
\end{align}

Next, due to fundamental theorem of calculus, we have that,
\begin{align}
& n^{1/2}\xi^{\T}\left\vert h(X_{i}+n^{-1/2}\Delta_{ij},\,\beta_{\ast}+
n^{-1/2}u_{1}) - h(X_{i}+n^{-1/2}\Delta_{ij},\,\beta_{\ast}+n^{-1/2}
u_{2})\right\vert  \notag \\
& \quad= \left\vert \int_{0}^{1} \xi^{\T} D_{\beta}h\left[
X_{i}+n^{-1/2}\Delta_{ij},\,\beta_{\ast}+n^{-1/2}\left\{
u_{1}+(u_{2}-u_{1})t\right\} \right] (u_{2}-u _{1}) {\rm d}t \right\vert  \notag \\
& \quad\leq\left\Vert u_{1}-u_{2}\right\Vert _{q} \Vert\xi\Vert_{p}
\int_{0}^{1}\left\Vert D_{\beta}h\left\{ X_{i}+ n^{-1/2}\Delta_{ij},\
\beta_{\ast }+ n^{-1/2}\left( u_{1}+(u_{2}- u_{1})t\right) \right\}
\right\Vert _{q} {\rm d}t  \notag \\
& \quad\leq\left\Vert u_{1}-u_{2}\right\Vert _{q} \Vert\xi\Vert_{p} \left[
\Vert D_{\beta}h(X_{i},\beta_{\ast})\Vert_{q} + \bar{\kappa}(X_{i}) \left\{
n^{-1/2}\Vert\Delta_{ij} \Vert+ 2c_{q} n^{-1/2} K \right\} \right] ,
\label{inter-Mn-Lip-2}
\end{align}
where $c_{q}$ is a fixed positive constant such that $\Vert x \Vert_{q} \leq
c_{q} \Vert x \Vert_{2}.$ The last inequality follows from Assumption A2.c).
Moreover, for a given $b,\nu,K > 0,$ we have from %
\eqref{inner-opt-inside-compact-set} that there exists $n_{0}$ such that $%
\Delta_{ij} \leq\nu n^{1/2},$ for all $i \leq n, n \geq n_{0},$ $\Vert
\xi\Vert_{p} \leq b,$ $\Vert u \Vert_{2} \leq K.$ Combining this observation
with those in \eqref{inter-lip-Mn} and \eqref{inter-Mn-Lip-2}, we obtain
that
\begin{align*}
\sup_{\Vert\xi\Vert_{p} \leq b} \left\vert M_{n}(\xi,u_{1}) - M_{n}(\xi
,u_{2})\right\vert \leq\Vert u_{1} - u_{2} \Vert_{q} b \left\{  E%
_{P_{n}}\left\Vert D_{\beta}h(X,\beta_{\ast})\right\Vert _{q} +  E%
_{P_{n}}\left\{ \bar{\kappa}(X)\right\} \left( \nu+ 2c_{q}n^{-1/2}K\right)
\right\} ,
\end{align*}
for all $n \geq n_{0}.$ For any random variable $Z,$ let $\text{CV}(Z) =
\mathrm{var}(Z)/E(Z)^{2}$ denote the coefficient of variation of $Z.$
If $n_{0}$ is also taken to be larger than both $2\varepsilon^{-1}\text{CV}%
\left\{\left\Vert D_{\beta}h(X,\beta_{\ast})\right\Vert _{q}\right\}$ and $%
2\varepsilon^{-1}\text{CV}\{\bar{\kappa}(X)\},$ then we have
\begin{align*}
{\rm pr}\left\{ E_{P_{n}}\left\Vert D_{\beta}h(X,\beta_{\ast})\right\Vert
_{q} \leq2 E\left\Vert D_{\beta}h(X,\beta_{\ast})\right\Vert _{q}
\right\} & \geq1-\varepsilon/2 \text{ and } \\
{\rm pr}\left[  E_{P_{n}}\left\{ \bar{\kappa}(X)\right\} \leq2 E\left\{
\bar{\kappa}(X)\right\}\right] & \geq1-\varepsilon/2.
\end{align*}
With these observations, if we take $L = 4b\left[E\left\Vert D_{\beta
}h(X,\beta_{\ast})\right\Vert _{q} + E\{\bar{\kappa}(X)\}(\nu+ 2
c_{q} K)\right],$ then
\begin{align*}
\sup_{\Vert\xi\Vert_{p} \leq b} \left\vert M_{n}(\xi,u_{1}) - M_{n}(\xi
,u_{2})\right\vert \leq L \Vert u_{1} - u_{2} \Vert_{q},
\end{align*}
with probability exceeding $1-\varepsilon.$
\end{proof}

\end{document}